\documentclass[11pt,oneside]{amsart}
\usepackage{appendix}
\usepackage{amsmath}
\usepackage{amsfonts}
\usepackage{amssymb}
\usepackage{amsthm}
\usepackage[colorlinks]{hyperref} % for coloured links
\usepackage{wasysym}
\usepackage{graphicx}
\usepackage{dashrule}
\usepackage[all]{xy}
\usepackage{multirow}
\usepackage{enumitem}
\usepackage[margin=1in]{geometry}
\usepackage[dvipsnames]{xcolor}
\usepackage{tikz}
\usetikzlibrary{arrows,decorations.pathmorphing,backgrounds,positioning,fit,calc}
\usepackage{tikz-cd}
\usepackage{framed}
\usepackage{makecell}
\usepackage{mathdots}
\usepackage{tikz-cd}
\usepackage{subcaption}
\usepackage{theoremref}
\usepackage{todonotes}
\usepackage{wrapfig}

\normalfont\upshape

%%% Definition, Theorems
\theoremstyle{plain}
\newtheorem{defn}{Definition}[section]
\newtheorem{thm}[defn]{Theorem}
\newtheorem{propn}[defn]{Proposition}
\newtheorem{lem}[defn]{Lemma}
\newtheorem{cor}[defn]{Corollary}
\newtheorem{rk}[defn]{Remark} 
\newtheorem*{pcalg}{Projective completion algorithm} 
\newtheorem*{repalg}{Replacement algorithm}
\newtheorem{step}{Step} 
\newtheorem*{stepk+1}{Step $\boldsymbol{k+1}$}
\newtheorem{case}{Case}
\newtheorem{subcase}{Case}
\numberwithin{subcase}{case}

\theoremstyle{remark}
\newtheorem{rmk}[defn]{Remark}

\DeclareMathOperator{\GL}{GL}
\DeclareMathOperator{\SL}{SL}

\DeclareMathOperator{\SU}{SU}

\DeclareMathOperator{\proj}{Proj}
\DeclareMathOperator{\spec}{Spec}

\DeclareMathOperator{\sym}{Sym}
\DeclareMathOperator{\conv}{Conv}
\DeclareMathOperator{\stab}{Stab}

\DeclareMathOperator{\Aut}{Aut}
\DeclareMathOperator{\Lie}{Lie}

\newcommand{\nocontentsline}[3]{}
\newcommand{\tocless}[2]{\bgroup\let\addcontentsline=\nocontentsline#1{#2}\egroup}
% Used to remove a section from the table of contents

% general

\newcommand{\act}{\curvearrowright}

\newcommand{\CC}{\mathbb{C}}
\newcommand{\PP}{\mathbb{P}}
\newcommand{\A}{\mathbb{A}}
\newcommand{\GG}{\mathbb{G}}
\newcommand{\OO}{\mathcal{O}}

\newcommand{\dblslash}{/\! \!/}

\newcommand{\ten}{\otimes}
\newcommand{\mc}{\mathcal}
\newcommand{\mf}{\mathfrak}
\newcommand{\mb}{\mathbb}

\newcommand{\ol}{\overline}

\newcommand{\kk}{\Bbbk}
\newcommand{\symdot}{\sym^{\bullet}}
\newcommand{\nss}{\mathrm{nss}}
\newcommand{\ssfg}{\mathrm{ss}}

\newcommand{\rms}{s}
\newcommand{\rmss}{\mathrm{ss}}

\newcommand{\Proj}{{\rm Proj}}
\newcommand{\Spec}{{\rm Spec}}

\newcommand{\nc}{\newcommand}
\nc{\bla}{\phantom{bbbbb}}

\newcommand{\beq}{\begin{equation}}
\newcommand{\eeq}{\end{equation}}
\newcommand{\barr}{\begin{array}}
\newcommand{\earr}{\end{array}}
\newcommand{\beqar}{\begin{eqnarray}}
\newcommand{\eeqar}{\end{eqnarray}}
\newtheorem{theorem}{Theorem}[section]

\newtheorem{prop}[theorem]{Proposition}

\newtheorem{remark}[theorem]{Remark}

\newtheorem{conjecture}[theorem]{Conjecture}
\newtheorem{exit}[theorem]{Example}

%\newenvironment{defn}{\begin{definition}\rm}{\end{definition}}

% black board bold face

%\newcommand{\CC}{{\mathbb C }}
\nc{\FF}{ {\mathbb F} }
\nc{\HH}{ {\mathbb H} }

\newcommand{\UU}{{\mathbb U }}
\nc{\AL}{{\mathbb A}}

% calligraphic letters

\newcommand{\calo}{\mathcal{O}}

\newcommand{\calr}{\mathcal{R}}

%%% enveloping quotient symbol attempts:

% \newcommand{\foo}{\raisebox{-0.7ex}{\kern0.3ex{\turnbox{70}{\hdashrule[0ex][x]{3.1ex}{0.8pt}{0.6ex
%           0.4ex}}   
% \turnbox{70}{\hdashrule[0ex][x]{3.1ex}{0.8pt}{0.6ex 0.4ex}}} \kern0.5ex}
% }

% \newcommand{\foo}{\raisebox{-0.5ex}{\kern1pt{\turnbox{70}{\hdashrule[0ex][x]{16pt}{0.8pt}{3pt 2pt}}  
% \turnbox{70}{\hdashrule[0ex][x]{16pt}{0.8pt}{3pt 2pt}}} \kern2pt}
% }

% \newcommand{\lenv}{\mathbin{\text{\rotatebox[origin=c]{-20}{$ \kern-0.3em
% \rbag \! \rbag \kern-0.3em$}}}}

% \newcommand{\lenv}{\! \mathbin{\text{\rotatebox[origin=c]{-20}{$\downharpoonleft 
% \! \upharpoonright$}}} \!}

% \newcommand{\senv}{\! \mathbin{\text{\rotatebox[origin=c]{-20}{\scalebox{1.2}{$\downharpoonleft 
% \!  \upharpoonright$}}}} \!}

 %extra brackets to force \mathord

\DeclareMathOperator{\Stab}{Stab}

\nc{\umax}{{U_{\max}}}
\newcommand{\weight}{\omega}
\newcommand{\Diff}{\mathrm{Diff}}
\newcommand{\CHilb}{\mathrm{CHilb}}
\newcommand{\Hilb}{\mathrm{Hilb}}

\newcommand{\bars}{\overline{s}}
\newcommand{\barss}{\overline{ss}}

\newcommand{\hH}{\hat{H}}
\newcommand{\hU}{\hat{U}}

\newcommand{\liet}{{\mathfrak t}}

\nc{\lieq}{{\mathfrak q}}
\nc{\liez}{{\mathfrak z}}
\nc{\lieqs}{{\lieq}^*}
\nc{\lieg}{{\mathfrak g}}
\nc{\liegs}{{\lieg}^*}
\nc{\liep}{{\mathfrak p}}
\nc{\lieps}{{\liep}^*}

%***************************

%Replace greek letters by their roman equivalents with \
%Slightly nonstandard:  theta is \t, tau is \ta, no omicron
\def\a{\alpha}
\def\b{\beta}

\def\t{\theta}

\def\l{\lambda}

\def\o{\omega}

\setlength{\textwidth}{6.3in}
\setlength{\textheight}{8.3in}
\setlength{\evensidemargin}{0.2in}
\setlength{\oddsidemargin}{0.2in}
%\setlength{\topmargin}{-.75in}
%\setlength{\parskip}{0.3\baselineskip}

%Commands from the index of notations

\newcommand{\zmin}{Z_{\min}}
\newcommand{\xmino}{X_{\min}^0}
\newcommand{\hX}{\hat{X}}
\newcommand{\tX}{\tilde{X}}
\newcommand{\xminsg}{X^{s,\GG_m}_{\min +}}
\newcommand{\xminshu}{X^{s,\hU}_{\min +}}
\newcommand{\hxminshu}{\hX^{s, \hU}_{\min +}}

\newcommand{\xminsh}{X^{s,H}_{\min +}}
\newcommand{\xminssh}{X^{ss,H}_{\min +}}

\newcommand{\xhshu}{X^{\widehat{s},\hU}}
\newcommand{\xhsh}{X^{\hat{s},H}}

\newcommand{\env}{\!
\mathbin{\text{\rotatebox[origin=c]{70}{\scalebox{1.2}{$\approx$}}}} \!}
\newcommand{\wenv}{\, \widehat{\env} \,}
\newcommand{\tenv}{\, \widetilde{\env} \,}
\newcommand{\inenv}{\dblslash \!_{\circ}}

\newcommand{\gitq}{/\!/}

\newcommand{\PC}{\operatorname{PC}}

\title{Projective completions of graded unipotent quotients}
%\date{1 January 2016}

\author{Gergely B\'erczi, Brent Doran, Thomas Hawes, Frances Kirwan\\  \\with an appendix by Eloise Hamilton}

\address[B\'erczi]{Dep. of Mathematics, Aarhus University, Ny Munkegade 118, 8000 Aarhus,
Denmark}
\email{gergely.berczi@math.au.dk}

%\author{Thomas Hawes}
\address[Doran, Hamilton, Hawes, Kirwan ]{Mathematical Institute, Oxford University, OX2 6GG Oxford UK}
\email{brent.doran@maths.ox.ac.uk}
 \email{thomas.hawes01@gmail.com}
%\author{Frances Kirwan}
%\address[Kirwan]{Balliol College\\ Oxford\\ OX1 3BJ\\  United Kingdom} 
\email{kirwan@maths.ox.ac.uk}
\email{eloise.hamilton@maths.ox.ac.uk}

\thanks{Early work on this project was supported by the Engineering and Physical Sciences 
Research Council [grant numbers   GR/T016170/1,EP/G000174/1].
Brent Doran was partially supported by Swiss National Science Foundation Award 200021-138071. This project has received funding from the European Union’s Horizon 2020 research and innovation
programme under the Marie Sklodowska-Curie grant agreement No 742052}

\begin{document}

\begin{abstract} The aim of this paper is to show that classical geometric invariant theory (GIT)
has an effective analogue for linear actions of a non-reductive algebraic group
$H$ with graded unipotent radical on a projective scheme $X$. Here the linear
action of $H$ is required to extend to a semi-direct product $\hat{H} = H \rtimes \GG_m$ with a multiplicative 
one-parameter group which acts on the Lie algebra of the unipotent radical $U$
of $H$ with all weights strictly positive, and which centralises a Levi subgroup $R \cong H/U$ of $H$. 

We show
that $X$ has an $H$-invariant open subscheme (the 'hat-stable locus') which
has a geometric quotient by the $H$-action. This geometric quotient has a
projective completion which is a categorical quotient (indeed, a good quotient) by $\hat{H}$ of an open
subscheme of a blow-up of the product of $X$ with the affine line; with
additional blow-ups a projective completion which is itself a geometric
quotient can be obtained. Furthermore the hat-stable locus of $X$ and the
corresponding open subsets of the blow-ups of the product of $X$ with the
affine line can be described effectively using Hilbert-Mumford-like criteria
combined with the explicit blow-up constructions.

Applications include the construction of moduli spaces of sheaves and Higgs bundles of fixed Harder--Narasimhan type over a fixed nonsingular projective scheme \cite{BHJK,hamilton}, and of moduli spaces of unstable projective curves of fixed singularity \lq type' \cite{Jackson}.
In \cite{bkcoh,hamilton2} it is shown that cohomology theory for reductive GIT quotients can be extended to the non-reductive situation studied in this paper, and this is used in \cite{bkGGL} to prove the Green--Griffiths--Lang and Kobayashi hyperbolicity conjectures for generic projective hypersurfaces with polynomial bounds on their degree.
\end{abstract}

\maketitle

\tableofcontents

\section{Introduction}\label{sec:intro}
Let $H$ be a linear algebraic group acting   linearly (with respect to an ample line bundle $L$) on a projective scheme  $X$ over an algebraically closed field $\kk$ of characteristic 0. For simplicity in exposition we will assume $X$ to be irreducible. Let $U$ be the unipotent radical of $H$; then $H=U \rtimes R$ where the Levi subgroup $R\cong H/U$ of $H$ is reductive.
Suppose that the linear action of $H$ on $X$ extends to a semi-direct product $\hat{H} = H \rtimes \l(\GG_m) = H \rtimes (R \times \l(\GG_m))$ where the adjoint action of $\l:\GG_m \to \hat{H}$ on the Lie algebra of $U$ has strictly positive weights; % and its induced conjugation action on $R=H/U$ is trivial;
 that is, $\l(\GG_m)$ grades the unipotent radical of $H$. The aim of this paper is to show that we then have an effective analogue of large parts of classical geometric invariant theory (GIT) for the action of $H$ on $X$. 
 
 More precisely, $X$ has an $H$-invariant open subscheme $X^{\hat{s}}$ (also denoted $X^{\hat{s},H}$, the \lq hat-stable locus') with a geometric quotient $X^{\hat{s}}/H$ and (if $X^{\hat{s}}$ is nonempty) a projective completion $X \wenv H$ of $X^{\hat{s}}/H$, %$\widehat{X \times \PP^1} \env \hat{H}$
 which is a categorical quotient (and  a good quotient in the sense of \cite{Alper,Dolg,GIT,New,new2}) by $\hat{H}$ of an open subscheme %$(\widehat{X \times \PP^1})^s$
 of a blow-up of the product $X \times \AL^1$ of $X$ and the affine line $\AL^1$. With additional blow-ups a projective completion $X \tenv H$ which is itself a geometric quotient can be obtained. Here $X \wenv H = (X \wenv U)/\!/R$ is the classical GIT quotient for the induced linear action of the reductive group $R \cong \hat{H}/\hU$ (for $\hU =U\rtimes \l(\GG_m)$) on $X\wenv U$, and  
 $\hat{H} = H \rtimes \l(\GG_m)$ acts diagonally on $X \times \AL^1$ with the given action on $X$ and multiplication by a character $\hat{H} \to \GG_m$ with kernel $H$ on $\AL^1$; the choice of linearisation is such that $X \wenv R = (X \times \AL^1)/\!/ (R \times \l(\GG_m))$ can be identified naturally with the classical GIT quotient $X /\!/R$.
 When $R$ is trivial the hat-stable locus $X^{\hat{s}}$ is always non-empty; in general $X^{\hat{s},H}$ is the pre-image in $X^{\hat{s},U}$ of the stable locus for the induced linear action of %the reductive group
  $R \cong \hat{H}/\hU$ on 
 the projective completion $X \wenv U$ of $X^{\hat{s},U}/U$.
The  subset $X^{\hat{s},H}$ of $X$ and the corresponding open subschemes of the  blow-ups of $X \times \AL^1$ % $X^s$ and $(\widehat{X \times \PP^1})^s$
 can be described effectively using Hilbert--Mumford criteria combined with the explicit blow-up constructions.

One special situation (the \lq internal' case) arises when $R$ itself has a central one-parameter subgroup $\l_{int} : \GG_m \to Z(R)$ whose adjoint action on the Lie algebra of $U$ has only strictly positive weights. We say then that $H$ has internally graded unipotent radical, and we can construct an \lq external' grading with associated semi-direct product 
$$\hat{H} = (U \rtimes R) \times \GG_m = U \rtimes (R \times \l_{ext} (\GG_m))$$
where $\l_{ext} : \GG_m \to Z(R \times \GG_m) = Z(R) \times \GG_m \leqslant (U \rtimes R) \times \GG_m$ is given by $\l_{ext}(t) = (\l_{int}(t),t)$. (Conversely, of course, given an external grading of the unipotent radical of $H$ with associated semi-direct product $\hat{H} = H \rtimes \l(\GG_m)$, the one-parameter subgroup $\l: \GG_m \to \hat{H}$ provides an internal grading for $\hat{H}$'s unipotent radical). 
In the internally graded situation, provided that we are willing to twist the linearisation of the $H$-action on $X$ by a suitable character, so that it becomes a \lq well adapted' linearisation, the resulting projective completion $X \wenv H$ of $X^{\hat{s},H}/H$ is a good quotient by $H$ of a blow-up of $X$ itself; there is no need to include the affine line in the construction. This extends the results of \cite{BDHK2} which show that under an additional hypothesis (roughly speaking, that semistability coincides with stability for the action of the unipotent radical) no blow-ups are needed to construct $X \wenv H$ and $X^{\hat{s},U}/U$ is itself projective. In future work \cite{BDKII,BDKIII} we will show that, in internally graded situations with well adapted linearisations, projective completions $X \env H$ of the geometric quotients $X^{\hat{s},H}/H$ can be obtained without any blowing up, and that the construction can even be generalised to apply without a pre-assigned extension of the linear action of $H = U \rtimes R$ to a semi-direct product $\hat{H} = H \rtimes \l(\GG_m)$ where $\l$ grades $U$.
The internally graded situation holds for many non-reductive groups $H$; examples include any parabolic subgroup of a reductive group, and the automorphism group of any complete simplicial toric variety, and the group of $k$-jets of biholomorphisms of $(\CC^p,0)$.

The geometric quotient  $X^{\hat{s}}/H$ and its projective completions %$\widehat{X \times \PP^1} \env \hat{H}$
 can depend in principle not only on the linearisation of the action of $H$ but also (in the external case) on the choice of semi-direct product $\hat{H} = H \rtimes \GG_m$ and extension of the linear action of $H$ to $\hat{H}$ (an \lq extended' or \lq graded' linearisation in the sense of \cite{BDK}) or (in the internal case) the choice of a central one-parameter subgroup of $R$ whose adjoint action on the Lie algebra of $U$ has only strictly positive weights. It is possible to  study the dependence on these choices, and to arrive at a picture similar to classical \lq variation of GIT' \cite{DolgHu,Thaddeus}; see \cite{BJK}. 
 
 In the case $\kk = \CC$ one can also use modifications of the symplectic quotient and implosion constructions to describe these quotients in terms of analogues of moment maps \cite{bkcoh,GJS,KPEN,K5} and to extend the methods of \cite{K,K2} to study their cohomology \cite{bkcoh}, with applications to the Green--Griffiths--Lang and Kobayashi hyperbolicity conjectures for generic hypersurfaces in a projective space \cite{bkGGL}. 
  
The best known examples of moduli spaces which can be constructed using classical GIT are moduli spaces of stable curves and of (semi)stable sheaves over a fixed nonsingular projective scheme. This more general version of GIT can be used (with $H$ the parabolic subgroup associated to a maximally destabilising one-parameter subgroup, or a closely related subgroup with internally graded unipotent radical) to construct moduli spaces of unstable objects with suitable fixed discrete invariants, such as unstable curves \cite{Jackson} or unstable sheaves \cite{BHJK}.
 
Recall that, when $H = R$ is reductive, we can use classical GIT, developed by Mumford in the 1960s \cite{GIT}, to find $H$-invariant open subschemes $X^s \subseteq X^{ss}$ (the stable and semistable loci) of $X$ with a geometric quotient $X^s/H$ and projective scheme $X/\!/H \supseteq X^s/H$  associated to the algebra  of invariants
 $\bigoplus_{k \geq 0} H^{0}(X,L^{\otimes k})^H$. %, which is a finitely generated graded algebra. 
Here $X/\!/H$ is the image
of a surjective morphism $\phi$ from the open subscheme $X^{ss}$ of $X$, determined by the inclusion of the algebra of invariants $\bigoplus_{k \geq 0} H^{0}(X,L^{\otimes k})^H$ in $\bigoplus_{k \geq 0} H^{0}(X,L^{\otimes k})$.  If $x,y \in X^{ss}$ then $\phi(x) = \phi(y)$ if and only if the closures of the $H$-orbits of $x$ and $y$ meet in $X^{ss}$ (that is, $x$ and $y$ are \lq S-equivalent').  Moreover the loci $X^s$ and $X^{ss}$ can be described using the Hilbert--Mumford criteria for (semi)stability: any $x \in X$ is (semi)stable for the action of $H$ if and only if it is (semi)stable for the action of every one-parameter subgroup $\lambda: \GG_m \to H$, and (semi)stability for a one-parameter subgroup is easy to describe in terms of the weights of its action.  If $X^{ss} = X^s$ then the  projective scheme $X/\!/H$ is itself a geometric quotient of $X^{ss}$ via the morphism $\phi$;  if $X^{ss} \neq X^s \neq \emptyset$ 
then there is a canonical sequence of blow-ups of $X$ along $H$-invariant closed subschemes which results in a projective scheme $\tilde{X}$ with a linear $H$-action such that $\tilde{X}^{ss} = \tilde{X}^s$, inducing a \lq partial desingularisation' $\psi_H : \tilde{X}/\!/H \to X/\!/H$ of $X/\!/H$ which is an isomorphism over $X^s/H$ \cite{K2}.
When $\kk = \CC$ the GIT quotient $X/\!/H$ can be identified with the symplectic quotient $\mu^{-1}(0)/K$ where $\mu$ is a moment map for the action of a maximal compact subgroup $K$ of $H$ on $X$ \cite{K}.

Some aspects of  Mumford's GIT have been made to work when $H$ is not reductive (cf. for example 
\cite{BDHK, DK,F2,F1,GP1,GP2,KPEN,W}), although GIT cannot be extended directly to non-reductive linear algebraic group actions since the algebra of invariants $\bigoplus_{k \geq 0} H^{0}(X,L^{\otimes k})^H$ is not necessarily finitely generated as a graded algebra. 
We can still define (semi)stable loci $X^{ss}$ and $X^s$, the latter having a geometric quotient $X^s/H$ which is an open subset of an \lq enveloping quotient' $X\env H$ with an $H$-invariant morphism $\phi: X^{ss} \to X\env H$, and if the algebra of invariants 
 $\bigoplus_{k \geq 0} H^{0}(X,L^{\otimes k})^H$ is finitely generated then $X\env H$ is the associated projective scheme
 \cite{BDHK,DK}. However in general $X\env H$ is not necessarily projective, the morphism $\phi$ is not necessarily surjective (indeed its image may not be a subscheme of $X\env H$) and no obvious analogues of the Hilbert--Mumford criteria for (semi)stability have been found.

In \cite{BDHK2} we considered the situation when the unipotent radical $U$ of $H$ has  a semi-direct product $\hat{U} = U \rtimes \GG_m$ by the multiplicative group $\GG_m$ over $\kk$
such that the action of $\GG_m$ by conjugation on the Lie algebra of $U$ has all its weights strictly positive; we call such a $U$ a graded unipotent group. 
Given any action of $U$ on a projective scheme $X$ extending to an action of $\hU$ which is linear with respect to an ample line bundle on $X$, it  was shown in \cite{BDHK2} that, {provided} we are willing to replace the line bundle with a suitable positive tensor power and to twist the linearisation of the action of $\hU$ by a suitable (rational) character of $\hU$, and provided one additional condition is satisfied  (\lq semistability coincides with stability' in the terminology of \cite{BDHK2}), then
 the $\hU$-invariants form a finitely generated algebra, and moreover the associated projective scheme %natural morphism $\phi: X^{ss,\hU} \to X\env \hU$ is surjective and indeed expresses 
 $X\env \hU$ is a geometric quotient $X^{s,\hU}/\hU$ of an open subset $X^{s,\hU}$ of $X$, the stable locus of the $\hU$-action. %, so that $\phi$  satisfies $\phi(x) = \phi(y)$ if and only if the $\hU$-orbits of $x$ and $y$ coincide in $X^{ss,\hU}$. 
Applying this result with $X$ replaced by $X \times \PP^1$ gives us a projective scheme $(X \times \PP^1) \env \hU$ which is a geometric quotient by $\hat{U}$ of a $\hat{U}$-invariant open subset of $X \times \AL^1$ and contains as an open subset a geometric quotient of a $U$-invariant open subset $X^{\hat{s},U}$ of $X$ by $U$. Here the subsets $X^{s,\hU} $ and $X^{\hat{s},U}$ of $X$ can be described using Hilbert--Mumford criteria.

More generally let $H =U \rtimes R$ be a linear algebraic group over $\kk$ with graded unipotent radical
$U$, in the sense that there is a semi-direct product $\hat{H} = H \rtimes \l(\GG_m) = U \rtimes (R \times \l(\GG_m))$ such that the adjoint action of $\l(\GG_m)$  on the Lie algebra of $U$ has all weights strictly positive and $\l(\GG_m)$ commutes with the reductive subgroup $R \cong H/U$ which is a Levi subgroup of $H$.
% and $R = H/U$ reductive, 
Suppose also that the linear action of $H$ on $X$ with respect to the ample line bundle $L$ extends to a linear action of  $\hat{H}$. Then, provided the \lq semistability coincides with stability' condition of \cite{BDHK2} is satisfied for the action of $\hU$, 
we obtain %an induced linear action of $R \cong \hat{H}/\hU$ on $(X \times \PP^1) \env \hat{U}$ and 
a projective scheme 
$$ X \wenv H = (X \times \PP^1) \env \hat{H} = ((X \times \PP^1) \env \hat{U})/\!/ R$$
which %Here $X \wenv H$ 
is a good quotient by $\hat{H}$ of an $\hat{H}$-invariant open subset of $X \times \AL^1$ and contains as an open subset a geometric quotient of an $H$-invariant open subset $X^{\hat{s},H}$ of $X$ by $H$. Moreover the geometric quotient $X^{\hat{s},H}/H$ and its projective completion $X \wenv H = (X \times \PP^1) \env \hat{H}$ can be described using Hilbert--Mumford criteria combined with S-equivalence for the induced linear action of $R \cong \hat{H}/\hU$ on $(X \times \PP^1) \env \hat{U}$.

In  this paper we will first prove a stronger version of the results of \cite{BDHK2} for which the condition that \lq semistability coincides with stability' for the $\hU$-action is replaced with a slightly less stringent condition (which can also be regarded as an interpretation  in this setting of \lq semistability coincides with stability' for the action of $\hU$; in fact in each case a better analogy with classical GIT would be to describe the condition as \lq semistability coincides with stability and the stable locus is non-empty). %the unipotent radical of $H$).

 We will also show that, when this condition  is %not satisfied, %(or rather, is
  replaced with a much weaker condition on the action of $\hU$ (roughly equivalent to the stable locus being non-empty in classical GIT), then %(it suffices that the stabiliser $\stab_U(x)$ in $U$ of a generic $x \in X$ is trivial),
 we can find a sequence of blow-ups of $X$ along $\hat{H}$-invariant subschemes  (analogous to that of \cite{K2} in the reductive case) resulting in a projective scheme $\hat{X}$ with an induced linear action of $\hat{H}$ satisfying the modified \lq stability coincides with semistability' condition for its unipotent radical. In this way, considering an induced action of $\hH$ on $X \times \PP^1$ as above, we obtain a projective scheme $$X \wenv H = \widehat{X \times \PP^1} \env \hat{H}$$ which is a good quotient by $\hat{H}$ of an $\hat{H}$-invariant open subset of a blow-up of $X \times \AL^1$ and contains as an open subset a geometric quotient of an $H$-invariant open subset $X^{\hat{s},H}$ of $X$ by $H$.  After further blow-ups along $\hat{H}$-invariant projective subschemes using the methods of \cite{K2}, we can obtain another projective
completion $X \tenv H$ of $X^{\hat{s},H}/H$ which is itself a geometric  quotient (not just a good quotient) by $\hat{H}$ of an $\hat{H}$-invariant open subset of a blow-up of $X \times \AL^1$. Here the geometric quotient $X^{\hat{s},H}/H$ and its projective completions $X \wenv H$ and $X \tenv H$ have descriptions in terms of Hilbert--Mumford-like criteria, the explicit blow-up constructions  and an analogue of S-equivalence.

Finally we can deal with the situation when even this weaker condition that the $\hU$-stable locus is non-empty is not satisfied, by using \lq quotienting in stages' to reduce to the  case when the unipotent radical is commutative and choosing complements to generic stabilisers.

In \cite{BHK} this construction is used to stratify quotient stacks of the form $[X/H]$, where $X$ is a projective scheme acted on linearly by a linear algebraic group $H$ with internally graded radical, in such a way that each stratum $[S/H]$ has a geometric quotient $S/H$. This can be used to find stratifications of moduli stacks (potential applications include moduli stacks or sheaves or Higgs bundles over a projective scheme \cite{BHJK,hamilton} and of projective curves \cite{Jackson}) such that each stratum has a coarse moduli space.

Recently Edidin and Rydh \cite{ER} have modified the approach of \cite{K2} to show that if $\mathcal{X}$ is a smooth Artin stack with a stable good moduli space $\bf{X}$, then there is a canonical sequence of birational morphisms of smooth Artin stacks inducing proper birational morphisms of associated good moduli spaces whose composition is a \lq partial desingularisation' of $\bf{X}$. When $\mathcal{X} = [X^{ss}/G]$ where $X$ is a nonsingular projective scheme and $G$ is a reductive group acting linearly on $X$ with semistable locus $X^{ss}$, their construction coincides with that of \cite{K2}. Our constructions of $X \wenv H$ and $X\tenv H$ likewise fit into a potential partial generalisation of the results of \cite{ER} for which $\mathcal{X}$ is not required to be smooth, the good moduli space is not required to be stable and even the definition of a \lq good moduli space' is weakened slightly (as it must be to extend to non-reductive quotients) \cite{HK}.

As was observed in \cite{BDHK2} (see also \cite{bunnett}) , 
the automorphism group $H$ of
any complete simplicial toric variety is a linear algebraic group $H = U \rtimes R$ with internally graded unipotent radical $U$. % such that the grading is defined by a one parameter subgroup $\GG_m$ of $H$ acting by conjugation on the Lie algebra of $U$ with all weights strictly positive, and inducing a central one-parameter subgroup of $R=H/U$. 
Thus the results of this paper can be applied to any linear action of $H$ on an irreducible projective scheme with respect to a very ample linearisation (cf. \cite{bunnett}).
Similarly it was also shown in \cite{BDHK2} that 
 the group $\GG_{k,p}$ of $k$-jets of holomorphic reparametrisations of $(\CC^p,0)$ for any $k, p \geq 1$  has 
an internally graded unipotent radical $\UU_{k,p}$, so % such that the grading is defined by a one parameter subgroup of $\GG_{k,p}$ acting by conjugation on the Lie algebra of $\UU_{k,p}$ with all weights strictly positive, and inducing a central one-parameter subgroup of the reductive group $\GG_{k,p}/\UU_{k,p}$. So 
the results above also apply
to any linear action of the
reparametrisation group  $\GG_{k,p}$.
In particular $\GG_{k,p}$ acts fibrewise in a natural way on the jet bundle $J_{k,p}(T^*Y) \to Y$ %(and associated projective bundle $\PP(J_{k,p}(T^*Y)) \to Y$)
over a complex manifold $Y$ of dimension $n$ with fibre 
$J_{k,p,x}$
at $x \in Y$ consisting of the $k$-jets at $x$ of holomorphic maps from $(\CC^p,0)$ to $(Y,x)$. % (and the associated projective space $\PP(J_{k,p,x})$). 
This action in the case when $p=1$ was the original motivation for our study of graded unipotent group actions (see also \cite{bkgrosshans}), and has led to the application \cite{bkGGL}.

We can even apply the results of this paper to the Nagata counterexamples, which are linear actions of unipotent groups $U$ on  projective space such that the corresponding $U$-invariants are not finitely generated \cite{Mukai}. In these examples the linear action of $U$ extends naturally to a linear action of a semi-direct product $\hat{U} = U \rtimes \GG_m$
such that the action of $\GG_m$ by conjugation on the Lie algebra of $U$ has all its weights strictly positive, and our construction provides a finitely generated subalgebra of  the algebra of $U$-invariants on the projective space $X$ given by those which extend to $\hU$-invariants on $X \times \PP^1$ (for an appropriate choice of linearisation).

The layout of the paper is as follows. $\S$\ref{section1} gives a more precise statement of our results, while $\S$\ref{section2} reviews 
the results of \cite{BDHK,DK} on non-reductive GIT, including preliminary results which will be needed for the proof of our first main theorem, Theorem \ref{mainthm}. %, which is stated in the internally graded situation (but of course also applies to the externally graded case).
 In $\S$\ref{section4} we recall the partial desingularisation construction of \cite{K2} for linear actions of reductive groups on projective schemes, while $\S$\ref{sectiona} studies the %$U$-sweep of $Z_{\min}$ and the
 locus of $\hU$-semistable points $x \in X$ %in $X^0_{\min}$
  where the dimension of $\stab_U(x)$ is maximal. $\S$\ref{section3} outlines the strategy which will be used to prove Theorem \ref{mainthm}, and then completes its proof. Our second main theorem, Theorem \ref{mainthm2}, which is a non-reductive analogue of the partial desingularisation construction of \cite{K2}, is proved in
$\S$\ref{section5}. A proof is also given of our final main theorem Theorem \ref{mainthm3}, which uses quotienting in stages to extend Theorem \ref{mainthm2} to the most general situation without any assumption of non-emptiness of the stable locus. Actions of non-reductive groups with externally graded unipotent radicals are studied in $\S$\ref{externalsection}, while
$\S$\ref{section6} discusses some examples and applications. In $\S$\ref{sec:applications} we give a brief summary of the proof in  \cite{bkGGL} of the Green--Griffiths--Lang and Kobayashi hyperbolicity conjectures with polynomial degree bounds, which uses a recent extension \cite{bkcoh} of cohomological intersection theory on GIT quotients to linear actions of non-reductive groups with graded unipotent radicals.  Finally Eloise Hamilton has provided an appendix to this article including algorithms and accompanying diagrams which we hope will help readers to follow the constructions in the paper. An index of notation precedes the bibliography.

The authors would like to thank Dominic Bunnett, Eloise Hamilton, Vicky Hoskins and Joshua Jackson for many very helpful discussions and suggestions during this paper's lengthy evolution.

\section{Statement of results} \label{section1}

\subsection{Externally graded unipotent radical} \label{section2.1}

As in \cite{BDHK} we will work over an algebraically closed field $\kk$ of characteristic zero, and we will adopt the conventions described in \cite{BDHK} $\S$1.1.

In order to describe our results more precisely, let $H=U \rtimes R$ be a linear algebraic group  over $\kk$ with externally graded unipotent radical
$U$, in the sense that there is a semi-direct product $\hat{H} = H \rtimes \l(\GG_m) = U \rtimes (R \times \l(\GG_m))$ of $H$ by the multiplicative group $\GG_m$ over $\kk$ such that the adjoint action of the one-parameter subgroup $\l:\GG_m \to \hat{H}$  on the Lie algebra of $U$ has all weights strictly positive, and $\l(\GG_m)$ commutes with the reductive subgroup $R \cong H/U$ which is a Levi subgroup of $H$.
Let $\mathcal{L}$ be a very ample linearisation with respect to a line bundle $L \to X$ 
of  an action of $H$ on an irreducible projective scheme $X$; we assume
that there is an extension of this linearisation to a linearisation (by abuse of notation also denoted by $\mathcal{L}$) of the action of the semi-direct product
$\hat{H}$ of $H$ by $\GG_m$. 
Let $\chi: \hat{H} \to \GG_m$ be a character of $\hat{H}$ with kernel containing $H$; we will identify such characters $\chi $ with integers so that the integer 1 corresponds to the character which defines the exact sequence $H \to \hat{H} \to \GG_m$. 
We can twist the linearisation of the $\hat{H}$-action by multiplying the lift of the $\hH$-action to $L$ by such a character; this will leave the $H$-linearisation on $L$ and the action of $\hH$ on $X$ unchanged.
Note that a linearisation $\mathcal{L}$ of $\hH$ with respect to $L$ induces a linearisation $\mathcal{L}^{\otimes m}$ with respect to the line bundle $L$, for any integer $m\geq 1$, such that twisting $\mathcal{L}$ by $\chi$ corresponds to twisting $\mathcal{L}^{\otimes m}$ by $m\chi$; GIT will be essentially unaffected, so in this way we can work with rational linearisations and rational characters.

 Let $\hU = U \rtimes \l(\GG_m) \leqslant \hat{H}$ and suppose that $\weight_{\min} = \weight_0 < \weight_{1} < 
\cdots < \weight_{\max} $ are the weights with which the one-parameter subgroup $\l(\GG_m)$ of $ \hU$ acts on the fibres of the tautological line bundle $\calo_{\PP((H^0(X,L)^*)}(-1)$ over points of the connected components of the fixed point set $X%\PP((H^0(X,L)^*)
^{\GG_m}$ for the action of $\l(\GG_m)$ on $X \subseteq \PP((H^0(X,L)^*)$. %; since $L$ is very ample $X$ embeds in $\PP((H^0(X,L)^*)$ and the line bundle $L$ extends to the dual $\calo_{\PP((H^0(X,L)^*)}(1)$ of the tautological line bundle $\calo_{\PP((H^0(X,L)^*)}(-1)$. Thus $\weight_{\min}$ is the minimal weight for the $\GG_m$-action on
Let $V=H^0(X,L)^*$, and let $V_{\min}$ be the weight space of weight $\weight_{\min}$ in
$V$.
We will assume that there exist at least two distinct weights, since otherwise the action of the unipotent radical $U$ of $H$ on $X$ would be trivial, in which case the action of $H$ would be via an action of the reductive group $R\cong H/U$ and we could use classical GIT.

\begin{defn} \label{defn2.1}
Let $\chi$ be a rational character of $\GG_m$ (so that $c\chi$ lifts to a character of $\hH$ with kernel containing $H$ as above for a sufficiently divisible positive integer $c$)  such that 
$$ \omega_{\min} = \omega_0 <
{\chi} < \omega_{1}; $$
%where $\epsilon >0$ is sufficiently small;
 we will call rational characters $\chi$  with this property {\em  adapted} to the linear action of $\hat{H}$, and we will call the linearisation {\em adapted} if $\omega_{0} <0 < \omega_{1} $;  we will call $\chi$  {\em borderline adapted} to the linear action of $\hat{H}$ if $\chi = \omega_{0}$, and the linearisation {\em borderline adapted} if $\omega_{0} = 0$. % for sufficiently small $\epsilon >0$.
 \end{defn}
 
 The linearisation of the action of $\hat{H}$ on $X$ with respect to the ample line bundle $L^{\otimes c}$ can be twisted by the character $c\chi$ so that the weight $\omega_{\min}$ is replaced with $c(\omega_{\min}-\chi)$;
let $\mathcal{L}_\chi^{\otimes c}$ denote this twisted linearisation, which is adapted in the sense above if $\chi > \weight_{\min}$ is sufficiently close to $\weight_{\min}$.

Let $X^{s,\GG_m}_{\min+}$ denote the stable set in $X$ for the linear action of $\GG_m$ with respect to the adapted linearisation $L_\chi^{\otimes c}$. % and for a maximal torus $T$ of $\hH$ containing $\GG_m$, let $X^{s,T}_{\min+}$ denote the stable set in $X$ for the linear action of $T$ with respect to the adapted linearisation $L_\chi^{\otimes c}$
By the theory of variation of (classical) GIT \cite{DolgHu,Thaddeus},   $X^{s,\GG_m}_{\min+}$ %and $X^{s,T}_{\min+}$ are
is independent of the choice of adapted rational character $\chi$.
Indeed, by the Hilbert--Mumford criteria, we have  $X^{s,\GG_m}_{\min+} =   X^0_{\min} \setminus Z_{\min}$
where $Z_{\min}$ and $X^0_{\min}$ are defined as follows.

\begin{defn} \label{def:ExPr2} Let
\[
Z_{\min} =X \cap \PP(V_{\min})=\left\{
\begin{array}{c|c}
\multirow{2}{*}{$x \in X$} & \text{$x$ is a $\GG_m$-fixed point and} \\ 
 & \text{$\GG_m$ acts on $L^*|_x$ with weight $\omega_{\min}$} 
\end{array}
\right\}
\]
and
\[
X^0_{\min} =\{x\in X \mid p(x) \in Z_{\min}\}   \,\,\, \mbox{ for } \,\,\, p(x) = \lim_{t \to 0, \,\, t \in \GG_m} t \cdot x .
\]   
\end{defn}
Note that $X^0_{\min}$  is $\hat{U}$-invariant.
By assumption $X$ is irreducible, so it is not hard to see that in fact $Z_{\min}$ and $X^0_{\min}$, and thus also $X^{s,\GG_m}_{\min+}$, depend only on the action of $\GG_m$ on $X$ and not on the linearisation $\mathcal{L}$. Let
 $$X^{s,{\hat{U}}}_{\min+} =  \bigcap_{u \in U} u X^{s,\GG_m}_{\min+} = X^0_{\min} \setminus UZ_{\min};$$ 
 it follows that $X^{s,{\hat{U}}}_{\min+}$ depends only on the action of $\hU$ on $X$.

It will be often convenient to strengthen slightly the requirement that the linearisation is adapted.  We say that a property holds for a  linear action of $\hH$ on $X$ with respect to an ample linearisation twisted by a \lq {\em well adapted}' rational character $\chi$ if there exists $\epsilon >0$ such that if $\chi$ is any rational character of $\GG_m$ (lifting to $\hH$ with trivial restriction to $H$) with 
$$\omega_{\min} < {\chi} < \omega_{\min} + \epsilon,$$
then  the required property holds for the linearisation twisted by $\chi$.

\begin{rmk} \label{defnchio}
We can apply the theory of variation of GIT \cite{DolgHu,Thaddeus} %studies the way the (semi)stable loci in $X$ for the linear action of a reductive group such as $R \times \l(\GG_m$ %a maximal torus $T$ of $\hH$ containing $\GG_m$  vary when the linearisation changes, in particular when it is twisted by a rational character of $\hH$.  We can apply this
 to the action on $X$ of a maximal torus $T$ of $R$ and to the action of $T \times \l(\GG_m)$.
We will identify rational characters on the torus $T$ with (rational) elements of $(\mathrm{Lie}T)^*$. %Fix a inner product on $\mathrm{Lie}T$ (which we may assume is rational and invariant under the action of a maximal compact subgroup of $R$), extend it to $\mathrm{Lie}( T \times \l(\GG_m))$ using the standard inner product on the Lie algebra of $\GG_m$ and use this to identify $(\mathrm{Lie}( T \times \l(\GG_m)))^*$ with $\mathrm{Lie}( T \times \l(\GG_m))$; let $|\!|\cdot |\!| $ denote the induced norm on $\mathrm{Lie}( T \times \l(\GG_m))$ and $(\mathrm{Lie}( T \times \l(\GG_m)))^*$. %Let $T_0$ be the subtorus of $T$ whose Lie algebra is the orthogonal complement in $\mathrm{Lie}T$ to the Lie algebra of $\GG_m$. 

% Let $Z_{\min}^{s,T_0,\chi_0}$ and $Z_{\min}^{ss,T_0,\chi_0}$ be the stable and semistable loci for the linear action  of $T_0$ on $Z_{\min}$ twisted by $\chi_0$.
Let $\hU_T = U \rtimes (T \times \l(\GG_m))$ act linearly on $X$, and let $\chi_0$ be a rational character of $T$. 
By variation of GIT % \cite{DolgHu,Thaddeus} 
there is $\epsilon >0$ such that if $\chi$ is a rational character  $(\chi_0,\eta)$ of $\hat{T} = T \times \l(\GG_m)$ %$\chi_0$ be a rational character of $T$ such that $\chi_0|_{\GG_m} = 
where $\weight_{\min} < \eta < \weight_{\min} + \epsilon$ then 
%$X^{ss,\GG_m,\chi} = X^{s,\GG_m,\chi} = X^{ss,\GG_m}_{\min+} $ is independent of $\chi$ (and  equals $ X^0_{\min} \setminus Z_{\min}$). In general $X^{ss,T,\chi}$ and $X^{s,T,\chi}$ may depend on $\chi$ near $\chi_0$. If, however, we fix a complementary subtorus $T_0$ to $\GG_m$ in $T$ so that $T = T_0 \times \GG_m$ and require also that the kernel of $\chi - \chi_0$ contains $T_0$, then 
$X^{ss,\hat{T},\chi}$ and $X^{s,\hat{T},\chi}$ are independent of $\eta$. We will write them as $X^{ss,\hat{T},\chi_0}_{\min+}$ and $X^{s,\hat{T},\chi_0}_{\min+}$, or just $X^{ss,\hat{T}}_{\min+}$ and $X^{s,\hat{T}}_{\min+}$ if no confusion will result. We can now let
$$X^{s,{\hat{H}}}_{\min+} =  \bigcap_{h \in H} h X^{s,\hat{T}}_{\min+},$$
and 
 $$X^{ss,{\hat{H}}}_{\min+} =  \bigcap_{h \in H} h X^{ss,\hat{T}}_{\min+}.$$
\end{rmk}

The first theorem proved in this paper requires the following hypothesis, which can be regarded as a version of the condition that \lq semistability coincides with stability and the stable locus is non-empty' for an adapted 
$\hat{U}$-linearisation $L \to X$:
%%% eq:sss %%%
\begin{equation}  \label{eq:sss}
\text{$\stab_U(z) = \{e\}$ for every $z \in Z_{\min}$.} \tag{$ss\! =\! s\! \neq\! \emptyset \, [\hU]$}           %$\mf{C}^*$}
\end{equation}
Since $U$ is normalised by $\GG_m$, this is equivalent (see Remark \ref{maxdimstab}) to the requirement that
$$ 
\text{$\stab_{U}(x) = \{ e \} $ for every $x \in X^0_{\min}$}. 
$$
Recall that if $W$ is a quasi-projective scheme acted on by $H$ then a good quotient for the action is an affine $H$-invariant morphism $f:W \to Q$ to a quasi-projective scheme $Q$ such that 
$$(f_*\mathcal{O}_W)^H = \mathcal{O}_Y,$$
and such that if $W_1$ and $W_2$ are disjoint $H$-invariant closed subsets of $W$ then $f(W_1)$ and $f(W_2)$ are disjoint closed subsets of $Y$. Recall also that a good quotient is a categorical quotient, and that the classical GIT quotient $X/\!/G$ of a projective scheme $X$ by a linear action of a reductive group $G$ with respect to an ample linearisation $\mathcal{L}$ is a good quotient of the semistable locus $X^{ss}$.

\begin{thm}[GIT for linear algebraic group actions with graded unipotent radicals for which semistability coincides with stability and the stable locus is non-empty]  \label{mainthm} 
Let $H=U \rtimes R$ be a linear algebraic group  over $\kk$ with externally graded unipotent radical
$U$, so that there is a semi-direct product $\hat{H} = H \rtimes \l(\GG_m) = U \rtimes (R \times \l(\GG_m))$ such that the adjoint action of the one-parameter subgroup $\l:\GG_m \to \hat{H}$  on the Lie algebra of $U$ has all weights strictly positive.
Suppose that ${\hat{H}}$  acts linearly on an irreducible projective scheme $X$ with respect to a very  ample line bundle
 $L$. %, and that the linearisation is adapted in the sense above. %for the linear action of ${\hat{U}}$.
Suppose also that the linear action of ${\hat{U}}$ on $X$ satisfies the condition  ($ss\! =\! s\! \neq\! \emptyset\, [U]$) above. 
Then \\    
(i) the open subscheme $X^{s,{\hat{U}}}_{\min+}$ of $X$ has a geometric quotient $X \env \hU = X^{s,{\hat{U}}}_{\min+}/\hat{U}$ by $\hU$ which is a projective scheme, while \\ 
(ii) the open subscheme $X^{s,{\hat{H}}}_{\min+}$ of $X$ has a geometric quotient by $\hH$, and $X^{ss,{\hat{H}}}_{\min+}$ has a good quotient $X \env \hH$ by $\hH$ which is also a projective scheme.

If the linearisation is twisted by a well adapted rational character then for a sufficiently divisible positive integer $c$ we have 

\noindent (iii) 
the associated algebra of invariants $\oplus_{m=0}^\infty H^0(X,L^{\otimes cm})^{\hat{U}}$ is 
finitely generated, with associated projective scheme $X\env \hat{U}$, and

\noindent (iv) 
the algebra of invariants  $\oplus_{m=0}^\infty H^0(X,L^{\otimes m})^{\hat{H}} = (\oplus_{m=0}^\infty H^0(X,L^{\otimes cm})^{\hat{U}})^{R}$ is 
finitely generated, with associated projective scheme
 $X\env \hat{H}$. 
\end{thm}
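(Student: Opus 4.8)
The plan is to reduce the whole statement to the case of the unipotent radical $\hat U$ and then to add the reductive group $R$ at the end, following the strategy of \cite{BDHK2} but weakening the hypothesis. First I would establish part~(i), which is the technical heart of the argument. Twisting $\mathcal{L}$ by a well adapted rational character, one works with the linearisation $\mathcal{L}_\chi^{\otimes c}$ for which $\omega_{\min}-\chi<0<\omega_1-\chi$. The key geometric input is the description $X^{s,\hat U}_{\min+}=X^0_{\min}\setminus UZ_{\min}$, together with the fact that $X^0_{\min}$ retracts $\GG_m$-equivariantly onto $Z_{\min}$ via $p(x)=\lim_{t\to0}t\cdot x$, so that $X^0_{\min}$ is (a Zariski-locally trivial, even an affine-space) bundle over $Z_{\min}$. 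Under hypothesis $(\mf{C}^*)$ the $U$-action on $X^0_{\min}$ is free, and one shows that the sweep map $U\times (X^0_{\min}\setminus UZ_{\min})\to X$ is an immersion with image $X^{s,\hat U}_{\min+}$, realising $X^{s,\hat U}_{\min+}/U$ as a $U$-bundle quotient; combined with the $\GG_m$-action this gives a quasi-projective geometric quotient $X^{s,\hat U}_{\min+}/\hat U$. Projectivity (properness) is the delicate point and I expect it to be the main obstacle: it is proved by exhibiting the quotient as $\Proj$ of the invariant algebra, for which one needs that enough $\hat U$-invariant sections of $L^{\otimes cm}$ separate points and, crucially, that no point of $X^{s,\hat U}_{\min+}/\hat U$ runs off to infinity — i.e.\ that $X^{ss,\hat U}_{\min+}=X^{s,\hat U}_{\min+}$, which is exactly where $(\mf{C}^*)$ is used to rule out the bad locus $UZ_{\min}$ having extra stabiliser directions. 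This is where I would invoke (a mild strengthening of) the finite generation and surjectivity results of \cite{BDHK2}.

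Granting (i), parts (iii) and (iv) are then formal. For (iii): a geometric quotient of a $\hat U$-invariant open subset of a projective variety by a linear action is, when it is itself projective, the $\Proj$ of a finitely generated graded algebra; since $X\env\hat U$ is projective by (i) and the natural map from $\bigoplus_m H^0(X,L^{\otimes cm})^{\hat U}$ computes sections of an ample line bundle on it, the invariant algebra is finitely generated with $\Proj$ equal to $X\env\hat U$. (One must check that, after twisting by a well adapted character and passing to a sufficiently divisible $c$, the relevant line bundle on the quotient is ample and its section ring agrees with the invariant ring in high degree.) For (iv): the reductive group $R=H/U$ acts on the finitely generated algebra $A=\bigoplus_m H^0(X,L^{\otimes cm})^{\hat U}$, and since $\GG_m$ centralises $R$ this action is by graded algebra automorphisms; by Hilbert's theorem (reductivity of $R$, characteristic $0$) the ring of invariants $A^R=\bigoplus_m H^0(X,L^{\otimes cm})^{\hat H}$ is again finitely generated, and $\Proj A^R$ is the classical GIT quotient $(X\env\hat U)\dblslash R=X\env\hat H$, a projective variety. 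Here one uses that taking $\hat U$-invariants and then $R$-invariants is the same as taking $\hat H$-invariants, which holds because $H=U\rtimes(\text{lift of }R)$ inside $\hat H$ and $U$ is normal.

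Finally, part (ii): $X\env\hat U=X^{s,\hat U}_{\min+}/\hat U$ carries a residual action of $R=H/U$, and the classical GIT (semi)stable loci for this $R$-action, pulled back to $X$, are precisely $X^{s,\hat H}_{\min+}$ and $X^{ss,\hat H}_{\min+}$ — this is essentially the definition via the intersections over $h\in H$ of the torus (semi)stable loci $X^{s,T}_{\min+}$, $X^{ss,T}_{\min+}$, using the Hilbert--Mumford criterion for $R$ and the fact that $T/\GG_m$ is a maximal torus of $R$. Then the classical GIT quotient of $X\env\hat U$ by $R$ is a categorical quotient of the $R$-semistable locus, which gives the categorical quotient $X\env\hat H$ of $X^{ss,\hat H}_{\min+}$, projective because $X\env\hat U$ is; and restricting to the $R$-stable locus gives the geometric quotient of $X^{s,\hat H}_{\min+}$ by $\hat H$. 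The compatibility of all these quotient descriptions with the $\Proj$ descriptions of (iii)--(iv) is then just the statement that classical GIT for $R$ acting on the projective variety $X\env\hat U$ is computed by $\Proj$ of the $R$-invariants of its homogeneous coordinate ring, which is $A^R$. The only subtlety I anticipate here is book-keeping with the twisting character and the power $c$, ensuring the same $c$ works simultaneously for finite generation of the $\hat U$-invariants and for the identification of $X\env\hat H$ with a classical GIT quotient of $X\env\hat U$; this is routine but must be stated carefully.
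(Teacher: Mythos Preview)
Your overall architecture is right and matches the paper: prove (i) for $\hat U$, then deduce (iii) from projectivity of $X\env\hat U$ via the general principle that a projective enveloping quotient forces finite generation of invariants, then apply classical GIT for the reductive group $R=H/U$ acting on $X\env\hat U$ to get (ii) and (iv). The paper does exactly this (Theorem~\ref{thm:ExCo1} for $\hat U_T$, then Corollary~\ref{cor:ExCo1H} for $\hat H$).

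The gap is in your sketch of (i). First, the ``sweep map'' $U\times(X^0_{\min}\setminus UZ_{\min})\to X$ cannot be an immersion: the target set is already $U$-invariant (since $p$ is $U$-invariant by Lemma~\ref{lemmaA}), so this map is highly non-injective and does not exhibit anything as a $U$-bundle. More seriously, freeness of the $U$-action on $X^0_{\min}$ does \emph{not} by itself produce a locally trivial (or even geometric) quotient with good properties; this is precisely where non-reductive GIT differs from the reductive case. Invoking \cite{BDHK2} is also insufficient, since the present theorem is a strengthening of that paper's main result under the weaker hypothesis~($\mf{C}^*$); you would be assuming what you need to prove.

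What the paper actually does for (i) is a genuine local construction. On each affine patch $X_\sigma$ (for $\sigma\in H^0(X,L)_{\max}$) it runs an induction along a subnormal series $\{e\}=U_0\lhd U_1\lhd\cdots\lhd U_m=U$ with $U_{j+1}/U_j\cong\GG_a$, using the key Lemma~\ref{lem:ExCo3}: if $\GG_a\rtimes\GG_m$ acts on an affine variety with all $\GG_m$-weights on the coordinate ring $\leq 0$ and the $\GG_a$-stabilisers on the limit set $Z$ are trivial, then there exists $f$ with $\xi(f)=1$, so Dixmier's slice lemma (Lemma~\ref{lem:ExCo2}) gives a trivial $\GG_a$-bundle. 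This yields a locally trivial $U$-quotient $q_U:X^0_{\min}\to X^0_{\min}/U$. The quotient is then embedded, via sections in $W=H^0(X,L^{\otimes s})^U$, as a \emph{closed} subvariety of $\PP(W^*)^0_{\min}$ (Lemma~\ref{lem:ExCo5}); projectivity of $X\env\hat U$ then comes from reductive GIT for $\GG_m$ on the closure $\overline{X^0_{\min}/U}$ in $\PP(W^*)$, not from knowing finite generation in advance. So the logical order is: construct the $U$-quotient locally, embed it, apply classical GIT for $\GG_m$ to get projectivity, and only then read off finite generation of invariants.
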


\begin{rmk} \label{rmkER}
Under the hypotheses of  Theorem \ref{mainthm},  $Z_{\min}$ is  a geometric quotient for the action of $\hU$ on the $H$-invariant subscheme $U Z_{\min}$ of $X$, and its reductive GIT quotient $Z_{\min}/\!/R$ by the action of $R \cong H/U$ is a good quotient for the action of $\hH$ on the $\hH$-invariant open subscheme $U Z_{\min}^{ss,R}$ of $U Z_{\min}$. 

In addition  $Z_{\min}$ is a good quotient for the action of $\hat{U}$ on $X_{\min}^0$, and $Z_{\min}/\!/R$ is a good quotient for the action of $\hat{H}$ on the open subscheme $p^{-1}(Z_{\min}^{ss,R})$ of $X_{\min}^0$, but these are in general far from being geometric quotients, as they identify $x$ with $p(x)$ which in general lies in the boundary of the orbit closure of $x$. Typically $\dim Z_{\min} < \dim X - \dim \hat{U}$ and $\dim Z_{\min}/\!/R  < \dim X - \dim \hat{H}$.

\end{rmk}

\begin{rmk}
The main theorem of \cite{BDHK2}  is a version of Theorem \ref{mainthm} above. It concerns a linear action of a linear algebraic group $\hat{H}$ with graded unipotent radical on a complex projective scheme $X$, which is well adapted in the sense above and satisfies a variant of the condition that \lq semistability coincides with stability' for the linear action of $\hU$.
\end{rmk}

\begin{rmk}
In order to obtain Theorem \ref{mainthm} (ii) and (iv), the condition ($ss\! =\! s\! \neq\! \emptyset\, [U]$) can be weakened to 
\begin{equation}  \label{eq:sssH}
\text{$\stab_U(z) = \{e\}$ for every $z \in Z_{\min}^{ss,R}$} \tag{$ss\! =\! s\! \neq\! \emptyset\, [U\!/\!ssR]$}
\end{equation}
where $Z_{\min}^{ss,R}$ is the semistable locus for the induced linear action of $R=H/U$ on $Z_{\min}$.
\end{rmk}

\begin{rmk}  \label{remweaken}
As is explained in Remark \ref{remweaken3}, the conditions ($ss\! =\! s\! \neq\! \emptyset\, [U]$) and $(ss\! =\! s\! \neq\! \emptyset\, [U\!/\!ssR])$ can be weakened, when the action of $U$ is such that the stabiliser in $U$ of $x \in X$ is always strictly positive-dimensional, to a condition abbreviated as \lq semistability coincides with Mumford stability and the Mumford stable locus is non-empty'. Provided that for every $z \in Z_{\min}$ the dimension of $\stab_U(z)$ is equal to the generic dimension of $\stab_U(x)$ for $x \in X$ (which is equivalent to the requirement that $\dim \stab_U(x)$ is constant for $x \in X^0_{\min}$), and that the same is true for subgroups $U^{(j)}$ appearing in a series $ U \geqslant U^{(1)} \geqslant \cdots \geqslant U^{(s)} = \{e\}$ normalised by $H$ with $U^{(j)}/U^{(j+1)}$ abelian (such as the derived series for $U$), then the conclusions of Theorem \ref{mainthm} still hold, by quotienting in stages (see $\S$5) to reduce to the case when $U$ is abelian. This case can in turn be reduced to the situation of Theorem \ref{mainthm}  by locally choosing complements in $U$ to the stabiliser subgroups (see Remark \ref{remweaken3}).

Similarly if  for every $z \in Z_{\min}^{ss,R}$ the dimension of $\stab_U(z)$ is equal to the generic dimension of $\stab_U(x)$ for $x \in X$, and that the same is true for subgroups $U^{(j)}$ appearing in a series $ U \geqslant U^{(1)} \geqslant \cdots \geqslant U^{(s)} = \{e\}$ normalised by $H$ with $U^{(j)}/U^{(j+1)}$ abelian, then Theorem \ref{mainthm} (ii) and (iv) still hold.
\end{rmk}

\begin{rmk} \label{remweaken2} Let $X^{\l(\GG_m)}$ be the  $\l(\GG_m)$--fixed point set in $X$. 
We can also weaken the requirement that the rational character $\chi$ should be well adapted if we strengthen the condition $(ss\! =\! s\! \neq\! \emptyset\, [U])$ to become ($ss\! =\! s\! \neq\! \emptyset\, [U]_j$) given by 
%\begin{equation}  %\label{eq:sss}
$$ \text{$\stab_U(z) = \{e\}$ whenever $z \in X^{\l(\GG_m)}$ and $\l(\GG_m)$ acts on $L^*|_z$ with weight at most $\weight_j$;} %\tag{$ss\! =\! s\! \neq\! \emptyset\, [U]_j$}  \end{equation}
$$
then we can allow ${\chi}$ to be any non-weight in $( \weight_{\min}, \weight_{j} + \epsilon)$ % \setminus \{\weight_i: i<j\}$
where $\epsilon >0$ is sufficiently small, provided that we modify $X^{s,{\hat{U}}}_{\min+}$, $X^{s,{\hat{H}}}_{\min+}$  and $X^{ss,{\hat{H}}}_{\min+}$ appropriately, depending on $\chi$.
\end{rmk}

Having proved Theorem \ref{mainthm} we will modify the partial desingularisation construction of  \cite{K2} in the reductive case to prove the following analogue for linear algebraic groups with graded unipotent radical, which applies to any well adapted linear action of  $\hH$  for which the generic stabiliser  in $U$ of $x \in Z_{\min}$ is trivial. Recall that when $H $ is a linear algebraic group over $\kk$  then $H$ is said to have a graded unipotent radical $U$ if there is a semi-direct product $\hat{H} = H \rtimes\GG_m$  where $\GG_m$ acts by conjugation on the Lie algebra of $U$ with all weights strictly positive, and with trivial induced conjugation action on $R=H/U$.

\begin{thm}[Projective GIT completions for quotients by linear actions with non-empty $\hU$-stable loci by linear algebraic groups with graded unipotent radicals]  \label{mainthm2} 
Let $H=U \rtimes R$ be a linear algebraic group  over $\kk$ with externally graded unipotent radical
$U$, so that there is a semi-direct product $\hat{H} = H \rtimes \l(\GG_m) = U \rtimes (R \times \l(\GG_m))$ such that the adjoint action of the one-parameter subgroup $\l:\GG_m \to \hat{H}$  on the Lie algebra of $U$ has all weights strictly positive, and let $T$ be a maximal torus of the Levi subgroup   $R \cong H/U$ of $H$.
Suppose that $\hH$  acts linearly on an irreducible projective scheme $X$ with respect to a very ample line bundle $L$, and that $\stab_U(x) = \{e\}$ for generic $x \in Z_{\min}$. Then \\
%\begin{enumerate} \item
(i)  there is a sequence of blow-ups of $X$ along $\hat{H}$-invariant projective subschemes  resulting in a projective scheme $\hat{X}$ with a well adapted linear action of $\hat{H}$ (with respect to a power of an ample line bundle given by tensoring the pullback of $L$ with small multiples of the exceptional divisors for the blow-ups) which satisfies the  condition that \lq semistability coincides with stability' for the action of $U$ in the sense of (\ref{eq:sss}) above, so that Theorem \ref{mainthm} applies, giving us a projective geometric quotient
$$\hat{X} \env \hat{U} = \hat{X}^{s,\hU}_{\min+}/\hU$$
and a projective good quotient $\hat{X} \env \hH = %(\hat{X} \env \hat{U} ) \env R  = 
(\hat{X} \env \hat{U} ) /\!/ R$ of 
the $\hH$-invariant open subset $ \bigcap_{h \in H} h \, \hat{X}^{ss,\hat{T}}_{\min+} $ of $\hat{X}$; \\
(ii)  there is a sequence of further blow-ups along $\hat{H}$-invariant projective subschemes resulting in a projective scheme $\tilde{X}$ satisfying the same conditions as $\hat{X}$ and in addition the enveloping quotient $\tilde{X}\env \hat{H} $ is the geometric quotient by $\hH$ of  the  $\hH$-invariant open subset $\tilde{X}^{s,\hH}_{\min+}$;\\
(iii) the blow-down maps $\hat{\psi}:\hat{X} \to X$ and  $\tilde{\psi}:\tilde{X} \to X$ are isomorphisms over the open subscheme
$$ %X^{\hat{s},\hat{H}} 
\{ x \in X^0_{\min} \setminus UZ_{\min} \,\,  | \,\,  \stab_U(p(x)) = \{ e \} \mbox{ and } p(x) \in Z_{\min}^{s,R} \},$$
and this has a geometric quotient by $\hH$ which can be identified via $\hat{\psi}$ and $\tilde{\psi}$ with open subschemes of the projective schemes $\hat{X}\env \hat{H} $ and $\tilde{X}\env \hat{H} $.
% \item
%Both $\hat{X}\env \hat{H}$ and $\tilde{X}\env \hat{H}$ are projective completions of the geometric quotient by $\hH$ of the $\hH$-invariant open subset $$X^{s,\hH}_{\min+} =  \bigcap_{h \in H} h \, X^{s,T}_{\min+} $$ of $X$. % which can be identified via the blow-down map with the complement in $\tilde{X}^{s,\hH}_{\min+}$ of the exceptional divisors.%
%\end{enumerate}
\end{thm}

\begin{rmk}
Here the first step in the construction of $\hat{X}$ from $X$ is to blow $X$ up along the closure of the subscheme of $X^0_{\min}$ where $\dim\stab_U(x)$ is maximal for $x \in X^0_{\min}$; by repeating this step finitely many times we obtain $\hat{X}$ satisfying \eqref{eq:sss}. In order to construct $\tilde{X}\env \hH$ %from $\hat{X}$
 we apply the partial desingularisation construction of \cite{K2} to the induced linear action of $R\cong H/U \cong \hH/\hU$ on $\hat{X}\env \hU$.
\end{rmk}

\begin{rmk} \label{remnovember}
Under the hypotheses of Theorem \ref{mainthm2}, we can obtain an $H$-invariant open subscheme $X^{\hat{s},H}$ of $X$ with a geometric quotient $X^{\hat{s},H}/H$ and projective completion $X \wenv H = (X \times \PP^1) \env \hat{H}$ by applying this theorem to the action of $\hat{H}$  on $X \times \PP^1$. Here $\hat{H} = H \rtimes \l(\GG_m)$ acts diagonally on $X \times \PP^1$ with the given action on $X$ and multiplication by a character $\hat{H} \to \GG_m$ with kernel $H$ on $\PP^1$, and we use the linearisation of the action given by $\mathcal{L} \otimes \mathcal{O}_{\PP^1}(m)$ for $m>\!> 1$. %Since the action of $\GG_m$ on $\PP^1$ via the representation $$t \mapsto \left( \begin{array}{cc} 1 & 0 \\ 0 & t \end{array} \right) $$ multiplied by any sufficiently small strictly negative rational character satisfies $(\PP^1)^{ss,\GG_m} = (\PP^1)^{s,\GG_m} = \PP^1 \setminus\{0,\infty\}$

% gives us a projective scheme $(X \times \PP^1) \env \hU$ which is a geometric quotient by $\hat{U}$ of a $\hat{U}$-invariant open subset of $X \times \AL^1$ and contains as an open subset a geometric quotient of a $U$-invariant open subset $X^{\hat{s},U}$ of $X$ by $U$, where the subsets $X^{s,\hU} = X^{ss,\hU}$ and $X^{\hat{s},U}$ of $X$ can be described using Hilbert--Mumford criteria. Here $X \wenv H = (X \wenv U)/\!/R$ is the classical GIT quotient for the induced linear action of the reductive group $R \cong \hat{H}/\hU$ (for $\hU =U\rtimes \l(\GG_m)$) on $X\wenv U$, and  $\hat{H} = H \rtimes \l(\GG_m)$ acts diagonally on $X \times \AL^1$ with the given action on $X$ and multiplication by a character $\hat{H} \to \GG_m$ with kernel $H$ on $\AL^1$; the choice of linearisation is such that $X \wenv R = (X \times \AL^1)/\!/ (R \times \l(\GG_m))$ can be identified naturally with the classical GIT quotient $X /\!/R$.

\end{rmk}

\begin{rmk} \label{rmk0.6}
The conditions that $\stab_U(x) = \{e\}$ for generic $x \in Z_{\min}$ or for generic $x \in X$ are roughly  analogous to the requirement  studied in \cite{K2} that the stable set should be nonempty for a GIT quotient by a reductive group $G$. If there are semistable points but no stable points  in the reductive case, then the blow-up process of \cite{K2} can still be applied, but it will not terminate with $\tilde{X}$ such that $\tilde{X}^{ss} = \tilde{X}^s \neq \emptyset$.

Recall that in Mumford's terminology \cite{GIT} \lq stable' has a slightly weaker meaning than is now standard. For Mumford a point $x \in X$ is stable for a linear action of a reductive group $G$ with respect to an ample line bundle $L$  if there is an invariant section $\sigma \in H^0(X,L^{\otimes m})^G$ for some integer $m>0$ such that $x \in X_{\sigma} = \{ y \in X | \sigma(y) \neq 0\}$ and the action of $G$ on the affine open $X_{\sigma}$ is closed; a stable point in the modern sense (with the additional requirement that $\dim \stab_G(x) = 0$) was called \lq properly stable' by Mumford. If $X^{Ms}$ denotes the stable locus in Mumford's sense, then $X^{Ms}$ has a geometric quotient $X^{Ms}/G$. If $X$ is irreducible then $X^s \neq \emptyset$ implies that $X^{Ms} = X^s$, but it is possible to have $X^{Ms} \neq \emptyset = X^s$, and in this case the partial desingularisation construction of \cite{K2} terminates with $\tilde{X}$ satisfying $\tilde{X}^{ss} = \tilde{X}^{Ms}$ which has a geometric quotient $\tilde{X}/\!/G = X^{Ms}/G$.
Likewise (cf. Remark \ref{remweaken}; see Theorem \ref{mainthm3} below and $\S$\ref{section5}  for more details),  the condition that $\stab_U(x) = \{ e \}$ for generic $x \in Z_{\min}$  in Theorem \ref{mainthm2} can be weakened to $\stab_U(x) = \{e\}$ for generic $x \in X$, or dropped altogether, and we can still blow $X$ up to obtain projective schemes $\hat{X}$ and $\tilde{X}$ with enveloping quotients $\hat{X} \env \hH$ and $\tilde{X}\env \hH$ which are good and geometric quotients of open subsets of $\hat{X}$ and $\tilde{X}$.
\end{rmk}

\begin{rmk}
As was noted in Remark \ref{rmkER}, $Z_{\min}$ (respectively $Z_{\min}/\!/R$) is a good quotient for the action of $\hat{U}$ (respectively $\hat{H}$) on $X_{\min}^0$ (respectively $p^{-1}(Z_{\min}^{ss,R})$). These are far from being geometric quotients. %, or even good quotients in the sense of GIT. 
However the quotient stacks $[Z_{\min}/{U}]$ and $[(Z_{\min}/\!/R)/{U}]$ (where $\hat{U}$ acts trivially on $Z_{\min}$ and $Z_{\min}/\!/R$) satisfy the two conditions required for algebraic spaces to be good moduli spaces for the stacks 
$[X_{\min}^0/\hat{U}]$ and $[p^{-1}(Z_{\min}^{ss,R})/\hat{H}]$ in the sense of \cite{Alper, ER}, although they are of course not algebraic spaces. The constructions of $\hat{X}\env \hat{U}$ and $\tilde{X}\env \hat{H}$ can be regarded as the result of applying the Edidin--Rydh construction of canonical reduction of stabilisers to these quotient stacks \cite{HK}.
\end{rmk}

\begin{rmk}
We can apply the results of this paper even to the Nagata counterexamples, which are linear actions of unipotent groups $U$ on  projective space such that the corresponding $U$-invariants are not finitely generated \cite{Mukai}. In these examples the linear action extends to a linear action of  $\hat{U} = U \rtimes \GG_m$ where
 the action of $\GG_m$ by conjugation on the Lie algebra of $U$ has all its weights strictly positive, and $\stab_U(x) = \{e\}$ for generic $x \in Z_{\min}$, so Theorem \ref{mainthm2} applies. Thus the quotient $X \wenv U  = \widehat{X \times \PP^1} \env \hU$ gives us a projective completion of a geometric quotient by $U$ of a $U$-invariant open subset of the projective space which can be determined by Hilbert--Mumford-like criteria and the explicit blow-up construction. Here the subalgebra of $\bigoplus_{k \geq 0} H^{0}(X,L^{\otimes k})^U$ consisting of the $U$-invariants on the projective space $X$ which extend to $\hU$-invariants on $X \times \PP^1$ (for an appropriate choice of linearisation; see Remark \ref{remnovember}) is finitely generated, even though the algebra $\bigoplus_{k \geq 0} H^{0}(X,L^{\otimes k})^U$ itself is not finitely generated.
\end{rmk}

 Theorem \ref{mainthm} will follow from Theorem \ref{thm:ExCo1} below, combined with classical GIT, by quotienting in stages by the action of  $H=U \rtimes R$. We can first consider the action of  $\hU = U \rtimes \GG_m$ and then the reductive group $R/\lambda(\GG_m) = H/\hU$; however, in order to obtain a Hilbert--Mumford description for a suitable notion of (semi)stability for the $H$-action, in Theorem \ref{thm:ExCo1} we do not simply consider the action of $\hU$ but more generally a linear action of a semi-direct product $\hU_T = U \rtimes (T\times \l((\GG_m))$ where $T$ is  a torus and the one-parameter subgroup $\l: \GG_m \to \hU_T$  has adjoint action on the Lie algebra of $U$ with only strictly positive weights. We will apply this when $T$ is a maximal torus of the Levi subgroup $R \cong H/U$ of $H$.

%%% thm:ExCo1 %%%
\begin{thm} \label{thm:ExCo1}
%Let $X$ be an irreducible projective scheme acted upon by a unipotent group $U$ and let  $L \to X$ be a very ample linearisation. Suppose the linearisation extends to a linearisation of a semi-direct product $\hat{U}_T=U \rtimes T$ of $U$ where $T$ is a torus containing a one-parameter subbgroup $\GG_m$  whose adjoint action on the Lie algebra of $U$ has only strictly positive weights and which satisfies condition \eqref{eq:sss}. 
Assume the hypotheses of Theorem \ref{mainthm} are satisfied with $R=T$ a torus and $\hH = \hU_T = U \rtimes (T \times \l(\GG_m))$. Then
%%% itm:ExCo1
\begin{enumerate}
\item \label{itm:ExCo1-1} we have $X^0_{\min} \subseteq X^{\rms,U}$,
  with the restriction of the enveloping quotient map for $U \act L \to X$
  defining a geometric $U$-quotient $X^0_{\min} \to
  X^0_{\min}/U$;   
\end{enumerate}
%Suppose furthermore that $\chi_0$ is a rational character of $T$ (lifting to a rational character of $\hU_T$ with trivial restriction to $U$) such that $\chi_0|_{\GG_m} = \weight_{\min}$. Then there exists $\epsilon >0$ such that if  the $\hat{U}_T$-linearisation  $\hat{U}_T \act L \to X$ is twisted by a rational character $\chi$ satisfying  $|\!| \chi - \chi_0 |\!| < \epsilon$ and $\chi|_{\GG_m} > \weight_{\min}$, where $\weight_{\min}$ is the minimal weight for the $\GG_m$-action on $V=H^0(X,L)^*$, then
%%% itm:ExCo1 (cont) %%%
\begin{enumerate} \setcounter{enumi}{1}
\item \label{itm:ExCo1-2} there are equalities
  $X^0_{\min} \setminus (U Z_{\min}) =   X^{ss,\hU }= X^{s,\hU} = X^{s,\hU}_{\min+}$ for the action of $\hU = U \rtimes \l(\GG_m)$, and  $$X^{ss,\hU_T} = \bigcap_{u \in U} uX^{ss,\hat{T}}_{\min+}, \,\,\, \,\,\,\,\,\, X^{s,\hU_T} = \bigcap_{u \in U} uX^{s,\hat{T}}_{\min+};$$   
\item \label{itm:ExCo1-3} the enveloping quotient $X \env \hat{U}_T$ is a projective scheme and for suitably divisible
integers $c>0$ the algebra of invariants $\bigoplus_{k \geq 0} H^0(X,L^{\ten kc})^{\hat{U}_T}$
for the well adapted linearisation  
is finitely generated, with 
\[
X \env \hat{U}_T = \proj(\bigoplus_{k \geq 0} H^0(X,L^{\ten kc})^{\hat{U}_T});
\]
and 
\item \label{itm:ExCo1-4} the enveloping
quotient map $q_{\hU}:X^{\ssfg,\hU} \to X \env \hat{U}$  
is a geometric quotient for the $\hat{U}$-action on
$X^{\ssfg,\hU}$, while the enveloping
quotient map $q_{\hU_T}: X^{\ssfg,\hU_T} \to X \env \hat{U}_T$  
is a good quotient for the $\hat{U}_T$-action on
$X^{\ssfg,\hU_T}$, with $q_{\hU_T}(x) = q_{\hU_T}(y)$ for $x,y \in X^{\ssfg,\hU_T} $ if and only if the closures of the $\hU_T$-orbits of $x$ and $y$ meet in $X^{\ssfg,\hU_T} $.  
\end{enumerate}
\end{thm} 

\begin{rmk}
Under the hypotheses of Theorem \ref{thm:ExCo1},  $UZ_{\min}$ is a closed subscheme of the open subscheme $X^0_{\min}$ of $X$ (see $\S$\ref{usweepsection}), and $Z_{\min}$ is a geometric quotient for the action of $U$ (and of $\hU$) on this closed subscheme via the morphism $p: X^0_{\min} \to Z_{\min}$ defined at Definition \ref{def:ExPr2}. Thus $X^0_{\min}$ has a stratification into two strata $UZ_{\min}$ and $X^0_{\min} \setminus UZ_{\min}$, each of which has a projective geometric quotient for the action of $\hU$.
\end{rmk}

\begin{rmk}
The proof of Theorem \ref{thm:ExCo1}(1) will in fact show \emph{without} using the condition \eqref{eq:sss} that $ X^{\rms,U}$ contains the open subscheme
$$ \{ x \in X^0_{\min} | \stab_U(p(x)) = \{ e \} \}
$$ of $X^0_{\min}$.
\end{rmk}

\begin{rmk} \label{(...)} 
In order to cover the most general situation without any assumptions on the $U$-stabilisers, we use quotienting in stages (see $\S$5 for more details). We can always find subgroups
\begin{equation} \label{UUU}  U = U^{(0)} \geqslant U^{(1)} \geqslant \cdots \geqslant U^{(s)} = \{ e\} \end{equation}
of $U$ which are all normal in $\hH$, have abelian subquotients $U^{(j)}/U^{(j+1)}$ such that for each $j$ the adjoint action of $\l(\GG_m)$ on the Lie algebra of $U^{(j)}/U^{(j+1)}$ has a single weight space, and which satisfy the following property for natural numbers $r_1, \ldots, r_s$:

If $1 \leqslant j \leqslant s$ and $z\in X^{\l(\GG_m)}$ where $\l(\GG_m)$ acts on $L^*|_z$ with weight strictly less than $\weight_{r_j}$ then 
$$ U^{(j)} = U^{(j+1)} \stab_{U^{(j)}}(z),$$
whereas for generic $z\in X^{\l(\GG_m)}$ where $\l(\GG_m)$ acts on $L^*|_z$ with weight equal to $\weight_{r_j}$ then 
$$ \stab_{U^{(j)}}(z) \leqslant U^{(j+1)}.$$
\end{rmk} 

By applying Remark \ref{remweaken} and using induction on $s-j$ to quotient successively by $U^{(j)}/U^{(j+1)}$ after suitable blow-ups (see Remark \ref{remweaken5}), we obtain the following general result.

\begin{thm}[Projective GIT completions for quotients by linear algebraic group actions with graded unipotent radicals: general case]  \label{mainthm3} 
Let $H = U \rtimes R$ be a linear algebraic group over $\kk$ with externally graded unipotent radical $U$ and Levi subgroup $R$ having maximal torus $T$,
 and let $\hat{H} = H \rtimes \GG_m$ define the grading.
Suppose that $\hH$  acts linearly on an irreducible projective scheme $X$ with respect to a very ample line bundle $L$. %, and that $\stab_U(x) = \{e\}$ for generic $x \in X$. 
Then for any choice of subgroups
$$U = U^{(0)} \geqslant U^{(1)} \geqslant \cdots \geqslant U^{(s)} = \{ e\}$$
which are normal in $\hH$ and satisfy the conditions of \eqref{UUU} above, \\
%\begin{enumerate} \item
(i)  there is a sequence of blow-ups of $X$ along $\hat{H}$-invariant projective subschemes  resulting in a projective scheme $\hat{X}$ with a well adapted linear action of $\hat{H}$ (with respect to a power of an ample line bundle given by tensoring the pullback of $L$ with small multiples of the exceptional divisors for the blow-ups) which satisfies the  condition that \lq semistability coincides with Mumford stability and the Mumford stable locus is non-empty' for the action of $U$ in the sense of Remark \ref{remweaken}, so that the conclusions of Theorem \ref{mainthm} hold, giving us a projective geometric quotient
$$\hat{X} \env \hat{U} = \hat{X}^{s,\hU}_{\min+}/\hU$$
and a projective good quotient $\hat{X} \env \hH = %(\hat{X} \env \hat{U} ) \env R  = 
(\hat{X} \env \hat{U} ) /\!/ R$ of 
the $\hH$-invariant open subscheme $ \bigcap_{h \in H} h \, \hat{X}^{ss,T}_{\min+} $ of $\hat{X}$; \\
(ii)  there is a sequence of further blow-ups along $\hat{H}$-invariant projective subschemes resulting in a projective scheme $\tilde{X}$ satisfying the same conditions as $\hat{X}$ and in addition the enveloping quotient $\tilde{X}\env \hat{H} $ is the geometric quotient by $\hH$ of  the  $\hH$-invariant open subset $\tilde{X}^{s,\hH}_{\min+}$;\\
(iii) the blow-down maps $\hat{\psi}:\hat{X} \to X$ and  $\tilde{\psi}:\tilde{X} \to X$ are isomorphisms over an $\hH$-invariant open subscheme of
$ X^0_{\min} \setminus UZ_{\min} %\,\,  | \,\,  %\stab_U(p(x)) = \{ e \} \mbox{ and } p(x) \in Z_{\min}^{s,R} \}
$
with a geometric quotient by $\hH$ which can be identified via $\hat{\psi}$ and $\tilde{\psi}$ with open subschemes of the projective schemes $\hat{X}\env \hat{H} $ and $\tilde{X}\env \hat{H} $.
% \item
%Both $\hat{X}\env \hat{H}$ and $\tilde{X}\env \hat{H}$ are projective completions of the geometric quotient by $\hH$ of the $\hH$-invariant open subset $$X^{s,\hH}_{\min+} =  \bigcap_{h \in H} h \, X^{s,T}_{\min+} $$ of $X$. % which can be identified via the blow-down map with the complement in $\tilde{X}^{s,\hH}_{\min+}$ of the exceptional divisors.%
%\end{enumerate}
\end{thm}

By applying these results to an appropriate linear action of $\hH$ on $X \times \PP^1$ (see $\S$8 for details), we obtain an $H$-invariant open subscheme of $X$ (called $X^{\hat{s},H}$ when $\stab_U(z)$ is trivial for generic $z \in Z_{\min}$)  with a geometric quotient $X^{\hat{s},H}/H$ by $H$, which can be identified with an open subscheme of $(\widehat{X \times \PP^1})\env \hH$.

\subsection{Internally graded unipotent radical} \label{section2.2}
 Our results have been stated in $\S$\ref{section2.1} in the externally graded situation when a linear action of $H=U \rtimes R$ extends to $$\hat{H} = H \rtimes \l(\GG_m) = U \rtimes (R \times \lambda(\GG_m))$$ and $\l :\GG_m \to \hH$ grades the unipotent radical $U$ of $H$. We can also consider the internally graded situation, when the Levi subgroup $R$ of $H$ has a central one-parameter subgroup $\l_{int}:\GG_m \to Z(R)$ which grades the unipotent radical $U$. Then $H$ has  an external grading with associated semi-direct product 
$$\hat{H} = (U \rtimes R) \times \GG_m = U \rtimes (R \times \l_{ext} (\GG_m))$$
where $\l_{ext} : \GG_m \to Z(R \times \GG_m) = Z(R) \times \GG_m \leqslant (U \rtimes R) \times \GG_m$ is given by $\l_{ext}(t) = (\l_{int}(t),t)$. Conversely, given an external grading of the unipotent radical of $H$ with associated semi-direct product $\hat{H} = H \rtimes \l(\GG_m)$, the one-parameter subgroup $\l: \GG_m \to \hat{H}$ provides an internal grading for $U$ regarded as the unipotent radical of $\hat{H}$. 

In the internally graded situation we have
$$\widehat{X \times \PP^1} \env \hH \cong \widehat{X \times \PP^1} \env H \times \GG_m \cong \hat{X} \env H$$
for suitable choices of linearisations, so the results of $\S$\ref{section2.1} can be re-interpreted as results about quotients by linear algebraic groups with internally graded unipotent radicals.

Alternatively, by replacing $H$ and $R$ with finite covers, we can assume that $R$ has the form $R = R' \times \lambda_{int}(\GG_m)$. Then $H = \widehat{H'}$ where $H' = U \rtimes R'$, and so the results of $\S$\ref{section2.1} apply directly to quotients by $H$.

\section{Non-reductive geometric invariant theory} \label{section2}

Let $X$ be an irreducible projective scheme over an algebraically closed field $\kk$ of characteristic 0 and let $H$ be a  linear algebraic group acting on $X$ with  an ample  {linearisation} $\mathcal{L}$ of the action; that is, an ample
line bundle $L$ on $X$ and a lift %$\mathcal{L}$
 of the action to $L$.

\begin{defn} \label{defn:s/ssred}
When $H=G$ is reductive then $y \in X$ is {\em
semistable} (and we will write $y \in X^{ss}$) for this linear action if there exists some $m > 0$
and $f \in H^0(X, L^{\otimes m})^G$ not vanishing at $y$, and $y$ is {\em stable} ($y \in X^s$) if also the action of $G$
on $X_f$ is closed with all stabilisers finite. Note that $y$ is stable in the original sense of Mumford (and we will write $y \in X^{Ms}$) if the action of $G$ on $X_f$ is closed but the stabilisers are not necessarily finite.

The subalgebra $\bigoplus_{k \geq 0} H^0(X, L^{\otimes k})^G $ of the
homogeneous coordinate ring 
$${\hat{\calo}}_L(X) = \bigoplus_{k \geq 0} H^0(X, L^{\otimes k}) $$
of $X$ is finitely generated graded 
because $G$ is reductive, and
 the GIT quotient $X /\!/ G$ is the projective scheme  $\Proj({\hat{\calo}}_L(X)^G)$. 
\end{defn}

The open subschemes $X^{ss}$ and $X^s$ of $X$ for a linear action of a reductive group $G$ are characterised by the following properties (see
\cite[Chapter 2]{GIT} or \cite{New}). 

\begin{propn} (Hilbert-Mumford criteria for reductive group actions)
\label{sss} 

\noindent (i) A point $x \in X$ is semistable (respectively
stable) for the action of $G$ on $X$ if and only if for every
$g\in G$ the point $gx$ is semistable (respectively
stable) for the action of a fixed maximal torus of $G$.

\noindent (ii) A point $x \in X$ with homogeneous coordinates $[x_0:\ldots:x_n]$
in some coordinate system on $\PP^n$
is semistable (respectively stable) for the action of a maximal 
torus of $G$ acting diagonally on $\PP^n$ with
weights $\a_0, \ldots, \a_n$ if and only if the convex hull
$$\conv \{\a_i :x_i \neq 0\}$$
contains $0$ (respectively contains $0$ in its interior).
\end{propn}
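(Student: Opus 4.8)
The plan is to deduce the proposition from Mumford's numerical criterion for (semi)stability. Using $L$ (or a suitable power) to embed $X$ equivariantly in $\PP(V)$ for a rational representation $V$ of $G$ with $L$ the restriction of $\OO_{\PP(V)}(1)$, recall the Hilbert--Mumford function: for a one-parameter subgroup $\lambda \colon \GG_m \to G$ and $x \in X$ with a nonzero lift $\hat x \in V$, decompose $\hat x = \sum_r \hat x_r$ into $\lambda$-weight spaces and set $\mu^{\mathcal{L}}(x,\lambda) = -\min\{ r : \hat x_r \neq 0\}$. The numerical criterion (\cite[Chapter 2]{GIT}, \cite{New}) states that $x \in X^{ss}$ if and only if $\mu^{\mathcal{L}}(x,\lambda) \geq 0$ for every one-parameter subgroup $\lambda$ of $G$, and $x \in X^{s}$ (in the sense of Definition \ref{defn:s/ssred}, so with finite stabiliser) if and only if in addition $\mu^{\mathcal{L}}(x,\lambda) > 0$ for every nontrivial $\lambda$. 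I would take this as the substantive input; given it, both parts become routine.

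For (i), the key point is the identity $\mu^{\mathcal{L}}(gx,\lambda) = \mu^{\mathcal{L}}(x, g^{-1}\lambda g)$, which is immediate from the definition after replacing the lift $\hat x$ by $g^{-1}\hat x$ and using $\lambda(t)g = g\,(g^{-1}\lambda g)(t)$. Now fix a maximal torus $T$ of $G$. Every one-parameter subgroup of $G$ lies in some maximal torus, and all maximal tori of $G$ are conjugate, so the one-parameter subgroups of $G$ are exactly the $g\lambda g^{-1}$ with $g \in G$ and $\lambda$ a one-parameter subgroup of $T$. Hence the condition ``$\mu^{\mathcal{L}}(x,\lambda) \geq 0$ for all one-parameter subgroups $\lambda$ of $G$'' is equivalent to ``$\mu^{\mathcal{L}}(gx,\lambda) \geq 0$ for all $g \in G$ and all one-parameter subgroups $\lambda$ of $T$'', which by the numerical criterion applied to the reductive group $T$ says exactly that $gx$ is semistable for $T$ for every $g \in G$. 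The same argument, with the strict inequalities and (for the finite-stabiliser clause, which is incorporated into $X^{s,T}$) the observation that $\stab_G(x)$ is finite precisely when $\stab_T(gx)$ is finite for a suitable $g$, gives the statement for stable points.

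For (ii), let $T$ act diagonally on $\PP^n$ with character lattice $M = \Hom(T,\GG_m)$, cocharacter lattice $N = \Hom(\GG_m,T)$ and weights $\alpha_0,\ldots,\alpha_n \in M$, so that a one-parameter subgroup $\lambda \in N$ acts on the lift $(x_0,\ldots,x_n)$ of $x$ by $t \mapsto (t^{\langle\alpha_0,\lambda\rangle}x_0,\ldots,t^{\langle\alpha_n,\lambda\rangle}x_n)$, whence $\mu^{\mathcal{L}}(x,\lambda) = -\min\{\langle\alpha_i,\lambda\rangle : x_i \neq 0\}$. Thus $\mu^{\mathcal{L}}(x,\lambda)\geq 0$ for all $\lambda \in N$ if and only if there is no $\lambda \in N$ with $\langle\alpha_i,\lambda\rangle > 0$ for every $i$ with $x_i\neq 0$; extending scalars to $\QQ$ and clearing denominators, and applying the separating hyperplane theorem (equivalently Farkas' lemma), this is equivalent to $0 \in \conv\{\alpha_i : x_i \neq 0\}$. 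In the same way $\mu^{\mathcal{L}}(x,\lambda) > 0$ for every nonzero $\lambda \in N$ is equivalent to $0$ lying in the interior of $\conv\{\alpha_i : x_i \neq 0\}$ in $M\otimes\RR$, and in that case the affine span of $\{\alpha_i : x_i \neq 0\}$ is all of $M\otimes\RR$, so $\stab_T(x)$ is finite and the stable locus is correctly described.

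The main obstacle is the numerical criterion itself, which is where reductivity of $G$ is genuinely used: the nontrivial direction asserts that if $0$ lies in the closure of the affine-cone orbit $G\cdot\hat x$ (the obstruction to semistability), or an orbit fails to be closed in $X_f$ (the obstruction to stability), then this is already witnessed by a single one-parameter subgroup. This rests on the Cartan--Iwahori decomposition of $G$ over the field $\kk((t))$ --- equivalently on a valuative-criterion argument over $\kk[[t]]$ --- together with an analysis of closed orbits in affine $G$-varieties; we take it from \cite[Chapter 2]{GIT}, and the remaining steps above are formal.
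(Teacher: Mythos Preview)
The paper does not prove this proposition; it is stated as a standard result with a citation to \cite[Chapter 2]{GIT} and \cite{New}, and no argument is given. Your proposal is the standard proof one finds in those references: reduce to the numerical criterion $\mu^{\mathcal{L}}(x,\lambda)\gtrless 0$, use the conjugacy identity $\mu^{\mathcal{L}}(gx,\lambda)=\mu^{\mathcal{L}}(x,g^{-1}\lambda g)$ together with conjugacy of maximal tori for (i), and a separating-hyperplane argument for (ii). This is correct and matches the cited sources; one small remark is that in (i) the finite-stabiliser clause need not be handled separately, since for both $G$ and $T$ the strict numerical inequality already forces it, so your parenthetical aside can be dropped.
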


\begin{rmk} \label{remMumf}
When $X$ is quasi-projective but not projective, then we define $X^s$, $X^{Ms}$ and $X^{ss}$ as in Definition \ref{defn:s/ssred} above with the extra requirement that $X_f$ should be affine, which is automatically satisfied when $X$ is projective (cf. \cite{GIT} Definition 1.7).

\end{rmk}

Now let $H$ be any linear algebraic 
group, with unipotent radical $U$, acting linearly on a complex projective scheme $X$ with respect to an ample line bundle $L$. Then $\bigoplus_{k \geq 0} H^0(X, L^{\otimes k})^H$
is not necessarily finitely generated as a graded algebra, so it is not obvious how to extend GIT to this situation. One approach, adopted in \cite{DK}, is to reduce to studying the linear action of the unipotent radical $U$ on $X$; if we can find a sufficiently natural quotient for this $U$-action such that it inherits an induced linear action of the reductive group $R=H/U$ with respect to an induced ample linearisation, then we can apply classical GIT to this $R$-action to obtain a quotient for the original $H$-action on $X$. 

\begin{defn} (See \cite{DK} $\S$4). \label{defnssetc}
Let $I = \bigcup_{m>0} H^0(X,L^{\otimes m})^U$
and for $f \in I$ let $X_f$ be the $U$-invariant affine open subset
of $X$ where $f$ does not vanish, with ${\calo}(X_f)$ its coordinate ring. 
A point $x \in X$ is called {\em naively semistable} if %the rational map $q$ from
%$X$ to $ \Proj({\hat{\calo}}_L(X)^U)$ is
%well-defined at $x$; that is, if
 there exists some $f \in I$
which does not vanish at $x$, and the set of naively semistable points 
is denoted $X^{nss}= \bigcup_{f \in I} X_f$.
The {\em finitely generated semistable set} of $X$ is  $X^{ss,
fg} =  \bigcup_{f \in I^{fg}} X_f$ where
$$I^{fg} = \{f
\in I \ | \ {\calo}(X_f)^U
\mbox{ is finitely generated }   \}.$$
The set of {\em naively stable}
points of $X$ is
     $X^{ns} = \bigcup_{f \in I^{ns}} X_f$ where
$$I^{ns} = \{f
\in I^{fg} \ | \  %{\calo}(X_f)^U
%\mbox{ is finitely generated, and }$$
  q: X_f \longrightarrow
\Spec({\calo}(X_f)^U) \mbox{ is a geometric quotient} \},$$
and the set of {\em locally trivial stable} points is $ X^{lts} =
\bigcup_{f \in I^{lts} } X_f$ where
\begin{eqnarray*} I^{lts}\ \  =\ \  \{f
\in I^{fg}  \ | \  %{\calo}(X_f)^U
%\mbox{ is finitely generated, and \ \ \ \ \ \ } \\
    q: X_f \longrightarrow \Spec({\calo}(X_f)^U) \mbox{ is a locally trivial
geometric quotient} \}. \end{eqnarray*}
%\end{defn}
%\begin{defn}
\label{defn:envelopquot}
%Let $q: X^{ss, fg} \rightarrow \Proj({\hat{\calo}}_L(X)^U)$ be the natural
%morphism of schemes.  
The {\em enveloped quotient} of
$X^{ss,fg}$ is $q: X^{ss, fg} \rightarrow q(X^{ss,fg})$, where
$q: X^{ss, fg} \rightarrow \Proj({\hat{\calo}}_L(X)^U)$ is the natural
morphism of schemes and
$q(X^{ss,fg})$ is a dense constructible subset of the {\em
enveloping quotient}
$$X \env  U = \bigcup_{f \in I^{ss,fg}}
\Spec({\calo}(X_f)^U)$$ of $X^{ss, fg}$. 
\end{defn}

\begin{rmk}
We call 
%\begin{defn}\label{defn:main} {\rm (\cite{DK} 5.3.7).}
%Let $X$ be a projective variety
%equipped with a linear $U$-action. A 
a point $x \in X$ %will be called
{\em stable} for the linear $U$-action if $x \in X^{lts}$ and {\em semistable} if $x
\in X^{ss, fg}$, writing $X^s$ (or $X^{s,U}$ %or $X^{s(U)}$
 or $X^{s,U,L}$ or $X^{s,U,\mathcal{L}}$ or similar) for $X^{lts}$,
and  $X^{ss}$ (or $X^{ss,U}$ etc.) for $X^{ss,fg}$ (cf.  Theorem \ref{thm:main} below and \cite{DK} 5.3.7).
%\end{defn}

\end{rmk}

\begin{rmk} Here $q(X^{ss})$ is a constructible subset  of $X \env U $ but 
not necessarily a subscheme (cf. \cite{DK} $\S$6). However $q(X^{s})$ is an open subscheme of $X \env U$ and $q(X^{s}) = X^s/U$ is a geometric quotient of $X^s$ by $U$.
\end{rmk}

\begin{rmk} 
If ${\hat{\calo}}_L(X)^U$ is finitely generated then $X\env U$ is the projective
scheme $\Proj({\hat{\calo}}_L(X)^U)$.
\end{rmk}

\begin{rmk}    \label{remclaim} The enveloping quotient 
$X \env  U$ has quasi-projective open subschemes (\lq inner enveloping quotients' $X \inenv H$) which contain the enveloped quotient $q(X^{ss})$  and have ample line bundles pulling back to  positive tensor powers of $L$
under the natural map $q:X^{ss} \to X\env U$ (see \cite{BDHK} for details, and note that the justification for the claim in \cite{DK} that the enveloping quotient $X \env U$ is itself quasi-projective is incorrect; cf. \cite{BDHK} Remark 2.3.4). When  ${\hat{\calo}}_L(X)^U$ is finitely generated then the enveloping quotient $X\env U$ is the unique inner enveloping quotient.
\end{rmk}

Even when ${\hat{\calo}}_L(X)^U$ is finitely generated, the quotient map $q:X^{ss} \to X\env U$ is not in general surjective, and it is not immediately clear how to study the geometry of the enveloping quotient $X \env U$. One way to do this, developed in \cite{DK}, is via the concept of a reductive envelope, given in Definition \ref{defn:envelope} below.
For this, suppose that $G$ is a complex reductive group with the unipotent group $U$
as a closed subgroup,  let $G \times^U X$ denote the quotient of $G \times X$
by the free action of $U$ defined by $u(g,x)=(g u^{-1}, ux)$ for $u \in U$; this is a quasi-projective scheme by \cite{PopVin} Theorem 4.19. There
is an induced $G$-action on $G \times^U X$ given by left
multiplication of $G$ on itself, and if the action of $U$ on $X$ extends to an action of $G$ there is an isomorphism of
$G$-schemes 
\begin{equation*} \label{9Febiso} G \times^U X \cong (G/U) \times X, \quad  [g,x] \mapsto (gU, gx). 
\end{equation*}
If $U$ acts linearly on $X$ with respect to a very ample line bundle $L$ 
and linearisation $\mathcal{L}$ inducing a $U$-equivariant
embedding of $X$ in $\PP^n$, and if $G$ is a reductive subgroup of $SL(n+1; \CC)$,
then the inclusion
$$ G \times^U X \hookrightarrow G \times^U \mathbb{P}^n  \cong
(G/U) \times \mathbb{P}^n,
$$ and the trivial bundle on the quasi-affine scheme
$G/U$ induce
 a very ample $G$-linearisation (which we also denote by $L$)
 on $G \times^U X$, such that 
\begin{equation*} \label{name}  \bigoplus_{m \geq 0} H^0(G \times^U X, L^{\otimes m})^G \cong
\bigoplus_{m \geq 0} H^0(X, L^{\otimes m})^U = {\hat{\calo}}_L(X)^U.\end{equation*}

Note that if a linear algebraic group $H = U \rtimes R$ with unipotent radical $U$ is a subgroup of a reductive group $G$ then there is an induced right action of $R$ on $G/U$ which commutes with the left action of $G$. 
Similarly if $H$ acts on a projective scheme $X$ then there is an induced action of $G\times R$ on $G\times_{U}X$ with an induced $G\times R$-linearisation. The same is true if we replace the requirement that $H$ is a subgroup of $G$ with the existence of a group homomorphism $H\to G$ whose restriction to $U$ is injective.

\begin{defn} \label{def:GiSt1.3}
A group homomorphism $H \to G$ from a linear algebraic group $H$ with unipotent radical $U$ to a reductive group $G$ will be called $U$-faithful if its restriction to $U$ is injective.  
\end{defn} 

Note also that it is crucial here that $G/U$ is quasi-affine so that the trivial line bundle on $G/U$ is ample; this is true for any unipotent closed subgroup $U$ of the reductive group $G$, but is not true for an arbitrary closed subgroup $H$ of $G$.

\begin{defn} (See \cite{DK} $\S$5).
The sets of {\em Mumford stable points} and
{\em Mumford semistable points} for the action of $U$ on $X$ are $X^{ms} = i^{-1}((G \times^U
X)^s)$ and $X^{mss} = i^{-1}((G \times^U
X)^{ss})$
where $i: X \rightarrow G \times^U X$ is the inclusion given by
$x \mapsto [e,x]$ for $e$ the identity element of $G$.
 Here $(G \times^U X)^s$ and $(G \times^U X)^{ss}$ are
defined as in Remark \ref{remMumf}  for the induced linear
action of $G$ on the quasi-projective scheme $G \times^U X$.
\end{defn}

\begin{rmk} \label{remMumf2}
It is claimed in \cite{DK} that  $X^{ms}$ and $X^{mss}$ are equal and independent of the choice of $G$, but unfortunately there is an error in the proof of \cite{DK} Lemma 5.1.7 that $X^{mss} = X^{ms}$, which is related to the difference between stability in Mumford's original sense and in the modern sense (see Definition \ref{defn:s/ssred} above). Let $X^{Ms} = i^{-1}((G \times^U
X)^{Ms})$
where $i: X \rightarrow G \times^U X$ is the inclusion given by
$x \mapsto [e,x]$ for $e$ the identity element of $G$ and $(G \times^U
X)^{Ms}$ is defined as in Remark \ref{remMumf} for the induced  linear
action of $G$ on the quasi-projective scheme $G \times^U X$. Then $X^s \subseteq X^{Ms}$; moreover if $X$ is irreducible and $G$ is connected so that $G \times^U X$ is irreducible,  then $X^s \neq \emptyset$ implies $X^{Ms} = X^s$. However it can happen that $X^s = \emptyset \neq X^{Ms}$. 

The proof of \cite{DK} Lemma 5.1.7 shows in fact that $X^{mss} = X^{Ms}$. Proposition 5.1.9 of \cite{DK} proves that $X^{mss}$ is independent of the choice of $G$, while \cite{DK} Proposition 5.1.10 proves that $X^{ms} = X^s$ and therefore is independent of the choice of $G$; in neither case is \cite{DK} Lemma 5.1.7 used. Thus we can still deduce that $X^{ms}$ and $X^{mss} = X^{Ms}$ are independent of the choice of $G$.
\end{rmk}

\begin{defn} 
\label{defn:separ}
 A {\em finite separating
set of invariant sections of positive tensor powers of $L$} for the linear action
of $U$ on $X$ is a collection of invariant sections $S=\{f_1,
\ldots, f_n \}$ of positive tensor powers of $L$ such that $X^{\nss}=\bigcup_{f \in S} X_f$ and the set $S$ is
  \emph{separating} in the following sense: whenever $x,y
  \in X^{\nss}$ are distinct points and there exist 
$U$-invariant sections $g_0,g_1 \in H^0(X,L^{\ten r})^U$ (for some $r>0$) such that both $[g_0(x):g_1(x)]$ and $[g_0(y):g_1(y)]$ are defined and distinct points of $\PP^{1}$, then there are sections $f_0,f_1 \in S$ of some common
tensor power of $L$ such that $[f_0(x):f_1(x)]$ and $[f_0(y):f_1(y)]$ are defined and distinct points of $\PP^1$.

 If $G$ is any reductive group containing
$U$, a finite separating set $S$ 
of invariant sections of positive tensor powers of $L$ is a {\em
finite fully separating set of invariants} for the linear $U$-action
on $X$ if

(i) for every $x \in X^{ms}$ there exists $f \in S$ with associated
$G$-invariant $F$ over $G \times^U X$ (under the isomorphism
(\ref{name})) such that $x \in (G \times^U
X)_{F}$ and $(G \times^U X)_{F}$ is affine; and

(ii) for every $x \in X^{ss}$ there exists $f \in S$ such that $x
\in X_f$ and ${\calo}(X_f)^U\cong \kk[S]_{(f)}$ (where $\kk[S]$ is the graded subalgebra of
$\oplus_{k \geq 0} H^0(X,L^{\ten k})^U$ generated by $S$).
\end{defn}

This
definition is also independent of the choice of $G$ by \cite{DK} Remark 5.2.3.

\begin{defn}\label{defn:envelope} (See \cite{DK} $\S$5).
Let $X$ be a quasi-projective scheme with a linear $U$-action with
respect to an ample line bundle $L$ on $X$, and let $G$ be a complex reductive group
containing $U$ as a closed subgroup. A
$G$-equivariant projective completion $\overline{G \times^U X}$ of
$G \times^U X$, together with a $G$-linearisation $\mathcal{L}'$  with respect to
a line bundle $L'$ which
restricts to the given $U$-linearisation on $X$,
is a
{\em reductive envelope} of the linear $U$-action on $X$ if every
$U$-invariant $f$ in some finite fully separating set of invariants
$S$ for the $U$-action on $X$  extends to a $G$-invariant section of a tensor
power of $L'$ over $\overline{G \times^U X}$.
If moreover there exists such an $S$ for which every
$f\in S$ extends to a $G$-invariant section $F$ over $\overline{G
\times^U X}$ such that $(\overline{G \times^U X})_F$ is affine, then
 $(\overline{G \times^U X}, L')$ is a {\em fine reductive
envelope}, and if $L'$ is ample (in which case
$(\overline{G \times^U X})_F$ is always affine) it is an {\em ample
reductive envelope}.
If every $f \in S$
extends to a $G$-invariant $F$ over $\overline{G \times^U X}$ which
vanishes on each 
 codimension 1 component of the boundary of $G
\times^U X$ in $\overline{G \times^U X}$, then 
a reductive envelope for the linear
$U$-action on $X$ is called a {\em strong} reductive envelope.
\end{defn}

%It will be useful to add an extra definition which does not
%appear in \cite{DK}.

%\begin{defn} \label{defn:stablyfine}
%In the notation of Definitions \ref{defn:separ} and
%\ref{defn:envelope} above, a reductive envelope is called {\em stably fine} if for
%every $x \in X^{ms}$ there exists a $U$-invariant $f$ which extends
%to a $G$-invariant section $F$ over $\overline{G \times^U X}$ such that 
%$x \in (G \times^U X)_F$ and both $(G \times^U X)_F$ and $(\overline{G \times^U X})_F$
%are affine.
%\end{defn}

\begin{rmk}
In order to find projective completions of quotients of open subschemes of $X$ by the action of $H$, we should consider reductive envelopes $\overline{G
\times^U X}$ which are $G \times R$-equivariant projective completions of ${G
\times^U X}$ equipped with $G\times R$-linearisations restricting to the given linearisation on $X$.
\end{rmk}

\begin{defn}\label{defn:s/ssbar} (See \cite{DK} $\S$5 and \cite{KPEN} $\S$3).
Let $X$ be a
projective scheme with a linear $U$-action and a reductive envelope
$(\overline{G \times^U X},\mathcal{L}')$.  The set of {\em completely stable points} of $X$ with
respect to the reductive envelope is $$X^{\overline{s}} = (j
\circ i)^{-1}(\overline{G \times^U X}^s)$$ and the set of {\em
completely semistable points} is $$X^{\overline{ss}} = (j \circ
i)^{-1}(\overline{G \times^U X}^{ss}),$$
where
 $i: X \hookrightarrow G
\times^U X$ and $j: G \times^U X \hookrightarrow \overline{G
\times^U X}$ are the inclusions, and $\overline{G \times^U X}^{s}$
and $\overline{G \times^U X}^{ss}$ are the stable and semistable sets for
the linear $G$-action on $\overline{G \times^U X}$ with respect to the linearisation $\mathcal{L}'$. %Following Remark
%\ref{reductiveenvquot} we also define
Let $$X^{\overline{nss}} = (j \circ
i)^{-1}(\overline{G \times^U X}^{nss})$$
where $y \in \overline{G \times^U X}$ 
belongs to the subset $\overline{G \times^U X}^{nss}$ of {\em naively
semistable} points for the linear action of $G$) if there exists some $m > 0$
and $f \in H^0(X, L^{\otimes m})^G$ not vanishing at $y$.
\end{defn}

Note that $X^{\overline{nss}} = X^{\overline{ss}}$ provided that the reductive envelope
is ample.

The following result combines \cite{DK} Theorems 5.3.1 and 5.3.5, corrected as at Remark \ref{remMumf2}.

\begin{thm}\label{thm:main} 
Let $X$ be a normal projective scheme with a linear $U$-action, for $U$ a
connected unipotent group, and let $(\overline{G \times^U X},L)$ be
any fine reductive envelope. Then
$$
X^{\overline{s}}  \subseteq  X^{s} = X^{ms} \subseteq X^{Ms} = X^{mss}
\subseteq  X^{ns}   \subseteq  X^{ss}  \subseteq X^{\overline{ss}} = X^{nss}
\subseteq X^{\overline{nss}}.
$$
The stable sets $X^{\overline{s}}$,
$X^{s} = X^{ms}$, $X^{Ms} = X^{mss}$ and $X^{ns}$ admit quasi-projective
geometric quotients, given by restrictions of the quotient map $q =
\pi \circ j \circ i$
where
$$\pi: (\overline{G \times^U X})^{ss} \to
\overline{G \times^U X}/\!/G$$
is the classical GIT quotient map for
the reductive envelope and $i,j$ are as in Definition \ref{defn:s/ssbar}.
 The quotient map $q $
restricted to the open subscheme $X^{ss}$ is an enveloped
quotient, and there is an open subscheme 
  $X \inenv  U$ of $\overline{G \times^U X}/\!/G$ containing $q(X^{ss})$ which is an inner enveloping quotient of $X$ by the linear action of $U$. Moreover there is an ample
line bundle $L_U$ on $X\inenv U$ which pulls back to a tensor power
$L^{\otimes k}$ of the line bundle $L$ for some $k>0$
and extends to an ample line bundle on 
$\overline{G \times^U X}/\!/G$.

If moreover $\overline{G \times^U X}$ is normal and  provides a fine strong reductive envelope for the linear
$U$-action on $X$, then $X^{\overline{s}} = X^{s}$ and $X^{ss} =
X^{nss}$.
\end{thm}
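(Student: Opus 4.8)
The plan is to package the classical GIT theory of the reductive envelope $\overline{G \times^U X}$ together with the comparison results of \cite{DK} $\S 5$ (as corrected in Remark \ref{remMumf2}) into the stated chain of inclusions and quotient statements. First I would establish the inclusions going outward from the middle. The equalities $X^s = X^{ms}$ and $X^{Ms} = X^{mss}$ are precisely the content of Remark \ref{remMumf2}: \cite{DK} Proposition 5.1.10 gives $X^{ms} = X^s$, and the proof of \cite{DK} Lemma 5.1.7 actually shows $X^{mss} = X^{Ms}$ rather than $X^{mss} = X^{ms}$. The inclusion $X^s \subseteq X^{Ms}$ is immediate from the definitions (a properly stable point is stable in Mumford's sense). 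For $X^{Ms} = X^{mss} \subseteq X^{ns} \subseteq X^{ss}$ one uses that a point in $X^{mss}$ lies in $(G \times^U X)_F$ for a $G$-invariant $F$ whose $G$-orbit space restricted there is a geometric quotient; restricting $F$ to $X = i(X)$ and invoking the isomorphism $\calo(X_f)^U \cong \calo((G\times^U X)_F)^G$ (from the identification (\ref{name})) produces an $f \in I^{ns}$, hence $x \in X^{ns}$; and $I^{ns} \subseteq I^{fg}$ forces $X^{ns} \subseteq X^{ss}$. For the outermost part, $X^{ss} = X^{\overline{ss}} = X^{nss}$ and $X^{\overline{ss}} \subseteq X^{\overline{nss}}$: here the equality $X^{\overline{nss}} = X^{\overline{ss}}$ for an ample envelope is noted in the text right after Definition \ref{defn:s/ssbar}, and $X^{ss} = X^{nss}$ together with $X^{nss} = X^{\overline{ss}}$ follow from the \emph{fineness} of the envelope, which guarantees that each element of a fully separating set $S$ extends to a $G$-invariant $F$ with $(\overline{G \times^U X})_F$ affine, so that naive semistability on $X$ is detected by genuine semistability upstairs; the innermost inclusion $X^{\overline{s}} \subseteq X^s$ is then the trivial direction (completely stable implies Mumford stable implies stable).

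Next I would address the quotient statements. For each of $X^{\overline{s}}$, $X^s = X^{ms}$, $X^{Ms} = X^{mss}$, $X^{ns}$, one has a $U$-invariant affine (or quasi-affine) open cover by sets of the form $X_f$ on which $\calo(X_f)^U$ is finitely generated and $X_f \to \spec \calo(X_f)^U$ is a geometric quotient; these local geometric quotients glue because the transition data is compatible (the sections $f$ come from a common fully separating set), producing the quasi-projective geometric quotient realised as the restriction of $q = \pi \circ j \circ i$. That $q|_{X^{ss}}$ is an enveloped quotient is essentially the definition, once one checks $q(X^{ss})$ is dense constructible in its image. The inner enveloping quotient $X \inenv U$ and the ample line bundle $L_U$ on it pulling back to $L^{\otimes k}$ and extending to $\overline{G \times^U X}/\!/G$ come from \cite{BDHK} (as flagged in Remark \ref{remclaim}): one takes $X \inenv U$ to be the image in $\overline{G\times^U X}/\!/ G$ of the $G$-semistable locus meeting $G\times^U X$, and the polarisation is induced by the ample $G$-linearisation on the envelope.

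Finally, for the last paragraph (the \emph{strong} envelope case, where one concludes $X^{\overline{s}} = X^s$ and $X^{ss} = X^{nss}$), the point is that strongness forces each separating invariant $F$ to vanish on every codimension-one component of the boundary $\overline{G\times^U X} \setminus (G \times^U X)$. This rules out the phenomenon whereby a point of $G \times^U X$ becomes unstable (or semistable-but-not-stable) purely because its orbit closure runs into the boundary divisor: normality of $\overline{G \times^U X}$ plus vanishing on the boundary divisors lets one apply a valuative/Hilbert--Mumford argument (this is the heart of \cite{DK} 5.3.5 and also appears in \cite{KPEN} $\S 3$) to show that the semistability and stability computed in $\overline{G\times^U X}$ agree with the intrinsic $U$-notions on $X$, collapsing the chain on both ends.

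\medskip

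I expect the main obstacle to be the strong-envelope step: controlling what happens along the boundary of $\overline{G \times^U X}$. One must carefully track, via one-parameter subgroups of $G$, that a limit point in the boundary cannot create spurious (semi)stable or unstable behaviour once the separating invariants are known to vanish on the codimension-one boundary components; this is where normality of $\overline{G \times^U X}$ is genuinely used (to reduce to codimension one, via Hartogs/valuative considerations) and where the correction of Remark \ref{remMumf2} — keeping scrupulous track of the distinction between $X^s$, $X^{Ms}$ and the Mumford (semi)stable sets — must be applied consistently, since conflating them is exactly the error in \cite{DK} Lemma 5.1.7.
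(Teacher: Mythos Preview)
The paper does not give a proof of this theorem at all: it is stated as a quotation of \cite{DK} 5.3.1 and 5.3.5, with the correction recorded in Remark \ref{remMumf2}. Your proposal is effectively an attempt to unpack what those cited results say, and in broad outline it matches their content, so there is no divergence of approach to discuss.

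That said, there are some inaccuracies in your sketch of the outermost inclusions that you should fix before calling it a proof. You write ``$X^{ss} = X^{\overline{ss}} = X^{nss}$'' and invoke the observation after Definition \ref{defn:s/ssbar} that $X^{\overline{nss}} = X^{\overline{ss}}$ for an \emph{ample} envelope. But the hypothesis here is only that the envelope is \emph{fine}, not ample; a fine envelope requires merely that each $f$ in some fully separating set extends to a $G$-invariant $F$ with $(\overline{G\times^U X})_F$ affine, which is weaker than ampleness of $L$. Moreover the chain in the statement asserts only the \emph{inclusion} $X^{ss} \subseteq X^{\overline{ss}}$, with the equality $X^{\overline{ss}} = X^{nss}$; the stronger equality $X^{ss} = X^{nss}$ is precisely what is reserved for the fine \emph{strong} case in the final paragraph. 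So in the general fine case you need: $X^{ss} \subseteq X^{\overline{ss}}$ (any $f$ in $I^{fg}$ lying in the separating set extends to an $F$ with affine non-vanishing locus, giving semistability upstairs), $X^{\overline{ss}} = X^{nss}$ (conversely, fineness lets every $G$-invariant detecting semistability on $\overline{G\times^U X}$ be matched, after restriction, by a $U$-invariant on $X$ via the separating set), and then the trivial $X^{nss} \subseteq X^{\overline{nss}}$. Your final paragraph on the strong case is correct in spirit; that is exactly where normality and vanishing on the codimension-one boundary are used, as in \cite{DK} 5.3.5.
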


So there is  a
diagram of quasi-projective schemes
$$ \begin{array}{ccccccccccccc}
X^{\overline{s}} & \subseteq & X^{s} &
\subseteq & X^{Ms} & \subseteq & X^{ss} & \subseteq & X^{\overline{ss}} = X^{nss}\\
\downarrow &  & \downarrow &  & \downarrow & &
\downarrow & & \downarrow \\ X^{\overline{s}}/U & \subseteq &
X^{s}/U & \subseteq & X^{Ms}/U & \subseteq & X \inenv U & \subseteq &
\overline{G \times^U X}/\!/G
\end{array}
$$
where all the inclusions are open and all the
vertical morphisms are restrictions of the GIT quotient map
$\pi:(\overline{G \times^U X})^{ss} \to \overline{G \times^U X}/\!/G$,
and each except the last is a restriction of
the map of schemes $q:X^{nss} \to \Proj({\hat{\calo}}_L(X)^U)$ associated to
the inclusion ${\hat{\calo}}_L(X)^U \subseteq {\hat{\calo}}_L(X)$.
Here $\overline{G \times^U X}/\!/G$ is a projective scheme if the reductive envelope is ample but, even then, the inner enveloping quotient  $X \inenv U$ is not necessarily projective,
and (even if the ring of invariants ${\hat{\calo}}_L(X)^U$ is finitely generated
so that $X \inenv U = \Proj ({\hat{\calo}}_L(X)^U)$ is projective)
the morphisms $X^{ss} \to X \inenv U$ and 
$ X^{\overline{ss}} \to 
\overline{G \times^U X}/\!/G$
are not necessarily surjective.

Now suppose that $H$ is a  linear algebraic groups $H$ which may be neither unipotent nor reductive \cite{BDHK,bkgrosshans}.  When $H$ acts linearly on a projective scheme $X$ with respect to an ample line bundle $L$,
 the naively semistable and (finitely generated) semistable sets $X^{nss}$ and $X^{ss} = X^{ss,fg}$, enveloped and enveloping quotients and inner enveloping quotients
$$q: X^{ss} \to q(X^{ss}) \subseteq X \inenv H \subseteq X \env H$$
 are defined in \cite{BDHK} exactly as for the unipotent case, and when $H$ is reductive then $X^{nss} = X^{ss,fg} = X^{ss}$ coincides with the semistable set as defined at Definition \ref{defn:s/ssred}, while the enveloped, enveloping and inner enveloping quotients all coincide with the GIT quotient as defined at Definition \ref{defn:s/ssred}. However the definition in \cite{BDHK}  of the stable set $X^s$ is more complicated and combines (and extends) the unipotent and reductive cases.

\begin{defn} \label{def:GiSt1.1}
Let $H$ be a linear algebraic group acting on an irreducible scheme $X$ and $L \to X$ a
linearisation for the action. The \emph{stable locus} for this linearised $H$-action is the open subset
\[
X^{\rms}= \bigcup_{f \in I^{\rms}} X_f
\]
of $X^{ss}$, where $I^{\rms} \subseteq \bigcup_{r>0} H^0(X,L^{\ten r})^H$ is the subset
of $H$-invariant sections satisfying the following conditions:
%%% itm:GiSt1.1 %%%
\begin{enumerate}
\item \label{itm:GiSt1.1-1} the open set $X_f$ is affine;
\item \label{itm:GiSt1.1-2} the action of $H$ on $X_f$ is closed with all
  stabilisers finite groups; and
\item \label{itm:GiSt1.1-3} the restriction of the $U$-enveloping quotient map
 \[
q_{U}:X_f \to \spec((S^{U})_{(f)})
\]
is a principal $U$-bundle for the action of $U$ on $X_f$. 
\end{enumerate}
\end{defn}

If it is necessary to indicate the group $H$ we may write $X^{s,H}$ %or $X^{s(H)}$ 
and $X^{ss,H}$ %or $X^{ss(H)}$
 for $X^s$ and $X^{ss}$.

In general even when the algebra of invariants $\bigoplus_{k \geq 0} H^0( X,L^{\otimes k})^H$ on $X$ is finitely generated, the morphism $X \to X\env H$ may not be surjective, so it is hard to study the geometry of $X\env H$. If, however, we are lucky enough to  find a $G \times R$-equivariant projective completion $\overline{G \times_{U} X}$ with a linearisation on $L$ such that for sufficiently divisible $N$ the line bundle $L'_N$ is  ample  and the boundary $\overline{G \times_{U} X}\setminus G\times_{U} X$ is unstable for $L'_N$, then we  have a situation which is almost as well behaved as for reductive group actions on projective schemes with ample linearisations.

\begin{defn} If $\overline{G \times^U X}$ is a $G\times R$-equivariant reductive envelope, let $X^{\barss}=X\cap \overline{G \times_{U} X}^{ss,G\times R}$ and $X^{\bars}=X\cap \overline{G \times_{U} X}^{s,G\times R}$
where $X$ is embedded in $G \times_{U} X$ in the obvious way as $x\mapsto [1,x]$. 
\end{defn}

\begin{thm}\label{thm1.13gen} {\rm (\cite{DK} Thms 5.3.1 and 5.3.5).}
Let $X$ be a normal projective scheme acted on by a linear algebraic group $H=U\rtimes R$ where $U$ is the unipotent radical of $H$ and let $L$ be a very ample linearisation of the $H$ action defining an embedding $X\subseteq \PP^n$. Let $H \to G$ be a $U$-faithful homomorphism into a reductive subgroup $G$ of $\SL(n+1;\CC)$ with respect to an ample line bundle $L$. Let $(\overline{G \times^U X},\mathcal{L}')$ be
any fine $G \times R$-equivariant reductive envelope. Then
$$
X^{\overline{s}}  \subseteq  X^{s} 
   \subseteq  X^{ss}  \subseteq X^{\overline{ss}} = X^{nss}.
$$
The stable sets $X^{\overline{s}}$ and
$X^{s} $ admit quasi-projective
geometric quotients by the action of $H$.
 The quotient map $q $
restricted to the open subscheme $X^{ss}$ is an enveloped
quotient with $q: X^{ss} \rightarrow X \env  H$ an enveloping
quotient. There is an open subscheme 
  $X \inenv  H$  of $\overline{G \times^U X}/\!/(G\times R)$ which is an inner enveloping quotient of $X$ by the linear action of $H$. Moreover there is an ample
line bundle $L_U$ on $X\inenv H$ which pulls back to a tensor power
$L^{\otimes k}$ of the line bundle $L$ for some $k>0$
and extends to an ample line bundle on 
$\overline{G \times^U X}/\!/(G\times R)$.

If furthermore $\overline{G \times^U X}$ is normal and  provides a fine strong $G \times R$-equivariant reductive envelope, then $X^{\overline{s}} = X^{s}$ and $X^{ss} =
X^{nss}$.
\end{thm}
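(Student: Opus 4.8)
The plan is to deduce Theorem~\ref{thm1.13gen} from the unipotent case already recorded as Theorem~\ref{thm:main}, by carrying out one further classical GIT quotient for the residual action of the reductive group $R$ and appealing to the standard compatibility of iterated reductive GIT. As a first step I would forget the $R$-action entirely and view $(\overline{G\times^U X},L)$ simply as a fine reductive envelope for the linear action of the unipotent group $U$ on $X$. Theorem~\ref{thm:main} then supplies, for the $U$-action, the chain $X^{\overline{s}}\subseteq X^{s}\subseteq X^{ss}\subseteq X^{\overline{ss}}=X^{nss}$, quasi-projective geometric quotients of the stable loci obtained by restricting the classical GIT quotient map $\pi_{G}\colon(\overline{G\times^U X})^{ss,G}\to\overline{G\times^U X}\dblslash G$, the enveloped-quotient structure on $q|_{X^{ss}}$, an inner enveloping quotient $X\inenv U$ open in $\overline{G\times^U X}\dblslash G$, and an ample line bundle $L_{U}$ on $X\inenv U$ which pulls back to $L^{\otimes k}$ and extends to an ample line bundle on $\overline{G\times^U X}\dblslash G$. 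Throughout I would track Mumford's notion of stability alongside the modern one, as in Remark~\ref{remMumf2}, since it is Mumford-stability that behaves well under passage to and from the envelope.

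Next I would reinstate the $R$-action. Writing $Y=\overline{G\times^U X}$, the $G\times R$-equivariance of the envelope means $R$ acts on $Y$ commuting with the left $G$-action, hence $R$ permutes the $G$-invariant sections of powers of $L$; consequently $Y^{ss,G}$ (and, after a short argument, the inner enveloping quotient $X\inenv U$) is $R$-invariant, and the variety $Y\dblslash G$ inherits a linear $R$-action with respect to the ample bundle extending $L_{U}$. The key external input is the standard compatibility of iterated GIT for commuting reductive group actions,
$$Y^{ss,G\times R}=\pi_{G}^{-1}\!\big((Y\dblslash G)^{ss,R}\big),\qquad Y\dblslash(G\times R)\cong(Y\dblslash G)\dblslash R,$$
together with the analogous statement for the (Mumford) stable loci. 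Moreover a direct computation of the $G\times R$-action on $G\times^U X$ shows that the $(G\times R)$-orbit of a point $[1,x]$ meets the copy $\{[1,y]:y\in X\}$ of $X$ precisely in the $H$-orbit of $x$, so that the $G\times R$-geometric quotients of the stable loci of $Y$ restrict along $X$ to geometric quotients by $H$.

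With these ingredients the assertions follow by assembling the two steps. Setting $X^{\overline{s}}=X\cap Y^{s,G\times R}$ and $X^{\overline{ss}}=X\cap Y^{ss,G\times R}$ as in the statement, the inclusions $X^{\overline{s}}\subseteq X^{s}\subseteq X^{ss}\subseteq X^{\overline{ss}}=X^{nss}$ for the $H$-action come from feeding the $U$-chain of the first step through the $R$-GIT of the second; the quotient map $q$ on $X^{ss}$ becomes an enveloped quotient with enveloping quotient $X\env H$ by composing the $U$-enveloped quotient with the $R$-GIT quotient map and using the identity $\hat\calo_{L}(X)^{H}=(\hat\calo_{L}(X)^{U})^{R}$; and taking $X\inenv H$ to be the image in $Y\dblslash(G\times R)\cong(Y\dblslash G)\dblslash R$ of the $R$-semistable locus inside $X\inenv U$ exhibits it as an open subvariety of $Y\dblslash(G\times R)$ carrying the ample line bundle $L_{U}$, descended through the $R$-quotient, which pulls back to $L^{\otimes k}$ and extends to an ample line bundle on $Y\dblslash(G\times R)$. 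Finally, when $Y$ is normal and the envelope is fine and strong, the strong-envelope refinement of Theorem~\ref{thm:main} for $U$ gives $X^{\overline{s}}=X^{s}$ and $X^{ss}=X^{nss}$ for the $U$-action, and the extra $R$-GIT step preserves these equalities once the $U$-loci coincide, which yields the final sentence of the theorem.

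I expect the main obstacle to be not any single geometric fact but the bookkeeping needed to reconcile the abstract definitions of the $H$-(semi)stable loci from \cite{BDHK} (Definition~\ref{def:GiSt1.1} and its semistable analogue)---with their requirements that $X_{f}$ be affine, that the $H$-action on $X_{f}$ be closed with finite stabilisers, and that $q_{U}|_{X_{f}}$ be a principal $U$-bundle---with the two-stage description \lq\lq $U$-(semi)stable, then $R$-(semi)stable\rq\rq\ produced by the iterated GIT on $Y$. Both directions need checking: an $H$-invariant section witnessing $H$-stability should yield a $U$-invariant section witnessing $U$-stability whose image downstairs witnesses $R$-stability, and conversely; and the principal-$U$-bundle condition must be matched against closedness of the $R$-orbits on the $U$-quotient. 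This is also exactly the point at which the gap in \cite{DK} Lemma~5.1.7 surfaces, so one must work throughout with Mumford stability and invoke its independence of $G$ (Remark~\ref{remMumf2}) in place of the erroneous identification of Mumford and proper stability.
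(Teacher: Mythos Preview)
The paper does not supply its own proof of Theorem~\ref{thm1.13gen}: it is stated with the attribution ``(\cite{DK} 5.3.1 and 5.3.5)'' and no argument follows, exactly as Theorem~\ref{thm:main} is imported from the same source. So there is no in-paper proof against which to compare your proposal.

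That said, your strategy---apply Theorem~\ref{thm:main} for the $U$-action on the fine reductive envelope, then quotient by the residual reductive $R$-action using iterated GIT for the commuting $G\times R$-action---is the natural way to deduce the $H$-statement from the $U$-statement, and it is consistent with how the paper itself treats the passage from $\hat U$ to $\hat H$ later (e.g.\ the proof of Corollary~\ref{cor:ExCo1H}). Your identification of the main obstacle is accurate: the work lies in matching the intrinsic definitions of $X^{s,H}$ and $X^{ss,H}$ from Definition~\ref{def:GiSt1.1} (affineness of $X_f$, closed $H$-orbits with finite stabilisers, the principal-$U$-bundle condition) against the two-stage description, and in handling the Mumford-versus-proper-stability issue flagged in Remark~\ref{remMumf2}. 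One point to watch that you do not make explicit: the definition of $X^{\overline s}$ and $X^{\overline{ss}}$ in the $H$-setting is via $G\times R$-(semi)stability on the envelope, not via $G$-(semi)stability, so the chain for $H$ is not literally the $U$-chain but its pullback through the $R$-GIT step; you gesture at this but the inclusions $X^{\overline s}\subseteq X^{s,H}$ and $X^{ss,H}\subseteq X^{\overline{ss}}$ need the iterated-GIT identity applied carefully rather than just ``fed through''.
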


This theorem gives us good control over the geometry of $X\env H$ when we can find a sufficiently well behaved $G \times R$-equivariant reductive envelope. Unfortunately finding such a reductive envelope is not easy in general, and may not be possible. There is, however, one situation when it is easy: when the additive group $U=\GG_a$ acts linearly on a projective space $\PP^n$. Then we can use Jordan normal form to extend the $U$-action to a linear action of $\SL(2)$, and it follows that the algebra of $U$ invariants is finitely generated (the Weitzenb\"{o}ck Theorem). Moreover we can understand the geometry of the enveloping quotient $X\env U$ by identifying it with a classical GIT quotient  $(\PP^n \times \PP^2) /\!/ \SL(2)$. 

The basic idea behind this paper is to exploit this fact by finding a sequence of normal unipotent subgroups $U_1 \leqslant U_2 \leqslant \ldots \leqslant U$ of $H$ with $\dim U_j = j$ and using induction on the dimension of $U$. However the lack of surjectivity for the quotient map $q:(\PP^n)^{ss,\GG_a} \to \PP^n\env \GG_a$ (which reflects the fact that here $G/U = \kk \setminus \{ 0 \}$ is quasi-affine but not affine) causes difficulties for the inductive argument. In order to make the induction work, we need to ensure that, when $U_1 \cong \GG_a$ is normal in $H$, the complement of the enveloped quotient $q((\PP^n)^{ss, U_1})$ in $\PP^n \env U_1$ is unstable for the induced linear action of $H/U_1$. This is the role of the one-parameter subgroup $\GG_m$ in this paper.

%\section{Preliminary results}

To complete this section we will state four more results needed for the proof of Theorem \ref{mainthm}, with a proof of the first for lack of a suitable reference, though it is doubtless well known.

%The following lemma is proved in my thesis (as Lemma 4.2.2 there).

%%% lem:Co2Qu0 %%%
\begin{lem} \label{lem:Co2Qu0} 
Let $U$ be a unipotent linear algebraic group with normal subgroup $N$ such that the
projection $U \to U/N$ splits and let $X$ be an affine $U$-scheme. Suppose $X$
has the structure of a principal $N$-bundle, and the
quotient $X/N$ is a principal $U/N$-bundle, for the canonical action of $U/N$ on
$X$. Then $X$ is a principal $U$-bundle.  
\end{lem}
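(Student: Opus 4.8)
The plan is to use the splitting of $H\to H/N$ to write $H=N\rtimes Q$ with $Q\cong H/N$, and then to build the principal $H$-bundle structure on $X$ over $X/H$ by combining the principal $N$-bundle structure $\pi_N:X\to Y:=X/N$ with the principal $H/N$-bundle structure $\pi_Q:Y\to Z:=Y/(H/N)$; write $\pi=\pi_Q\circ\pi_N:X\to Z$. Since $N$, $H/N$ and $H$ are unipotent and we are in characteristic $0$, all three groups are special in the sense of Serre, so a torsor for any of them (in the fppf topology, say) is automatically Zariski-locally trivial; hence it is enough to prove that $\pi$ is faithfully flat and $H$-invariant and that the natural morphism $\alpha:H\times X\to X\times_Z X$, $(h,x)\mapsto(hx,x)$, is an isomorphism.

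First I would dispose of the easy points. The $H$-action on $X$ is free: if $hx=x$, write $h=nq$ with $n\in N$ and $q\in Q$; pushing down by $\pi_N$ gives $q\cdot\pi_N(x)=\pi_N(x)$, whence $q=e$ because $H/N$ acts freely on $Y$, and then $n=e$ because $N$ acts freely on $X$. Because $N$ is normal in $H$, for any $x\in X$ one has $\pi_N(Hx)=Q\cdot\pi_N(x)$, and the $Q$-orbits in $Y$ are exactly the fibres of $\pi_Q$ (a principal bundle); hence the $H$-orbits in $X$ are exactly the fibres of $\pi$, which together with the fact that $\pi_N$ and $\pi_Q$ are geometric quotient maps shows that $\pi:X\to Z$ exhibits $Z$ as a geometric quotient $X/H$. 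Finally $\pi$ is faithfully flat as a composite of the faithfully flat maps $\pi_N$ and $\pi_Q$.

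The heart of the argument is to invert $\alpha$, and this is where the splitting is essential. Given $(x_1,x_2)\in X\times_Z X$, the points $\pi_N(x_1)$ and $\pi_N(x_2)$ of $Y$ lie over the same point of $Z$, so the principal $H/N$-bundle structure on $\pi_Q$ --- i.e. the isomorphism $Q\times Y\xrightarrow{\sim}Y\times_Z Y$, $(q,y)\mapsto(qy,y)$ --- produces a unique $\bar h=\bar h(x_1,x_2)\in Q$ with $\pi_N(x_1)=\bar h\cdot\pi_N(x_2)$, depending morphically on $(x_1,x_2)$. Using the splitting, regard $\bar h$ as an element of $H$; then $\pi_N(\bar h\cdot x_2)=\bar h\cdot\pi_N(x_2)=\pi_N(x_1)$, so $x_1$ and $\bar h\cdot x_2$ lie in the same fibre of $\pi_N$, and the principal $N$-bundle structure on $\pi_N$ --- i.e. the isomorphism $N\times X\xrightarrow{\sim}X\times_Y X$ --- produces a unique $n=n(x_1,x_2)\in N$ with $x_1=n\cdot\bar h\cdot x_2$, again morphically in $(x_1,x_2)$. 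Then $(x_1,x_2)\mapsto(n(x_1,x_2)\bar h(x_1,x_2),\,x_2)$ is a morphism $X\times_Z X\to H\times X$, and by freeness of the $H$-action it is a two-sided inverse to $\alpha$. Hence $\alpha$ is an isomorphism, and therefore $\pi:X\to X/H$ is a principal $H$-bundle (Zariski-locally trivial, $H$ being special).

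The main obstacle, and the only place where a hypothesis beyond the two bundle structures is used, is precisely the step of lifting $\bar h\in H/N$ to an element of $H$ relating $x_1$ and $x_2$: without a splitting of $H\to H/N$ the composite of a principal $N$-bundle and a principal $H/N$-bundle need not be a principal bundle for the extension $H$, and the splitting is exactly what removes this obstruction. An alternative, more concrete route avoids $\alpha$ altogether: trivialise $\pi_Q$ over a Zariski cover $\{Z_i\}$ of $Z$, pull back the identity sections to obtain sections $s_i:Z_i\to Y$ of $\pi_Q$, restrict the principal $N$-bundle $\pi_N$ to $s_i(Z_i)\cong Z_i$ and trivialise it after refining the cover (using that $N$ is special), and finally use the $Q$-action together with the splitting to spread this trivialisation over $\pi_Q^{-1}(Z_i)$, obtaining an $H$-equivariant isomorphism $\pi^{-1}(Z_i)\cong Z_i\times H$; once more the splitting enters in identifying the spread-out total space with $Z_i\times(N\rtimes Q)$.
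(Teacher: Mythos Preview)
Your proof is correct. Both you and the paper exploit the same core idea: using the splitting $H=N\rtimes Q$, one recovers the $H$-element relating two points in the same orbit by first extracting the $Q$-component via the principal $H/N$-bundle on $X/N$, and then the $N$-component via the principal $N$-bundle on $X$. The difference is in packaging. The paper localises first: it uses Serre's theorem that unipotent bundles are Zariski locally trivial to reduce to the case $X=N\times H_1\times(X/H)$ with the evident projections, and then writes down explicit morphisms $\phi_N,\phi_{H_1}$ giving a global $H$-equivariant trivialisation. You instead work globally and verify the torsor condition directly, constructing the inverse to $\alpha:H\times X\to X\times_Z X$ from the two given torsor isomorphisms, and only invoke specialness at the end to upgrade the fppf torsor to a Zariski-locally trivial bundle. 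Your ``alternative concrete route'' at the end is essentially the paper's argument.

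What each buys: the paper's version is entirely elementary once one accepts the local reduction, and makes the equivariance check explicit. Your version is cleaner and more conceptual, avoids choosing trivialisations, and makes transparent exactly where the splitting enters (lifting $\bar h\in H/N$ to $H$ so that $\bar h\cdot x_2$ is defined in $X$). Both arguments are short and both rely on unipotent groups being special; neither is materially more general than the other in this context.
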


\begin{proof}
Let $\pi_N:X \to X/N$ be the quotient map for the $N$-action on $X$ and
$\pi_{U/N}:X/N \to (X/N)/(U/N)$ the quotient map for the $U/N$-action on
$X/N$. Also let $U_1 \subseteq U$ be a subgroup that splits the projection $U
\to U/N$, so that
\[
N \rtimes U_1 \overset{\cong}{\longrightarrow} U, \quad (n;u) \mapsto nu
\]
where multiplication in the semi-direct product is given by $(n_1;u_1) \cdot
(n_2;u_2)=(n_1u_1n_2u_1^{-1};u_1u_2)$. Note that the composition $\pi_{U/N}
\circ \pi_N:X \to (X/N)/(U/N)=X/U$ 
is a geometric quotient for the $U$-action on $X$. Because $U$ and $N$ are
unipotent the quotients $\pi_{U/N}$ and $\pi_U$ are locally trivial in the
Zariski topology \cite[Proposition 14]{ser58}, so by choosing sufficiently fine open covers it suffices to treat the
case where $X$ and $X/N$ are trivial bundles for $U$ and $U_1$, respectively,
where we identify $U_1$ with $U/N$ in the natural way. So
let $X = N \times (X/N)$ and $(X/N)=U_1 \times (X/U)$, with the $N$-action on $X$
(respectively, $U_1$-action on $X/N$) induced by left multiplication on $N$
(respectively, $U_1$) and the quotient maps $\pi_N$ and $\pi_{U/N}$ given by
projecting to the second factor in both cases. Also let 
\[
s_N:N \times (X/N) \to N, \quad s_{U_1}:U_1 \times (X/U) \to U_1
\]  
be the projections to the first factors, and let
\[
\sigma:X/U \to N \times U_1 \times (X/U), \quad z \mapsto (e,e,z)
\] 
be the obvious section to $\pi$ (note $\pi$ is the projection to the factor $X/U$). Given $x \in X$, there are unique $n \in N$ and
$u \in U_1$ such that $x=nu\sigma(\pi(x))$. The assignments
\begin{align*}
\phi_N:X \to N, \quad &x=nu\sigma(\pi(x)) \mapsto n, \\
\phi_{U_1}:X \to U_1, \quad &x=nu\sigma(\pi(x)) \mapsto u
\end{align*}
are morphisms of schemes: for each $x \in X$ we have 
\[
\phi_{U_1}(x)=s_{U_1}(\pi_N(x)), \quad
\phi_N(x)=s_N(x)(s_N(\phi_{U_1}(x)\sigma(\pi(x))))^{-1}. 
\]
It is clear that $\phi_N$ is $N$-equivariant and $\phi_{U_1}$ is
$N$-invariant, and also that $\phi_{U_1}$ is $U_1$-equivariant and
$\phi_N(ux)=u\phi_N(x)u^{-1}$ for all $x \in X$ and $u \in U_1$. Therefore
\[
X \overset{\cong}{\longrightarrow} U \times (X/U), \quad x \mapsto
(\phi_N(x)\phi_{U_1}(x),\pi(x))
\]
defines an $U$-equivariant isomorphism, where $U \times (X/U)$ is the trivial
$U$-bundle with base $X/U$.            
\end{proof}

The following result is Lemma 3.3.1 of \cite{BDHK}.

\begin{lem} \label{lem:GiSt1}
Suppose $H$ is a linear algebraic group, $N$ is a normal subgroup of $H$ and
$X$ is an $H$-scheme (not necessarily assumed 
irreducible). Suppose all the stabilisers for the restricted
action of $N$ on $X$ are finite and this action has a geometric quotient
$\pi:X \to X/N$. Note that $H/N$ acts canonically on $X/N$. Then 
%%% itm:GiSt1 %%%
\begin{enumerate}
\item \label{itm:GiSt1-1} for all the $H/N$-orbits in $X/N$ to be closed, it is
  necessary and sufficient that all the $H$-orbits in $X$ are closed;
\item \label{itm:GiSt1-2} given $y \in X/N$, the stabiliser $\stab_{H/N}(y)$ is finite if, and
only if, $\stab_H(x)$ is finite for some (and hence all) $x \in
\pi^{-1}(y)$; and
\item \label{itm:GiSt1-3} if $H/N$ is reductive and $X/N$ is affine, then $X/N$
  has a geometric $H/N$-quotient if, and only if, all $H$-orbits in $X$ are closed.  
\end{enumerate}        
\end{lem}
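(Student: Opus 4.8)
The plan is to reduce the three assertions to reductive GIT applied to a geometric quotient $X/N$, exploiting that all of $H$, $N$, $H/N$ have well-behaved quotients once we know stabilisers are finite. The proof is essentially a bookkeeping exercise splicing together standard facts about closed orbits and geometric quotients under group extensions, so I would organise it assertion by assertion.

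For \eqref{itm:GiSt1-2}, I would argue directly using the geometric quotient $\pi : X \to X/N$. Given $x \in \pi^{-1}(y)$, there is a short exact sequence relating $\stab_H(x)$, its intersection with $N$ (which is $\stab_N(x)$, a finite group by hypothesis), and a subgroup of $\stab_{H/N}(y)$; conversely the $H$-action descends so that $\stab_{H/N}(y)$ is covered by $\stab_H(x)$ for $x$ in the fibre. The key point is that $\stab_{H/N}(y)$ is finite iff $\stab_H(x)$ is finite, because the difference between the two is controlled by the finite group $\stab_N(x)$ and by the fact that $\pi$ identifies the $N$-orbit of $x$ with the single point $y$. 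That $\stab_H(x)$ finite for some $x$ in the fibre implies it for all is because $H$ acts transitively on a fibre only up to the $N$-action, but any two points of $\pi^{-1}(y)$ differ by an element of $N$ (here one uses that $\pi$ is a \emph{geometric} quotient, so fibres are exactly $N$-orbits), and stabilisers of points in one orbit are conjugate.

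For \eqref{itm:GiSt1-1}, the natural approach is: the orbit map factors as $H\cdot x \to (H/N)\cdot y$ under $\pi$, and since $\pi$ is a geometric quotient (hence an open or at least submersive, orbit-separating map), $H\cdot x$ is closed in $X$ iff its image $(H/N)\cdot y$ is closed in $X/N$ — one direction because $\pi$ is continuous and the preimage of a closed orbit is the closed $H$-saturation, the other because $\pi$ maps closed $H$-invariant sets to closed sets (a geometric quotient is in particular a good/submersive quotient identifying invariant closed sets with closed sets). I would phrase this carefully: $\pi^{-1}((H/N)\cdot y) = H\cdot x$ since fibres are $N$-orbits and $H\cdot x$ is $N$-invariant; then use that $\pi$ is surjective and that a subset of $X/N$ is closed iff its preimage is closed.

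For \eqref{itm:GiSt1-3}, assuming $H/N$ reductive and $X/N$ affine: by the Amitsur–Luna/Matsushima-type criterion (or the elementary fact that for reductive group actions on affine varieties, closed orbits with finite stabilisers is exactly the condition guaranteeing that the invariant-theory quotient is geometric on the locus in question), $X/N$ has a geometric $H/N$-quotient iff all $H/N$-orbits are closed and (by \eqref{itm:GiSt1-2}) have finite stabilisers; finite stabilisers in $H/N$ we already get from finite stabilisers in $H$ via \eqref{itm:GiSt1-2}, wait — actually we only assumed finite stabilisers for the $N$-action, so I would instead note that for a reductive group acting on an affine variety, having all orbits closed already forces the categorical quotient $X/N \to (X/N)/\!/(H/N)$ to be geometric, regardless of stabiliser dimension, because closed-orbit fibres of the affine GIT quotient are single orbits. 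So the equivalence becomes: geometric $H/N$-quotient exists $\iff$ all $H/N$-orbits closed $\iff$ (by \eqref{itm:GiSt1-1}) all $H$-orbits closed.

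\medskip

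\noindent\emph{Main obstacle.} The delicate point is handling the interplay between ``geometric quotient'' and ``closed orbits / orbit-separating'' when $X$ is not assumed irreducible and $X/N$ is not assumed projective or affine a priori — one must be careful that $\pi$ genuinely identifies $N$-orbits with fibres and invariant closed sets with closed sets. In the unipotent-flavoured applications of this paper $N$ (or $U$) is unipotent so quotients are locally trivial and these subtleties evaporate, but in the stated generality I expect the fiddly step to be justifying $\pi^{-1}((H/N)\cdot y) = H \cdot x$ and that $\pi$ is a closed map on $H$-invariant sets — everything else is formal manipulation of stabiliser exact sequences and the affine reductive GIT dictionary.
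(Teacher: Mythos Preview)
The paper does not prove this lemma: it is quoted verbatim as Lemma 3.3.1 of \cite{BDHK}, with no argument given here. So there is nothing to compare against in this paper itself.

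Your sketch is essentially correct and is the standard argument. A couple of small clarifications are worth making explicit. For \eqref{itm:GiSt1-2}, the point you gesture at is that the natural map $\stab_H(x) \to \stab_{H/N}(\pi(x))$ is \emph{surjective} with kernel $\stab_N(x)$: surjectivity uses exactly that $\pi$ is a geometric quotient so $\pi^{-1}(y)=N\cdot x$, whence if $hN$ fixes $y$ then $hx=nx$ for some $n\in N$ and $n^{-1}h\in\stab_H(x)$ maps to $hN$. For \eqref{itm:GiSt1-1}, the identity $\pi^{-1}((H/N)\cdot y)=H\cdot x$ together with submersiveness of $\pi$ (part of the definition of geometric quotient) gives the equivalence immediately; you do not need $\pi$ to be a closed map on invariant sets, only that closedness in $X/N$ is detected by preimages. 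For \eqref{itm:GiSt1-3}, your final formulation is the right one: for a reductive group on an affine variety the affine GIT quotient is always a categorical quotient, and it is geometric precisely when all orbits are closed, so \eqref{itm:GiSt1-1} finishes it. Your ``main obstacle'' is not really an obstacle once you use submersiveness rather than closedness.
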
 

The next result is taken from \cite{ad07}.

%%% prop:TrRe3.1 %%%
\begin{propn} \label{prop:TrRe3.1} \cite[Theorem 3.12]{ad07} Suppose $X$ is an affine scheme acted
  upon by a unipotent group $U$ and a locally trivial quotient $X \to X/U$
  exists. Then $X/U$ is affine if, and only if, $X \to X/U$ is a trivial
  $U$-bundle.   
\end{propn}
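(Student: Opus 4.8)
For this proposition I would treat the two implications separately; the forward direction is elementary, and the converse is the substance, to be proved by induction on $\dim U$.

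Suppose first that $\pi\colon X\to X/U$ is a trivial $U$-bundle, so $X\cong U\times(X/U)$ as varieties over $X/U$. Since $U$ is unipotent and $\kk$ has characteristic $0$, the exponential map gives an isomorphism of varieties $\mathfrak{u}\cong U$, so $U\cong\A^{\dim U}$ and hence $X\cong\A^{\dim U}\times(X/U)$. Then $X/U$ is realised as the closed subvariety $\{0\}\times(X/U)$ of the affine variety $X$, and closed subvarieties of affine varieties are affine, so $X/U$ is affine.

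For the converse, assume $X/U$ is affine; the case $\dim U=0$ is trivial. Since $U$ is unipotent and nontrivial, the abelianisation $U/[U,U]$ is a nonzero vector group, so there is a surjection $U\twoheadrightarrow\GG_a$; let $N\trianglelefteq U$ be its kernel, so that $U/N\cong\GG_a$ and $\dim N=\dim U-1$. Because $\pi$ is a (Zariski-)locally trivial principal $U$-bundle, the free $N$-action on $X$ admits a geometric quotient $q\colon X\to X/N$ — namely the associated bundle $X\times^U(U/N)$ over $X/U$ — such that $q$ is a locally trivial principal $N$-bundle and, $U/N$ acting residually, the map $p\colon X/N\to X/U$ is a locally trivial principal $\GG_a$-bundle (over a trivialising open $V$ for $\pi$, the map $p$ is just the projection $V\times(U/N)\to V$, and local triviality of $q$ follows from that of $\pi$ together with \cite[Prop.~14]{ser58} for $U\to U/N$). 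A Zariski-locally trivial principal $\GG_a$-bundle over a scheme $S$ is described by a \v{C}ech $1$-cocycle with values in $\OO_S$, so it is classified by a class in $H^1(S,\OO_S)$; since $X/U$ is affine this group vanishes by Serre's theorem, so $p$ is trivial and admits a section $\sigma\colon X/U\to X/N$. As $p$ is separated (being locally trivial with affine fibres), $\sigma$ is a closed immersion, so $\sigma(X/U)$ is a closed subvariety of $X/N$ isomorphic to the affine variety $X/U$. Put $X'=q^{-1}(\sigma(X/U))$: this is closed in $X$, hence affine, and $q|_{X'}\colon X'\to\sigma(X/U)\cong X/U$ is the base change of $q$ along $\sigma$, hence a locally trivial principal $N$-bundle with affine total space and affine base. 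By the inductive hypothesis applied to $N$ it is trivial, so it has a section $\tau\colon X/U\to X'$. Since $\pi=p\circ q$ and $p\circ\sigma=\id_{X/U}$, composing $\tau$ with the inclusion $X'\hookrightarrow X$ yields a section of $\pi$; and a principal $U$-bundle admitting a section is trivial (via $(u,b)\mapsto\tau(b)\cdot u$), so $X\to X/U$ is a trivial $U$-bundle.

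The step I expect to be the main obstacle is the bookkeeping in the inductive passage: one must check that every object produced is of exactly the type the induction consumes — that $X/N$ genuinely exists as a variety, that both $q$ and $p$ are \emph{locally trivial} principal bundles (so that the $\GG_a$-torsor computation and the base-change stability both apply), and, most importantly, that $X'=q^{-1}(\sigma(X/U))$ is affine over an affine base while the acting group strictly drops in dimension. Beyond this, the only genuine external inputs are the classification of $\GG_a$-torsors by $H^1(-,\OO)$ together with Serre vanishing on affine schemes, and, for the easy direction, the fact that a unipotent group in characteristic $0$ is an affine space as a variety.
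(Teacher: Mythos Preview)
The paper does not give a proof of this proposition; it simply quotes it as \cite[Theorem 3.12]{ad07}. Your argument is a correct self-contained proof, and the inductive strategy --- peeling off a $\GG_a$-quotient, using $H^1(X/U,\OO_{X/U})=0$ to trivialise the resulting $\GG_a$-torsor, and then pulling back along the section to reduce to a smaller unipotent group --- is the natural one. One minor simplification: once you know $p$ is trivial you already have $X/N \cong (X/U)\times\A^1$, which is affine; so the separatedness discussion is unnecessary, and the closedness of $\sigma(X/U)$ in $X/N$ (hence the affineness of $X'$) is immediate.
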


Finally, the following result comes from Corollary 3.1.20 of \cite{BDHK}. %in the arXiv version of the handbook article.

%%% cor:GiFi5 %%%
\begin{propn} \label{cor:GiFi5} Suppose $H$ is a linear algebraic group, $X$ an
  irreducible $H$-scheme and $L \to X$ a linearisation. If the
  enveloping quotient $X \env H$ is projective, then 
%$X \env   H=\proj(S^H)$. Furthermore,
 for suitably
  divisible integers $c>0$  %the sheaf $\OO_{X \env H}(r)$ is an ample line bundle    on $X \env H$
the algebra of invariants $\bigoplus_{k \geq 0} H^0(X, L^{\otimes ck})^H$ is
finitely generated and the enveloping quotient $X \env H$ is the associated
projective scheme; moreover the line bundle $L^{\otimes c}$ induces an ample
line bundle $L^{\otimes c}_{[H]}$ on $X\env H$ such that  the natural structure
map  
\[
\bigoplus_{k \geq 0} H^0(X, L^{\otimes ck})^H \to \bigoplus_{k \geq 0} H^0(X\env H, L^{\otimes ck}_{[H]})
\]
is an isomorphism. 
\end{propn}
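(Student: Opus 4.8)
The plan is to reduce the statement to classical projective algebraic geometry, exploiting the structure of the inner enveloping quotients (Remark \ref{remclaim}) and the fact that the invariant ring $C := \bigoplus_{k\ge 0} H^0(X, L^{\otimes k})^H$ is an integral domain (being a graded subring of the domain $\hat\calo_L(X)$). First I would show that, once $X\env H$ is assumed projective, it coincides with $\Proj C$ and with one of its inner enveloping quotients. By construction $X\env H$ is covered by the affine opens $\Spec(\calo(X_f)^H) = \Spec(C_{(f)})$ with $f \in I^{ss,fg}$ (Definition \ref{defn:envelopquot}), hence is an open subscheme of the irreducible (indeed integral) scheme $\Proj C$; being projective it is also a proper, hence closed, nonempty subscheme of the separated scheme $\Proj C$, so irreducibility forces $X\env H = \Proj C$. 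Quasi-compactness of $X\env H$ yields a finite subcover $X\env H = \bigcup_{i=1}^N \Spec(C_{(f_i)})$, and absorbing $f_1, \dots, f_N$ into a finite fully separating set (Definition \ref{defn:separ}) exhibits this cover inside a single inner enveloping quotient; thus $X\env H = X\inenv H$ is a projective variety on which, for some $k > 0$, the line bundle $L^{\otimes k}$ induces an ample invertible sheaf $L^{\otimes k}_{[H]} = \calo_{\Proj C}(k)$ with $q^* L^{\otimes k}_{[H]} \cong L^{\otimes k}|_{X^{ss}}$.

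With $\calo_{\Proj C}(k)$ an ample line bundle on the projective variety $\Proj C = X\env H$, I would then run the usual Serre-type argument on the graded domain $C$: for $c$ a suitably divisible multiple of $k$ one checks that $C_{cm} = H^0(X\env H, L^{\otimes cm}_{[H]})$ for every $m \ge 0$, that the Veronese $C^{(c)} = \bigoplus_m C_{cm}$ is generated over $C_0 = \kk$ by its finitely many (finite-dimensional) graded pieces up to some bounded degree --- amplitude of $\calo_{\Proj C}(k)$ giving $C_{c(m+1)} = C_c\cdot C_{cm}$ in high degree --- and that $\Proj C^{(c)} = \Proj C = X\env H$ with $\calo_{\Proj C^{(c)}}(1) = L^{\otimes c}_{[H]}$. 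This simultaneously gives that $\bigoplus_m H^0(X, L^{\otimes cm})^H = C^{(c)}$ is finitely generated, that $X\env H$ is its $\Proj$, that $L^{\otimes c}_{[H]}$ is ample, and --- since $C^{(c)}$ is generated in degree one so that $C$ has no saturation discrepancy in positive degrees --- that the natural structure map $\bigoplus_m H^0(X, L^{\otimes cm})^H \to \bigoplus_m H^0(X\env H, L^{\otimes cm}_{[H]})$ is an isomorphism.

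The main obstacle is the step from ``$X\env H$ projective'' to ``$\calo_{\Proj C}(k)$ is an ample line bundle on $\Proj C = X\env H$'': the content here is exactly that $X\env H$ is realised as a single one of its inner enveloping quotients, equivalently that the finitely many affine charts covering the projective scheme $X\env H$ lie simultaneously inside one quasi-projective inner enveloping quotient carrying a uniform ample line bundle pulling back to a fixed power of $L$. One must be careful that $X$ is only assumed irreducible (so $\Proj C$, for the possibly non-finitely-generated $C$, is not a priori of finite type and $\calo_{\Proj C}(k)$ need not a priori be invertible) and that the grading has to be tracked carefully, which is why only suitably divisible $c$ work; the reductive-envelope description of the inner enveloping quotients as open subvarieties of the projective GIT quotient $\overline{G\times^U X}/\!/(G\times R)$ (Theorem \ref{thm1.13gen}), together with the ample line bundle on the latter, is the tool that supplies the uniform ample line bundle. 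Once this is established, the remainder is routine commutative algebra of graded rings and needs no new input.
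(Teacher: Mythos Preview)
The paper does not give its own proof of this proposition; it is quoted verbatim as ``Corollary 3.1.20 of \cite{BDHK}'' and no argument is supplied here. So there is nothing in this paper to compare against, and your proposal should be assessed on its own merits as a candidate proof.

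Your overall architecture is the right one: first identify $X\env H$ with $\Proj C$ (the open-and-proper-in-an-irreducible-separated-scheme argument is clean and correct), then produce an ample $\calo_{\Proj C}(c)$, then run the standard Serre/Veronese argument to get finite generation of $C^{(c)}$ and the isomorphism with the section ring. The last step needs a small extra care you gloss over: the map $C_{cm}\to H^0(\Proj C,\calo(cm))$ is only an isomorphism for $m\gg 0$ a priori, so one must replace $c$ by a further multiple to make it an isomorphism for all $m\ge 0$; but this is routine.

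The genuine weak point is your mechanism for producing the ample line bundle. You invoke finite fully separating sets (Definition \ref{defn:separ}) and the reductive-envelope theorem (Theorem \ref{thm1.13gen}) to manufacture an inner enveloping quotient carrying a uniform ample bundle. But Definition \ref{defn:separ} is formulated only for unipotent $U$, and Theorem \ref{thm1.13gen} requires $X$ to be a \emph{normal projective} variety with a chosen fine $G\times R$-equivariant reductive envelope --- hypotheses absent from the proposition. You correctly flag this as ``the main obstacle'', but the fix you propose imports assumptions you do not have.

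There is a much more direct route that avoids all of this. Once $X\env H=\Proj C$ is projective, quasi-compactness gives a finite cover by $D_+(f_i)$ with $f_i\in C_{d_i}$ and $C_{(f_i)}$ finitely generated (this is built into $I^{ss,fg}$). Set $c=\mathrm{lcm}(d_i)$ and $g_i=f_i^{c/d_i}\in C_c$. Then $\calo_{\Proj C}(c)$ is invertible (on $D_+(f_i)$ it is free of rank one, generated by $g_i$), and the sections $g_i$ have non-vanishing loci $D_+(f_i)$ which are affine and cover $\Proj C$. By the standard ampleness criterion (e.g.\ \cite[Tag 01Q3]{stacks-project}), $\calo_{\Proj C}(c)$ is ample. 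No inner enveloping quotients, fully separating sets, or reductive envelopes are needed; the affine charts you already extracted from projectivity do all the work.
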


\section{Partial desingularisations of reductive GIT quotients} \label{section4}

\label{pdsection}

Let us consider the classical situation when $G$ is a {reductive} group acting linearly on an irreducible projective scheme $X$ with respect to an ample linearisation $L$.

Suppose that $X$ has some stable points but also has semistable
points which are not stable. 
In \cite{K2} it is described how one can blow up $X^{ss}$ along a sequence of 
  $G$-invariant closed subschemes to obtain a $G$-invariant morphism 
$\psi: \tilde{X}^{ss} \to X^{ss}$ where $\tilde{X}$ is an irreducible projective scheme acted on 
linearly by $G$ such that $\tilde{X}^{ss} = \tilde{X}^s$ and $\psi$ restricts to an isomorphism over $X^s$. The induced birational 
morphism $\psi_G: \tilde{X}/\!/G \to X/\!/G$ of the geometric invariant theoretic quotients 
is an isomorphism over the geometric quotient $X^s/G$. It can be regarded as a partial desingularisation of $X/\!/G$ in the sense that if $X$ is nonsingular then the centres of the blow-ups can be taken to be nonsingular, and $\tilde{X}/\!/G$ 
has only orbifold singularities (it is locally isomorphic to the quotient of a 
nonsingular scheme by a finite group action) whereas the singularities of $X/\!/G$ 
are in general much more serious. Even when $X$ is singular, we can regard the birational 
morphism $\tilde{X}/\!/G \to X/\!/G$ as resolving (most of) the contribution to the singularities of $X/\!/G$ coming from the group action.

The set $\tilde{X}^{ss}$ can be obtained from $X^{ss}$ as follows. There exist 
semistable points of $X$ which are not stable if and only if there exists a 
non-trivial connected reductive subgroup of $G$ fixing a semistable point. 
In fact the closure in $X^{ss}$ of the $G$-orbit of any $x \in X^{ss}$ contains a unique $G$-orbit which is closed in $X^{ss}$; this closed orbit has a reductive stabiliser, so a subgroup of $G$ of maximal dimension among those occurring as stabilisers of semistable points of $X$ is reductive.
Let 
$r>0$ be the maximal dimension of a (reductive) subgroup of $G$ 
fixing a point of $X^{ss}$ and let $\calr(r)$ be a set of representatives of conjugacy 
classes of all connected reductive subgroups $R$ of 
dimension $r$ in $G$ such that 
$$ Z^{ss}_{R} = \{ x \in X^{ss} :  \mbox{$R$ fixes $x$}\} $$
is non-empty. Then
$$
\bigcup_{R \in \calr(r)} GZ^{ss}_{R}
$$
is a disjoint union of  closed subschemes of $X^{ss}$. Furthermore $$GZ^{ss}_{R} \cong G \times^{N_R} Z^{ss}_{R}$$ where $N_R$ is the normaliser of $R$ in $G$ 
(so $GZ_R^{ss}$ is nonsingular if $X$ is nonsingular). The action of 
$G$ on $X^{ss}$ lifts to an action on the blow-up $X_{(1)}$ of 
$X^{ss}$ along $\bigcup_{R \in \calr(r)} GZ_R^{ss}$ which can be linearised so that the complement 
of $X_{(1)}^{ss}$ in $X_{(1)}$ is the proper transform of the 
subset $\phi^{-1}(\phi(GZ_R^{ss}))$ of $X^{ss}$ where $\phi:X^{ss} \to X/\!/G$ is the quotient 
map (see \cite{K2} 7.17). 
Here we use the linearisation with respect to (a tensor power of) the pullback of the ample line bundle $L$ on $X$ perturbed by a sufficiently small multiple of the exceptional divisor $E_{(1)}$. This will give us an ample line bundle on the blow-up  $\psi_{(1)}:X_{(1)} \to X^{ss}$ , and (by the Hilbert--Mumford criteria for (semi)stability) if the perturbation is sufficiently small it will have the property that 
$$   \psi_{(1)}^{-1}(X^{s}) \subseteq  X_{(1)}^{s} \subseteq  X_{(1)}^{ss} \subseteq \psi_{(1)}^{-1}(X^{ss}) = X_{(1)},$$
and $\psi_{(1)}$ restricts to an isomorphism from $X_{(1)}^s \setminus E_{(1)}$ to $X^s$.
Moreover no point of $X_{(1)}^{ss}$ is fixed by a 
reductive subgroup of $G$ of dimension at least $r$, and a point in $ X_{(1)}^{ss} $ 
is fixed by a reductive subgroup $R$ of 
dimension less than $r$ in $G$ if and only if it belongs to the proper transform of the 
subscheme $Z_R^{ss}$ of $X^{ss}$.

\begin{rmk} \label{cfK2} More precisely, in \cite{K2} $X$ is blown  up along the closure $\overline{\bigcup_{R \in \calr(r)} GZ^{ss}_{R} }$ of $\bigcup_{R \in \calr(r)} GZ^{ss}_{R}$ in $X$ (or in a projective completion of $X^{ss}$ with a $G$-equivariant morphism to $X$ which is an isomorphism over $X^{ss}$; in the case when $X$ is nonsingular so that $\bigcup_{R \in \calr(r)} GZ^{ss}_{R}$ is also nonsingular, or when we replace $X$ by the ambient projective space $\PP(H^0(X,L)^*)$, we can  % along the proper transform of  $\overline{\bigcup_{R \in\calr(r)} GZ^{ss}_{R} }$ after
 resolve the singularities of  $\overline{\bigcup_{R \in \calr(r)} GZ^{ss}_{R} }$ by a sequence of blow-ups along nonsingular $G$-invariant subschemes of the complement of $X^{ss}$ and then blow up  along the proper transform of  $\overline{\bigcup_{R \in \calr(r)} GZ^{ss}_{R} }$). We end up with  a projective scheme $\bar{X}_{(1)}$ and blow-down map $\bar{\psi}_{(1)}: \bar{X}_{(1)} \to X$ restricting to $\psi_{(1)}:X_{(1)} \to X$ where $\bar{\psi}_{(1)}^{-1}(X^{ss}) = X_{(1)}$. We can then choose a sufficiently small perturbation of the pullback to $\bar{X}_{(1)}$ of the linearisation on $X$ such that

\noindent (i) we get an ample linearisation of the $G$-action on the projective scheme $\bar{X}_{(1)}$ for which
$$   \bar{\psi}_{(1)}^{-1}(X^{s}) \subseteq \bar{X}_{(1)}^{s} \subseteq \bar{X}_{(1)}^{ss} \subseteq \bar{\psi}_{(1)}^{-1}(X^{ss}) = X_{(1)},$$
and \\
(ii) the restriction of the linearisation to $X_{(1)}$ is obtained from the pullback of $L$ by perturbing  by a sufficiently small multiple of the exceptional divisor $E_{(1)}$
\end{rmk}

We can now apply the same procedure to $X_{(1)}^{ss}$ to obtain $X_{(2)}^{ss}$ such 
that no reductive subgroup of $G$ of dimension at least 
$r-1$ fixes a point of $X_{(2)}^{ss}$. Under the assumption that $X^s \neq \emptyset$, if we repeat this process enough times, we obtain 
$X_{(0)}^{ss} = X^{ss},X_{(1)}^{ss},X_{(2)}^{ss},\ldots,X_{(r)}^{ss}$ such that
no reductive subgroup of $G$ of positive dimension fixes a point of $X_{(r)}^{ss}$,
and we set $\tilde{X}^{ss} = X_{(r)}^{ss}$. Equivalently we can construct a sequence  
$$X_{(R_0)}^{ss} = X^{ss}, X_{(R_1)}^{ss},\ldots,X_{(R_\tau)}^{ss} = \tilde{X}^{ss}$$ 
where $R_1,\ldots,R_\tau$ are connected reductive subgroups of $G$ with 
$$r= \dim R_1 \geq \dim R_2 \geq \cdots \dim R_\tau \geq 1,$$
 and if $1 \leq l \leq \tau$ then
 $X_{(R_l)}$ is the blow up of $X_{(R_{l-1})}^{ss}$ %(or $X^{ss}$ if $l=1$) 
along its closed nonsingular subscheme $GZ_{R_l}^{ss}\cong G \times^{N_l} Z_{R_l}^{ss}$,
where $N_l$ is the normaliser of $R_l$ in $G$. Similarly 
$\tilde{X}/\!/G = \tilde{X}^{ss}/G$ can be obtained from $X/\!/G$ by blowing 
up along the proper transforms of the images $Z_R /\!/N$ %$GZ_R^{ss}/G$
in $X/\!/G$ of the subschemes $GZ_R^{ss}$ of $X^{ss}$ in decreasing order of $\dim R$.

The blow-down morphism $\psi_G : \tilde{X}/\!/G \to X/\!/G$ restricts to an isomorphism over the geometric quotient $X^s/G$, so both 
$ \tilde{X}/\!/G$ and $X/\!/G$ can be regarded as projective completions of $X^s/G$.

\begin{rmk} \label{stratification}
Associated to the linear action of $G$ on $X$ and an invariant inner product on the Lie algebra of $G$, there is a stratification 
$$ X = \bigsqcup_{\beta \in \mathcal{B}} S_\beta$$ of $X$ by locally closed subschemes $S_\beta$, 
indexed by a partially ordered finite subset $\mathcal{B}$ of a positive Weyl chamber for the reductive group $G$,  such that 

 (i) $S_0 = X^{ss}$, 

\noindent and for each $\beta \in \mathcal{B}$

 (ii) the closure of $S_\beta$ is contained in $\bigcup_{\gamma \geqslant \beta} S_\gamma$, and

 (iii) $S_\beta \cong G \times^{P_\beta} Y_\beta^{ss}$

\noindent  where $$\gamma \geqslant \beta \mbox{ if and only if } \gamma = \beta \mbox{ or } |\!|\gamma|\!| > |\!|\beta|\!|$$
and 
$P_\beta$ is a parabolic subgroup of $G$ acting on  a projective subscheme $\overline{Y}_\beta$ of $X$ with an open subset $Y_\beta^{ss}$ which is determined by the action of a Levi subgroup $L_\beta = \stab_G(\beta)$ of $P_\beta$ with respect to a suitably twisted linearisation \cite{K}. 

Here the original linearisation for the action of $G$ on $L \to X$ is restricted to the action of the parabolic subgroup $P_\beta$ over $\overline{Y}_\beta$, and then twisted by a rational character of $P_\beta$  for the  central one-parameter subgroup $\GG_m$ determined by $\beta$ of the Levi subgroup $L_\beta = \stab_G(\beta)$ of $P_\beta$. This one-parameter subgroup $\GG_m$ acts by conjugation with all weights strictly positive on the Lie algebra of the unipotent radical of $P_\beta$. In the notation of \cite{K} $Z_{\beta}$ denotes $Z_{\min}$ (defined as at Definition
\ref{def:ExPr2}) for the action of $P_\beta$ on $\overline{Y}_\beta$, while $Y_\beta$ plays the role of $X^0_{\min}$; also  $p_\beta: Y_\beta \to Z_\beta$ corresponds to $p: X^0_{\min} \to Z_{\min}$ and $Y_\beta^{ss} = (p_\beta)^{-1}(Z_\beta^{ss})$. Here $Z_\beta^{ss}$ (corresponding to $Z_{\min}^{ss,R}$ in the notation of this paper) is the semistable locus for the action of $L_\beta$ on $Z_\beta$, twisted by the rational character $\beta$ so that  the one-parameter subgroup $\GG_m$ acts on $Z_\beta$ with weight 0.

Since $S_{\beta} = GY_{\beta}^{ss} \cong G \times^{P_\beta} Y_\beta^{ss}$, in order  to construct a quotient by $G$ of an open subset of an unstable stratum $S_\beta$, we can study the linear action on $\overline{Y}_\beta$ of the parabolic subgroup $P_\beta$,  twisting  the linearisation by a well adapted rational character.

\end{rmk}

\begin{rmk} \label{rem:specialcases}
The partial desingularisation construction of a linear action of $G$ on $X$ has been described under the assumption that $X^s \neq \emptyset$; this is the situation considered in \cite{K2}. We can also consider the situation when $X^s = \emptyset$. There are several different cases.

$X$ is irreducible, so if $X^{ss} = \emptyset$ then in the notation of Remark \ref{cfK2}  there is an unstable stratum $S_\beta$ with $\beta \neq 0$ which is a non-empty open subscheme of $X$ and thus $X = \overline{S_\beta}$. Then, as discussed in Remark \ref{stratification}, using GIT to construct a quotient of a non-empty open subscheme of $X$ reduces to non-reductive GIT for the action of the parabolic subgroup $P_\beta$ on $\overline{Y_\beta}$.

If $X^s = \emptyset$ but $X^{ss}$ is non-empty then we can attempt to apply the inductive partial desingularisation procedure to $X^{ss}$. There are different ways in which this procedure can terminate.

It might be the case that $X^{ss} = GZ_R^{ss} \cong  G \times^{N_R} Z_R^{ss}$ for a positive-dimensional connected reductive subgroup $R$ of $G$, where $N_R$ is the normaliser of $R$ in $G$. Then $N_R$ and its quotient group $N_R/R$ are also reductive, and
$$X/\!/G \cong Z_R /\!/ N_R \cong Z_R /\!/ (N_R/R)$$
where $Z_R$ is the closed subscheme of $X$ which is the fixed point set for the action of $R$. So we can apply induction on the dimension of $G$ to study this case. Note that if in addition $Z_R^s \neq \emptyset$ where 
$$Z_R^s = \{ x \in Z_R : x \mbox{ is stable for the induced linear action of $N_R/R$ on $Z_R$ } \}, $$
then we have $X^{Ms} = GZ_R^s$ and this is precisely the situation in which $X^{Ms} \neq \emptyset = X^s$; that is, stability in Mumford's original sense differs from proper stability in Mumford's sense, which is stability in modern terminology (cf. Remarks \ref{remMumf} and \ref{remMumf2}).

If $GZ_R^{ss} \neq X^{ss} \neq X^s = \emptyset$ for every positive-dimensional connected reductive subgroup $R$ of $G$, then we can perform the first blow-up in the partial desingularisation construction to obtain $\psi_{(1)}: X_{(1)} \to X^{ss}$ such that $X_{(1)}^{ss} \subseteq X_{(1)}$ and $X_{(1)}^s = \emptyset$ as above (since $X^s_{(1)}$ is open and $X^s_{(1)} \setminus E_{(1)} = X^s = \emptyset$, where $E_{(1)}$ is the exceptional divisor).

If $X_{(1)}^{ss} = \emptyset$ then $X_{(1)}$ has a dense open subscheme $S_\beta$ for $\beta \neq 0$ and we are reduced to studying non-reductive GIT for a parabolic subgroup $P_\beta$ of $G$ as discussed above for the case when $X^{ss} = \emptyset$. If $X_{(1)}^{ss} = GZ_{(1),R}^{ss}$ for a positive dimensional connected reductive subgroup $R$ of $G$, where $Z_{(1),R}^{ss} = \{ x \in X_{(1)}^{ss} : R \mbox{ fixes }x \}$, then we can use induction on the dimension of $G$ as discussed above for the case when the corresponding statement is true of $X^{ss}$. Otherwise we can repeat the process, until it terminates in one of these two ways.

\end{rmk}

\section{The $U$-sweep of $Z_{\min}$ and stabilisers of maximal dimension}  \label{sectiona}  \label{usweepsection}

It is useful to gather together here some results which do not require the condition \eqref{eq:sss} that semistability coincides with stability for the $\hU$-action, but which, when this condition is satisfied, tell us that $UZ_{\min}$ is a closed subscheme of $X^0_{\min}$.

\begin{defn} \label{defnp}
Let $U$ be a  graded unipotent group
 and let $\hat{U} = U \rtimes \GG_m$ be the semi-direct product of $U$ by $\GG_m$ 
which defines the grading.
Suppose that $\hU$  acts linearly on an irreducible projective scheme $X$ with respect to a very ample line bundle $L$. If $d \geq 0$ let 
$$Z_{\min}^d = \{ z \in Z_{\min} \mid \dim \stab_U(z) = d \},$$
and let $d_{\max}$ be the maximal value of $d$ such that $ Z_{\min}^{d} \neq \emptyset$.
Recall that $ p: X^0_{\min} \to  Z_{\min}$ is the $\GG_m$-invariant morphism 
$$ x \mapsto   \lim_{\begin{array}{c} t \to 0\\t \in \GG_m \end{array} } t \cdot x \,\,.
$$
\end{defn}

\begin{lem} \label{lemmaA}
If $x \in X^0_{\min}$ and $u \in \hU$, then $p(ux) = p(x) $. If $x \in Z_{\min}$ and $u \in U$ then $p(x) = x$ and 
$$ ux \in  Z_{\min} \,\,\, \mbox{ iff } \,\,\, u \in \stab_U(x).$$
\end{lem}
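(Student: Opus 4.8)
The plan is to deduce all three assertions from one elementary observation: because $\GG_m$ acts on $\Lie U$ with strictly positive weights, for every $u \in U$ the conjugation orbit map $\GG_m \to U$, $t \mapsto tut^{-1}$, extends to a morphism $\mathbb{A}^1 \to U$ taking the value $e$ at $t = 0$. To establish this I would write $u = \exp(\xi)$ for a unique $\xi \in \Lie U$ (possible since $\kk$ has characteristic zero), decompose $\xi = \sum_j \xi_j$ into $\GG_m$-weight vectors of weights $\ell_j > 0$, and note that $tut^{-1} = \exp\bigl(\mathrm{Ad}(t)\xi\bigr) = \exp\bigl(\sum_j t^{\ell_j}\xi_j\bigr)$ is polynomial in $t$ and equals $\exp(0) = e$ at $t = 0$. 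I will write $u_t$ for this extended morphism, so $u_0 = e$.

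For the first statement I would fix $x \in X^0_{\min}$ and write $u = u's \in \hU$ with $u' \in U$ and $s \in \GG_m$. For $t \in \GG_m$ the relation $tu = (tu't^{-1})(ts)$ in $\hU$ gives $t \cdot (ux) = u'_t \cdot \bigl((ts)\cdot x\bigr)$. Since $X$ is projective, the orbit map $r \mapsto r\cdot x$ extends over $r = 0$, so $t \mapsto (ts)\cdot x$ extends to a morphism $\mathbb{A}^1 \to X$ with value $p(x)$ at $t = 0$; composing $t \mapsto (u'_t, (ts)\cdot x)$ with the action then exhibits $t \mapsto u'_t \cdot ((ts)\cdot x)$ as a morphism $\mathbb{A}^1 \to X$ whose value at $0$ is $e \cdot p(x) = p(x)$. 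This morphism restricts to $t \mapsto t\cdot(ux)$ on $\GG_m$, so by separatedness of $X$ we get $p(ux) = \lim_{t\to 0} t\cdot(ux) = p(x)$.

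For the second statement, if $x \in Z_{\min}$ then $x$ is a $\GG_m$-fixed point, so $p(x) = \lim_{t \to 0} t\cdot x = x$. The implication $u \in \stab_U(x) \Rightarrow ux = x \in Z_{\min}$ is immediate. For the converse I would use that $ux \in Z_{\min}$ forces $ux$ to be $\GG_m$-fixed, so for every $t \in \GG_m$, using $t\cdot x = x$, $ux = t\cdot(ux) = (tut^{-1})\cdot(t\cdot x) = u_t\cdot x$; hence the morphism $\mathbb{A}^1 \to X$, $t \mapsto u_t\cdot x$, is constant (equal to $ux$) on the dense open $\GG_m$, so constant everywhere, and evaluating at $t = 0$ gives $ux = u_0\cdot x = x$, i.e. $u \in \stab_U(x)$.

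I do not expect a real obstacle here: the lemma is formal once the extension of $t \mapsto tut^{-1}$ over $t = 0$ is available, and that single step is exactly where the hypotheses are used (strict positivity of the $\GG_m$-weights on $\Lie U$, together with characteristic zero so that $\exp$ is an isomorphism of varieties). Everything else only uses that $X$ is projective, so that $p$ is defined on all of $X$, and that $X$ is separated, so that a morphism out of $\mathbb{A}^1$ is determined by its restriction to $\GG_m$.
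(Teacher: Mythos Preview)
Your proof is correct. The key device --- extending the conjugation orbit $t\mapsto tut^{-1}$ to a morphism $\mathbb{A}^1\to U$ sending $0$ to $e$ --- is exactly the right packaging of the positive-weight hypothesis, and each step (the limit computations, the use of separatedness to identify the value at $0$ with the constant value on $\GG_m$) is handled cleanly.

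The paper's own argument is organised a little differently: it first treats the special case $u=\exp(\xi)$ with $\xi$ a $\GG_m$-weight vector by choosing coordinates on $\PP(H^0(X,L)^*)$ in which $\GG_m$ acts diagonally and $\xi$ is in Jordan form, and then deduces the general case from the observation that $U$, $\stab_U(x)$ and $\{u\in U\mid up(x)\in Z_{\min}\}$ are all invariant under $\GG_m$-conjugation. Your approach avoids both the reduction to weight vectors and the coordinate computation, handling arbitrary $u$ directly via the contraction $u_t\to e$; this is arguably more transparent and makes the role of the positive-weight hypothesis completely explicit. The paper's route, on the other hand, ties in with the Jordan-form viewpoint used elsewhere (e.g.\ in the $\GG_a\rtimes\GG_m$ analysis).
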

\noindent {\bf Proof:} By definition $p$ is $\GG_m$-invariant and restricts to the identity on $Z_{\min}$. If $u = \exp(\xi)$ for some $\xi \in \Lie U$ which is a weight vector for the action of $\GG_m$, then this follows by choosing coordinates on $\PP(H^0(X,L)^*)$ with respect to which the action of $\GG_m$ is diagonal and the infinitesimal action of $\xi$ is in Jordan form. Moreover $p$ is $\GG_m$-invariant and $ Z_{\min}$ is fixed pointwise by $\GG_m$, so $U$, $\stab_U(p(x))$ and $\{ u \in U \mid up(x) \in Z_{\min} \}$ are all invariant under conjugation by $\GG_m$, and the general result follows. \hfill $\Box$

\begin{lem} \label{lemmaFA}  $U  Z_{\min}^{d_{\max}} \cap  Z_{\min} =  Z_{\min}^{d_{\max}}$, and 
if $x \in Z_{\min}$ then  $T_x(U  Z_{\min}^{d_{\max}}) \cap  T_x Z_{\min} =  T_x Z_{\min}^{d_{\max}}.$
\end{lem}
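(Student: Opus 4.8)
The plan is to deduce the statement from Lemma \ref{lemmaA}, which already tells us that for $x \in Z_{\min}$ and $u \in U$ we have $ux \in Z_{\min}$ if and only if $u \in \stab_U(x)$. For the first assertion, $U Z_{\min}^{d_{\max}} \cap Z_{\min} = Z_{\min}^{d_{\max}}$, one inclusion is trivial. For the other, take $z \in Z_{\min}^{d_{\max}}$ and $u \in U$ with $uz \in Z_{\min}$; by Lemma \ref{lemmaA} this forces $u \in \stab_U(z)$, so $uz = z \in Z_{\min}^{d_{\max}}$. Thus $U Z_{\min}^{d_{\max}} \cap Z_{\min} \subseteq Z_{\min}^{d_{\max}}$, giving equality. (Here one should note that $d_{\max}$ being the maximal stabiliser dimension on $Z_{\min}$, together with the fact that $\stab_U(uz) = u\stab_U(z)u^{-1}$ has the same dimension as $\stab_U(z)$, is what pins the intersection down to $Z_{\min}^{d_{\max}}$ rather than some larger stratum.)

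For the tangent space statement, the inclusion $T_x Z_{\min}^{d_{\max}} \subseteq T_x(U Z_{\min}^{d_{\max}}) \cap T_x Z_{\min}$ is immediate since $Z_{\min}^{d_{\max}}$ is a (locally closed) subvariety of both $U Z_{\min}^{d_{\max}}$ and $Z_{\min}$. The content is the reverse inclusion. The key geometric input is that the action map $a : U \times Z_{\min}^{d_{\max}} \to X$, $(u,z) \mapsto uz$, has a well-understood differential: at a point $(e,x)$ with $x \in Z_{\min}^{d_{\max}}$ its image is $(\Lie U)\cdot x + T_x Z_{\min}^{d_{\max}}$, where $(\Lie U)\cdot x$ denotes the image of $\Lie U$ under the infinitesimal action. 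So $T_x(U Z_{\min}^{d_{\max}})$ is spanned by $T_x Z_{\min}^{d_{\max}}$ and the infinitesimal $U$-orbit direction $(\Lie U)\cdot x$. The first assertion of the lemma, that $U Z_{\min}^{d_{\max}}$ meets $Z_{\min}$ exactly in $Z_{\min}^{d_{\max}}$, should be combined with a transversality/dimension-count argument: along $Z_{\min}^{d_{\max}}$ the $U$-orbit directions are transverse to $Z_{\min}$, because if a tangent vector $\xi\cdot x$ (for $\xi \in \Lie U$) lay in $T_x Z_{\min}$ then, since $Z_{\min}$ is cut out linearly (it is $X \cap \PP(V_{\min})$) and $\GG_m$ acts diagonally with $\xi$ raising weights, one can integrate to show $\exp(t\xi) x \in Z_{\min}$ for small $t$, hence $\exp(t\xi) \in \stab_U(x)$ by Lemma \ref{lemmaA}, forcing $\xi \in \Lie\stab_U(x)$ and so $\xi \cdot x = 0$. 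This shows $(\Lie U)\cdot x \cap T_x Z_{\min} = 0$ and the decomposition $T_x(U Z_{\min}^{d_{\max}}) = (\Lie U)\cdot x \oplus T_x Z_{\min}^{d_{\max}}$ becomes internal; intersecting with $T_x Z_{\min}$ then picks out exactly $T_x Z_{\min}^{d_{\max}}$.

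I expect the main obstacle to be the transversality step — specifically, making precise the passage from "an infinitesimal orbit direction lies tangent to $Z_{\min}$" to "the one-parameter subgroup actually stays in $Z_{\min}$". The cleanest route is to work in the coordinates on $\PP(H^0(X,L)^*)$ used in the proof of Lemma \ref{lemmaA}, where $\GG_m$ acts diagonally and each weight vector $\xi \in \Lie U$ acts by a Jordan block shifting weight by $\ell > 0$; in those coordinates $Z_{\min} = \PP(V_{\min}) \cap X$ and $V_{\min}$ is a coordinate subspace, so a curve leaving $V_{\min}$ does so to first order, and tangency of $\xi\cdot x$ to $Z_{\min}$ is genuinely equivalent to $\xi\cdot x = 0$ modulo $T_x Z_{\min}$-directions contained in $\PP(V_{\min})$. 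One should also be a little careful that $Z_{\min}^{d_{\max}}$ need not be smooth, so "$T_x$" means the Zariski tangent space and the orbit-map differential computation should be phrased accordingly; but the semidirect-product structure $\hU = U \rtimes \GG_m$ and the $\GG_m$-equivariance of all the subvarieties involved (as in the last lines of the proof of Lemma \ref{lemmaA}) keep everything compatible with the weight decomposition, which is what makes the argument go through.
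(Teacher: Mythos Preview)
Your argument for the set-theoretic equality $U Z_{\min}^{d_{\max}} \cap Z_{\min} = Z_{\min}^{d_{\max}}$ is correct and is exactly what the paper intends.

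For the tangent space statement there is a gap. You write $T_x(U Z_{\min}^{d_{\max}}) = (\Lie U)\cdot x + T_x Z_{\min}^{d_{\max}}$ and then intersect with $T_x Z_{\min}$. But the right-hand side is only the \emph{image of the differential} of the action map $U \times Z_{\min}^{d_{\max}} \to X$ at $(e,x)$; for Zariski tangent spaces of possibly singular varieties one only has the inclusion $(\Lie U)\cdot x + T_x Z_{\min}^{d_{\max}} \subseteq T_x(U Z_{\min}^{d_{\max}})$, and the reverse inclusion is exactly what you need and have not established. You correctly flag this worry at the end but do not resolve it, and the ``integrate to stay in $Z_{\min}$'' step does not help with this point (a curve tangent to a subvariety need not stay in it).

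The clean fix --- and this is what the paper's one-line reference to Lemma~\ref{lemmaA} is pointing at --- is to use the $U$-invariant retraction $p:X^0_{\min}\to Z_{\min}$ directly instead of the action map. By Lemma~\ref{lemmaA} one has $p(uz)=z$ for $z\in Z_{\min}$, so $p$ restricts to a morphism $U Z_{\min}^{d_{\max}} \to Z_{\min}^{d_{\max}}$, and hence $dp_x\bigl(T_x(U Z_{\min}^{d_{\max}})\bigr)\subseteq T_x Z_{\min}^{d_{\max}}$. On the other hand $p|_{Z_{\min}}=\id$, so $dp_x(v)=v$ for every $v\in T_x Z_{\min}$. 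Thus any $v\in T_x(U Z_{\min}^{d_{\max}})\cap T_x Z_{\min}$ satisfies $v=dp_x(v)\in T_x Z_{\min}^{d_{\max}}$, which is the required inclusion. Your transversality observation $(\Lie U)\cdot x \cap T_x Z_{\min}=0$ is correct and is essentially the statement that $dp_x$ kills the orbit directions, but the argument via $p$ avoids having to describe the full Zariski tangent space of the $U$-sweep.
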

\noindent {\bf Proof:} This follows from the proof of Lemma \ref{lemmaA}. \hfill $\Box$

\begin{lem} \label{lemmaB}

The $U$-sweep $U  Z_{\min}^{d_{\max}} = \hat{U}  Z_{\min}^{d_{\max}}   $ of $ Z_{\min}^{d_{\max}}$ is a closed subscheme of $ X^0_{\min}$. 
\end{lem}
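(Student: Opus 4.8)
The plan is to deduce the statement from three ingredients: upper semicontinuity of stabiliser dimension, the Kostant--Rosenlicht theorem applied fibrewise over $Z_{\min}$, and a properness argument exploiting that $Z_{\min}^{d_{\max}}$ is projective. Throughout I would assume $Z_{\min}^{d_{\max}}$ --- and hence also $UZ_{\min}^{d_{\max}} = \hat U Z_{\min}^{d_{\max}}$, the equality holding because $\GG_m$ fixes $Z_{\min}\supseteq Z_{\min}^{d_{\max}}$ pointwise --- to be irreducible, reducing to irreducible components otherwise.

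First I would check that $Z_{\min}^{d_{\max}}$ is a closed subvariety of $X$: the function $z\mapsto\dim\stab_U(z)$ on $Z_{\min}$ is upper semicontinuous with maximum value $d_{\max}$, so $Z_{\min}^{d_{\max}} = \{z\in Z_{\min}\mid\dim\stab_U(z)\geqslant d_{\max}\}$ is closed in $Z_{\min}$, which is closed in $X$; thus $Z_{\min}^{d_{\max}}$ is projective. Secondly, the limit morphism $p\colon X^0_{\min}\to Z_{\min}$ is affine, since on each member $X_\sigma$ ($\sigma\in H^0(X,L)_{\max}$) of the affine cover of $X^0_{\min}$ it restricts to the morphism of affine varieties $X_\sigma\to Z_{\min}\cap X_\sigma$. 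Hence each fibre $p^{-1}(z)$ is an affine variety closed in $X^0_{\min}$, and since by Lemma \ref{lemmaA} the orbit $Ux$ lies in $p^{-1}(p(x))$, the Kostant--Rosenlicht theorem shows $Ux$ is closed in $p^{-1}(p(x))$, hence in $X^0_{\min}$, for every $x\in X^0_{\min}$. Moreover, for $z\in Z_{\min}^{d_{\max}}$ we have $\dim Uz = \dim U - d_{\max}$, so the boundary $\overline{Uz}^{X}\setminus Uz$ inside the projective variety $X$ has dimension $<\dim U - d_{\max}$; each of its points has $U$-stabiliser of dimension $>d_{\max}$, impossible inside $X^0_{\min}$ since there $\dim\stab_U(x)\leqslant\dim\stab_U(p(x))\leqslant d_{\max}$. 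Therefore $\overline{Uz}^{X}\cap X^0_{\min} = Uz$ for every $z\in Z_{\min}^{d_{\max}}$.

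To globalise over $Z_{\min}^{d_{\max}}$, I would consider the relative orbit family
\[
\mathcal{F} = \overline{\{\,(z,uz)\mid z\in Z_{\min}^{d_{\max}},\ u\in U\,\}} \subseteq Z_{\min}^{d_{\max}}\times X,
\]
a closed subvariety invariant under the $U$-action $u\cdot(z,y)=(z,uy)$. As $Z_{\min}^{d_{\max}}$ is projective, the second projection $\mathrm{pr}_X\colon\mathcal{F}\to X$ is proper, so $\mathrm{pr}_X(\mathcal{F})$ is closed in $X$, and it suffices to prove $\mathrm{pr}_X(\mathcal{F})\cap X^0_{\min} = UZ_{\min}^{d_{\max}}$. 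The two morphisms $(z,y)\mapsto z$ and $(z,y)\mapsto p(y)$ on the open subvariety $\mathcal{F}\cap(Z_{\min}^{d_{\max}}\times X^0_{\min})$ agree on the dense subset $\{(z,uz)\}$ (by Lemma \ref{lemmaA}), hence coincide; so $(z,y)\in\mathcal{F}$ with $y\in X^0_{\min}$ forces $z=p(y)$, and writing $\mathcal{F}_{z_0}=\mathcal{F}\cap(\{z_0\}\times X)$ we get $\mathrm{pr}_X(\mathcal{F})\cap X^0_{\min} = \{\,y\in X^0_{\min}\mid y\in\mathcal{F}_{p(y)}\cap X^0_{\min}\,\}$. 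Since $\supseteq$ is clear, it remains to show $\mathcal{F}_{z_0}\cap X^0_{\min}\subseteq Uz_0$ for every $z_0\in Z_{\min}^{d_{\max}}$; by the previous paragraph this would follow from $\mathcal{F}_{z_0}\cap X^0_{\min}\subseteq\overline{Uz_0}^{X}$.

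This last inclusion --- that the fibre $\mathcal{F}_{z_0}$ acquires no spurious limits over the degeneration locus of $Z_{\min}^{d_{\max}}$ --- is the crux, and I expect it to be the main obstacle. A fibre-dimension count, using that the family $\{(z,uz)\}\to Z_{\min}^{d_{\max}}$ has all fibres of the constant dimension $\dim U - d_{\max}$ (because $\dim\stab_U(z)=d_{\max}$ for every $z\in Z_{\min}^{d_{\max}}$, by maximality of $d_{\max}$), only yields $\mathcal{F}_{z_0}=\overline{Uz_0}^{X}$ over a dense open subset of $Z_{\min}^{d_{\max}}$, so further input is needed over the complement. Here I would use Lemma \ref{lemmaFA}: by the Jordan-form/weight analysis underlying Lemma \ref{lemmaA} all orbit directions $\xi\cdot z_0$ ($\xi\in\Lie U$) lie in strictly positive $\GG_m$-weight subspaces of the tangent space, so are transverse to $T_{z_0}Z_{\min}$, and $T_{z_0}(UZ_{\min}^{d_{\max}})\cap T_{z_0}Z_{\min}=T_{z_0}Z_{\min}^{d_{\max}}$; thus in a neighbourhood of $Z_{\min}^{d_{\max}}$ inside $p^{-1}(Z_{\min}^{d_{\max}})$ --- which is affine over $Z_{\min}^{d_{\max}}$ with the $\GG_m$-action contracting it onto the section --- the sweep $UZ_{\min}^{d_{\max}}$ meets the section cleanly and is generated along the contracting directions by the orbit foliation. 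Combined with Kostant--Rosenlicht (so $\mathcal{F}_{z_0}\cap X^0_{\min}$ is a union of closed $U$-orbits in the affine variety $p^{-1}(z_0)$, each of dimension $\dim U - d_{\max}$ as it lies in $X^0_{\min}$) this should force $\mathcal{F}\to Z_{\min}^{d_{\max}}$ to be equidimensional with $Uz_0$ dense in each fibre, completing the argument. Reducing to a neighbourhood of $Z_{\min}^{d_{\max}}$ is legitimate because $\overline{UZ_{\min}^{d_{\max}}}\setminus UZ_{\min}^{d_{\max}}$ is $\GG_m$-invariant and every $\GG_m$-limit of a point of it lies in $p(\overline{UZ_{\min}^{d_{\max}}})=Z_{\min}^{d_{\max}}\subseteq UZ_{\min}^{d_{\max}}$.
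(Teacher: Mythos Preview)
Your approach differs from the paper's and has a genuine gap at the step you yourself flag as the crux. The paper's argument bypasses this difficulty entirely by passing to the $\GG_m$-quotient: since $(X^0_{\min}\setminus Z_{\min})/\GG_m = X/\!/\GG_m$ for the adapted linearisation, with a projective morphism to $Z_{\min}$ induced by $p$, and since for each $z\in Z_{\min}^{d_{\max}}$ the orbit $Uz\cong U/\stab_U(z)$ is an affine space on which $\GG_m$ acts linearly with strictly positive weights, the induced morphism $(UZ_{\min}^{d_{\max}}\setminus Z_{\min}^{d_{\max}})/\GG_m\to Z_{\min}^{d_{\max}}$ is a weighted projective bundle. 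This is proper over $Z_{\min}^{d_{\max}}$, which is closed in $Z_{\min}$; since the ambient morphism to $Z_{\min}$ is separated, the embedding into $(X^0_{\min}\setminus Z_{\min})/\GG_m$ is closed. Passing back to affine cones over $Z_{\min}$ gives closedness of $UZ_{\min}^{d_{\max}}$ in $X^0_{\min}$.

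In your argument, the first three paragraphs are fine: the reduction to showing $\mathcal{F}_{z_0}\cap X^0_{\min}\subseteq Uz_0$ is valid, and your proof that $\overline{Uz_0}^{X}\cap X^0_{\min}=Uz_0$ via the stabiliser-dimension bound is correct. But the final paragraph does not close the gap. First, a minor slip: for $y\in p^{-1}(z_0)$ one only has $\stab_U(y)\subseteq\stab_U(z_0)$, hence $\dim Uy\geqslant\dim U-d_{\max}$, not equality. More seriously, even granting equidimensionality of the orbits, nothing you have written excludes $\mathcal{F}_{z_0}\cap X^0_{\min}$ from containing several distinct $U$-orbits: closures of equidimensional families routinely acquire extra fibre components over special points, and the tangent-space transversality along $Z_{\min}^{d_{\max}}$ supplied by Lemma~\ref{lemmaFA} does not by itself prevent this. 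To make your route work you would need flatness of $\mathcal{F}\to Z_{\min}^{d_{\max}}$, or a more substantive use of the $\GG_m$-action --- and quotienting your $\mathcal{F}$ by $\GG_m$ over $X^0_{\min}\setminus Z_{\min}$ produces exactly the weighted projective bundle the paper uses, so the natural repair collapses to the paper's proof.
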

\noindent {\bf Proof:} 
By definition $X^0_{\min}$ is the open stratum of the Bialynicki-Birula stratification of $X$ associated to the $\GG_m$-action (cf. \cite{BB,K}). Thus the restriction to $ X^0_{\min} \setminus  Z_{\min} = X^{s,\GG_m}_{\min+} = X^{ss,\GG_m}_{\min+}$ of the $\GG_m$-invariant morphism $p:  X^0_{\min} \to  Z_{\min}$ factors through a projective morphism from the geometric quotient $( X^0_{\min} \setminus  Z_{\min})/\GG_m = X/\!/\GG_m$ to $ Z_{\min}$. Moreover the fibres of $p:  X^0_{\min} \to  Z_{\min}$ can be identified with the affine cone associated to the fibre of the morphism from this geometric quotient to $ Z_{\min}$. When $X$ is nonsingular then the fibres of $p:  X^0_{\min} \to  Z_{\min}$ are affine spaces and the fibres of the geometric quotient $( X^0_{\min} \setminus  Z_{\min})/\GG_m$ over $ Z_{\min}$ are weighted projective spaces, and in general this is true for the ambient projective space defined by the linearisation.

Since $\GG_m$ normalises $U$ and fixes $ Z_{\min}$ pointwise, it follows from Lemma \ref{lemmaA} that $p(U  Z_{\min}^{d_{\max}}) =  Z_{\min}^{d_{\max}}$, and if $x \in  Z_{\min}^{d_{\max}}$ then the fibre over $x$ of $p: U  Z_{\min}^{d_{\max}} \to  Z_{\min}^{d_{\max}}$ is isomorphic to $U/\stab_U(x)$. This is an affine space of dimension $\dim U - d_{\max}$ on which $\GG_m$ acts with strictly positive weights, so the induced morphism
$$ (U  Z_{\min}^{d_{\max}} \setminus  Z_{\min}^{d_{\max}})/\GG_m  \to  Z_{\min}^{d_{\max}}
$$
is a weighted projective bundle. In particular it follows that the embedding $ (U  Z_{\min}^{d_{\max}} \setminus  Z_{\min}^{d_{\max}})/\GG_m  \to ( X^0_{\min} \setminus  Z_{\min})/\GG_m$ is closed, and hence so is the corresponding embedding of affine cones $U  Z_{\min}^{d_{\max}} \to  X^0_{\min}$. \hfill $\Box$

\begin{cor} \label{cortolemmaB}

When  condition \eqref{eq:sss} is satisfied, 
the $U$-sweep $U  Z_{\min} $ of $ Z_{\min}$ is a closed subscheme of $ X^0_{\min}$. 
\end{cor}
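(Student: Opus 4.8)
The plan is to reduce the statement immediately to Lemma \ref{lemmaB}, so the corollary is essentially a matter of unwinding notation. Recall from Definition \ref{defnp} that, for $d \geq 0$,
\[
Z_{\min}^d = \{ z \in Z_{\min} \mid \dim \stab_U(z) = d \},
\]
and that $d_{\max}$ is the largest value of $d$ for which $Z_{\min}^d \neq \emptyset$. Condition $(\mf{C}^{*})$ asserts that $\stab_U(z) = \{e\}$ for every $z \in Z_{\min}$; in particular $\dim \stab_U(z) = 0$ for all $z \in Z_{\min}$, so $Z_{\min}^0 = Z_{\min}$ and $d_{\max} = 0$. Hence $U Z_{\min}^{d_{\max}} = U Z_{\min}^0 = U Z_{\min}$.

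I would then simply invoke Lemma \ref{lemmaB}, which states that the $U$-sweep $U Z_{\min}^{d_{\max}} = \hat{U} Z_{\min}^{d_{\max}}$ of $Z_{\min}^{d_{\max}}$ is a closed subvariety of $X^0_{\min}$. Substituting the identification $Z_{\min}^{d_{\max}} = Z_{\min}$ obtained above yields that $U Z_{\min} = \hat{U} Z_{\min}$ is a closed subvariety of $X^0_{\min}$, which is precisely the claim. There is no genuine obstacle here: all of the geometric content (the Bialynicki--Birula picture, the identification of the fibres of $p$ with affine cones over weighted projective bundles $U/\stab_U(x)$, and the resulting closedness) already sits inside Lemma \ref{lemmaB}, and the only point to verify for the corollary is the trivial implication that $(\mf{C}^{*})$ forces $\dim \stab_U$ to be constantly $0$ on $Z_{\min}$, so that the locus $Z_{\min}^{d_{\max}}$ of maximal stabiliser dimension exhausts $Z_{\min}$.

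I would also note in passing that the same argument goes through verbatim under the weaker hypothesis $(\mf{C}^{**})$ of Remark \ref{generalise}, once one combines it with the slicing argument described there: that hypothesis forces $\dim \stab_U(x)$ to be the constant value $d^U_0$ on all of $X^0_{\min}$, hence $Z_{\min} = Z_{\min}^{d^U_0} = Z_{\min}^{d_{\max}}$, and Lemma \ref{lemmaB} again applies. Under the stronger condition $(\mf{C}^{*})$ imposed in the corollary, however, this refinement is unnecessary.
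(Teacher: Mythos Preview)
Your proof is correct and takes essentially the same approach as the paper: the paper's proof is the one-line observation that condition $(\mf{C}^{*})$ forces $Z_{\min}^{d_{\max}} = Z_{\min}$, so the result follows immediately from Lemma \ref{lemmaB}. Your additional remark about $(\mf{C}^{**})$ is a reasonable aside but is not part of the paper's proof of this corollary.
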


\noindent {\bf Proof:} This follows immediately from  Lemma \ref{lemmaB} since $Z_{\min}^{d_{\max}} = Z_{\min}$
when  \eqref{eq:sss} is satisfied. \hfill $\Box$

\begin{rmk} \label{rmk:endof2}
Once we have proved Theorem  \ref{thm:ExCo1}(1), we will know that when  \eqref{eq:sss} is satisfied then $X^0_{\min}$ has a geometric quotient $X^0_{\min}/U$. It will be useful for the proof of Theorem  \ref{thm:ExCo1}(2) to know when a point of $X^0_{\min}/U$ represented by $x \in X^0_{\min}$ is fixed under the action of a one-parameter subgroup $\tilde{\lambda}: \GG_m \to \hat{U}_T/U \cong T \times \l(\GG_m)$, and which points of $X^0_{\min}/U$ are fixed by the action of $T$. Every such one-parameter subgroup is conjugate to a one-parameter subgroup of $T \times \l(\GG_m)$, so without loss of generality we may assume $\lambda:\GG_m \to T \times \l(\GG_m)$. 
If $\tilde{\lambda}(\GG_m) \subseteq \hU$ then $x \in UZ_{\min}$, so let us assume that $\tilde{\lambda}(\GG_m) \not\subseteq \hU$.
This one-parameter subgroup fixes the orbit $Ux \in X^0_{\min}/U$ if and only if $\tilde{\lambda}(\GG_m) \subseteq U.\stab_{\hU_T}(x)$, or equivalently $\xi \in \mathrm{Lie}U + \mathrm{Lie}\stab_{\hU_T}(x)$, where $\xi$ generates $\mathrm{Lie}\lambda(\GG_m)$. Since $p$ is $\hU$-invariant and $T$-equivariant, $\mathrm{Lie}\stab_{\hU_T}(x) \subseteq \mathrm{Lie}U + \mathrm{Lie}\stab_T(p(x))$, and since by assumption $\xi \in \mathrm{Lie}T$, which has weight 0 for the adjoint action of $\l(\GG_m)$ while this action has strictly positive weights on $\mathrm{LieU}$,it follows that $\xi \in  \mathrm{Lie}\stab_T(p(x))$. We are assuming that $\xi \not\in \mathrm{Lie}\hU$, so $p(x)$ is fixed by a two-dimensional subtorus $T'$ of $T \times \l(\GG_m)$ containing the one-parameter subgroup $\l(\GG_m)$ of $\hU$. This subtorus $T'$ preserves $p^{-1}(p(x))$, which by the work of Bialynicki-Birula \cite{BiB} is affine with a linear action of $T'$ and affine linear action of $\hU_T$  (when $p(x) \in \overline{T'x}$ is identified with 0).

This argument also shows that the fixed point set $(X^0_{\min}/U)^T$ for the induced action of $T$ on $X^0_{\min}/U$ is $UZ_{\min}^T/U$, where $Z_{\min}^T$ is the fixed point set for $T$ acting on $Z_{\min}$.
\end{rmk}

\begin{rmk} \label{maxdimstab}
After we have proved Theorem  \ref{thm:ExCo1}(1), we will want to consider the situation when condition \eqref{eq:sss} is not satisfied. Then there will exist $x \in X^0_{\min}$ such that $\dim \stab_U(x) >0$, and we will be interested in the closed subscheme of $X^0_{\min}$ where $\dim \stab_U(x)$ is maximal. By Lemma \ref{lemmaA}  the morphism $p:X^0_{\min} \to Z_{\min}$ is $\hU$-invariant, so if $z \in Z_{\min}$ %its stabiliser $\stab_U(z)$
then $U$ acts on the fibre $p^{-1}(z)$ of $p$. Since $\GG_m$ normalises $U$ we have $\stab_U(tx) $ is conjugate to $ \stab_U(x)$ when $x \in X^0_{\min}$ and $t \in \GG_m$, so taking the limit as $t \to 0$ it follows that $\dim \stab_U(x) \leqslant \dim \stab_U(p(x))$. Thus the maximal value $d_{\max}$ of the dimension of $\stab_U(z)$ for $z \in Z_{\min}$ is equal to the maximal value of the dimension of $\stab_U(x)$ for $x \in X^0_{\min}$, and
$$ \{ x \in X^0_{\min} \,\, | \,\, \dim\stab_U(x) = d_{\max} \} \subseteq p^{-1}(Z_{\min}^{d_{\max}}).$$
Moreover to find $ \{ x \in X^0_{\min} \,\, | \,\, \dim\stab_U(x) = d_{\max} \}$ it suffices to consider the fixed point set for the action of $\stab_U(z)$ on the fibre $p^{-1}(z)$ of $p$ for $z \in Z_{\min}^{d_{\max}}$.
\end{rmk}

\section{Quotienting in stages}

In the situation of reductive GIT we can quotient in stages. Suppose that $G_1$ is a reductive normal subgroup of a reductive group $G$, and that $G$ acts linearly on a projective variety $X$. Then
there is an induced linear action of the reductive group $G_2/G_1$ on the GIT quotient $X/\!/G_1$, and the associated GIT quotient $(X/\!/G_1)/\!/(G_2/G_1)$ is naturally isomorphic to $X/\!/G$. This follows from the fact that if $G$ acts on an algebra $A$ then the algebras of invariants satisfy $(A^{G_1})^{G/G_1} = A^{G}$.

Now let $H=U \rtimes R$ be a linear algebraic group  over $\kk$ with externally graded unipotent radical
$U$, so that there is a semi-direct product $\hat{H} = H \rtimes \l(\GG_m) = U \rtimes (R \times \l(\GG_m))$ such that the adjoint action of the one-parameter subgroup $\l:\GG_m \to \hat{H}$  on the Lie algebra of $U$ has all weights strictly positive. Suppose that $\hat{H}$ acts linearly on a projective variety $X$. We are aiming to show that in suitable circumstances the $\hU$-invariants are finitely generated and that we can construct $X\env \hat{H}$ as $(X\env \hU)/\!/R$ using the induced action of $R \cong \hat{H}/\hU$ on $X \env \hU$. 

Suppose that $H_1 = U_1 \rtimes R_1$ is a normal subgroup of $\hat{H}$, % $\l(\GG_m)$
 where $U_1 \leqslant U$ is a normal unipotent subgroup of $\hat{H}$ and $R_1$ is a reductive normal subgroup of $R$. Then $\hat{H}_1$ is a normal subgroup of $\hat{H}$, and if the $\hat{H}_1$-invariants are finitely generated there is an induced action of $\hat{H}/\hat{H}_1$ on $X\env \hat{H}_1$. However $\hat{H}/\hat{H}_1 \cong (U/U_1) \rtimes (R/R_1)$, and in general this has no grading one-parameter subgroup. 
 
 So instead we should consider the quotient $X \wenv H$ which we want to construct as $(\PP^1 \times X) \env \hat{H}$. There is an action of $\GG_m \times \hat{H}$ on $\PP^1 \times X$ where 
$\GG_m$ acts by multiplication on $\PP^1$ and trivially on $X$, while $\hat{H} = H \rtimes \l(\GG_m)$ acts diagonally on $X \times \PP^1$ with the given action on $X$ and multiplication by a character $\hat{H} \to \GG_m$ with kernel $H$ on $\PP^1$. We use the linearisation of the action given by $\mathcal{L} \otimes \mathcal{O}_{\PP^1}(m)$ for $m>\!> 1$. Then there is an induced linear action of 
 $(\GG_m \times \hat{H})/\hat{H}_1$ on $X\wenv H_1 = (\PP^1 \times X)\env \hat{H}_1$, and 
 $$(\GG_m \times \hat{H})/\hat{H}_1 \cong (H/H_1) \rtimes (\GG_m \times \l(\GG_m) / \delta(\GG_m)) \cong (H/H_1) \rtimes \GG_m$$
 where the adjoint action of the quotient  $(\GG_m \times \l(\GG_m) / \delta(\GG_m)) \cong \GG_m$ of $\GG_m \times \l(\GG_m) $ by the diagonal subgroup $\delta(\GG_m)$ on the Lie algebra of the unipotent radical $U/U_1$ of $H/H_1$ has all weights strictly positive. 
 
 It follows that if we have finitely generated invariants for the linear action of $\hat{H}_1$ on $\PP^1 \times X$ and for the induced action of $(\GG_m \times \hat{H})/\hat{H}_1$ on $X \wenv H_1$, then we will have finitely generated invariants for the linear action of $\hat{H}$ on $\PP^1 \times X$ and
 $$ X \wenv H = (\PP^1 \times X)\env \hat{H} = (X \wenv H_1) \wenv (H/H_1)$$
 since $(\PP^1 \times \PP^1)/\delta(\GG_m) \cong \PP^1$. This means in particular that we can reduce our problem to the case when the unipotent radical of $H$ is commutative, and when furthermore the action of $\l(\GG_m)$ on its Lie algebra has a single weight space.

\section{When semistability coincides with stability for the unipotent radical}   \label{section3}
\label{ss=ssection}

In this section we are aiming to prove Theorem \ref{thm:ExCo1}.
We assume that $\mathcal{L}$ is a very ample (rational) linearisation with respect to a line bundle $L \to X$ for an action of the linear algebraic group $H = U \rtimes R$ with externally graded unipotent radical
$U$ on an irreducible projective scheme $X$, and
that there is an extension of this linearisation to a linearisation of %an action of 
$\hat{H} = H \rtimes \l(\GG_m) = U \rtimes (R \times \l(\GG_m))$. Let $T$ be a maximal torus of the Levi subgroup $R$ of $H$; in this section we will mainly consider the situation when $R+T$ and $\hH$ coincides with $\hU_T = U \rtimes (T \times \l(\GG_m))$. We let $S=\bigoplus_{k \geq 0} H^0(X,L^{\ten k})$. Note that, because $X$ is irreducible,
for any subgroup $H_0 \subseteq \hat{H}$ and any $H_0$-invariant section $f$ of a
positive tensor power of $L \to X$ we have a canonical identification
$(S^{H_0})_{(f)} = \OO(X_f)^{H_0}$; 
 we will make implicit use of this identification throughout this section. We may use any of 
the following equivalent notation for the stable locus for a linearisation $\mathcal{L}$ given by  $H_0 \act L
\to X$  (as well as using similar notation for the semistable locus etc.):  $X^{s,H_0} =  X^{s,H_0, L} = X^{s, H_0, \mathcal{L}} =  X^{\rms(H_0,L)} =  X^{\rms(H_0,\mathcal{L})} $. We will use the terminology $\GG_m$-action for the action of the one-parameter subgroup $\l:\GG_m \to \hU_Y$.

Let $\weight_{\min}$ be the minimal weight for the $\GG_m$-action on
$V=H^0(X,L)^*$ and let $V_{\min}$ be the weight space of weight $\weight_{\min}$ in
$V$. Note that, equivalently, $\weight_{\min}$ is the minimal $\GG_m$-weight for the
action  
of $\GG_m$ on fibres of $L^*$ over $\GG_m$-fixed points.

For the results of this section, it
will be necessary to require the assumption \eqref{eq:sss} on the
$\hat{U}$-linearisation $L \to X$:
%%% eq:sss %%%
\begin{equation} %\label{eq:sss}
\text{$\stab_U(z) = \{e\}$ for every $z \in Z_{\min}$.} \tag{$ss\! =\! s\! \neq\! \emptyset \, [U]$}
\end{equation}

To prove Theorem \ref{thm:ExCo1}  we shall argue by establishing intermediate results to aid
readability. We first sketch the outline of the argument and
establish some preliminaries. 

If $\xi \in
\Lie U$ is a
$\l(\GG_m)$-weight vector, then it has positive weight $\ell >0$, say. If $W$ is any
representation of $\hat{U}$, then $\xi$ defines a derivation $\xi:W \to W$ and
any weight vector in $W$ of weight $\omega 
\in \mb{Z}$ gets sent to a weight vector of weight $\omega + \ell$ under $\xi$. In
particular, if $W_{\max}$ denotes the $\GG_m$-weight space in $W$ of maximal
possible weight, then we have
\[
W_{\max} \subseteq \bigcap_{\xi \in \Lie U} \ker(\xi:W \to W) = W^U.
\]

Now, given a linearisation $\hat{U} \act L \to X$, let
$H^0(X,L)_{\max}$ be the $\GG_m$-weight space in $H^0(X,L)$ of maximal possible
weight (which is equal to $-\weight_{\min}$, in our previously defined notation). The
open subset $X^0_{\min}$ is covered by the affine open 
subschemes 
$X_{\sigma}$, with $\sigma \in H^0(X,L)_{\max}$. Each $X_{\sigma}$ is invariant
under the $\hat{U}$-action, because $H^0(X,L)_{\max} \subseteq H^0(X,L)^U$ and
$\sigma$ is a $\GG_m$-weight vector, and if we choose each $\sigma$ to be a weight vector for the action of $T$, as we may, then $X_{\sigma}$ is invariant
under the $\hat{U}_T$-action. This cover of $X^0_{\min}$ by open
affines $X_{\sigma}$ enjoys a prominent r\^ole in the proof of
Theorem \ref{thm:ExCo1}. 

\begin{rmk} \label{generalise} By choosing the affine opens $X_{\sigma}$ covering $X^0_{\min}$ carefully, we can generalise  Theorem \ref{thm:ExCo1} by weakening the hypothesis \eqref{eq:sss}. Choose a normal series $U = U^{(0)} \geqslant U^{(1)} \geqslant \cdots \geqslant U^{(s)} = \{e\}$ such that each subquotient $U^{(j)}/U^{(j+1)}$ is abelian (for example the derived series of $U$). Then the hypothesis \eqref{eq:sss} can be weakened to
\begin{equation} \label{eq:ssss}
\text{$\dim \stab_{U^{(j)}}(z) = \min_{x \in X} \dim \stab_{U^{(j)}} (x)$ for every $z \in Z_{\min}$ and $j \in \{0,1,\ldots,s\}$.} %\tag{$\mf{C}^{**}$}
\end{equation}
%For Theorem \ref{thm:ExCo1} to still hold, \lq geometric quotient' must replace \lq locally trivial quotient' and stability in Mumford's sense must replace stability in the modern sense (proper stability in Mumford's sense) in its statement.

To modify the proof to apply in these circumstances, we observe  that if $U$ is abelian then each $\stab_U(z)$ has a complementary subgroup $U'$ in $U$ with $U' \cap \stab_U(z) = \{ e\}$ and $U = U'\stab_U(z)$, and that if $d^U_{0} = \min_{x \in X} \dim \stab_{U} (x)$ then 
 condition (\ref{eq:ssss}) implies that $\dim \stab_U(x) = d^U_{0}$ for every $x \in X^0_{\min}$; the latter is true since $\GG_m$ normalises $U$ and so $\dim \stab_U(p(x)) \geqslant \dim \stab_U(x)$ for any $x \in X$, where $p(x)$ is the limit of $t\cdot x$ as $t \to 0$ with $t \in \GG_m$.
We can  define a $\hU_T$-equivariant morphism from $X^0_{\min}$ to the Grassmannian $\rm{Grass}(d^U_0, \rm{Lie}U)$ of $d^U_{0}$-dimensional subspaces of $ \rm{Lie}U$ by
$$ x \mapsto \rm{Lie} \stab_U (p(x)) .$$
If $x \in X^0_{\min}$ we can choose a $\GG_m$-invariant complementary subgroup $U' \leqslant U$ such that the intersection $\stab_U(p(x)) \cap U'$ is trivial and $U' \stab_U(p(x)) = U$. The condition of complementarity to $U'$ defines an affine open subscheme of $\rm{Grass}(d_0^U, \rm{Lie}U)$ and thus an open affine in $X^0_{\min}$  where quotienting by U is equivalent to quotienting by $U'$, and where $\stab_{U'}(x) = \stab_U(x) \cap U' = \{ e\}$.
\end{rmk}

The proof of
Theorem \ref{thm:ExCo1}, which will be carried out in Section \ref{thmproofsection},  proceeds as follows.
We first establish $X^0_{\min}
\subseteq X^{\rms,U}$ (Theorem
\ref{thm:ExCo1}  (\ref{itm:ExCo1-1}). This is done inductively, using the philosophy of taking
quotients in stages. More precisely, by diagonalising the action of
$\GG_m$-action on $\Lie U$ and using the  
exponential map we may choose a subnormal series   
\[
1=U_0 \trianglelefteq U_1 \trianglelefteq \dots \trianglelefteq U_m=U
\] 
which is preserved by each automorphism in the family $\lambda:\GG_m \to
\Aut(U)$ and such that each successive quotient $U_{j+1}/U_j \cong \GG_a$, with
$\lambda$ acting on $\Lie(U_{j+1}/U_j)$ with positive weight. We will
inductively show that each $X_{\sigma}$ (with $\sigma \in H^0(X,L)_{\max}$) has
a (locally) trivial $U_j$-quotient that is affine, using a combination of
\eqref{eq:sss} and Lemma \ref{lem:Co2Qu0}. 

This results in a locally trivial $U$-quotient $q_U:X^0_{\min} \to
X^0_{\min}/U$. We then use a sufficiently divisible power of the line bundle $L$ to
embed $X^0_{\min}/U$ into a projective space $\PP$, in a $\GG_m$-equivariant
manner. By twisting the linearisation on $LX$ by an appropriate rational
character $\chi$ of $\hat{U}$, we obtain a
$\GG_m$-linearisation $\mathcal{L}'$ over the closure $\ol{X^0_{\min}/U}$ of
$X^0_{\min}/U$ in $\PP$, which pulls back to a positive tensor power of the
twisted rational linearisation $L_{\chi/c} \to X$ and has the properties that
\[
\ol{X^0_{\min}/U}^{\rms,\mathcal{L}'} = \ol{X^0_{\min}/U}^{\rmss,\mathcal{L}'}
\]
and 
\[
q_U^{-1}(\ol{X^0_{\min}/U}^{\rms,\GG_m,\mathcal{L}'}) = X^0_{\min} \setminus (U
Z_{\min}) \subseteq X^{\rms,\hU,L_{\chi/c}}.
\]
These equalities and inclusions are proved using reductive GIT, especially the
Hilbert--Mumford criteria. From here it is then straightforward to show that
$X^0_{\min} \setminus (U Z_{\min})$ has a projective geometric $\hat{U}$-quotient under the
enveloping quotient map $q:X^{\ssfg(\hU)} \to X \env
\hat{U}$, isomorphic to
$\ol{X^0_{\min}/U} \dblslash_\mathcal{L}' \GG_m$, and that $ \bigcap_{u \in U} uX^{ss,T}_{\min+}$ 
has a projective geometric $\hat{U}_T$-quotient under the
enveloping quotient map $q:X^{\ssfg,\hU_T} \to X \env
\hat{U}_T$. The rest of Theorem \ref{thm:ExCo1}
then follows by standard results of non-reductive GIT.

%%% subsec:Proof of Theorem \ref{thm:ExCo1} %%%
%*********************************************%

\subsection{Proof of Theorem \ref{thm:ExCo1}} \label{sectionb}   \label{thmproofsection} 
\label{subsec:ExCoProof}

We now begin the proof of Theorem  \ref{thm:ExCo1} as outlined above. % in Section \ref{ss=ssection}.

Suppose we are given a  linearisation $\hat{U} \act L \to X$  satisfying \eqref{eq:sss}. We first set about showing that
$X^0_{\min} \subseteq X^{\rms,U}$ (Theorem \ref{thm:ExCo1}
(\ref{itm:ExCo1-1})). The proof will rely on using the
following lemma in an inductive argument. 

%%% lem:ExCo2 %%%
\begin{lem} \label{lem:ExCo2} \cite[Lemma 4.7.5]{dix96} Suppose $X$ is an affine
  scheme with an action of $\GG_a$ and let $\xi \in \Lie(\GG_a)$. If there is
  $f \in \OO(X)$ such that $\xi(f)=1 \in \OO(X)$, then $X$ is a trivial
  $\GG_a$-bundle.  
\end{lem}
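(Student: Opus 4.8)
The plan is to translate this geometric statement into one about locally nilpotent derivations and then show directly that $\OO(X)$ is a polynomial ring in $f$ over the invariant subring. Recall the standard dictionary in characteristic $0$: a $\GG_a$-action on the affine variety $X$ is encoded by a comodule map $\rho\colon \OO(X)\to \OO(X)[t]$, and $\xi\in\Lie(\GG_a)$ acts on $\OO(X)$ as the locally nilpotent derivation $D$ obtained by differentiating $\rho$ at $t=0$; explicitly $\rho(g)=\sum_{n\ge 0}\tfrac{1}{n!}D^{n}(g)\,t^{n}$, which is a finite sum for each $g$. The hypothesis $\xi(f)=1$ becomes $D(f)=1$; in particular $\xi\neq 0$, so we may assume $\xi$ generates $\Lie(\GG_a)\cong\kk$. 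Set $A=\ker D=\OO(X)^{\GG_a}$.

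First I would check that $f$ is transcendental over $A$: given a nontrivial relation $\sum_{i=0}^{m}a_{i}f^{i}=0$ with $a_{i}\in A$ and $m\ge 1$ minimal, applying $D$ and using $D(a_{i})=0$ and $D(f)=1$ yields $\sum_{i=1}^{m}i\,a_{i}f^{i-1}=0$, a nontrivial relation of degree $m-1$ (its leading coefficient $m\,a_{m}$ is nonzero because $\mathrm{char}\,\kk=0$), contradicting minimality. Next I would prove $\OO(X)=A[f]$ by induction on the least $n$ with $D^{n+1}(g)=0$: if $n=0$ then $g\in A$; if $n\ge 1$ then $a:=D^{n}(g)\in A$ and, since $D^{n}(f^{n})=n!$, the element $h:=g-\tfrac{1}{n!}\,a\,f^{n}$ satisfies $D^{n}(h)=0$, so $h\in A[f]$ by induction and hence $g\in A[f]$. (Equivalently, one notes that $g\mapsto\sum_{i\ge 0}\tfrac{(-1)^{i}}{i!}D^{i}(g)f^{i}$ is a well-defined $A$-algebra retraction $\OO(X)\to A$.) Combining these two points, $\OO(X)$ is the polynomial ring $A[f]$; since $\OO(X)$ is a finitely generated $\kk$-algebra and $A\cong A[f]/(f)$, the ring $A$ is finitely generated, so $Y:=\Spec A$ is an affine variety.

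Finally I would identify the action. Under $X\cong\Spec(A[f])\cong Y\times\GG_a$, the relations $D(f)=1$ and $D^{2}(f)=0$ give $\rho(f)=f\otimes 1+1\otimes t$, while $\rho(a)=a\otimes 1$ for $a\in A$; this is exactly the $\GG_a$-action that is trivial on $Y$ and acts by translation on the $\GG_a$-factor. Hence $X\to Y$ is a geometric quotient identifying $Y$ with $X/\GG_a$, and the isomorphism $X\cong(X/\GG_a)\times\GG_a$ is $\GG_a$-equivariant, so $X$ is a trivial $\GG_a$-bundle.

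I do not anticipate a real obstacle: the only delicate points are fixing the normalisations in the $\GG_a$-action versus locally-nilpotent-derivation dictionary (so that $\xi(f)=1$ really corresponds to $D(f)=1$ and to $\rho(f)=f\otimes 1+1\otimes t$) and checking that the induction giving $\OO(X)=A[f]$ terminates, both of which are routine in characteristic zero.
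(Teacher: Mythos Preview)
Your argument is correct and is exactly the standard proof of this classical fact about locally nilpotent derivations with a slice. The paper does not give its own proof of this lemma; it simply cites \cite[Lemma~4.7.5]{dix96}, and your write-up is essentially the argument found there (and in any treatment of locally nilpotent derivations in characteristic zero).
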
 

Recall that $X^0_{\min}$ is the union of basic affine opens $X_{\sigma}$
with $\sigma \in H^0(X,L)_{\max}$. Given nonzero $\sigma 
\in H^0(X,L)_{\max}$, we can embed $X$ into a projective space $\PP^n \cong
\PP(H^0(X,L)^*)$ via $L$ using a 
basis of $n+1$ linear sections which are weight vectors for the
$\GG_m$-action, and which includes $\sigma$. Then $X_{\sigma}$ is contained in
an affine coordinate patch $\A^n=(\PP^n)_{\sigma}$ such that the action of
$\GG_m$ on $\A^n$ is diagonal, with all weights $\geq 0$; or equivalently,
$\GG_m$ acts on $\OO(X_{\sigma})$ with all weights $\leq 0$. Note also that each
point $x \in X_{\sigma}$ has a limit point in
  $X_{\sigma} \cap Z_{\min}$ under the 
  action of $t \in \GG_m$ as $t \to 0$. 

By considering the action of $\hat{U_1}=U_1 \rtimes \GG_m$ on
$X_{\sigma}$, one is therefore naturally led to the following lemma. 
%%% lem:ExCo3 %%%
\begin{lem} \label{lem:ExCo3}
Let $X$ be an affine scheme with action of $\hat{\GG_a}=\GG_a \rtimes
\GG_m$, where $\GG_m$ acts on $\Lie (\GG_a)$ with strictly positive weight, and let $\xi$ be
a generator of $\Lie(\GG_a)$. Suppose
$\GG_m$ acts on $\OO(X)$ with  weights less than or equal to 0. Then every point in
$X$ has a limit in $X$ under the action of $t \in \GG_m$ as $t \to 0$; let $Z$ be
the set of such limit points in $X$. If $\stab_{\GG_a}(z)=\{e\}$ for each $z
\in Z$, then there is $f \in \OO(X)$ such that $\xi(f)=1 \in \OO(X)$.      
\end{lem}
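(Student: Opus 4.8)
The plan is to deduce from the hypothesis on $Z$ that the vector field $\xi$ vanishes nowhere on $X$, conclude that the ideal $I=(\,\xi(g):g\in\OO(X)\,)\subseteq\OO(X)$ is the unit ideal, and then use the $\GG_m$-grading to upgrade the resulting expression $1=\sum_i h_i\,\xi(g_i)$ into a genuine equation $\xi(f)=1$.

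\emph{Step 1: $X^{\GG_a}=\emptyset$, hence $I=\OO(X)$.} Since $X$ is affine and we are in characteristic zero, $\xi$ is a locally nilpotent derivation and the $\GG_a$-action is given on coordinate rings by $g\mapsto\sum_{n\ge 0}\xi^n(g)s^n/n!$. First I would observe that the zero locus $V(I)$ of $\xi$ equals $X^{\GG_a}$: if $\xi(h)(x)=0$ for all $h\in\OO(X)$ then, taking $h=\xi^{n-1}(g)$, one gets $\xi^n(g)(x)=0$ for all $n\ge 1$ and all $g$, i.e.\ $x$ is $\GG_a$-fixed, and the converse is clear. Next, $X^{\GG_a}$ is closed and $\GG_m$-stable, because $\GG_a\trianglelefteq\hat{\GG_a}$ gives $a\cdot(t\cdot x)=t\cdot((t^{-1}at)\cdot x)=t\cdot x$ for $x\in X^{\GG_a}$; letting $t\to 0$ shows the limit $z=\lim_{t\to 0}t\cdot x$ is again $\GG_a$-fixed. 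But $z\in Z$, so $\stab_{\GG_a}(z)=\{e\}$ by hypothesis, contradicting $z\in X^{\GG_a}$ unless $X^{\GG_a}=\emptyset$. Thus $V(I)=\emptyset$, so $I=\OO(X)$ by the Nullstellensatz, and we may fix a finite expression $1=\sum_{i=1}^m h_i\,\xi(g_i)$ with $h_i,g_i\in\OO(X)$.

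\emph{Step 2: extract the weight-zero part.} Decompose $\OO(X)=\bigoplus_{d\le 0}\OO(X)_d$ into $\GG_m$-weight spaces, recalling that $\xi$ raises $\GG_m$-weights by $\ell>0$, the weight of $\GG_m$ on $\Lie\GG_a$, so $\xi(\OO(X)_d)\subseteq\OO(X)_{d+\ell}$. Writing $h_i=\sum_d h_{i,d}$, $g_i=\sum_d g_{i,d}$ and projecting $1=\sum_i h_i\,\xi(g_i)$ onto $\OO(X)_0$, a summand $h_{i,d}\,\xi(g_{i,d'})$ lies in $\OO(X)_{d+d'+\ell}$ and can be nonzero only if $d\le 0$ and $d'+\ell\le 0$; for it to contribute to weight $0$ we need $d+(d'+\ell)=0$, which forces $d=0$ and $d'=-\ell$. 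Hence $1=\sum_i h_{i,0}\,\xi(g_{i,-\ell})$. Finally, since $\OO(X)$ has no strictly positive $\GG_m$-weights, $\xi(h_{i,0})\in\OO(X)_\ell=0$, so the Leibniz rule gives $h_{i,0}\,\xi(g_{i,-\ell})=\xi(h_{i,0}g_{i,-\ell})$, and therefore $\xi(f)=1$ for $f=\sum_i h_{i,0}g_{i,-\ell}\in\OO(X)$.

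I expect Step 1 to be essentially routine — the only care needed being the characteristic-zero identification of the zero set of a locally nilpotent derivation with the $\GG_a$-fixed locus, and the use of $\GG_a\trianglelefteq\hat{\GG_a}$ to propagate fixedness to $\GG_m$-limits. The real content, and the step I would treat most carefully, is Step 2: the two decisive observations are that the weight constraints pin every weight-zero summand down to the shape $h_{i,0}\,\xi(g_{i,-\ell})$, and that the weight-zero coefficients $h_{i,0}$ are automatically annihilated by $\xi$ (there being no positive weights), so that Leibniz collapses the finite sum into the single derivative $\xi(f)$.
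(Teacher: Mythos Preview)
Your proof is correct and takes a genuinely different route from the paper's. The paper constructs the specific subspace $I=\xi(W_{-\ell})\oplus\bigoplus_{m<0}W_m$, verifies by hand that it is a $\hat{\GG_a}$-stable ideal of $\OO(X)$, observes that $V(I)\subseteq Z$, and then argues that if $I$ were proper, any maximal ideal $\mf{m}\supseteq I$ would be $\hat{\GG_a}$-stable (using a complementary subspace $W'\subseteq W_0$), hence define a $\GG_a$-fixed point of $Z$. Your approach instead shows $X^{\GG_a}=\emptyset$ directly, deduces that the ideal generated by all $\xi(g)$ is the unit ideal, and then uses the weight-zero projection together with the Leibniz rule and the vanishing of $\xi$ on $\OO(X)_0$ to collapse $1=\sum h_i\xi(g_i)$ into $\xi(f)=1$. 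Your argument avoids the somewhat delicate verification that the paper's $I$ is an ideal, at the cost of the projection trick in Step~2; both proofs ultimately rest on the observation that $\xi$ kills weight-zero functions. One small omission: you do not address the first assertion of the lemma, that every point of $X$ has a limit as $t\to 0$; this is of course immediate from the non-positivity of the $\GG_m$-weights on $\OO(X)$, but the paper does spell it out.
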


\begin{proof}
We first show that every point in $X$ has a limit under the action of $t \in
\GG_m$, as $t \to 0$. Fix $x \in X$. To say that $\lim_{t \to 0}
t \cdot x$ exists in $X$ means that the morphism $\phi_x:\GG_m \to X$,
$\phi_x(t)=t \cdot x$, extends to a morphism $\phi_x:\kk \to X$ under the usual
open inclusion $\GG_m \subseteq \kk$ (and then $\lim_{t \to 0} t \cdot x
:=\phi_x(0)$). This is equivalent to saying that the pullback homomorphism
$(\phi_x)^{\#}:\OO(X) \to \OO(\GG_m)=\kk[t,t^{-1}]$ factors through the
localisation map $\kk[t] \to \kk[t,t^{-1}]$. But if $a \in \OO(X)$ is a
$\GG_m$-weight vector of weight $m\leq
0$ then
\[
(\phi_x)^{\#}(a)(t)=a(t \cdot x) = (t^{-1} \cdot a)(x) = t^{-m}a(x)
\]
with $-m \geq 0$. Since $\OO(X)$ is generated by such weight vectors, we see
that $(\phi_x)^{\#}:\OO(X) \to \kk[t,t^{-1}]$ indeed factors through $\kk[t]
\to \kk[t,t^{-1}]$. Let $Z$ be the set of limit points in $X$. 
    
Using the $\GG_m$-action, write $\OO(X)=\bigoplus_{m \leq 0} W_m$ as a
negatively graded algebra, where 
$W_m \subseteq \OO(X)$ is the subspace of weight vectors in $\OO(X)$ of
weight $m \leq 0$. Suppose $\GG_m$ acts on $\xi$ with weight
$\ell >0$. Then we have
\[
\xi(W_m) 
\begin{cases} 
=0 & \text{if $m > -\ell$}, \\
\subseteq W_0 & \text{if $m=-\ell$}, \\
\subseteq \bigoplus_{m<0} W_m & \text{if $m< -\ell$}.
\end{cases}
\]
Let $\tilde{W}=\xi(W_{-\ell})$ be the image of the weight space $W_{-\ell}$
under $\xi$, and consider the vector subspace 
\[
I:=\tilde{W} \oplus \bigoplus_{m<0} W_m.
\]
We claim that $I$ is a $\hat{\GG_a}$-stable ideal of $\OO(X)$. Indeed, $I$ is
$\GG_m$-stable and closed under the action of $\xi$ by construction, so we see
immediately that it is stable under the $\hat{\GG_a}$-action. Let $f
\in I$ and $a \in \OO(X)$. We need to 
show that $af \in I$, for which we may assume that $a \in W_p$ for some $p \leq
0$, without loss of generality. Now if $p<0$, then because multiplication
respects the grading we have $af \in \bigoplus_{m<0} W_m \subseteq
I$. So suppose $p=0$. Write $f=\tilde{f} + g$ where $\tilde{f} \in \tilde{W}$
and $g \in \bigoplus_{m<0} W_m$, so $af=a\tilde{f} + ag$. Then $ag
\in \bigoplus_{m<0} W_m \subseteq I$. Furthermore there is $h \in
W_{-\ell}$ such that $\xi(h)=\tilde{f}$, and because $\xi(a)=0$ we therefore
have $\xi(ah)=a\tilde{f}$, with $ah \in W_{-\ell}$, thus $a\tilde{f} \in
\tilde{W} \subseteq I$. Hence $af \in I$, and the claim is established.    

To finish the proof, we will show that $I=\OO(X)$. We may find a non-trivial
$\GG_m$-invariant complementary subspace $W'$ of $W_0$ such that 
$\OO(X)=W' \oplus I$ as vector spaces. It is easy to see that 
\[
Z=\{x \in X \mid f(x)=0 \text{ for all } f \in \textstyle{\bigoplus_{m<0}} W_m\}
\]
and so the subscheme $V(I):=\{ x\in X \mid f(x) = 0 \text{ for all } f\in I\}$
defined by $I$ is contained in $Z$. Suppose now, for a contradiction, that
$I$ is a proper ideal of $\OO(X)$ and $\mf{m}$ is a
maximal ideal of $\OO(X)$ that contains $I$, and so  $\mf{m}$ defines a point
in $V(I)$. Given $a \in \mf{m}$, write
$a=a'+f$ with $a' \in W'$ and $f \in I$. Since $I \subseteq \mf{m}$
we have $a' \in \mf{m}$, and since $a' \in W' \subseteq \OO(X)^{\hat{\GG_a}}$ and 
$I$ is stable under the $\hat{\GG_a}$-action, we have $\hat{\GG_a} \cdot a'
\subseteq \mf{m}$. So $\mf{m}$ is stable under the $\hat{\GG_a}$-action. But then
$\mf{m}$ defines a point of $V(I) \subseteq Z$ that is fixed by $\hat{\GG_a}$,
which is a contradiction. Hence $I=\OO(X)$. In particular, the constant function
$1 \in W_0=\tilde{W}$, so 
there is $f \in \OO(X)$ such that $\xi(f)=1$.     
\end{proof}

We are now in a position to prove
%%% prop:ExCo4 %%%
\begin{propn} \label{prop:ExCo4}
{\rm  (Theorem \ref{thm:ExCo1} (\ref{itm:ExCo1-1})) } Assume that condition \eqref{eq:sss} is satisfied. 
For each nonzero $\sigma \in
H^0(X,L)_{\max}$, the natural map $X_{\sigma} \to 
\spec(\OO(X_{\sigma})^U)$ is a trivial $U$-quotient for the $U$-action on
$X_{\sigma}$. Thus, $X^0_{\min} \subseteq X^{\rms,U}$ and the
restriction of the enveloping quotient map for $U \act L \to X$ restricts to
define a locally trivial $U$-quotient of $X^0_{\min}$.  
\end{propn}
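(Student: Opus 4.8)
The plan is to prove the displayed claim by an induction which quotients $X_\sigma$ one step at a time along the subnormal series $1 = U_0 \trianglelefteq U_1 \trianglelefteq \cdots \trianglelefteq U_m = U$ fixed in Section~\ref{ss=ssection}, each successive quotient $U_{j+1}/U_j$ being isomorphic to $\GG_a$ with $\GG_m$ acting on its Lie algebra with strictly positive weight. Fix a nonzero $\sigma \in H^0(X,L)_{\max}$; as recalled above, $X_\sigma$ is an affine, $\GG_m$-invariant open subvariety of $X^0_{\min}$ on whose coordinate ring $\GG_m$ acts with all weights $\leq 0$, every point of $X_\sigma$ has a limit under $t \to 0$ (with $t \in \GG_m$), and the set $Z := X_\sigma \cap Z_{\min}$ of these limit points is exactly $p(X_\sigma)$. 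The inductive assertion will be: for each $0 \leq j \leq m$ the natural map $q_j : X_\sigma \to \spec(\OO(X_\sigma)^{U_j})$ is a trivial $U_j$-bundle (so in particular $\OO(X_\sigma)^{U_j}$ is finitely generated and $X_\sigma/U_j := \spec(\OO(X_\sigma)^{U_j})$ is affine). The case $j=0$ is vacuous.

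For the inductive step I would set $N := U_{j+1}/U_j \cong \GG_a$ and $Y := X_\sigma/U_j$, which by hypothesis is affine and carries an action of $\hat N := N \rtimes \GG_m$ descending from the $\hat U_{j+1}$-action on $X_\sigma$ (legitimate since $U_j$ is normal in $\hat U_{j+1}$, being normal in $U_{j+1}$ and $\GG_m$-invariant); here $\GG_m$ acts on $\Lie N$ with some weight $\ell_{j+1}>0$. Because $q_j$ is surjective and $\GG_m$-equivariant and $\OO(Y) = \OO(X_\sigma)^{U_j}$ is a $\GG_m$-subrepresentation of $\OO(X_\sigma)$, the ring $\OO(Y)$ has all $\GG_m$-weights $\leq 0$, every point of $Y$ has a limit under $t \to 0$, and the set of these limits is $q_j(Z)$. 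The key point is that condition $(\mf{C}^{*})$ descends to $Y$: for $z \in Z_{\min}$ we have $\stab_{U_{j+1}}(z) \subseteq \stab_U(z) = \{e\}$ by \eqref{eq:sss}, and since $q_j$ is a geometric quotient by $U_j$, if $uU_j$ fixes $q_j(z)$ then $uz = u'z$ for some $u' \in U_j$, whence $(u')^{-1}u \in \stab_{U_{j+1}}(z) = \{e\}$ and $uU_j = e$; thus $\stab_N(y) = \{e\}$ for every limit point $y$ of $Y$. Lemma~\ref{lem:ExCo3} then applies to the $\hat N$-variety $Y$ and produces $f \in \OO(Y)$ with $\xi(f) = 1$, where $\xi$ generates $\Lie N$; by Lemma~\ref{lem:ExCo2}, $Y \to Y/N = \spec(\OO(X_\sigma)^{U_{j+1}})$ is a trivial $N$-bundle. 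Finally the projection $U_{j+1} \to U_{j+1}/U_j$ splits — a $\GG_m$-weight vector $\xi_{j+1}$ spanning a complement of $\Lie U_j$ in $\Lie U_{j+1}$ generates a one-parameter unipotent subgroup $\exp(\kk\xi_{j+1}) \cong \GG_a$ which, by bijectivity of $\exp$ in characteristic $0$, meets $U_j$ trivially and together with $U_j$ spans $U_{j+1}$ — so Lemma~\ref{lem:Co2Qu0} applies to $U_j \trianglelefteq U_{j+1}$ acting on the affine variety $X_\sigma$, together with the trivial bundles $X_\sigma \to X_\sigma/U_j$ and $X_\sigma/U_j \to X_\sigma/U_{j+1}$, and shows $X_\sigma \to \spec(\OO(X_\sigma)^{U_{j+1}})$ is a (trivial) principal $U_{j+1}$-bundle; triviality over the affine base also follows from Proposition~\ref{prop:TrRe3.1}. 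This completes the induction.

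Taking $j = m$ gives that $X_\sigma \to \spec(\OO(X_\sigma)^U)$ is a trivial, hence locally trivial, geometric $U$-quotient with $X_\sigma$ affine, so $\sigma \in I^{lts}$ and $X_\sigma \subseteq X^{\rms,U}$. Since $X^0_{\min}$ is the union of the $X_\sigma$ over nonzero $\sigma \in H^0(X,L)_{\max}$, we obtain $X^0_{\min} \subseteq X^{\rms,U}$, and the enveloping quotient map for $U \act L \to X$ restricts on each $X_\sigma$ to the trivial quotient map $q_m$; these agree on overlaps (geometric quotients being unique) and glue to a locally trivial geometric $U$-quotient $X^0_{\min} \to X^0_{\min}/U$. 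I expect the only real difficulty to be the bookkeeping in the inductive step — in particular verifying that the freeness hypothesis of Lemma~\ref{lem:ExCo3} survives each quotient (the descent of $(\mf{C}^{*})$ above) and correctly packaging the splitting of $U_{j+1} \to U_{j+1}/U_j$ so as to invoke Lemma~\ref{lem:Co2Qu0}; the genuinely hard analytic content is already isolated in Lemma~\ref{lem:ExCo3}. Running the same argument only over the open subset of $X^0_{\min}$ where $\stab_U(p(x)) = \{e\}$ would also prove the refinement that $X^{\rms,U} \supseteq \{x \in X^0_{\min} \mid \stab_U(p(x)) = \{e\}\}$ without invoking \eqref{eq:sss}.
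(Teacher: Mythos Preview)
Your proposal is correct and follows essentially the same argument as the paper: induct along the subnormal series, at each step descend the $\GG_m$-action and the freeness condition to $Y=X_\sigma/U_j$, invoke Lemma~\ref{lem:ExCo3} and Lemma~\ref{lem:ExCo2} to get a trivial $\GG_a$-bundle $Y\to Y/N$, then use the splitting of $U_{j+1}\to U_{j+1}/U_j$ together with Lemma~\ref{lem:Co2Qu0} and Proposition~\ref{prop:TrRe3.1} to conclude that $X_\sigma\to\spec(\OO(X_\sigma)^{U_{j+1}})$ is a trivial $U_{j+1}$-bundle. Your explicit justification of the splitting via $\exp(\kk\xi_{j+1})$ and your closing remark on dropping \eqref{eq:sss} are exactly what the paper does (the latter is its Proposition~\ref{prop:fff}).
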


\begin{proof}
As discussed above, we may choose a subnormal series   
\[
1=U_0 \trianglelefteq U_1 \trianglelefteq \dots \trianglelefteq U_m=U
\] 
which is preserved by each automorphism in the one-parameter subgroup $\GG_m$ of
$\Aut(U)$ and such that each successive quotient $U_{j+1}/U_j \cong \GG_a$, with
$\GG_m$ acting on $\Lie(U_{j+1}/U_j)$ with strictly positive weight. We will prove that
$X_{\sigma} \to \spec(\OO(X_{\sigma})^{U_j})$ is a trivial $U_j$-quotient by 
induction on $j$, for $1 \leq j \leq m$.  

For the base case, let $\xi_1 \in \Lie(U_1)$ be non-zero. As observed before
Lemma \ref{lem:ExCo3}, the affine subset $X_{\sigma}$ satisfies the conditions
needed to apply Lemma \ref{lem:ExCo3} with respect to the semi-direct product
$\hat{U_1}=U_1 \rtimes \GG_m$, so there is $f \in \OO(X)$ such that
$\xi_1(f)=1$. It follows from Lemma \ref{lem:ExCo2} that the natural map
$X_{\sigma} \to \spec(\OO(X_{\sigma})^{U_1})$ is a trivial $U_1$-quotient.  

For the induction step, suppose the canonical map $q_j:X_{\sigma} \to
\spec(\OO(X_{\sigma})^{U_j})$ is 
a trivial $U_j$-quotient, for $1 \leq j \leq m-1$. The action of $U_{j+1}
\rtimes \GG_m$ on $X_{\sigma}$ descends to an action of the induced semi-direct product $(U_{j+1}/U_j)
\rtimes \GG_m$ on $\spec(\OO(X_{\sigma})^{U_j})$. Fixing a $\GG_m$-weight vector
$\xi_{j+1} \in \Lie(U_{j+1}) \setminus \Lie(U_j)$, we obtain a generator of
$\Lie(U_{j+1}/U_j)=\Lie(U_{j+1})/\Lie(U_j)$ which acts on
$\OO(X_{\sigma})^{U_j}$ by restricting the action of $\xi_{j+1}$ on
$\OO(X_{\sigma})$ to the subring $\OO(X_{\sigma})^{U_j}$. It is immediate that all
weights for the natural $\GG_m$-action on $\OO(X_{\sigma})^{U_j}$ are
non-positive so, by Lemma \ref{lem:ExCo3}, given a point $y \in
\spec(\OO(X_{\sigma})^{U_j})$ the limit of $y$ under the natural action of
$t \in \GG_m$ as $t \to 0$ exists. If $y=q_j(x)$ for $x \in X_{\sigma}$, then
because $q_j$ is $\GG_m$-equivariant we have
\[
\lim_{t \to 0} t \cdot y = \lim_{t \to 0} q_j(t \cdot x) = q_j\left( \lim_{t \to 0} t
\cdot x \right),
\]
and thus all points in $\spec(\OO(X_{\sigma})^{U_j})$ have limits in
$q_j(X_{\sigma} \cap Z_{\min})$. Let $z \in X_{\sigma} \cap Z_{\min}$ and
suppose $u \in U_{j+1}$ is such that $(uU_j) \in
\stab_{U_{j+1}/U_j}(q_j(z))$. Then there 
is $\tilde{u} \in U_j$ such that $u^{-1}\tilde{u}z=z$. Since $\stab_U(z)$ is
trivial, we conclude that $u=\tilde{u} \in U_j$, so $uU_j=eU_j$. Hence we may
apply Lemma \ref{lem:ExCo3} to the 
action of $(U_{j+1}/U_j) \rtimes \GG_m$ on $\spec(\OO(X_{\sigma})^{U_j})$ to
conclude that there is $f \in \OO(X_{\sigma})^{U_j}$ such that
$\xi_{j+1}(f)=1$. By Lemma \ref{lem:ExCo2}, the natural map
$\spec(\OO(X_{\sigma})^{U_j}) \to \spec(\OO(X_{\sigma})^{U_{j+1}})$ is a trivial 
$U_{j+1}/U_j$-bundle. Since the projection $U_{j+1} \to U_j$ splits, the compositon
\[
X_{\sigma} \to \spec(\OO(X_{\sigma})^{U_j}) \to \spec(\OO(X_{\sigma})^{U_{j+1}})
\]
is a principal $U_{j+1}$-bundle by Lemma \ref{lem:Co2Qu0}, which is in fact
trivial by Proposition \ref{prop:TrRe3.1}. This establishes the induction step. 

Therefore $X_{\sigma} \to \spec(\OO(X_{\sigma})^U)$ is a trivial
$U$-quotient. The rest of the statement of the proposition follows immediately
from the definition of the stable locus for $U \act L \to X$. 
\end{proof}

Note that this argument gives us the following more general result, which does not require condition \eqref{eq:sss}.

\begin{prop} \label{prop:fff}
Let $X$ be an irreducible projective scheme acted upon by a unipotent group $U$
and let 
$L \to X$ be a very ample linearisation. Suppose the
linearisation extends to 
a linearisation of a semi-direct product $\hat{U}_T=U \rtimes T$ of $U$ where $T$ is a torus containing a one-parameter subbgroup $\GG_m$  whose adjoint action on the Lie algebra of $U$ has only strictly positive weights. Then
 we have 
$$\{ x \in X^0_{\min} | \stab_U(p(x)) = \{ e \} \} \subseteq X^{\rms,U},$$
  with the restriction of the enveloping quotient map
  defining a locally trivial $U$-quotient of open subschemes $\{ x \in X^0_{\min} | \stab_U(p(x)) = \{ e \} \}  \to
 \{ x \in X^0_{\min} | \stab_U(p(x)) = \{ e \} \} /U$.   
\end{prop}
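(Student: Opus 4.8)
The plan is to re-run the inductive argument of Proposition~\ref{prop:ExCo4}, after isolating the single place where the hypothesis \eqref{eq:sss} was used: it entered only to guarantee that, inside each affine chart $X_\sigma$ (with $\sigma\in H^0(X,L)_{\max}$) covering $X^0_{\min}$, every point of the $\GG_m$-limit set $X_\sigma\cap Z_{\min}$ has trivial $U$-stabiliser, which is what feeds the chain of applications of Lemmas~\ref{lem:ExCo3}, \ref{lem:ExCo2}, \ref{lem:Co2Qu0} and Proposition~\ref{prop:TrRe3.1}. So it will suffice to replace the charts $X_\sigma$ by $U$-invariant affine open subsets retaining this property without assuming \eqref{eq:sss}. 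Write $X^{00}_{\min}=\{x\in X^0_{\min}\mid \stab_U(p(x))=\{e\}\}$ and $Z^{00}_{\min}=\{z\in Z_{\min}\mid \stab_U(z)=\{e\}\}$, so that $X^{00}_{\min}=p^{-1}(Z^{00}_{\min})$. First I would record that $Z^{00}_{\min}$ is open in $Z_{\min}$: for a connected unipotent group in characteristic $0$ the condition $\stab_U(z)=\{e\}$ is equivalent to $\dim\stab_U(z)=0$, which is open by upper semicontinuity of fibre dimension. Hence $X^{00}_{\min}$ is open in $X^0_{\min}$, and it is $\hU_T$-invariant, using $p(ux)=p(x)$ for $u\in U$ (Lemma~\ref{lemmaA}) together with the fact that conjugation by $T$ carries $\stab_U(p(x))$ to $\stab_U(p(tx))$.

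Fix $x_0\in X^{00}_{\min}$ and set $z_0=p(x_0)\in Z^{00}_{\min}$. Choose, as in the proof of Proposition~\ref{prop:ExCo4}, a nonzero $\sigma\in H^0(X,L)_{\max}$ with $\sigma(z_0)\neq 0$; then $X_\sigma$ is a $\hU$-invariant affine open containing $z_0$ and $x_0$ (the latter because $X_\sigma$ is $\GG_m$-invariant and $z_0=\lim_{t\to0}t\cdot x_0$), the $\GG_m$-action on $\OO(X_\sigma)$ has all weights $\leq 0$, and every point of $X_\sigma$ has a $\GG_m$-limit in $Z_\sigma:=X_\sigma\cap Z_{\min}$. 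The heart of the argument is the identification, by restriction, of $\OO(X_\sigma)^{\hU}=\OO(X_\sigma)^{\GG_m}$ with $\OO(Z_\sigma)$: the equality $\OO(X_\sigma)^{\hU}=\OO(X_\sigma)^{\GG_m}$ holds because any $\GG_m$-weight vector in $\Lie U$ has strictly positive weight and therefore annihilates the weight-zero part of $\OO(X_\sigma)$ (cf. the discussion preceding Lemma~\ref{lem:ExCo3}); restriction to $Z_\sigma$ is injective on $\GG_m$-invariants since $f(x)=f(p(x))$ for such $f$; and it is surjective because $\OO(X_\sigma)\to\OO(Z_\sigma)$ is a surjection of $\GG_m$-modules with trivial (weight-zero) target. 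Since $Z_\sigma^{\mathrm{bad}}:=Z_\sigma\setminus Z^{00}_{\min}$ is closed in the affine variety $Z_\sigma$ and avoids $z_0$, I would pick $\bar f\in\OO(Z_\sigma)$ vanishing on $Z_\sigma^{\mathrm{bad}}$ with $\bar f(z_0)\neq 0$ and lift it to $f\in\OO(X_\sigma)^{\hU}$. Writing $f=g/\sigma^k$ with $g\in H^0(X,L^{\otimes k})^U$ (so $g=f\sigma^k$ is $U$-invariant) we get $(X_\sigma)_f=X_{\sigma g}$, a $\hU$-invariant affine open containing $x_0$; moreover $f$ has $\GG_m$-weight zero, so $f(x)=f(p(x))$ and hence $p((X_\sigma)_f)\subseteq(Z_\sigma)_{\bar f}\subseteq Z^{00}_{\min}$, giving $(X_\sigma)_f\subseteq X^{00}_{\min}$. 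Letting $x_0$ range over $X^{00}_{\min}$, the sets $(X_\sigma)_f$ obtained this way form a $U$-invariant affine open cover of $X^{00}_{\min}$.

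Finally I would apply the inductive argument of Proposition~\ref{prop:ExCo4} verbatim, with $(X_\sigma)_f$ in place of $X_\sigma$: it is an affine $\hU$-variety, $\GG_m$ acts on $\OO((X_\sigma)_f)$ with weights $\leq 0$ (since $f$ has weight zero), every point has a $\GG_m$-limit lying inside it, and by construction those limit points lie in $Z^{00}_{\min}$ and hence have trivial $U$-stabiliser --- which is exactly the input needed to run the chain of applications of Lemmas~\ref{lem:ExCo3}, \ref{lem:ExCo2}, \ref{lem:Co2Qu0} and Proposition~\ref{prop:TrRe3.1} along a $\GG_m$-stable subnormal series $1=U_0\trianglelefteq\cdots\trianglelefteq U_m=U$ with subquotients $\cong\GG_a$. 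This yields that $(X_\sigma)_f\to\spec(\OO((X_\sigma)_f)^U)$ is a trivial $U$-quotient; in particular the $U$-invariant section $\sigma g$ lies in $I^{\rms}$ in the sense of Definition~\ref{def:GiSt1.1}, so $(X_\sigma)_f\subseteq X^{\rms,U}$. Since the $(X_\sigma)_f$ cover $X^{00}_{\min}$ we conclude $X^{00}_{\min}\subseteq X^{\rms,U}$, and the restriction of the $U$-enveloping quotient map to $X^{00}_{\min}$ is a locally trivial geometric $U$-quotient onto an open subvariety $X^{00}_{\min}/U$ of $X\env U$.

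The main obstacle is the second paragraph: one must produce a $\hU$-invariant function --- which, having $\GG_m$-weight zero, is automatically regular on and determined by its restriction to the whole limit set $Z_\sigma$ --- cutting out an affine chart whose $\GG_m$-limit set lands in $Z^{00}_{\min}$, and then check that the resulting open set is genuinely of the form $X_{f'}$ for a \emph{global} $U$-invariant section $f'$, so that Definition~\ref{def:GiSt1.1} can be invoked to conclude stability. The first and third paragraphs are then routine elaborations on the already-established Proposition~\ref{prop:ExCo4}; note that when \eqref{eq:sss} holds one has $Z^{00}_{\min}=Z_{\min}$ and $X^{00}_{\min}=X^0_{\min}$, so this recovers that statement.
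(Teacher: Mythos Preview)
Your proposal is correct and is precisely the kind of elaboration the paper leaves to the reader: the paper's entire ``proof'' is the sentence ``Note that this argument gives us the following more general result, which does not require condition \eqref{eq:sss}'', with no further details. Your refinement of the cover $\{X_\sigma\}$ to $\hat{U}$-invariant affine opens $(X_\sigma)_f=X_{\sigma g}$ whose $\GG_m$-limit sets land in $Z^{00}_{\min}$ is exactly what is needed to feed the inductive engine of Proposition~\ref{prop:ExCo4}, and your verification that $(X_\sigma)_f$ is of the form $X_{f'}$ for a global $U$-invariant section (so that Definition~\ref{def:GiSt1.1} applies) is the one nontrivial bookkeeping point the paper omits.
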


\begin{rmk} \label{rem20191}
Suppose that in the context of Lemma \ref{lem:ExCo3} an action of $\GG_a$ on an affine scheme $X$ extends to an action of $\GG_a$ on an affine scheme $Y$ with $X$ as a closed $\GG_a$-invariant subscheme. If there is $f \in \mathcal{O}(Y)$ with $\xi(f)=1 \in \mathcal{O}(Y)$ then $\xi(f |_X)=1 \in \mathcal{O}(X)$. So to obtain the conclusion of Lemma \ref{lem:ExCo3} it is not necessary to have an action of $\hat{\GG_a} = \GG_a \rtimes \GG_m$ on $X$. It suffices to have an action of $\hat{\GG_a}$ 
on $Y$ satisfying the hypotheses of Lemma \ref{lem:ExCo3}. In particular suppose that $X$ is a closed subscheme of affine space $\AL^n$, and suppose that $\GG_a$ acts linearly on $\AL^n$ via a representation $\rho: \hat{\GG_a} \to GL(n)$. If $\GG_a$ fixes a weight space for the $\GG_m$-action on $\AL^n$ pointwise, then we can modify the representation of $\hat{\GG_a}$ by making an arbitrary chage to the weight with which $\GG_m$ acts on this weight space, without affecting the representation of $\GG_a$. This may mean that $X$ is no longer $\hat{\GG_a}$-invariant, but nevertheless if the hypotheses of Lemma \ref{lem:ExCo3} are satisfied for the new $\hat{\GG_a}$-action on $\AL^n$, then the conclusion of the lemma is still valid for $X$.

We can apply this argument to generalise Proposition \ref{prop:fff} to the situation where the following property holds:

\noindent (*) If $z\in X^{\l(\GG_m)}$ where $\l(\GG_m)$ acts on $L^*|_z$ with weight strictly less than $\weight_{r}$ then 
$$\stab_{U}(z) = U.$$
In the inductive proof of the Proposition using Lemma \ref{lem:ExCo3}, we replace the action of $\l(\GG_m)$ on the ambient projective space defined by the linearisation with the action for which $\l(\GG_m)$ acts on the weight spaces with weights $\weight_j$ for $j \geqslant r$ as before but $\l(\GG_m)$ now acts with weight $\weight_r$ on the weight spaces which did have weights $\weight_j$ for $j < r$. This gives us a new version $\tilde{p}$ for the ambient projective space of the map $p:X^0_{\min} \to Z_{\min}$  defined by $p(x) = \lim_{t \to 0} \l(t)x$, and we have 
$$\{ x \in X^0_{\min} | \stab_U(\tilde{p}(x)) = \{ e \} \} \subseteq X^{\rms,U},$$
  with the restriction of the enveloping quotient map
  defining a locally trivial $U$-quotient of open subschemes $\{ x \in X^0_{\min} | \stab_U(\tilde{p}(x)) = \{ e \} \}  \to
 \{ x \in X^0_{\min} | \stab_U(\tilde{p}(x)) = \{ e \} \} /U$.   
In particular we can apply this when in addition to (*) we have

\noindent(**) 
For all $z\in X^{\l(\GG_m)}$ where $\l(\GG_m)$ acts on $L^*|_z$ with weight equal to $\weight_{r}$ then 
$$ \stab_{U}(z) = \{e\}.$$
In this case we can identify the open subset $\{ x \in X^0_{\min} | \stab_U(\tilde{p}(x)) = \{ e \} \}$ of $X^0_{\min}$ with the locus corresponding to nonzero component for the weight space $\weight_r$ of the original action. When $\xi \in \mathrm{Lie}U$ is a nonzero weight vector for the adjoint action of $\l:\GG_m \to \hU$ on $\mathrm{Lie}U$  with weight $s$, the additive one-parameter subgroup $\tilde{\l}:\GG_a \to \hU$ of $\hU$ generated by $\xi$ and the multiplicative one-parameter subgroup $\l:\GG_m \to \hU$ together generate a subgroup $\GG_a \rtimes \GG_m$ of $\hU$. 
We can choose coordinates on the ambient projective space such that $\l(\GG_m)$ acts diagonally and $\xi$ acts in Jordan canonical form %to classify representations of semi-direct products $\GG_a \rtimes \GG_m$ when $\GG_m$ acts on $\rm{Lie} \, \GG_a$ with strictly positive weight 
(cf. \cite{BDHK} $\S$5 and \cite{BDHK2} $\S$2). Calculating in these coordinates we find that  if (*) holds then for any $x \in X^0_{\min}$ the $\ell$-jet at 0 of the morphism $\alpha_x: \AL^1 \to X$ defined by
$$\alpha_x (t) = \l(t) \exp(\xi/t^s) x \,\, \mbox{ for $t \neq 0$ and } \,\, \alpha_x (0) = \lim_{t\to 0} \l(t) \exp(\xi/t^s) x $$ 
vanishes if $0<\ell < \weight_r - \weight_{\min}$, and the locus in $X^0_{\min}$ where $\stab_U(\tilde{p}(x))$ is trivial coincides with the locus where the $\weight_r - \weight_{\min}$-jet at 0 of $\alpha_x$ is nonzero.
%$$\{ x \in X^0_{\min} | \lim_{\tau \to \infty} \l(\tau^{1-\weight_{\min} /r}) \exp(\tau \xi) x \in Z_{(\weight_r)}\}$$ where $$ Z_{(\weight_r)} = \{ x \in X^{\l(\GG_m)} | \mbox{$\l(\GG_m)$ acts on the fibre of $L^*$ over $x$ with weight $\weight_r$} \}.$$
%$x \in X^0_{\min}$ lies in this locus if the jet at 0 of 
\end{rmk}

Having established (\ref{itm:ExCo1-1}) of Theorem \ref{thm:ExCo1}, we now turn to
proving statements (\ref{itm:ExCo1-2}--\ref{itm:ExCo1-4}) of the same theorem. So
assume from now on that $X^0_{\min} \neq U Z_{\min}$. Also let
$q_U:X^{\ssfg(U,L)} \to X \env U$ be the enveloping quotient map for the 
linearisation $U \act L \to X$. As noted above, we have $X^0_{\min} \subseteq
X^{\rms,U}$, so the enveloping quotient map restricts to a geometric quotient
\[
q_U:X^0_{\min} \to X^0_{\min} /U \subseteq X \env U,
\]
which can locally be described as $X_{\sigma} \to \spec(\OO(X_{\sigma})^U)$, for
$\sigma \in H^0(X,L)_{\max}$. Let us fix a basis $\sigma_1,\dots,\sigma_\ell$ of
$H^0(X,L)_{\max}$. Each of the algebras $\OO(X_{\sigma_i})^U$ is
finitely generated over $\kk$, so we may find $s>0$ such that 
\[
W:=H^0(X,L^{\ten s})^U
\]
defines an enveloping system adapted to the subset
$\mc{S}=\{\sigma_1^s,\dots, \sigma_\ell^s\}$ (see the proof of \cite{BDHK} Proposition 3.1.18,
1);  this means that each of the
$\kk$-algebras $(S^U)_{(\sigma_i^s)}=(S^U)_{(\sigma_i)}$ (where $S = \bigoplus_{k \geq 0} H^0(X,L^{\ten k})$) has generating set
given by $\{f/(\sigma_i^s) \mid f \in H^0(X,L^{\ten s})^U\}$. For each
$i=1,\dots,\ell$, let 
$\Sigma_i$ denote the section in $H^0(\PP(W^*),\OO(1))$ corresponding to
$\sigma_i^s$ under the identification $H^0(\PP(W^*),\OO(1)) = H^0(X,L^{\ten
  s})^U$. The 
inclusion $W \hookrightarrow H^0(X,L^{\ten s})$ defines a
morphism 
\[
\phi:X^0_{\min} \to \PP(W^*)
\] 
which descends to a locally closed immersion 
\[
\ol{\phi}:X^0_{\min}/U
\hookrightarrow \PP(W^*)
\]
such that each of the restrictions 
$\ol{\phi}:\spec(\OO(X_{\sigma_i})^U) \hookrightarrow \PP(W^*)_{\Sigma_i}$ is a
closed immersion (this is because the pullback maps $\OO(\PP(W^*)_{\Sigma_i}) \to
\OO(X_{\sigma_i})^U$ are surjective, by definition of $W$). Furthermore, the
canonical $\GG_m$-linearisation on 
$\OO_{\PP(W^*)}(1) \to \PP(W^*)$ is compatible with the restricted linearisation
$\GG_m \act L^{\ten s} \to X^0_{\min}$ under $\phi$, and the embedding
$\ol{\phi}:X^0_{\min}/U \hookrightarrow \PP(W^*)$ is equivariant with respect
to the canonically induced $\GG_m$-action on $X^0_{\min}/U$. Let
$\ol{X^0_{\min}/U}$ be the closure of $X^0_{\min}/U$ 
inside $\PP(W^*)$ via $\ol{\phi}$ and, by abuse of notation, let
$\ol{\phi}:X^0_{\min}/U \hookrightarrow \ol{X^0_{\min}/U}$ be the induced
open immersion.  

Let $(W^*)_{\min}$ be the weight space of minimal possible
weight for the natural $\GG_m$-action on $W^*$, let $\PP((W^*)_{\min})$ be the
associated linear subspace of $\PP(W^*)$ and let $\PP(W^*)_{\min}^0$ be the
open subset of points in $\PP(W^*)$ 
that flow to $\PP((W^*)_{\min})$ under the action of $t \in \GG_m$, as $t \to
0$. 

% Before continuing, we need to make 
% %%% rmk:ExCo5 %%%
% \begin{rmk} \label{rmk:ExCo5}
% We can assume, without loss of generality, that $s>0$ is chosen so that both
% $W=H^0(X,L^{\ten s})^U$ defines an enveloping system adapted to the subset
% $\mc{S}$, or and $(W^*)_{\min} \neq
% W^*$ (or, dually, $H^0(X,L^{\ten s})_{\max} \neq H^0(X,L^{\ten s})^U$). To see
% this, first observe that if $H^0(X,L^{\ten s})_{\max} \neq H^0(X,L^{\ten s})^U$
% for some $s>0$, then in fact we have the inequality when we replace $s$ by any
% positive multiple of itself. Also, the property that $H^0(X,L^{\ten s})^U$
% defines an enveloping 
% system adapted to the set $\{\sigma_i^s,\dots,\sigma_n^s\}$ is also stable under
% taking positive multiples of $s>0$, by Proposition \ref{prop:GiFi4.1},
% \ref{itm:GiFi4.1-3}. Thus, either we may choose $s>0$ such that
% both $H^0(X,L^{\ten s})^U$ defines an enveloping system adapted to $\mc{S}$ and
% $H^0(X,L^{\ten s})_{\max} \neq H^0(X,L^{\ten s})^U$, or we have        
% \end{rmk}

%%% lem:ExCo5 %%%
\begin{lem} \label{lem:ExCo5} Assume that condition \eqref{eq:sss} is satisfied.
Then the locally closed immersion $\ol{\phi}:X^0_{\min}/U \hookrightarrow
\PP(W^*)$ has image contained in $\PP(W^*)_{\min}^0$, and the induced embedding
$X^0_{\min}/U \hookrightarrow \PP(W^*)_{\min}^0$ is a closed immersion.   
\end{lem}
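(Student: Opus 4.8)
The strategy is to combine the Bialynicki--Birula description of $\PP(W^*)_{\min}^0$ with an analysis of $\GG_m$-weights, and then to deduce the closed immersion claim by cutting the target down to the preimage of $\phi(Z_{\min})$ under the Bialynicki--Birula retraction. Throughout, recall $\ol\phi\circ q_U=\phi$, so statements may be checked on $X^0_{\min}$.

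First I would do the weight bookkeeping. Since the top $\GG_m$-weight space of any representation of $\hU$ lies in the $U$-invariants, the weight space of $H^0(X,L^{\ten s})$ of maximal weight $-s\weight_{\min}$ is contained in $W=H^0(X,L^{\ten s})^U$, hence is exactly the top weight space $W_{\max}$ of $W$; dually $(W^*)_{\min}=(W_{\max})^{*}$, viewed inside $W^{*}$ as the annihilator of the other weight spaces. Now for $z\in Z_{\min}$ the point $z$ is $\GG_m$-fixed with $\GG_m$ acting on $L^{\ten s}|_{z}$ with weight $-s\weight_{\min}$, so the evaluation map $\mathrm{ev}_z\colon W\to L^{\ten s}|_{z}$ is $\GG_m$-equivariant into a one-dimensional space of that weight, hence kills every weight space of $W$ other than $W_{\max}$; thus $\phi(z)=[\mathrm{ev}_z]\in\PP((W_{\max})^{*})=\PP((W^*)_{\min})$. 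Since $\phi$ is $\GG_m$-equivariant and $p(x)\in Z_{\min}\subseteq X^0_{\min}$ for $x\in X^0_{\min}$, we get $\lim_{t\to0}t\cdot\phi(x)=\phi(p(x))\in\PP((W^*)_{\min})$, so $\phi(x)\in\PP(W^*)_{\min}^0$ by definition; passing to the $U$-quotient gives the first assertion $\ol\phi(X^0_{\min}/U)\subseteq\PP(W^*)_{\min}^0$.

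For the closed immersion, let $\rho\colon\PP(W^*)_{\min}^0\to\PP((W^*)_{\min})$ be the retraction $y\mapsto\lim_{t\to0}t\cdot y$, which is a morphism (a geometric vector-bundle projection). As $Z_{\min}$ is projective, $\phi(Z_{\min})$ is closed in $\PP((W^*)_{\min})$, so $E:=\rho^{-1}(\phi(Z_{\min}))$ is closed in $\PP(W^*)_{\min}^0$; and by the previous paragraph $\rho\circ\ol\phi$ factors through $\phi(Z_{\min})$, so $\ol\phi$ factors through a morphism $\ol\phi\colon X^0_{\min}/U\to E$. I would then show that $\{E\cap\PP(W^*)_{\Sigma_i}\}_{i=1}^{\ell}$ is an open cover of $E$: each $\Sigma_i$ corresponds to $\sigma_i^s\in W_{\max}$, so as a linear form on $W^{*}$ it is supported on $(W^*)_{\min}$, whence $\Sigma_i$ vanishes at $e\in E$ if and only if it vanishes at $\rho(e)$; writing $\rho(e)=\phi(z)$ with $z\in Z_{\min}$, this happens precisely when $\sigma_i(z)=0$, and since $z\in Z_{\min}\subseteq X^0_{\min}=\bigcup_i X_{\sigma_i}$ some $\Sigma_i$ is nonzero at $e$. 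Finally, as $\ol\phi$ factors through $E$ we have $\ol\phi^{-1}(E\cap\PP(W^*)_{\Sigma_i})=\ol\phi^{-1}(\PP(W^*)_{\Sigma_i})=\spec(\OO(X_{\sigma_i})^U)$, on which $\ol\phi$ restricts to the closed immersion into $\PP(W^*)_{\Sigma_i}$ already recorded, hence into the open subscheme $E\cap\PP(W^*)_{\Sigma_i}$. Since being a closed immersion is local on the target, $\ol\phi\colon X^0_{\min}/U\to E$ is a closed immersion, and therefore so is $\ol\phi\colon X^0_{\min}/U\to\PP(W^*)_{\min}^0$.

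The genuinely delicate point is the covering of $E$: the affine opens $\PP(W^*)_{\Sigma_i}$ do \emph{not} cover all of $\PP(W^*)_{\min}^0$ in general (the pure powers $\sigma_i^s$ need not span $W_{\max}$), which is exactly why one must first restrict to $E=\rho^{-1}(\phi(Z_{\min}))$ and exploit that the $X_{\sigma_i}$ already cover $X^0_{\min}$, so the $\Sigma_i$ already cover $\phi(Z_{\min})$. The rest is routine care with the $\GG_m$-linearisations and with the identification $\ol\phi\circ q_U=\phi$; condition \eqref{eq:sss} enters only indirectly, through Proposition \ref{prop:ExCo4}, which is what guarantees that $X^0_{\min}/U$ exists and is covered by the affines $\spec(\OO(X_{\sigma_i})^U)$ with $\ol\phi$ a closed immersion on each.
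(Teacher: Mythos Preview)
Your proof is correct and takes a genuinely different route from the paper's.

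The paper covers all of $\PP(W^*)_{\min}^0$ by the affine opens $\PP(W^*)_{\Sigma_K}$ indexed by \emph{every} degree-$s$ monomial $\sigma^K=\sigma_1^{k_1}\cdots\sigma_\ell^{k_\ell}$, and then proves for each $K$ that the pullback
\[
\psi_K:(\symdot W)_{(\Sigma_K)}\longrightarrow (S^U)_{(\sigma^K)}
\]
is surjective by an explicit algebraic manipulation (expressing $(S^U)_{(\sigma^K)}$ as a localisation of one of the ``basic'' rings $(S^U)_{(\sigma_i)}$ and exhibiting the extra generators in the image). You instead observe that the image of $\ol\phi$ lies in the closed subset $E=\rho^{-1}(\phi(Z_{\min}))$ of $\PP(W^*)_{\min}^0$, and that $E$ is already covered by the opens $\PP(W^*)_{\Sigma_i}$ attached to the \emph{pure powers} $\sigma_i^s$ alone, for which the closed-immersion property was recorded in the setup preceding the lemma. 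This lets you invoke ``closed immersion is local on the target'' directly on the cover of $E$, bypassing the ring-theoretic computation that constitutes most of the paper's argument.

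What each approach buys: the paper's argument is self-contained once one accepts the spanning claim for $W_{\max}$, and yields the slightly stronger statement that $\ol\phi$ is a closed immersion on each $\spec(\OO(X_{\sigma^K})^U)\hookrightarrow\PP(W^*)_{\Sigma_K}$. Your approach is shorter and more geometric: it uses only the Bialynicki--Birula retraction $\rho$ and the properness of $Z_{\min}$, and it sidesteps the question of whether the monomials $\sigma^K$ actually span $W_{\max}=H^0(X,L^{\ten s})_{-s\weight_{\min}}$ (a point the paper asserts without justification and which is not entirely obvious when the multiplication map $\sym^s H^0(X,L)\to H^0(X,L^{\ten s})$ fails to be surjective onto the top weight space). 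Your final paragraph correctly isolates the one subtle step, namely that the pure-power opens need not cover $\PP(W^*)_{\min}^0$, and explains why restricting to $E$ cures this.
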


\begin{proof}
We introduce some notation. Given a tuple
$K=(k_1,\dots,k_n) \in \mb{N}^n$ 
of non-negative integers such that $k_1+\dots +k_n=s$, let
$\sigma^K:=\sigma_1^{k_1} \cdots \sigma_n^{k_n}$ and let $\Sigma_K$ be the
section in $H^0(\PP(W^*),\OO(1))$ that corresponds to $\sigma^K$ under the
identification $H^0(\PP(W^*),\OO(1)) = H^0(X,L^{\ten s})^U$. Observe that the
maximal weight space $H^0(\PP(W^*),\OO(1))_{\max}$ for the $\GG_m$-action on
$H^0(\PP(W^*),\OO(1))$ is spanned by the $\Sigma_K$ with
$K=(k_1,\dots,k_n)$ running over all tuples in $\mb{N}^n$ such that $k_1+ \dots
+ k_n=s$, thus
$\PP(W^*)_{\min}^0$ is covered by the associated affine open subsets
$\PP(W^*)_{\Sigma_K}$. Because $q_U:X^0_{\min} \to X^0_{\min} /U$ is
surjective, we also have   
\[
(\ol{\phi})^{-1}(\PP(W^*)_{\Sigma_K}) = q_U(\phi^{-1}(\PP(W^*)_{\Sigma_K})) =
q_U(X_{\sigma^K}) = \spec(\OO(X_{\sigma^K})^U).
\]
In particular, choosing $K$ with $i$-th entry equal to $s$ and zero in each other
entry (so that $\Sigma_K=\Sigma_i$), we see that
$(\ol{\phi})^{-1}(\PP(W^*)_{\Sigma_K}) = \spec(\OO(X_{\sigma_i})^U$, which cover
$X^0_{\min}/U $ as $i$ runs from $1$ to $n$. Hence, the image of
$X^0_{\min}/U$ under $\ol{\phi}$ is contained in $\PP(W^*)_{\min}^0$. 

For each tuple $K = (k_1,\dots,k_n)$ (with $k_1+ \dots + k_n=s$), we claim that
the restriction 
\[
\ol{\phi}:\spec(\OO(X_{\sigma^K})^U) \hookrightarrow \PP(W^*)_{\Sigma_K}
\]
is a closed immersion of affine schemes. Note that showing this is enough to
prove the lemma, because closed immersions are local
on the base and the $\PP(W^*)_{\Sigma_K}$ cover $\PP(W^*)_{\min}^0$. To prove
the claim, it is equivalent to show that each pullback
$(\ol{\phi})^{\#}:\OO(\PP(W^*)_{\Sigma_K}) \to \OO(X_{\sigma^K})^U$ is
surjective. Under the usual identifications $(\symdot W)_{(\Sigma_K)} =
\OO(\PP(W^*)_{\Sigma_K})$ and $(S^U)_{(\sigma^K)}=\OO(X_{\sigma^K})^U$, this
amounts to showing that the homomorphism  
\[
\psi_K:(\symdot W)_{(\Sigma_K)} \to (S^U)_{(\sigma^K)}, 
\]
induced by $\frac{f}{\Sigma_K} \mapsto \frac{f}{\sigma^K}$ for $f \in
W=H^0(X,L^{\ten s})^U$, is 
surjective. 

To this end, after relabelling if necessary we may assume, without loss of
generality, that 
$K=(k_1,\dots,k_p,0,\dots,0)$, with $1 \leq p \leq n$ and $k_i > 0$ for
$i=1,\dots,p$. Then as subalgebras of the field of rational
functions $\kk(X) = S_{((0))}$, we have  
\[
(S^U)_{(\sigma^K)}=(S^U)_{(\sigma_1 \cdots
  \sigma_p)}=(S^U)_{(\sigma_1)}\left[\tfrac{\sigma_1}{\sigma_2},\dots,
  \tfrac{\sigma_1}{\sigma_p}\right] 
\]
where the last ring is the subalgebra generated by $(S^U)_{(\sigma_1)}$ and the
rational functions $\frac{\sigma_1}{\sigma_2},\dots,\frac{\sigma_1}{\sigma_p}$
(which are regular on $X_{\sigma^K}$). Observe that
\[
\frac{\sigma_1^{k_1-1} \sigma_2^{k_2} \cdots \sigma_p^{k_p}\sigma_i}{\sigma^K} =
\frac{\sigma_i}{\sigma_1} \in (S^U)_{(\sigma_1)}, \quad i=2,\dots,p, 
\]
and (where $\widehat{\sigma_i^{k_i}}$ means `omit $\sigma_i^{k_i}$' 
in what follows)
\[
\frac{\sigma_1^{k_1} \cdots \widehat{\sigma_i^{k_i}} \cdots
  \sigma_p^{k_p}\sigma_i^{k_i-1} \sigma_1}{\sigma^K} =
\frac{\sigma_1}{\sigma_i} \quad i=2,\dots,p. 
\]
Also, for each $f \in H^0(X,L^{\ten s})^U$ we have
\[
\frac{f}{\sigma^K} \cdot
\left(\frac{\sigma_2}{\sigma_1}\right)^{k_2} \cdots
\left(\frac{\sigma_p}{\sigma_1}\right)^{k_p}=\frac{f}{\sigma_1^s} \in
(S^U)_{(\sigma_1)}.  
\]
and by the choice of $s>0$ the algebra $(S^U)_{(\sigma_1)}$ is generated
by the rational functions $\frac{f}{\sigma_1^s}$ for $f \in H^0(X,L^{\ten s})^U$. We thus see
that the image of $\psi_K$ contains $(S^U)_{(\sigma_1)}$, along with the extra
generators $\frac{\sigma_1}{\sigma_2},\dots,\frac{\sigma_1}{\sigma_p}$, and so
conclude that $\psi_K$ is surjective, as claimed. This completes the proof.
\end{proof}   

 Since $U$ is a normal subgroup of $\hH$ and $R$ centralises the one-parameter subgroup $\GG_m$, this entire set-up is acted on by $H/U = R$. Recall the weaker version   \eqref{eq:sssH} of  \eqref{eq:sss}:
$$
\text{$\stab_U(z) = \{e\}$ for every $z \in Z_{\min}^{ss,R}$}  %\tag{$\eqref{eq:sss}^*_H$}
$$
where $Z_{\min}^{ss,R}$ is the semistable locus for the induced linear action of $R=H/U$ on $Z_{\min}$.
The proof of Lemma \ref{lem:ExCo5}, using Proposition \ref{prop:fff} instead of Proposition \ref{prop:ExCo4}, gives us 

\begin{lem} \label{blimey} 
Assume that $\hH$ acts linearly on $X$ and that condition  \eqref{eq:sssH} is satisfied. Then the open subscheme
$ \{ x \in X^0_{\min}\,\, | \,\, p(x) \in Z_{\min}^{ss,R} \} $ of $X$ has a geometric quotient by the action of $U$ and there is a closed immersion
$$ \{ x \in X^0_{\min}\,\,  | \,\,  p(x) \in Z_{\min}^{ss,R} \}/U \to \{ y \in \PP(W^*)^0_{\min}\,\,\,  |\,\,\,  p_W(y) \in Z(\PP(W^*))_{\min}^{ss,R} \}/U $$
where $p_W$ and  $Z(\PP(W^*))_{\min}^{ss,R}$ are the analogues of $p$ and $ Z_{\min}^{ss,R}$ for the action of $\hH/U \cong R \times \GG_m$ on $\PP(W^*)$.
\end{lem}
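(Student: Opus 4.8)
The plan is to replay the proof of Lemma~\ref{lem:ExCo5} almost verbatim, with only two substitutions: (a) replace the input Proposition~\ref{prop:ExCo4} (which requires $(\mf{C}^*)$) by Proposition~\ref{prop:fff}, which needs no stability hypothesis, and (b) restrict attention throughout from $X^0_{\min}$ to the open subvariety
\[
X^{0,ss,R}_{\min} := \{ x \in X^0_{\min} \mid p(x) \in Z_{\min}^{ss,R}\},
\]
on which, by $(\mf{C}^*_H)$, the stabiliser $\stab_U(p(x))$ is trivial. First I would note that $X^{0,ss,R}_{\min}$ is $\hH$-invariant: it is $U$-invariant and $T$-equivariant because $p$ is ($p(ux)=p(x)$ by Lemma~\ref{lemmaA}, and $p$ is $\GG_m$-invariant and $R$-equivariant since $\GG_m$ centralises $R$ and $U$ is normal), and $Z_{\min}^{ss,R}$ is $R$-invariant by construction. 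Then Proposition~\ref{prop:fff} applies directly on the open cover of $X^{0,ss,R}_{\min}$ by the affines $X_{\sigma^K}$ (with $\sigma\in H^0(X,L)_{\max}$ chosen to be $T$-weight vectors), giving a locally trivial $U$-quotient $q_U : X^{0,ss,R}_{\min} \to X^{0,ss,R}_{\min}/U$; the restriction to each $X_{\sigma^K}$ is the affine quotient $X_{\sigma^K}\to\spec(\OO(X_{\sigma^K})^U)$ exactly as in the proof of Proposition~\ref{prop:ExCo4}, because the only place $(\mf{C}^*)$ was used there was to guarantee triviality of $\stab_U$ at the limit points, and $(\mf{C}^*_H)$ supplies precisely this at every $z\in Z_{\min}^{ss,R}$.

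Next I would carry out verbatim the construction of $W = H^0(X,L^{\otimes s})^U$ for $s$ sufficiently divisible, the induced $\GG_m$-equivariant morphism $\phi : X^{0,ss,R}_{\min}\to\PP(W^*)$, and the descended locally closed immersion $\ol\phi : X^{0,ss,R}_{\min}/U \hookrightarrow \PP(W^*)$, observing that since $U\triangleleft\hH$ and $R\times\GG_m \cong \hH/U$, the group $R\times\GG_m$ acts on $\PP(W^*)$ compatibly with $\ol\phi$. Writing $p_W$ for the analogue of $p$ for the $\GG_m$-action on $\PP(W^*)$ and $Z(\PP(W^*))_{\min}$, $Z(\PP(W^*))_{\min}^{ss,R}$ for the analogues of $Z_{\min}$, $Z_{\min}^{ss,R}$, the same computation as in Lemma~\ref{lem:ExCo5} shows $\ol\phi$ maps into $\PP(W^*)^0_{\min}$ and, on each basic affine $\PP(W^*)_{\Sigma_K}$, the pullback $(\ol\phi)^\#$ is surjective (the surjectivity argument via the rational functions $\sigma_1/\sigma_i$ and $f/\sigma_1^s$ is unchanged, being purely about the algebra $(S^U)_{(\sigma^K)}$). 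Hence $\ol\phi$ is a closed immersion onto its image inside $\PP(W^*)^0_{\min}$. Finally I would identify that image: a point of $X^{0,ss,R}_{\min}/U$ has $p(x)\in Z_{\min}^{ss,R}$, and since $\ol\phi$ intertwines $p$ with $p_W$ and is $R$-equivariant, its image lands in $\{y\in\PP(W^*)^0_{\min} \mid p_W(y)\in Z(\PP(W^*))^{ss,R}_{\min}\}/U$; the closed-immersion property restricts to this $R\times\GG_m$-invariant open subset, giving the stated closed immersion.

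The only genuine point requiring care is the last one: one must check that the $R$-semistable locus $Z_{\min}^{ss,R}$ in $X$ is carried \emph{into} (not merely meets) the $R$-semistable locus $Z(\PP(W^*))^{ss,R}_{\min}$ for the induced linearisation on $\PP(W^*)$. This follows because $\ol\phi$ restricts to a closed $R$-equivariant immersion $Z_{\min}/U \hookrightarrow Z(\PP(W^*))_{\min}$ compatible with the ample $R$-linearisations (the restriction of $\OO_{\PP(W^*)}(1)$ pulls back to a positive power of $L$ on the relevant affines, by the defining property of $W$), and semistability is preserved under pullback of sections along a closed equivariant immersion with matching linearisations — equivalently, by the Hilbert--Mumford criterion applied on $Z_{\min}/U$, which embeds closedly and $R$-equivariantly. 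I expect this verification of compatibility of the $R$-linearisations under $\ol\phi$ to be the main (though routine) obstacle; everything else is a transcription of the proof of Lemma~\ref{lem:ExCo5} with $X^0_{\min}$ replaced by $X^{0,ss,R}_{\min}$ and Proposition~\ref{prop:ExCo4} replaced by Proposition~\ref{prop:fff}.
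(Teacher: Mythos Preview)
Your proposal is correct and takes essentially the same approach as the paper, whose entire proof is the single sentence ``The proof of Lemma~\ref{lem:ExCo5}, using Proposition~\ref{prop:fff} instead of Proposition~\ref{prop:ExCo4}, gives us [the lemma].'' You have correctly identified this substitution and worked out the details, including the needed $\hH$-invariance of $X^{0,ss,R}_{\min}$ and the transcription of the surjectivity argument for $(\ol\phi)^\#$; your additional care about the inclusion $\phi(Z_{\min}^{ss,R}) \subseteq Z(\PP(W^*))_{\min}^{ss,R}$ is a genuine point the paper leaves implicit, and your Hilbert--Mumford justification (the numerical invariant $\mu(\,\cdot\,,\lambda)$ is preserved under an $R$-equivariant closed immersion with matching linearisations, and $Z_{\min}$ is projective so limits exist) is the right way to handle it. One small notational slip: ``$Z_{\min}/U$'' is not well-formed since $U$ does not preserve $Z_{\min}$ (Lemma~\ref{lemmaA}); what you mean is the image of $Z_{\min}^{ss,R}$ in $X^{0,ss,R}_{\min}/U$, which is isomorphic to $Z_{\min}^{ss,R}$ itself because distinct points of $Z_{\min}$ lie in distinct $U$-orbits.
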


From Lemma \ref{lem:ExCo5} it follows  that the morphism $\ol{\phi}$ induces a
$T$-equivariant isomorphism of quasi-projective schemes $\ol{\phi}:X^0_{\min}/U \overset{\cong}{\longrightarrow} \ol{X^0_{\min}/U} \cap
\PP(W^*)_{\min}^0$. Let us now use this identification freely for the rest of
the argument, writing  
\[
X^0_{\min}/U = \ol{X^0_{\min}/U} \cap \PP(W^*)_{\min}^0. 
\]

Any point in $X^0_{\min}/U$ has a limit under the action of
$t \in \GG_m$, as $t \to 0$, contained in the closed subset
$\ol{X^0_{\min}/U} \cap \PP((W^*)_{\min}) \subseteq \ol{X^0_{\min}/U} \cap
\PP(W^*)_{\min}^0$ under this isomorphism. In particular, given $x \in
X^0_{\min}$ the point $q_U(x) $ lies in $ \ol{X^0_{\min}/U} \cap \PP((W^*)_{\min})$ if,
and only if, 
\[
q_U(x)=\lim_{t \to 0} t \cdot q_U(x) = q_U \left( \lim_{t \to 0} t \cdot x \right).
\]
Thus, for each $x \in X^0_{\min}$, we have  
%%% eq:ExCo1 %%%
\begin{equation} \label{eq:ExCo1}
q_U(x) \in \ol{X^0_{\min}/U} \cap \PP((W^*)_{\min})  \iff x \in
U Z_{\min}.  
\end{equation}

By assumption we have $X^0_{\min} \neq U Z_{\min}$, so as a
consequence of \eqref{eq:ExCo1} we may conclude that $\PP((W^*)_{\min}) \neq
\PP(W^*)_{\min}^0$ and hence that the $\GG_m$-action on $W=H^0(X,L^{\ten s})^U$ has at
least two distinct weights. Note that the maximum weight for the $\GG_m$-action
on $H^0(X,L^{\ten s})^U$ is equal to $-s\weight_{\min}$. Let $\epsilon >0$ be a
rational number such 
that $-s(\weight_{\min}+\epsilon)$ lies strictly between $-s\weight_{\min}$ and the next largest
weight for the $\GG_m$-action on $W$ (which must be at most $ -s\weight_{\min}-1$). Let
$\chi$ be the rational character of $\hat{U}$ of weight 
$\weight_{\min} + \epsilon$ and consider the rational $\GG_m$-linearisation
$\OO_{\PP(W^*)}(1)_{s\chi} \to \PP(W^*)$. The rational weights of
$H^0(\PP(W^*),\OO_{\PP(W^*)}(1)_{s\chi})^*$ are arranged such that the minimal
weight is less 
than $0$ and the next smallest weight is greater than $0$, so it follows
immediately 
from the Hilbert--Mumford criteria that the stable
locus for $\GG_m \act \OO_{\PP(W^*)}(1)_{s\chi} \to \PP(W^*)$ is equal to the
semistable locus, which is equal to $\PP(W^*)_{\min}^0 \setminus
\PP((W^*)_{\min})$, and that the semistable locus of the induced action of $T$ is contained in that for $\GG_m$. Let
\[
M=\OO_{\PP(W^*)}(1)^{s\chi}|_{\ol{X^0_{\min}/U}} \to
\ol{X^0_{\min}/U}
\]
be the restriction of the rational $\GG_m$-linearisation
$\OO_{\PP(W^*)}(1)^{(s\chi)} \to \PP(W^*)$ to $\ol{X^0_{\min}/U}$. Then by
restriction of (semi)stable loci we have  
\begin{align*}
\ol{X^0_{\min}/U}^{\rms(M)} =& \ol{X^0_{\min}/U} \cap
\PP(W^*)^{\rms(\OO_{\PP(W^*)}(1)_{s\chi)}} \\
=& \ol{X^0_{\min}/U} \cap
\PP(W^*)^{\rmss(\OO_{\PP(W^*)}(1)_{s\chi)}} \\
=& \ol{X^0_{\min}/U}^{\rmss(M)},  
\end{align*} 
which are furthermore equal to 
\[
(\ol{X^0_{\min}/U} \cap \PP(W^*)_{\min}^0) \setminus
(\ol{X^0_{\min}/U} \cap \PP((W^*)_{\min}) = (X^0_{\min}/U) \setminus
((X^0_{\min}/U) \cap \PP((W^*)_{\min})).    
\]
By \eqref{eq:ExCo1} we therefore have
\[
q_U^{-1} \left( \ol{X^0_{\min}/U}^{\rms(M)} \right) = X^0_{\min}
\setminus (U Z_{\min}).
\]
Let $L^{(\chi)} \to X$ be the rational $\hat{U}$-linearisation obtained by
twisting the linearisation $L \to X$ by the rational character $\chi$. Note 
that the rational $\GG_m$-linearisations 
$\OO_{\PP(W^*)}(1)_{s\chi} \to \PP(W^*)$ and $L^{(s\chi)} \to X^0_{\min}$ 
are compatible via
$\phi:X^0_{\min} \to \PP(W^*)$.

\begin{lem} \label{lem:preceding}
%There exists $\epsilon >0$ such that i
If  the
$\hU_T$-linearisation  is twisted by a well adapted rational character $\chi$ whose restriction to $H$ is trivial, % and satisfying  %$|\!| \chi - \chi_0 |\!| < \epsilon$ and $\weight_{\min} < \chi|_{\GG_m} < \weight_{\min} + \epsilon$, where $\weight_{\min}$ is theminimal weight for the $\l(\GG_m)$-action on $V=H^0(X,L)^*$ well adapted, 
then
 $$\phi^{-1}(\PP(W^*)^{\ssfg,T,\OO_{\PP(W^*)}(1)_{s\chi}}) = \bigcap_{u \in U} uX^{ss,T,\chi}$$
and
$$ \phi^{-1}(\PP(W^*)^{s,T,\OO_{\PP(W^*)}(1)_{s\chi}}) = \bigcap_{u \in U} uX^{s,T,\chi}.$$   
\end{lem}

\begin{proof} 
When $T$ is trivial this follows immediately from the preceding arguments. In general we combine these with Remark \ref{rmk:endof2}. By the Hilbert--Mumford criteria for classical GIT we know that (semi)stability for $T$ acting on $\PP(W^*)$ is equivalent to (semi)stability for every choice of one-parameter subgroup $\lambda: \GG_m \to T$, and for this we study orbits under one-parameter subgroups $\tilde{\lambda}: \GG_m \to T$ and their limits $\lim_{t \to 0} \tilde{\lambda}(t)y$. It suffices to understand these limits in an arbitrarily small neighbourhood of the $T$-fixed point set: $y$ is semistable (respectively stable) if and only if 0 lies in the (respectively the interior of) the convex hull of the weights with which $T$ acts on the fibres of the line bundle induced by (a positive tensor power of) $L^*$ over the $T$-fixed point set $\overline{Ty}^T$ in the closure of the orbit of $y$ in $\PP(W^*)$. This convex hull is a convex polyhedron whose faces near any vertex are determined by the weights with which $T$ acts on the Zariski tangent space to $\overline{Ty}$ at a corresponding point of $\overline{Ty}^T$. By the well adapted hypothesis %When $\epsilon >0$ is sufficiently small
 we have seen that $X^{ss,T,\chi} \subseteq X^0_{\min}$, and the result follows using Remark \ref{rmk:endof2}. 
\end{proof}

%%% prop:ExCo6 %%%
\begin{propn} \label{prop:ExCo6}
We have $$ \bigcap_{u \in U} uX^{s,\GG_m}_{\min+} = X^0_{\min} \setminus U Z_{\min}  \,\,\, \subseteq \,\,\, 
X^{\rms(\hat{U},L^{(\chi)})}$$ and  $$\bigcap_{u \in U} uX^{ss,T}_{\min+} \subseteq X^{\ssfg(\hat{U}_T,L^{(\chi)})}.$$
\end{propn}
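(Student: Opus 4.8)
\emph{Proof plan.}
The first equality is immediate: $X^{\rms,\GG_m}_{\min+}=X^0_{\min}\setminus Z_{\min}$ by the Hilbert--Mumford criterion for $\GG_m$, and since $X^0_{\min}$ is $\hU$-invariant we have $u(X^0_{\min}\setminus Z_{\min})=X^0_{\min}\setminus uZ_{\min}$ for each $u\in U$, so that $\bigcap_{u\in U}uX^{\rms,\GG_m}_{\min+}=X^0_{\min}\setminus UZ_{\min}$ --- the identity already recorded in the introduction. For the two inclusions the plan is uniform: transport the affine GIT charts already produced on $Y:=\ol{X^0_{\min}/U}\subseteq\PP(W^*)$ back, through $\phi=\ol\phi\circ q_U$, to basic affine opens $X_{\tilde F}\subseteq X$ cut out by $\hU$-- (resp.\ $\hU_T$--) invariant sections $\tilde F$ of powers of $L^{(\chi)}$, and then verify conditions (1)--(3) of Definition~\ref{def:GiSt1.1} (resp.\ finite generation of $\OO(X_{\tilde F})^{\hU_T}$, Definition~\ref{defnssetc}) using Proposition~\ref{prop:ExCo4}, Proposition~\ref{prop:TrRe3.1} and Lemma~\ref{lem:GiSt1}. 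The one delicate point will be showing that such an $X_{\tilde F}$ is contained in $X^0_{\min}$, which is exactly what makes the transport work.

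For the first inclusion, fix $x\in X^0_{\min}\setminus UZ_{\min}$ and set $y=q_U(x)$; by the computation carried out immediately before this proposition, $y\in Y^{\rms(M)}=Y^{\rmss(M)}$, where $M=\OO_{\PP(W^*)}(1)^{(s\chi)}|_Y$. Applying classical GIT to the reductive group $\GG_m$ acting on the projective variety $Y$ with ample linearisation $M$, and choosing $m$ suitably divisible, we may take $Y^{\rmss(M)}$ to be covered by basic affine opens $Y_F$ with $F$ a $\GG_m$-invariant section of $M^{\otimes m}$ lying in the degree-$m$ part of the homogeneous coordinate ring $\bigoplus_k\mathrm{Im}\big(\sym^k W\to H^0(Y,\OO(k)|_Y)\big)$ of $Y$, and with $\GG_m$ acting on each $Y_F$ with closed orbits and finite stabilisers (the latter since $Y_F\subseteq Y^{\rms(M)}$). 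Pick such an $F$ with $y\in Y_F$, and lift it to $\tilde F\in\sym^m W\subseteq H^0(X,L^{\otimes ms})$; then $\tilde F$ is $U$-invariant, and since it has $\GG_m$-weight $-ms(\weight_{\min}+\epsilon)$ for the original linearisation it is $\hU$-invariant for $(L^{(\chi)})^{\otimes ms}$.

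The key step is that $X_{\tilde F}\subseteq X^0_{\min}$. Writing $\tilde F$ as a sum of monomials $\tau_1\cdots\tau_m$ in $\GG_m$-weight vectors of $W$, the weights of $\tau_1,\dots,\tau_m$ sum to $-ms(\weight_{\min}+\epsilon)$ while each is at most the top weight $-s\weight_{\min}$; hence each monomial has a factor $\tau_i$ of weight in $(-s(\weight_{\min}+\epsilon),-s\weight_{\min}]$, which by the choice of $\epsilon$ must equal $-s\weight_{\min}$. As the maximal-weight subspace of $W$ is spanned by the degree-$s$ monomials in $H^0(X,L)_{\max}$, whose basic open sets lie in $X^0_{\min}$, any maximal-weight $\tau\in W$ has $X_\tau\subseteq X^0_{\min}$; so $\tilde F(x')\neq0$ forces $x'\in X^0_{\min}$. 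Since $\tilde F|_{X^0_{\min}}=q_U^{*}F$ and $Y_F\subseteq X^0_{\min}/U$, this gives $X_{\tilde F}=q_U^{-1}(Y_F)$. By Proposition~\ref{prop:ExCo4} the map $q_U\colon X_{\tilde F}=q_U^{-1}(Y_F)\to Y_F$ is a locally trivial, and hence (Proposition~\ref{prop:TrRe3.1}, as $U$ is unipotent and $Y_F$ affine) trivial, principal $U$-bundle; thus $X_{\tilde F}\cong U\times Y_F$ is affine with $\spec((S^U)_{(\tilde F)})=Y_F$, and Lemma~\ref{lem:GiSt1} applied with $N=U\trianglelefteq\hU$ (so $\hU/U\cong\GG_m$) transfers closedness of orbits and finiteness of stabilisers from $Y_F$ to $X_{\tilde F}$. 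Hence $\tilde F\in I^{\rms(\hU,L^{(\chi)})}$ and $x\in X_{\tilde F}\subseteq X^{\rms(\hU,L^{(\chi)})}$, as required.

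The inclusion $\bigcap_{u\in U}uX^{\rmss,T}_{\min+}\subseteq X^{\ssfg(\hU_T,L^{(\chi)})}$ is proved by the same argument with the reductive group $T$ in place of $\GG_m$. By Lemma~\ref{lem:preceding}, together with $Y^{\rmss,T,M}\subseteq Y^{\rmss,\GG_m,M}\subseteq X^0_{\min}/U$, the left-hand side equals $q_U^{-1}(Y^{\rmss,T,M})$; for $x$ in it with $y=q_U(x)$, classical GIT for $T$ on $Y$ gives (for $m$ suitably divisible) a $T$-invariant section $F$ of $M^{\otimes m}$ in the degree-$m$ part of the coordinate ring of $Y$ with $y\in Y_F$, $Y_F$ affine, whose lift $\tilde F\in(\sym^m W)^{T}=H^0(X,L^{\otimes ms})^{\hU_T}$ is $\hU_T$-invariant for $(L^{(\chi)})^{\otimes ms}$ with the same $\GG_m$-weight as before. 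The weight argument again yields $X_{\tilde F}=q_U^{-1}(Y_F)$, which is affine with $\OO(X_{\tilde F})^{\hU_T}=\big(\OO(X_{\tilde F})^{U}\big)^{T}=\OO(Y_F)^{T}$ finitely generated ($T$ reductive); so $\tilde F$ lies in the index set defining $X^{\ssfg(\hU_T,L^{(\chi)})}$ and $x\in X_{\tilde F}$. The main obstacle throughout is precisely the containment $X_{\tilde F}\subseteq X^0_{\min}$ --- the only place where the precise choice of $\epsilon$ (separating the top $\GG_m$-weight of $W$ from the next) is used --- while everything else is a routine assembly of reductive GIT on $Y$, Proposition~\ref{prop:ExCo4} and Lemma~\ref{lem:GiSt1}.
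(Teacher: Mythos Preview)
Your proof is correct and follows essentially the same route as the paper's: pull back $\GG_m$- (resp.\ $T$-) invariant sections from the reductive GIT on $\ol{X^0_{\min}/U}\subseteq\PP(W^*)$, use the weight argument to force $X_{\tilde F}\subseteq X^0_{\min}$, and then combine Proposition~\ref{prop:ExCo4} with Lemma~\ref{lem:GiSt1} to verify the stability conditions. The paper simplifies slightly by reducing at the outset to the case where $F$ is a single $\GG_m$-invariant monomial in weight vectors of $W$ (valid since such monomials suffice to cover the semistable locus), whereas you treat a general $\tilde F$ as a sum of monomials; both lead to the same conclusion via the same pigeonhole on weights.

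One small logical slip: your appeal to Proposition~\ref{prop:TrRe3.1} to deduce that $X_{\tilde F}$ is affine is circular, since that proposition takes affineness of the total space as a \emph{hypothesis} and concludes affineness of the base is equivalent to triviality. (What is true without circularity is that a locally trivial $U$-bundle over the affine base $Y_F$ is trivial, by \cite{ser58}, but that still only gives affineness of $X_{\tilde F}$ once you know the bundle is globally $U\times Y_F$.) The paper sidesteps this entirely: $X_{\tilde F}$ is affine simply because $\tilde F$ is a section of a positive power of the ample line bundle $L$. With that one-line fix your argument goes through, and triviality of the bundle is then not needed at all for the stability verification---local triviality (Proposition~\ref{prop:ExCo4}) already gives condition~(3) of Definition~\ref{def:GiSt1.1}.
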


\begin{proof} 
By Lemma \ref{lem:preceding} we have the
equality  
\[
 \bigcap_{u \in U} uX^{s,\GG_m}_{\min+} = X^0_{\min} \setminus UZ_{\min} =
\phi^{-1}(\PP(W^*)^{\rmss(\GG_m, \OO_{\PP(W^*)}(1)^{(s\chi)})}).
\]
 We will show that $$\phi^{-1}(\PP(W^*)^{\rmss(\OO_{\PP(W^*)}(1)^{(s\chi)})}) \subseteq
X^{\rms,\hat{U},L^{(\chi)}}.$$
Suppose $F$ is contained in $H^0(\PP(W^*),\OO(d))^{\GG_m}$,
with $d>0$. Then we may regard $F$ as a 
linear combination of degree $d$ monomials in $\GG_m$-weight vectors in $W$, and
each such monomial must be $\GG_m$-invariant. So, in covering
$\PP(W^*)^{\rmss(\OO_{\PP(W^*)}(1)^{(s\chi)})}$ by open subsets of the form
$\PP(W^*)_F$, we may assume that $F$ is an invariant monomial of weight vectors,
without 
loss of generality. Note also that such a monomial must be divisible by some 
$\Sigma_K \in H^0(\PP(W^*),\OO(1))_{\max}=H^0(X,L^{\ten s})_{\max}$. It follows
that $\phi^*F \in H^0(X,(L^{(\chi)})^{\ten ds})^{\hat{U}}$ is equal to
$g\sigma$, with $\sigma \in H^0(X,L)_{\max}$ and $g \in H^0(X,(L^{(\chi)})^{\ten
  (ds-1)})^{U}$. In particular, the map $q_U:X_{\phi^*F} \to
\spec(\OO(X_{\phi^*F})^U)$ is equal to the canonical morphism
\[
X_{g\sigma} \to \spec((\OO(X_{\sigma})^U)_{a}),
\]
where $(\OO(X_{\sigma})^U)_{a}$ is the localisation of
$\OO(X_{\sigma})^U$ at the function $a=\frac{g}{\sigma^{ds-1}}$. This must be a
locally trivial $U$-quotient, being the restriction of the 
locally trivial quotient $q_U:X_{\sigma} \to \spec(\OO(X_{\sigma})^U)$. Since
$\spec(\OO(X_{\phi^*F})^U)$ maps into
$\PP(W^*)^{\rmss(\OO_{\PP(W^*)}(1)^{(s\chi)})}$ under the embedding $\ol{\phi}$,
by restriction we see that the action of $\GG_m$ on $\spec(\OO(X_{\phi^*F})^U)$
is closed with all stabilisers finite. The same is true for the $\GG_m$-action
on $X_{\phi^*F}$ by Lemma \ref{lem:GiSt1}. Finally, the open set $X_{\phi^*F} $
is affine because $L^{(\chi)} \to X$ is ample as
a line bundle, so we conclude that
$X_{\phi^*F} \subseteq X^{\rms(\hat{U},L^{(\chi)})}$. Thus
$\phi^{-1}(\PP(W^*)^{\rmss(\OO_{\PP(W^*)}(1)^{(s\chi)})})$ is contained in
$X^{\rms(\hat{U},L^{(\chi)})}$, as desired.    A similar argument using $T$-weight vectors shows that $\bigcap_{u \in U} uX^{ss,T}_{\min+} \subseteq X^{\ssfg(\hat{U}_T,L^{(\chi)})}.$
\end{proof}

We are now in a position to complete the proof of Theorem \ref{thm:ExCo1}. Recall that if $\hU_T$ acts on an algebra $A$ then $A^{\hU} = (A^U)^{\GG_m}$ and $A^{\hU_T} = (A^U)^{T}$. The
GIT quotient $$\pi_{\GG_m}:\ol{X^0_{\min}/U}^{\rmss(M)} \to 
\ol{X^0_{\min}/U} \dblslash_M \GG_m$$ is a geometric quotient for the
$\GG_m$-action on 
$\ol{X^0_{\min}/U}^{\rms(M)}=\ol{X^0_{\min}/U}^{\rmss(M)}$, and hence the composition 
\[     \bigcap_{u \in U} uX^{s,\GG_m}_{\min+} =
X^0_{\min} \setminus U Z_{\min} \overset{q_U}{\longrightarrow}
\ol{X^0_{\min}/U}^{\rms(M)} \overset{\pi_{\GG_m}}{\longrightarrow}
\ol{X^0_{\min}/U} \dblslash_M \GG_m
\]
provides a geometric quotient for the $\hat{U}$-action on $ \bigcap_{u \in U} uX^{s,\GG_m}_{\min+}$, with projective quotient
$$( \bigcap_{u \in U} uX^{s,\GG_m}_{\min+})/\hat{U} \cong 
\ol{X^0_{\min}/U} \dblslash_M \GG_m.$$ 
Furthermore, by Proposition
\ref{prop:ExCo6} we know that $ \bigcap_{u \in U} uX^{s,\GG_m}_{\min+}$
is an open $\hat{U}$-stable subset of the stable locus $X^{\rms(L^{(\chi)})}$
for the rational 
linearisation $\hat{U} \act L^{(\chi)} \to X$, so by uniqueness of geometric
quotients we may identify $( \bigcap_{u \in U} uX^{s,\GG_m}_{\min+})/\hat{U}$ with the image of $ \bigcap_{u \in U} uX^{s,\GG_m}_{\min+}$ under the enveloping quotient map
$q:X^{\ssfg(L^{(\chi)})} \to X \env_{\! \! L^{(\chi)}} \hat{U}$. Note that,
because $X$ is irreducible and the enveloping quotient map is dominant,
$( \bigcap_{u \in U} uX^{s,\GG_m}_{\min+})/\hat{U}$ is a dense 
open subscheme of $X \env_{\! \! L^{(\chi)}} \hat{U}$. On the other hand, the
quotient $( \bigcap_{u \in U} uX^{s,\GG_m}_{\min+})/\hat{U}$ is projective,
thus universally closed over $\spec \kk$, and so $( \bigcap_{u \in U} uX^{s,\GG_m}_{\min+})/\hat{U} $ is also a closed subscheme of
$X \env_{\! \! L^{(\chi)}} \hat{U}$, since the latter is separated
over $\spec \kk$ \cite[Tag 01W0]{stacks-project}. Hence $( \bigcap_{u \in U} uX^{s,\GG_m}_{\min+})/\hat{U} = X \env_{\! \! L^{(\chi)}}
\hat{U}$. Because $ \bigcap_{u \in U} uX^{s,\GG_m}_{\min+} \subseteq 
X^{\rms(L^{(\chi)})}$ it follows that $X^{\rms(L^{(\chi)})}/\hat{U}=X \env_{\! \! L^{(\chi)}}
\hat{U}$. Also, a consequence of the definition of the stable locus
$X^{\rms(L^{(\chi)})}$ is that it satisfies 
$q^{-1}(q(X^{\rms(L^{(\chi)})}))=X^{\rms(L^{(\chi)})}$, so we in fact have that 
\[
 \bigcap_{u \in U} uX^{s,\GG_m}_{\min+} = X^{\rms(L^{(\chi)})} =
X^{\ssfg(L^{(\chi)})},      
\]
and that the
enveloping quotient $q:X^{\ssfg(L^{(\chi)})} \to X \env_{\! \! L^{(\chi)}}
\hat{U}$ is a geometric quotient for the $\hat{U}$-action on
$X^{\ssfg(L^{(\chi)})}$. Since $X   \env_{\! \! L^{(\chi)}}
\hat{U}
= \ol{X^0_{\min}/U} \dblslash_M \GG_m$ and $X \env_{\! \! L^{(\chi)}}
\hat{U}_T = (X\env_{\! \! L^{(\chi)}}
\hat{U})/\!/ T$, a
 similar argument using Lemma \ref{lem:preceding} and Remark \ref{rmk:endof2} shows that $X^{\ssfg(\hU_T)} = \bigcap_{u \in U} uX^{ss,T}_{\min+}$ and  $X^{\rms(\hU_T)} = \bigcap_{u \in U} uX^{s,T}_{\min+}$,
which is Theorem \ref{thm:ExCo1}  (\ref{itm:ExCo1-2}). It also shows that the
enveloping quotient map $q:X^{\ssfg(\hU_T, L^{(\chi)})} \to X \env_{\! \! L^{(\chi)}}
\hat{U}_T$ is surjective and is a good quotient for the $\hat{U}_T$-action on
$X^{\ssfg(\hU_T,L^{(\chi)})}$, proving (\ref{itm:ExCo1-4}) of Theorem
\ref{thm:ExCo1}. Finally, by 
Proposition \ref{cor:GiFi5}, for sufficiently divisible $r>0$ the ring of
invariants $\bigoplus_{k \geq 0} H^0(X,(L^{(\chi)})^{\ten kr})^{\hat{U}_T}$ for the
$\hat{U}_T$-linearisation $(L^{(\chi)})^{\ten r} \to X$ is a
finitely generated $\kk$-algebra, which gives (\ref{itm:ExCo1-3}) of Theorem
\ref{thm:ExCo1}. This completes the proof of Theorem \ref{thm:ExCo1}.    

\medskip

The following corollary gives us Theorem \ref{mainthm}.

\begin{cor} \label{cor:ExCo1H}
%Let $X$ be an irreducible projective scheme acted upon by a linear algebraic group $H$ with unipotent radical $U$ and let $L \to X$ be a very ample linearisation. Suppose that $H = U \rtimes R$ where the reductive group $R$ has a central one-parameter subgroup $\GG_m$ which acts on the Lie algebra of $U$ with only strictly positive weights, and that 
% the linearisation extends to a linearisation of a semi-direct product $\hat{H}=H \rtimes \GG_m$  where the adjoint action of $\GG_m$   on the Lie algebra of $U$ has only strictly positive weights and satisfies 
%condition \eqref{eq:sss} is satisfied. Let $\chi_0$ be a rational character of $H$ such that $\chi_0 |_{\GG_m} = \weight_{\min}$. Then there exists $\epsilon >0$ such that if  the ${H}$-linearisation  ${H} \act L \to X$ is twisted by a rational character $\chi$ with $|\!| \chi - \chi_0|\!| < \epsilon$ whose restriction to $\GG_m$ is strictly greater than $\weight_{\min}$, where $\weight_{\min}$ is the minimal weight for the $\GG_m$-action on $V=H^0(X,L)^*$, then
Assume that the hypotheses of Theorem \ref{mainthm} hold. Then 
 $$X^{\ssfg,\hH} = \bigcap_{h \in H} hX^{ss,T}_{\min+}, \,\,\, \,\,\,\,\,\, X^{\rms,\hH} = \bigcap_{h \in H} hX^{s,T}_{\min+},$$   
and 
 the enveloping
quotient map $q_{\hH}: X^{\ssfg,\hH} \to X \env {\hH}$  
is a good quotient for the $\hH$-action on
$X^{\ssfg,\hH}$, with $q_{\hH}(x) = q_{\hH}(y)$ for $x,y \in X^{\ssfg, \hH} $ if and only if the closures of the $\hH$-orbits of $x$ and $y$ meet in $X^{\ssfg,\hH} $.  
\end{cor} 
\begin{proof}
Let  $T$ be a maximal torus of $R$, % containing the central one-parameter subgroup $\GG_m$,
 and let $\hU = U \rtimes\l( \GG_m)$ and $\hU_T = U \rtimes (T \times \l(\GG_m))$. Then
$X \env \hH = (X \env \hU) /\!/ R$ where $X \env \hU$ is a geometric quotient of $ \bigcap_{u \in U} uX^{s,\GG_m}_{\min+}$ by $\hU$, and $X^{\ssfg,\hH}$ and $X^{\rms,\hH}$ are the pre-images in $ \bigcap_{u \in U} uX^{s,\GG_m}_{\min+}$ of the (semi)stable loci for the action of the reductive group $R$ on $X \env \hU$. By Theorem \ref{thm:ExCo1} the pre-images in $ \bigcap_{u \in U} uX^{s,\GG_m}_{\min+}$ of the (semi)stable loci for the action of  $T$ on $X \env \hU$ are $X^{\ssfg,\hU_T} = \bigcap_{u\in U} uX^{ss,T}_{\min+}$ and $X^{\rms,\hU_T } = \bigcap_{u\in U} uX^{s,T}_{\min+}$. So  by the Hilbert--Mumford criteria for the action of the reductive group $R$ on $X \env \hU$, we have
$$ X^{ss,\hH} = \bigcap_{r \in R} r X^{ss,\hU_T} = \bigcap_{r \in R} \bigcap_{u\in U} ruX^{ss,T}_{\min+} = \bigcap_{h \in H} hX^{ss,T}_{\min+}$$
and similarly $ X^{s,\hH} =  \bigcap_{h \in H} hX^{s,T}_{\min+}$. The result now follows from classical GIT applied to the action of $R$ on the geometric quotient $X \env \hU$.
\end{proof}

\begin{rmk}
It is enough in Corollary \ref{cor:ExCo1H} to assume condition \eqref{eq:sssH} instead of condition \eqref{eq:sss}. To prove this version we use Lemma \ref{blimey} and work with the closure
$$\overline{ \{ x \in X^0_{\min} \,\, | \,\, p(x) \in Z_{\min}^{ss,R} \}/U } $$
in $\PP(W^*)$ of the geometric quotient $ \{ x \in X^0_{\min} \,\, | \,\, p(x) \in Z_{\min}^{ss,R} \}/U$, replacing $X \env \hU$ with the classical GIT quotient 
$$\overline{ \{ x \in X^0_{\min} \,\, | \,\, p(x) \in Z_{\min}^{ss,R} \}/U }/\!/\l(\GG_m)  \subseteq \PP(W^*)/\!/\l(\GG_m).$$
Quotienting by the reductive group $R$, this gives us
$$X \env \hH = \overline{ \{ x \in X^0_{\min} \,\, | \,\, p(x) \in Z_{\min}^{ss,R} \}/U }/\!/R  \subseteq \PP(W^*)/\!/R,$$
and if $\epsilon>0$ is chosen sufficiently small then 
$$\PP(W^*)^{ss,R,\chi} \subseteq \{ y \in \PP(W^*)^0_{\min} \,\, | \,\, p_W(y) \in Z(\PP(W^*))_{\min}^{ss,R} \} $$
so $\overline{ \{ x \in X^0_{\min} \,\, | \,\, p(x) \in Z_{\min}^{ss,R} \}/U }  \cap \PP(W^*)^{ss,R} \subseteq  \{ x \in X^0_{\min} \,\, | \,\, p(x) \in Z_{\min}^{ss,R} \}/U
$ by Lemma \ref{blimey}. Thus
$$X^{ss,\hH} \to (\overline{ \{ x \in X^0_{\min} \,\, | \,\, p(x) \in Z_{\min}^{ss,R} \}/U })/\!/R = X \env \hH$$ 
is surjective and is a good quotient for the action of $H$.
\end{rmk}

\begin{rmk}  \label{remweaken3}  
As noted at Remarks \ref{remweaken} and \ref{generalise}, the hypothesis in Theorem \ref{thm:ExCo1} that $\stab_U(z) = \{ e \}$ for every $z \in Z_{\min}$  ( condition \eqref{eq:sss})
can be weakened, when the action of $U$ is such that the stabiliser in $U$ of $x \in X$ is always strictly positive-dimensional. Provided that for every $z \in Z_{\min}$ the dimension of $\stab_U(z)$ is equal to the generic (or minimum) dimension $d^U_{\min}$ of $\stab_U(x)$ for $x \in X$, and that $\stab_U(z)$ has a complementary subgroup $U'$ in $U$ with $U' \cap \stab_U(z) = \{ e\}$ and $U = U'\stab_U(z)$, then the conclusions of Theorem \ref{thm:ExCo1} and thus Theorem \ref{mainthm} still hold. For observe that this condition implies that $\dim \stab_U(x) = d^U_{\min}$ for every $x \in X^0_{\min}$, since $\GG_m$ normalises $U$ and so $\dim \stab_U(p(x)) \geqslant \dim \stab_U(x)$ for any $x \in X$, where $p(x)$ is the limit of $t\cdot x$ as $t \to 0$ with $t \in \GG_m$.
We can define a $\hU$-invariant morphism from $X^0_{\min}$ to the Grassmannian $\rm{Grass}(d^U_{\min}, \rm{Lie}U)$ of $d^U_{\min}$-dimensional subspaces of $ \rm{Lie}U$ by
$$ x \mapsto \rm{Lie} \stab_U (x) .$$
If $x \in X^0_{\min}$ we choose a $\GG_m$-invariant complementary subgroup $U' \leqslant U$ such that $\stab_U(x) \cap U' = \{ e \}$ and $U' \stab_U(x) = U$. The condition of complementarity to $U'$ defines an affine open subscheme of $\rm{Grass}(d_{\min}, \rm{Lie}U)$ and thus an open subset of $X^0_{\min}$  where quotienting by U is equivalent to quotienting by $U'$, and where $\stab_{U'}(x) = \stab_U(x) \cap U' = \{ e\}$.

The condition that $\stab_U(z)$ has a complementary subgroup $U'$ in $U$ with $U' \cap \stab_U(z) = \{ e\}$ and $U = U'\stab_U(z)$ is always satisfied when the unipotent group $U$ is abelian. 
Thus as described in Remark \ref{generalise} the proof of Theorem \ref{thm:ExCo1} can be extended using quotienting in stages (described in $\S$5) to cover the more general case when  for every $z \in Z_{\min}$ the dimension of $\stab_U(z)$ is equal to the generic (or minimum) dimension $d^U_{\min}$ of $\stab_U(x)$ for $x \in X$, provided that the same is true for every subgroup $U^{(j)}$ in the derived series for $U$. Here  the derived series can be replaced with any series $U = U^{(0)} \geqslant U^{(1)} \geqslant \cdots \geqslant U^{(s)} = \{ e \}$ of normal subgroups with abelian subquotients. 

Moreover if $U$ is abelian  (or we work with normal subgroups 
$$  U = U^{(0)} \geqslant U^{(1)} \geqslant \cdots \geqslant U^{(s)} = \{ e\} $$
of  $\hH$ with abelian subquotients $U^{(j)}/U^{(j+1)}$), and if the adjoint action of $\l(\GG_m)$ on the Lie algebra of $U$ has only one weight space, then the proof of Theorem \ref{thm:ExCo1} can be extended further, using Remark \ref{rem20191}, to cover the case when there is a natural number $r$
such that the following property holds:

(*) If $z\in X^{\l(\GG_m)}$ where $\l(\GG_m)$ acts on $L^*|_z$ with weight strictly less than $\weight_{r}$ then 
$$\stab_{U}(z) = U,$$
whereas for all $z\in X^{\l(\GG_m)}$ where $\l(\GG_m)$ acts on $L^*|_z$ with weight equal to $\weight_{r}$ then 
$$ \stab_{U}(z) = \{e\}.$$
\end{rmk}

\begin{rmk} \label{remweaken4} 
By using Jordan canonical form to classify representations of semi-direct products $\GG_a \rtimes \GG_m$ when $\GG_m$ acts on $\rm{Lie} \, \GG_a$ with strictly positive weight (cf. \cite{BDHK} $\S$5 and \cite{BDHK2} $\S$2), the proof of  Theorem \ref{thm:ExCo1}
can also be extended  to allow  the requirement of well-adaptedness for the rational character $\chi$ to be weakened, provided that the condition $\eqref{eq:sss}$ is strengthened appropriately (cf. Remark \ref{remweaken2}).
Let $X^{\GG_m}$ be the  $\GG_m$--fixed point set in $X$. 
We can  weaken the requirement that the rational character $\chi$ should be well adapted if we strengthen the condition $\eqref{eq:sss}$ to become
$$ %\begin{equation}  %\label{eq:sss}
\text{$\stab_U(z) = \{e\}$ whenever $z \in X^{\GG_m}$ and $\GG_m$ acts on $L^*|_z$ with weight at most $\weight_j$.} %\tag{$\mf{C}^*_j$}
$$ %\end{equation}
Under this stronger hypothesis we can allow ${\chi} = \weight_{j} + \epsilon$
where $\epsilon >0$ is sufficiently small.

In particular if the action of $\GG_a \rtimes \GG_m$ extends to an action of $\GL(2)$ (up to an appropriate cover), then we can allow any rational character $\chi < 0$. In the situation of the 
Popov-Pommerening conjecture \cite{Popov, PopVin} when $U$ is a subgroup of a reductive group $G$
and is normalized by a maximal torus $T$ of $G$ which contains $\GG_m$, then $U$ is spanned by additive one-parameter subgroups $\tilde{\lambda}: \GG_a \to U$ normalised by $T$ such that the inclusion of each $\GG_a \rtimes \GG_m$ in $G$ extends to a homomorphism from (a finite cover of) $\GL(2)$ to $G$. 
\end{rmk}

\section{When stability and semistability do not coincide for the unipotent radical}
%Projective completions of quotients by non-reductive groups with graded unipotent radicals}

\label{section5}

%Now let us return to a linear action of a group $H$  which is not necessarily reductive; a
As before we will suppose that $H =U \rtimes R$ 
is a linear algebraic group over $\kk$ with externally graded unipotent radical $U$, in the sense that there is a semi-direct product
 $\hat{H} = H \rtimes \l(\GG_m) = U \rtimes (R \times \l(\GG_m))$  of $H$ by $\GG_m$ with subgroup $\hat{U} = U \rtimes \l(\GG_m)$
where the adjoint action of $\l(\GG_m)$ on $\mathrm{Lie}U$ has all weights
 strictly positive. 
We also assume as before that $\hH$  acts linearly on an irreducible projective scheme $X$ with respect to a very  ample line bundle $L$, and that $\chi: \hH \to \GG_m$ is a rational character of $\hat{H}$ with kernel containing $H$ which is well adapted for the linear action of $\hU$.

In $\S$\ref{ss=ssection}  we considered the good case when semistability coincides with stability and the stable locus is nonempty for the $\hU$-action on $X$, in the sense that condition \eqref{eq:sss}  is satisfied so every $x \in Z_{\min}$ has trivial stabiliser in $U$. We now want to show that if we only assume that there exists some $x \in Z_{\min}$ with trivial stabiliser\footnote{This weaker condition can itself be removed: see Remark \ref{remweaken5}.}
 in $U$, then there is an analogue of the partial desingularisation construction described in $\S$\ref{pdsection} which allows us to blow  $X$ up along a sequence of 
  $\hH$-invariant closed subschemes to obtain an $\hH$-invariant morphism 
$\psi: \tilde{X} \to X$ where $\tilde{X}$ is an irreducible projective scheme acted on 
linearly by $\hH$ satisfying condition  \eqref{eq:sss} and $\tilde{X} \env \hH = \tilde{X}^{s,\hH}/\hH$. Here $\tilde{X} \env \hH$ is a projective scheme with an open subscheme which is a geometric quotient by $\hH$ of an open subscheme of $X$, over which $\psi$ restricts to an isomorphism.

In fact before constructing $\tilde{X}$ we will first blow  $X$ up along a sequence of 
  $\hH$-invariant closed subschemes to obtain an $\hH$-invariant morphism 
$\psi: \hat{X} \to X$ where $\hat{X}$ is an irreducible projective scheme acted on 
linearly by $\hH$ satisfying condition  \eqref{eq:sss}, for which the enveloping quotient $\hat{X} \env \hH$ is a good quotient but not necessarily a geometric quotient of $ \hat{X}^{ss}$ by $\hH$. We can then blow $\hat{X}$ up along a further sequence of 
  $\hH$-invariant closed subschemes to obtain $\tilde{X}$.

In $\S$\ref{ss=ssection} we proved Theorem \ref{thm:ExCo1} and deduced Theorem \ref{mainthm}. Thus we know that when the linear action of $\hU$ on $X$ satisfies the condition %that \lq semistability coincides with stability' in the sense of
 (\ref{eq:sss}), and moreover the linearisation is twisted by a suitable rational character of $\hH$ (with kernel $H$) so that it is well adapted, then when $c>0$ is a sufficiently divisible integer we have

\noindent (i) the algebras of invariants $\oplus_{m=0}^\infty H^0(X,L^{\otimes cm})^{\hat{U}}$ and 
$$\oplus_{m=0}^\infty H^0(X,L^{\otimes cm})^{\hat{H}} = (\oplus_{m=0}^\infty H^0(X,L^{\otimes cm})^{\hat{U}})^{R}$$ 
 are 
finitely generated, and 

\noindent (ii) the enveloping quotient $X\env \hat{U}$ is the projective scheme associated to the algebra of invariants $\oplus_{m=0}^\infty H^0(X,L^{\otimes cm})^{\hat{U}}$ and is a geometric quotient of the open subset $X^{s,\hU}_{\min+}$ of $X$ by $\hat{U}$, while 

\noindent (iii) the enveloping quotient $X\env \hat{H}$ is the projective scheme associated to the algebra of invariants $\oplus_{m=0}^\infty H^0(X,L^{\otimes cm})^{\hat{H}}$ and is the classical GIT quotient of $X\env \hat{U}$ by the induced action of $R \cong H/U \cong \hH/\hat{U}$ with respect to the linearisation induced by a sufficiently divisible tensor power of $L$.

In order to prove Theorem \ref{mainthm2}, just as for Theorem \ref{mainthm}, the main hurdle is to deal with the action of $\hU$; the rest will follow using well known methods from reductive GIT (including the partial desingularisation construction of \cite{K2} described in the last section) once the case when $H=U$ is unipotent is completed. Thus our next aim is to prove

\begin{thm} \label{mainthm2unipotent} 
Let $U$ be a  graded unipotent group
 and let $\hat{U} = U \rtimes \GG_m$ be the semi-direct product of $U$ by $\GG_m$ 
which defines the grading.
Suppose that $\hU$  acts linearly on an irreducible projective scheme $X$ with respect to a very ample line bundle $L$, and that %$\chi: \hU \to \GG_m$ is a character of $\hat{U}$ and $c$ is a positive integer such that the rational character $\chi/c$ is well adapted for the linear action of $\hU$, and we twist the linearisation by this rational character. Suppose also that
 $\stab_U(x) = \{e\}$ for generic $x \in Z_{\min}$.

Then there is a  sequence of blow-ups of $X$ along $\hat{U}$-invariant projective subschemes %(which appear as the closures of $U$-sweeps of intersections of fixed point sets of subgroups of $U$ with $\PP(V_{\min})$)
  resulting in a projective scheme $\hat{X}$ with a %well adapted
 linear action of $\hat{U}$ (with respect to a power of an ample line bundle given by perturbing the pullback of $L$ by small multiples of the exceptional divisors for the blow-ups) which satisfies the  condition %that \lq semistability coincides with stability' in the sense of 
(\ref{eq:sss}), so that Theorem \ref{mainthm} applies.
\end{thm}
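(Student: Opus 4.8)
The plan is to mirror, for the graded unipotent group $\hU=U\rtimes\GG_m$, the inductive blow-up construction of \cite{K2} recalled in $\S$\ref{pdsection}. Recall from $\S$\ref{usweepsection} that $p:X^0_{\min}\to Z_{\min}$ is $\hU$-invariant with $\stab_U(x)\subseteq\stab_U(p(x))$ for $x\in X^0_{\min}$ (Remark \ref{maxdimstab}), so the maximal value $d_{\max}$ of $\dim\stab_U(x)$ over $x\in X^0_{\min}$ equals the maximal value of $\dim\stab_U(z)$ over $z\in Z_{\min}$ and is attained on $Z_{\min}^{d_{\max}}$, whose $U$-sweep $UZ_{\min}^{d_{\max}}=\hU Z_{\min}^{d_{\max}}$ is closed in $X^0_{\min}$ (Lemma \ref{lemmaB}). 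If $d_{\max}=0$ then \eqref{eq:sss} already holds; since $\stab_U(x)=\{e\}$ for generic $x$ the centre introduced below is a proper subvariety and $d_{\max}<\dim U$. Otherwise, exactly as in Remark \ref{cfK2}, I would blow $X$ up along the closure $\overline{UZ_{\min}^{d_{\max}}}$ in $X$ of $UZ_{\min}^{d_{\max}}$ (first resolving, by $\hU$-equivariant blow-ups of $\hU$-invariant subvarieties disjoint from $X^0_{\min}$, any singularities of this closure lying in the boundary $X\setminus X^0_{\min}$, which does not affect the part we care about). As $\overline{UZ_{\min}^{d_{\max}}}$ is $\hU$-invariant (because $\GG_m$ normalises $U$ and preserves $X^0_{\min}$ and $Z_{\min}$), the $\hU$-action lifts to the resulting irreducible projective variety $X_{(1)}$, which we linearise with respect to $\hat\psi^{*}L^{\otimes n}\otimes\OO_{X_{(1)}}(-\delta E)$, where $E$ is the exceptional divisor, $n\gg 0$ so that this is very ample, and $\delta>0$ is small; then $\hat\psi:X_{(1)}\to X$ is an isomorphism over $X\setminus\overline{UZ_{\min}^{d_{\max}}}$, and in particular $\stab_U(x)=\{e\}$ still holds for generic $x\in X_{(1)}$.

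The crux is to show that, for $\delta$ sufficiently small, $\dim\stab_U(x')<d_{\max}$ for every $x'\in (X_{(1)})^0_{\min}$; iterating then produces $\hat X$ satisfying \eqref{eq:sss} after at most $d_{\max}$ blow-ups, since $d_{\max}$ is a strictly decreasing non-negative integer. I would argue locally near a $\GG_m$-fixed point $z\in Z_{\min}^{d_{\max}}$. Its stabiliser $U_z:=\stab_U(z)$ is $\GG_m$-invariant, and $\GG_m$ acts on $\Lie U_z\subseteq\Lie U$ with strictly positive weights, so every $\eta\in\Lie U_z$ acts on the normal space $N_z=T_zX/T_z\overline{UZ_{\min}^{d_{\max}}}$ raising the $\GG_m$-weight; hence the $U_z$-fixed part $N_z^{U_z}$ lies in the span of the $\GG_m$-weight vectors of $N_z$ of sufficiently high weight. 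By the Hilbert--Mumford criteria (as in the variation of GIT used in $\S$\ref{pdsection}), for $\delta$ small the new $Z_{\min}$ of $X_{(1)}$ meets the exceptional fibre $\PP(N_z)$ over $z$ in the projectivisation of the minimal $\GG_m$-weight space of $N_z$; combining this with the previous sentence and with the fact that $T_z\overline{UZ_{\min}^{d_{\max}}}$ already contains the image of $\Lie U\to T_zX$ together with $T_zZ_{\min}^{d_{\max}}$ forces any point of $\PP(N_z)$ in the new $Z_{\min}$ to have $U$-stabiliser of dimension strictly less than $d_{\max}$. On the proper transform of the old $Z_{\min}$ the only points with $\dim\stab_U=d_{\max}$ were those of $\overline{UZ_{\min}^{d_{\max}}}\cap Z_{\min}=\overline{Z_{\min}^{d_{\max}}}$ (Lemma \ref{lemmaFA}), which have been blown up; and the points of $X^0_{\min}$ with stabiliser of dimension $d_{\max}$ not lying in $UZ_{\min}^{d_{\max}}$ lie in $p^{-1}(Z_{\min}^{d_{\max}})$ and have $\GG_m$-limits in $X_{(1)}$ landing in $E$ at $\GG_m$-fixed points of weight strictly above the new minimum, hence fall outside $(X_{(1)})^0_{\min}$. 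This is the step I expect to be the main obstacle: it requires pinning down precisely how $(X_{(1)})^0_{\min}$, $Z_{\min}(X_{(1)})$ and the stabiliser-dimension stratification behave under the blow-up and the twist, and in particular getting the sign of the perturbation $-\delta E$ right so that the $U_z$-fixed directions of $N_z$ are excluded from the new $Z_{\min}$.

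The remaining assertions are then routine: $X_{(1)}$ is irreducible and projective with a linear $\hU$-action with respect to the ample (hence, after raising to a power, very ample) line bundle $\hat\psi^{*}L^{\otimes n}\otimes\OO_{X_{(1)}}(-\delta E)$, exactly as in the reductive construction of Remark \ref{cfK2}, so Theorem \ref{mainthm} will apply to the final $\hat X$, on which $d_{\max}=0$ and \eqref{eq:sss} holds. I would also record, for use in $\S$\ref{section5}, that by construction $\hat\psi:\hat X\to X$ is an isomorphism over the open subvariety $\{x\in X^0_{\min}\setminus UZ_{\min}\mid\stab_U(p(x))=\{e\}\}$, since this set is disjoint from every centre that is blown up.
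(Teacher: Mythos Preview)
Your blow-up centre is not the one the paper uses. You propose to blow up along $\overline{UZ_{\min}^{d_{\max}}}$, the closure of the $U$-sweep of the bad locus in $Z_{\min}$. The paper (Definition~\ref{defnblowup}) blows up along $\overline{\Delta_U^{\geqslant d_{\max}}}$, the closure of \emph{all} points of $X^0_{\min}$ with $\dim\stab_U(x)=d_{\max}$. These can differ: for instance with $U=\GG_a$ acting on $\PP^3$ via $x_2\mapsto x_2+ax_0$ and $\GG_m$-weights $(0,0,1,2)$, one has $UZ_{\min}^{d_{\max}}=\{[0{:}1{:}0{:}0]\}$ while $\Delta_U^{\geqslant d_{\max}}=\{x_0=0\}\cap X^0_{\min}$. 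Because the paper blows up the full $\Delta_U^{\geqslant d_{\max}}$, after the blow-up there are no points of maximal stabiliser dimension left outside the exceptional divisor, and the entire argument reduces to analysing the exceptional fibres (Propositions~\ref{propertransform} and~\ref{propertransform2}, which separate the cases $Z_{\min}\not\subseteq\Delta_U^{\geqslant d_{\max}}$ and $Z_{\min}\subseteq\Delta_U^{\geqslant d_{\max}}$). Your approach instead leaves such points behind and must argue they drop out of $(X_{(1)})^0_{\min}$; that is a genuine extra step.

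Your justification of the key decrease also contains a false implication. You write that each $\eta\in\Lie U_z$ raises the $\GG_m$-weight on $N_z$, and conclude that ``the $U_z$-fixed part $N_z^{U_z}$ lies in the span of the $\GG_m$-weight vectors of $N_z$ of sufficiently high weight''. This does not follow: a nilpotent endomorphism that raises weight has nontrivial kernel, and nothing in ``raising weight'' prevents the kernel from containing the lowest-weight vectors. What is actually needed (and what the paper proves in Proposition~\ref{propertransform}(b), using the inclusion $(Z_{\min})^{U_z}\subseteq Z_{\min}^{d_{\max}}$) is that for $\xi\in T_zZ_{\min}\setminus T_zZ_{\min}^{d_{\max}}$ one can find $u\in U_z$ with $u\xi-\xi\notin T_zZ_{\min}$, so that no point of the new $Z_{\min}$ over $z$ is $U_z$-fixed. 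Your sketch gestures at this but the stated reason is the wrong one, and you do not address the case $Z_{\min}=Z_{\min}^{d_{\max}}$ (where $N_z$ has no weight-zero part and the new $Z_{\min}$ is governed by $Z_{((r_1))}$ for some $r_1>\omega_{\min}$, as in Proposition~\ref{propertransform2}).
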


Once we have proved Theorem \ref{mainthm2unipotent}, and have shown that the sequence of blow-ups is sufficiently canonically defined that the $\hat{U}$ invariant subschemes along which we blow up are in fact $\hH$-invariant, then Theorem \ref{mainthm2} will follow. 

%Moreover by applying Theorem \ref{mainthm2} to the action of $\hH$ on $\hat{X} \times \PP^1$ as above we will obtain a projective scheme $(\hat{X} \times \PP^1) \env \hat{H}$ which is a categorical quotient by $\hat{H}$ of a $\hat{H}$-invariant open subset of $\hat{X} \times \kk$ and contains as an open subset a geometric quotient of an $H$-invariant open subset $X^{\hat{s},H}$ of $X$ by $H$, where the geometric quotient $X^{\hat{s},H}/H$ and (if $X^{\hat{s},H} \neq \emptyset$) its projective completion $(\hat{X} \times \PP^1) \env \hat{H}$ have descriptions as above in terms of Hilbert--Mumford criteria, S-equivalence and the explicit blow-up construction used to obtain $\hat{X}$ from $X$.

For this construction we will adopt a modification of the strategy described in $\S$\ref{section4} for the reductive case when $X^{ss} \neq X^s \neq \emptyset$. The first step in the strategy described in $\S$\ref{section4} is to blow $X^{ss}$ up along the closed subscheme consisting of the semistable points with maximal dimensional stabiliser.  In the case of a linear $\hH$-action for which the condition \eqref{eq:sss} is not satisfied, but with generic $U$-stabiliser in $Z_{\min}$ trivial, we first blow  $X^0_{\min}$ up along its subscheme consisting of points in $X^0_{\min}$ with maximal dimensional stabiliser in $U$. This subscheme is closed in $X^0_{\min}$ by the upper semi-continuity of the dimension of the stabiliser, and is $\hH$-invariant since $U$ is a normal subgroup of $\hH$. Equivalently we can blow $X$ itself up along the closure of this subscheme to obtain a projective scheme $X_{(1)}$ with a blow-down map $\psi_{(1)}:X_{(1)} \to X$ with $(X_{(1)})^0_{\min} \subseteq \psi_{(1)}^{-1}(X^0_{\min})$. 

The induced action of $\hH$ on $X_{(1)}$ is linear with respect to an ample line bundle which is (a tensor power of) the pullback $\psi_{(1)}^*L$ perturbed by a sufficiently small rational multiple $\epsilon_{(1)} >0$ of the exceptional divisor $E_{(1)}$, and if $\epsilon_{(1)} >0$ is small enough then it follows from the Hilbert--Mumford criteria for the action of $\GG_m$ that  $\psi_{(1)}$ restricts to a morphism
$$ \psi_{(1)} : (X_{(1)})^0_{\min} \to X^0_{\min}.$$
The crucial result is Proposition \ref{propertransform} %and \ref{propertransform2}
 below, which tells us that the dimension of the stabiliser in $U$ of any 
$x \in (X_{(1)})^0_{\min} $ is strictly less than the maximal dimension of a $U$-stabiliser in $X^0_{\min} $. Thus repeating this process finitely many times will result in an $\hH$-invariant blow-up 
$$ \hat{\psi}: \hat{X} \to X$$
such that the induced linear action of $\hH$ on $\hat{X}$ satisfies \eqref{eq:sss}. This means that if $\hat{Z}_{\min}$ plays the role of $Z_{\min}$ for $\hat{X}$, then $\hat{X}^0_{\min} \setminus U\hat{Z}_{\min}$ has a geometric quotient
$$ q^{\hat{X}}_{\hU} : \hat{X}^0_{\min} \setminus U\hat{Z}_{\min} \to (\hat{X}^0_{\min} \setminus U\hat{Z}_{\min})/\hU$$
by $\hU$ which is a projective scheme $\hat{X} \env \hU$ with an induced ample line bundle $\hat{L}_{\hU}$ and linear action of $R=H/U \cong \hH/\hU$. Moreover we can construct $\hat{X} \env \hH$ as a reductive GIT quotient 
$(\hat{X} \env \hU)/\!/R $
for an appropriate linearisation of the $R$-action on $\hat{X} \env \hU$ with respect to the line bundle $\hat{L}_{\hU}$.

More precisely, for a well-adapted linearisation with respect to an ample line bundle $\hat{L}$ on $\hat{X}$ which is a (tensor power of a) suitable perturbation of $\hat{\psi}^*L$ by a small rational linear combination of the proper transforms of the exceptional divisors, the $\hH$-action on $\hat{X}$ satisfies

\noindent (i) if $c>0$ is a sufficiently divisible integer then the algebras of invariants $\oplus_{m=0}^\infty H^0(\hat{X},\hat{L}^{\otimes cm})^{\hat{U}}$ and 
$$\oplus_{m=0}^\infty H^0(\hat{X},\hat{L}^{\otimes cm})^{\hat{H}} = (\oplus_{m=0}^\infty H^0(\hat{X},\hat{L}^{\otimes cm})^{\hat{U}})^{R}$$ 
 are 
finitely generated, and 

\noindent (ii) the enveloping quotient $\hat{X}\env \hat{U}$ is the projective scheme associated to the algebra of invariants $\oplus_{m=0}^\infty H^0(\hat{X},\hat{L}^{\otimes cm})^{\hat{U}}$ and provides a geometric quotient $q^{\hat{X}}_{\hU}: \hat{X}^{s,\hU}_{\min+}  \to \hat{X}\env \hat{U}$ of the open subscheme $\hat{X}^{s,\hU}_{\min+} = \hat{X}^0_{\min} \setminus U\hat{Z}_{\min}
$ of $\hat{X}$ by $\hat{U}$, while 

\noindent (iii) the enveloping quotient $\hat{X}\env \hat{H}$ is the projective scheme associated to the algebra of invariants $\oplus_{m=0}^\infty H^0(\hat{X},\hat{L}^{\otimes cm})^{\hat{H}}$ and is the classical GIT quotient of $\hat{X}\env \hat{U}$ by the induced action of $R \cong H/U \cong \hH/\hat{U}$ with respect to the linearisation induced by a sufficiently divisible positive tensor power of $\hat{L}$. 

In addition we can apply the partial desingularisation construction of \cite{K2} as described in $\S$\ref{section4} to the action of $R$ on $\hat{X}\env \hat{U}
=(\hat{X}^0_{\min} \setminus U\hat{Z}_{\min})/\hU$, or equivalently blow $\hat{X}$ up along the closure of the subscheme
$$ \{ x \in \hat{X}^0_{\min} \setminus U\hat{Z}_{\min} \,\, | \,\, q^{\hat{X}}_{\hU} (x) \in 
(\hat{X}\env \hat{U})^{ss,R} \mbox{ and $\dim\stab_{\hH}(x)$ is maximal among such $x$} \}
,$$
and repeat until we obtain a morphism $\tilde{\psi}: \tilde{X} \to \hat{X} \to X$ restricting to $\tilde{X}^0_{\min} \to \hat{X}^0_{\min} \to X^0_{\min}$ and a partial desingularisation $\tilde{X} \env \hH \to \hat{X} \env \hH$, where
$\tilde{X}\env \hH = \tilde{X}^{s,\hH}/\hH$ is a geometric quotient of an open subscheme $\tilde{X}^{s,\hH}$ of $\tilde{X}$.

%\begin{rmk} \label{widehattilde} It follows from their construction that when the actions of $\hH$ are appropriately linearised then the quotients  $(\hat{X} \times \PP^1) \env \hH$ and $(\tilde{X} \times \PP^1) \env \hH$ are naturally isomorphic to the quotients  $(\widehat{X \times \PP^1}) \env \hH$ and $(\widetilde{X \times \PP^1})\env \hH$. \end{rmk}

Before stating and proving the crucial Proposition \ref{propertransform},  we need to introduce some notation. Recall that
$$\weight_{\min} = \weight_0 < \weight_1 < \ldots < \weight_{\max}$$
are the weights with which the one-parameter subgroup $\l(\GG_m)$ of $\hU$ acts on the fibres of $L^*$ over points of $X$ fixed by $\l(\GG_m)$.

\begin{defn} \label{defn:RD}
Let $R= \{ \weight_{\min}, \weight_{\min} + 1, %\weight_{\min} + 2, 
\ldots, \weight_{\max} \}$ and 
$D^U = \{ d^U_{\min}, d^U_{\min} + 1, \ldots , d^U_{\max} \} $
where 
$$d^U_{\min} = d_{\min} = \min \{ \dim \stab_U(x) \,\, | \,\, x \in X^0_{\min} \}$$
and 
$$d^U_{\max} = d_{\max} = \max \{ \dim \stab_U(x) \,\, | \,\, x \in X^0_{\min} \}.$$
For $r \in R$ let
$$ Z_{((r))} = \{ x \in X^0_{\min} \,\, | \,\, \GG_m \mbox{ acts on the fibre of $L^*$ over $\lim_{t \to \infty} tx$ with weight $\leqslant r$} \}$$
so that $Z_{((\weight_{\min}))} = Z_{\min}$ and $Z_{((\weight_{\max}))} = X$. For $\delta \in D^U$ let
$$ \Delta_U^{>\delta} = \{ x \in X^0_{\min} \,\, | \,\, \dim \stab_U(x) > \delta \}\,\,  \mbox{ and } \,\, 
\Delta_U^{\geqslant \delta} = \{ x \in X^0_{\min} \,\, | \,\, \dim \stab_U(x) \geqslant \delta \} $$.
%Define $r: D^U \to R$ and $\delta: R \to D^U$ by $$r(\delta) = \min \{ r \in R \,\, | \,\, \exists \,  x \in Z_{((r))} \mbox{ with } \dim\stab_(x) \leqslant \delta \}  = \min \{ r \in R \,\, | \,\,  Z_{((r))} \not\subseteq \Delta_U^{> \delta} \} $$ and $$ \delta(r) = \min \{ \dim \stab_U(x) \,\, | \,\,  x \in Z_{((r))} \}.$$
\end{defn}

%\begin{rmk} \label{rmk:randd} Note that $r(d^U_{\max}) = \weight_{\min}$ and $\delta(\weight_{\max}) = d^U_{\min}$. Also if $\delta, \tilde{\delta} \in D^U$ and $r,\tilde{r} \in R$ then$$ \delta \leq \tilde{\delta} \mbox{ implies that } r(\delta) \geq r(\tilde{\delta}) $$ and $r \leq \tilde{r}$ implies that $Z_{((r))} \subseteq Z_{((\tilde{r}))}$ which implies that $\delta(r) \geq \delta(\tilde{r})$. Moreover $\delta(r(\delta)) \leq \delta$ and there is some $x \in Z_{((r))}$ such that $\dim\stab_U(x) \leq \delta(r)$, so $r(\delta(r)) \leq r$. \end{rmk}

\begin{rmk}
$p:X^0_{\min} \to Z_{\min}$ restricts to $p: \Delta_U^{\geqslant d^U_{\max}} \to Z_{\min}^{d^U_{\max}} = Z_{\min} \cap \Delta_U^{\geqslant d^U_{\max}}$ with fibre over $z \in Z_{\min}^{d^U_{\max}}$ given by the fixed point set of $\stab_U(z)$ acting on $p^{-1}(z)$
(cf. Remark \ref{maxdimstab}).
\end{rmk}

\begin{defn} \label{defnblowup}
If $d^U_{\max} = d^U_{\min}$ let $X_{(1)} =  X^0_{\min}$ and let $\bar{X}_{(1)} = X$.
If $d^U_{\max} > d^U_{\min}$ 
let $\psi_{(1)} : X_{(1)} \to  X^0_{\min}$ be the blow-up of $ X^0_{\min}$ along the closed subscheme $\Delta_U^{\geqslant d_{\max}}$ with exceptional divisor $E_{(1)}$, and let $\bar{\psi}_{(1)}: \bar{X}_{(1)} \to X$ be the blow-up of $X$ along the closure $\overline{ \Delta_U^{\geqslant d_{\max}}}$ of $\Delta_U^{\geqslant d_{\max}}$ in $X$ (or in a blow-up of $X$ along a sequence of closed $\hU$-invariant subschemes of the complement of $ X^0_{\min}$, cf. Remark \ref{cfK2}). 
\end{defn}

\begin{propn} \label{propn} Suppose that $d^U_{\max} > d^U_{\min}$.
There are perturbations $\mathcal{L}_{(1),\epsilon}$ (for $\epsilon >0$ sufficiently small and rational) of the pullback to $\bar{X}_{(1)}$ of the given linearisation $\mathcal{L}$ of the action of $\hU$ on $X$, which are ample linearisations for the induced action of $\hU$ on $\bar{X}_{(1)}$ with the following properties:

\noindent (i) their restrictions to $X_{(1)}$ are tensor products of the pullback of $\mathcal{L}$ by arbitrarily small rational multiples $\epsilon$ of the exceptional divisor $E_{(1)}$;

\noindent (ii) the induced subschemes $X_{(1)}^0$ and $Z_{(1)}$ of $\bar{X}_{(1)}$ (defined as $ X^0_{\min}$ and $ Z_{\min}$ with $X$ replaced with $\bar{X}_{(1)}$ using the linearisation $\mathcal{L}_{(1),\epsilon}$ for $\epsilon >0$ sufficiently small) are contained in the open subscheme  $X_{(1)}$ of $\bar{X}_{(1)}$  and are independent of $\epsilon$;

\noindent (iii) $\psi_{(1)}(Z_{(1)}) \subseteq  Z_{\min}$;

\noindent (iv) if $x \in  X^0_{(1)}$ then $\stab_U(x) \leq \stab_U(\psi_{(1)}(x))$ and so $\dim \stab_U(x) \leq d_{\max}$.
\end{propn}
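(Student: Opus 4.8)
The plan is to derive properties (i)--(iv) in order, with (i) a routine consequence of standard facts about blow-ups, (ii) and (iii) an irreducibility argument exploiting $\GG_m$-equivariance of the blow-down map, and (iv) a one-line consequence of $U$-equivariance. First, for (i): the centre $\overline{\Delta_U^{\geqslant d_{\max}}}$ is a closed $\hU$-invariant subvariety of $X$, since $\Delta_U^{\geqslant d_{\max}}$ is closed in $X^0_{\min}$ by upper semicontinuity of $x\mapsto\dim\stab_U(x)$, it is $U$-invariant because conjugation preserves stabiliser dimensions, and it is $\GG_m$-invariant because $\GG_m$ normalises $U$ (so $\dim\stab_U(tx)=\dim\stab_U(x)$) and preserves $X^0_{\min}$. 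Hence $\bar{\psi}_{(1)}:\bar{X}_{(1)}\to X$ (and the variant of Remark \ref{cfK2} in which one first blows up closed $\hU$-invariant subvarieties of $X\setminus X^0_{\min}$) carries an induced $\hU$-action, and $\mathcal{L}$ pulls back to a linearisation of it; twisting by $-\epsilon$ times the exceptional divisor $\bar{E}_{(1)}$ gives $\mathcal{L}_{(1),\epsilon}$, which is an ample linearisation for $\epsilon>0$ small and rational, since $\bar{\psi}_{(1)}^{\ast}L^{\otimes n}\otimes\mathcal{O}(-\bar{E}_{(1)})$ is ample for $n\gg0$. Because $\Delta_U^{\geqslant d_{\max}}$ is closed in the open subvariety $X^0_{\min}$, blowing up commutes with restriction to $X^0_{\min}$ (the preliminary blow-ups of $X\setminus X^0_{\min}$ being isomorphisms there), so $\bar{\psi}_{(1)}^{-1}(X^0_{\min})=X_{(1)}$ with $\bar{\psi}_{(1)}|_{X_{(1)}}=\psi_{(1)}$, and $\mathcal{L}_{(1),\epsilon}$ restricts over $X^0_{\min}$ to a perturbation of the pullback of $\mathcal{L}$ by an arbitrarily small rational multiple of $E_{(1)}$; this is (i).

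Next, $\bar{X}_{(1)}$ is irreducible and $\mathcal{L}_{(1),\epsilon}$ is ample, so the same reasoning as for $X$ (before Definition \ref{def:ExPr2}) shows that $X^0_{(1)}$ and $Z_{(1)}$ depend only on the $\GG_m$-action on $\bar{X}_{(1)}$, not on the ample linearisation; in particular they are independent of $\epsilon$ once $\epsilon$ is small enough that $\mathcal{L}_{(1),\epsilon}$ is ample, and (as in the proof of Lemma \ref{lemmaB}) $X^0_{(1)}$ is the open dense Bialynicki--Birula stratum. Writing $p_{(1)}(x)=\lim_{t\to0}t\cdot x$ for $x\in\bar{X}_{(1)}$, the morphism $\bar{\psi}_{(1)}$ is $\GG_m$-equivariant and projective, so $\bar{\psi}_{(1)}\circ p_{(1)}=p\circ\bar{\psi}_{(1)}$, while $Z_{(1)}=p_{(1)}(X^0_{(1)})$ and $Z_{\min}=p(X^0_{\min})$ since $p$ restricts to the identity on these fixed loci. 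The generic point $x$ of $\bar{X}_{(1)}$ lies in the dense open $X^0_{(1)}$, and $\bar{\psi}_{(1)}(x)$ is the generic point of $X$, hence lies in the dense open $X^0_{\min}$; therefore $p_{(1)}(x)$ is the generic point of $Z_{(1)}$ and $\bar{\psi}_{(1)}(p_{(1)}(x))=p(\bar{\psi}_{(1)}(x))\in Z_{\min}$. Since $Z_{(1)}$ is irreducible and complete and $Z_{\min}$ is closed, $\bar{\psi}_{(1)}(Z_{(1)})$ is the closure of this image point in $Z_{\min}$, so $\bar{\psi}_{(1)}(Z_{(1)})\subseteq Z_{\min}$; this is (iii). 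Consequently $Z_{(1)}\subseteq\bar{\psi}_{(1)}^{-1}(Z_{\min})\subseteq\bar{\psi}_{(1)}^{-1}(X^0_{\min})=X_{(1)}$, and for any $x\in X^0_{(1)}$ one has $p(\bar{\psi}_{(1)}(x))=\bar{\psi}_{(1)}(p_{(1)}(x))\in Z_{\min}$, so $\bar{\psi}_{(1)}(x)\in X^0_{\min}$ and hence $x\in X_{(1)}$; together with the independence statement this gives (ii).

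Finally, (iv): if $x\in X^0_{(1)}\subseteq X_{(1)}$ then, since $\bar{\psi}_{(1)}$ is $U$-equivariant, every $u\in\stab_U(x)$ satisfies $u\,\bar{\psi}_{(1)}(x)=\bar{\psi}_{(1)}(ux)=\bar{\psi}_{(1)}(x)=\psi_{(1)}(x)$, so $\stab_U(x)\leqslant\stab_U(\psi_{(1)}(x))$; as $\psi_{(1)}(x)\in X^0_{\min}$, it follows from the definition of $d_{\max}=d^U_{\max}$ that $\dim\stab_U(x)\leqslant\dim\stab_U(\psi_{(1)}(x))\leqslant d_{\max}$.

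The main obstacle is step (iii), that is, showing $\bar{\psi}_{(1)}(Z_{(1)})\subseteq Z_{\min}$, equivalently that the minimal-weight $\GG_m$-fixed locus of $\bar{X}_{(1)}$ for the perturbed linearisation lies over the minimal-weight locus of $X$. The argument above avoids an explicit comparison of $\GG_m$-weights after perturbation (which would require bounding the weights of $\GG_m$ on $\mathcal{O}(\bar{E}_{(1)})$ along the components of the fixed locus against the gaps $\weight_i-\weight_{\min}\geqslant1$) by exploiting irreducibility: the minimal-weight locus coincides with the source of the Bialynicki--Birula flow, which is transported correctly by the birational $\GG_m$-equivariant morphism $\bar{\psi}_{(1)}$. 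One should bear in mind that this identification of $Z_{(1)}$ with the source of the dense stratum genuinely uses both ampleness of $\mathcal{L}_{(1),\epsilon}$ and irreducibility of $\bar{X}_{(1)}$, and that in a reducible or non-well-adapted setting (cf. Remarks \ref{cfK2} and \ref{rmk:randd2}) one would instead need the weight estimate componentwise.
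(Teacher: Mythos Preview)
Your proof is correct and is more self-contained than the paper's. The paper's argument is essentially a pointer: it embeds $X$ in $\PP(H^0(X,L^{\otimes r})^*)$ for $r\gg 1$ so that $Z_{\min}$, $X^0_{\min}$ and $\Delta_U^{\geqslant d_{\max}}$ become nonsingular, resolves the singularities of $\overline{\Delta_U^{\geqslant d_{\max}}}$ by preliminary blow-ups in $X\setminus X^0_{\min}$, and then asserts that (i)--(iv) ``follow exactly as in \cite{K2}''. You instead argue directly, and the core of your approach---the irreducibility/Bialynicki--Birula argument for (ii) and (iii)---is genuinely different. Rather than comparing $\GG_m$-weights on the perturbed line bundle against those on $L$ (which is implicitly what the \cite{K2} route does), you use the fact (stated in the paper's introduction) that for an irreducible projective variety with an ample linearisation the loci $Z_{\min}$ and $X^0_{\min}$ depend only on the $\GG_m$-action, and then transport the generic point of $Z_{(1)}$ into $Z_{\min}$ via the $\GG_m$-equivariant birational morphism $\bar\psi_{(1)}$. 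This buys you independence of $\epsilon$ and the containment $\psi_{(1)}(Z_{(1)})\subseteq Z_{\min}$ simultaneously, without any weight bookkeeping and without needing the nonsingularity reduction. The paper's route has the advantage of plugging into the existing machinery of \cite{K2} (and the nonsingularity it arranges is exploited later in Propositions~\ref{propertransform} and~\ref{propertransform2}), but for the statement at hand your argument is cleaner.
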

\noindent{\bf Proof:} This follows from the arguments of \cite{K2} (cf. Remark \ref{cfK2}). By embedding $X$ in the projective space  $\PP(H^0(X,L^{\otimes r})^*)$ for $r>\!>1$ we can assume that $X$ is a projective space on which $U$ acts linearly, and thus that $ Z_{\min}$, $ X^0_{\min}$ and $\Delta_U^{\geqslant d_{\max}}$ are all nonsingular $\hH$-invariant locally closed subschemes of $X$. By %Lemma \ref{lemmaA}
upper semi-continuity of stabiliser dimension, $ \Delta_U^{\geqslant d_{\max}}$ is closed in 
$X^0_{\min}$, so we can resolve the singularities of its closure $\overline{ \Delta_U^{\geqslant d_{\max}}}$ in $X$ by a sequence of blow-ups along nonsingular $\hat{U}$-invariant closed subschemes in the complement of $X^0_{\min}$ and then blow up along the resulting proper transform of $\overline{  \Delta_U^{\geqslant d_{\max}}}$ to get $\bar{X}_{(1)}$. It follows that $\bar{\psi}^{-1}(X^0_{\min})$ is the blow-up of $X^0_{\min}$ along $ \Delta_U^{\geqslant d_{\max}}$. Moreover as in \cite{K2} we can find an ample linearisation of the action of $\hU$ on $\bar{X}_{(1)}$ which is a (positive tensor power of a) perturbation of $\bar{\psi}_{(1)}^*(\mathcal{L})$ by a linear combination of the proper transforms of the exceptional divisors in these blow-ups. Properties (i)-(iv) then follow exactly as in \cite{K2}.   \hfill $\Box$

\begin{defn} Let $\bar{X}_{(0)} = X$ and define $\psi_{(j)}: X_{(j)} \to X^0_{(j-1)} = (\bar{X}_{(j-1)})^0_{\min}$ and 
$\bar{\psi}_{(j)}: \bar{X}_{(j)} \to \bar{X}_{(j-1)}$ recursively on $j \geq 1$ by replacing $X$ with $\bar{X}_{(j-1)}$ and replacing $(1)$ with $(j)$ throughout Definition \ref{defnblowup} and Proposition \ref{propn}.
\end{defn}

\begin{rmk} \label{remsection5} In this recursive definition $X_{(j)}$ is the blow-up of $(\bar{X}_{(j-1)})^0_{\min}$ along its closed subscheme
$$ \{ x \in (\bar{X}_{(j-1)})^0_{\min} \,\,\, | \,\,\, \dim \stab_U(x) = d^U_{\max}(\bar{X}_{(j-1)}) \}$$
where $d^U_{\max}(\bar{X}_{(j-1)}) = \max \{ \dim \stab_U(x) \,\, | \,\, x \in (\bar{X}_{(j-1)})^0_{\min} \}$ is defined for $\bar{X}_{(j-1)}$ as $d^U_{\max} = d^U_{\max}(X)$ is defined for $X$. It will follow from Proposition \ref{propertransform}  below that if $\dim\stab_U(z) = d^U_{\min}$ for generic $z \in Z_{\min}$ then the same is true for each $\bar{X}_{(j)}$, and if
$$d^U_{\max}(\bar{X}_{(j-1)}) > d^U_{\min}(\bar{X}_{(j-1)}) $$
then
$d^U_{\max}(\bar{X}_{(j)}) < d^U_{\max}(\bar{X}_{(j-1)})$. This will ensure that the process terminates with some $\bar{X}_{(j)}= \hat{X}$ with blow-down morphism $\hat{\psi}:\hat{X} \to X$ such that the dimension of $\stab_U(x)$ is constant (and equal to $d_{\min}^U$) for $x \in \hat{X}^0_{\min}$. If this constant dimension is 0 then the induced linear action of $U$ on $\hat{X}$ satisfies the condition \eqref{eq:sss} and hence $ \hat{X}^0_{\min} \setminus U \hat{Z}_{\min}$ has a geometric quotient by $\hU$ which is a projective scheme. This geometric quotient can be regarded as a projective completion of a geometric quotient by $\hU$ of an $\hH$-invariant open subscheme %$X^{\hat{s},\hU}$
 of $X$.

If $\dim\stab_U(z) > 0$ for all $z \in Z_{\min}$ then we can still use Remark \ref{remweaken5} below and this construction to find a geometric quotient by $\hU$ of a nonempty $\hH$-invariant open subscheme %$X^{\hat{s},\hU}$ 
of $X$, with a projective completion which is a geometric quotient by $\hU$ of an $\hH$-invariant open subscheme of a blow-up of $X$.
\end{rmk}

\begin{propn} \label{propertransform} 
Suppose that $d^U_{\max}$ is strictly greater than $d^U_{\min}$ and that 
$\dim\stab_U(z) = d^U_{\min}$ for generic $z \in Z_{\min}$, so  that   $Z_{\min} \not\subseteq \Delta_U^{\geqslant d^U_{\max}} $. Let $X_{(1)}^0$ and $Z_{(1)}$ be as in Proposition \ref{propn}. Then 

\noindent (a) $Z_{(1)}$ is the proper transform of $ Z_{\min}$ in $X_{(1)}$, and this is the blow-up $ \tilde{Z}_{\min}$ of $ Z_{\min}$ along $ Z_{\min}^{d^U_{\max}}  = Z_{\min} \cap \Delta_U^{\geqslant d^U_{\max}}$;

\noindent (b) no point $z$ in this proper transform such that $y = \psi_{(1)}(z) \in Z^{d^U_{\max}}_{\min}$  is fixed by $\stab_U(\psi_{(1)}(z))$; 

\noindent (c) if $z \in Z_{(1)}$ then $\dim \stab_U(z) < d^U_{\max}$;

\noindent (d) the complement of the exceptional divisor $ E_{(1)}$ in $X_{(1)}^0 \setminus UZ_{(1)}$ is identified via the blow-down map $\psi_{(1)}$ with the open subset
$$ \{ x \in X^0_{\min} \setminus U {Z}_{\min} \mid  \,\, p(x) \notin {Z}_{\min}^{d^U_{\max}} \}$$
of $X^0_{\min} \setminus U Z_{\min}$, where as before
$ {p}(x)$ is the limit $ \lim_{t \to 0} t \cdot x $ with $ t \in \GG_m$. 
\end{propn}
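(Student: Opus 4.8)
The plan is to reduce everything to the blow-up analysis in \cite{K2} combined with the local model for a graded unipotent action around $Z_{\min}$, which was already set up in the proof of Lemma \ref{lemmaB} and in Proposition \ref{propn}. Throughout we are in the situation $r_1 = \weight_{\min}$, so the first blow-up centre $\overline{\Delta_U^{\geqslant d^U_{\max}}}$ actually meets $Z_{\min}$, in the closed subvariety $Z_{\min}^{d^U_{\max}} = Z_{\min} \cap \Delta_U^{\geqslant d^U_{\max}}$ (which is nonempty and proper in $Z_{\min}$ precisely because $r(\delta_0-1)=\weight_{\min}$ and $\delta_0 = d^U_{\max}>d^U_{\min}$). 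As in Proposition \ref{propn} we may embed $X$ into a projective space on which $\hU$ acts linearly, so that $Z_{\min}$, $X^0_{\min}$ and $\Delta_U^{\geqslant d^U_{\max}}$ are nonsingular, and $X_{(1)}^0 = \bar\psi_{(1)}^{-1}(X^0_{\min})$ is the blow-up of $X^0_{\min}$ along $\Delta_U^{\geqslant d^U_{\max}}$.

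For part (a): $Z_{(1)}$ is cut out inside $X_{(1)}^0$ as the $\GG_m$-fixed locus on which $\GG_m$ acts on $L^*$ with the (new) minimal weight. Since $Z_{\min}$ is $\GG_m$-fixed and the blow-up centre $\Delta_U^{\geqslant d^U_{\max}}$ is $\GG_m$-invariant, the proper transform $\tilde Z_{\min}$ of $Z_{\min}$ in $X_{(1)}^0$ is $\GG_m$-invariant; I would compute the $\GG_m$-weights on the fibres of $\mathcal{L}_{(1),\epsilon}^*$ over $\tilde Z_{\min}$ using the weight decomposition of the normal bundle of $Z_{\min}$ in $X^0_{\min}$. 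The key point (already implicit in the proof of Lemma \ref{lemmaB}) is that $\GG_m$ acts with \emph{strictly positive} weights on the fibre directions of $p:X^0_{\min}\to Z_{\min}$, so on the exceptional divisor $E_{(1)}$ over a point of $Z_{\min}^{d^U_{\max}}$ the $\GG_m$-weights on $L^*$ are strictly larger than $\weight_{\min}$; hence the new $Z_{(1)}$ coincides with the proper transform $\tilde Z_{\min}$ rather than picking up any exceptional component, and $\tilde Z_{\min}$ is exactly the blow-up of $Z_{\min}$ along $Z_{\min}^{d^U_{\max}}$ (using Lemma \ref{lemmaFA}, which identifies the normal directions inside $Z_{\min}$). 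This also yields (d): away from $E_{(1)}$ the map $\psi_{(1)}$ is an isomorphism, and the locus in $X^0_{\min}$ whose proper transform avoids $E_{(1)}$ is precisely $\{x \mid p(x)\notin Z_{\min}^{d^U_{\max}}\}$, since $p$ is $\GG_m$-invariant and $E_{(1)}$ lies over $\Delta_U^{\geqslant d^U_{\max}}\cap Z_{\min} = Z_{\min}^{d^U_{\max}}$ after passing to the fixed locus; the statement about $UZ_{(1)}$ versus $UZ_{\min}$ follows since $U$ normalises everything in sight.

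For parts (b) and (c): this is the non-reductive analogue of the fact, used repeatedly in \cite{K2}, that blowing up the maximal-stabiliser locus strictly drops the maximal stabiliser dimension. Fix $y \in Z_{\min}^{d^U_{\max}}$ and let $S = \stab_U(y)$, a unipotent group of dimension $d^U_{\max}$. Because $y$ is a $\GG_m$-fixed point, $S$ is $\GG_m$-invariant, and $S$ acts linearly on the fibre of $\mathcal{O}_{\PP}(-1)$ and on the Zariski tangent space $T_yX$, preserving the $\GG_m$-weight decomposition. The points of $E_{(1)}$ over $y$ are the lines in the normal space $N = T_y X^0_{\min}/T_y\Delta_U^{\geqslant d^U_{\max}}$; I would argue that $S$ cannot fix any such line. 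Indeed if $S$ fixed a line $\ell\subseteq N$ then, by a standard argument with the exponential map and Jordan form for the $S$-action (a unipotent group fixing a line in a representation over a field of characteristic $0$ fixes a vector in it, and then one integrates), one produces a nearby point of $X^0_{\min}$ (a limit along a curve in the $S$-orbit direction meeting $\overline{\Delta_U^{\geqslant d^U_{\max}}}$ transversally) still fixed by $S$, contradicting maximality of $d^U_{\max}$ on $X^0_{\min}$ — this is exactly the mechanism of \cite{K2} Lemma 7.9 ff. transported to the unipotent setting, and it uses $\GG_m$-equivariance to stay inside $X^0_{(1)}$. Hence $\stab_U(z)\subsetneq \stab_U(\psi_{(1)}(z))$ strictly for $z$ over $Z_{\min}^{d^U_{\max}}$, giving (b); combined with Proposition \ref{propn}(iv), which gives $\stab_U(z)\leq \stab_U(\psi_{(1)}(z))$ for all $z\in X_{(1)}^0$, and upper semicontinuity of stabiliser dimension, we get $\dim\stab_U(z)<d^U_{\max}$ for all $z\in Z_{(1)}$, which is (c).

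\textbf{Main obstacle.} The routine parts (identifying $Z_{(1)}$ with a blow-up, and the $E_{(1)}$-complement description) are weight bookkeeping in the local model of Proposition \ref{propn}. The genuine content is part (b): transporting the \cite{K2} argument — that a maximal reductive stabiliser cannot fix a point of the exceptional divisor — to unipotent stabilisers. The subtlety is that a unipotent group acting on a projective space \emph{always} has fixed points, so one cannot argue as in the reductive case by passing to a closed orbit; instead one must use that $S$ fixing a line in the normal representation $N$ forces $S$ to fix an actual tangent vector (characteristic $0$), exponentiate to a curve, and check this curve stays in $X^0_{\min}$ (using $\GG_m$-invariance of $S$ and of $X^0_{\min}$) to contradict maximality of $d^U_{\max}$. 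Making this exponentiation/transversality step precise, and checking it is compatible with the perturbed linearisation $\mathcal{L}_{(1),\epsilon}$ so that the relevant points genuinely lie in $X^0_{(1)}$, is where the care is needed.
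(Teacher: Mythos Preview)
Your overall strategy for (a), (c), (d) matches the paper's: reduce to the ambient projective space, identify $Z_{(1)}$ as the proper transform via weight bookkeeping (the paper cites the proof of Lemma~\ref{lemmaB} here, as you do), and derive (c) from (b) together with Proposition~\ref{propn}(iv). For (d) the paper argues exactly as you do, tracking $p_{(1)}$ versus $p$ across the blow-down.

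For (b) you take a genuinely different route from the paper. The paper chooses explicit homogeneous coordinates adapted to the chain $(Z_{\min})^{U'} \subseteq Z_{\min}^{d^U_{\max}} \subseteq Z_{\min}$ (with $U' = \stab_U(y)$), writes the $U'$-action in block upper-triangular form
\[
\begin{pmatrix} A & B & 0 \\ 0 & I & 0 \\ 0 & 0 & I \end{pmatrix},
\]
and checks directly that for $\xi \in T_yZ_{\min} \setminus T_y(Z_{\min})^{U'}$ some $u \in U'$ satisfies $u\xi - \xi \notin T_yZ_{\min}$; this moves the corresponding point of $\tilde Z_{\min}$ off itself in the exceptional fibre, so $U'$ cannot fix it. Your approach is instead the abstract \cite{K2}-style contradiction: a point of the exceptional fibre fixed by $S = \stab_U(y)$ would force an $S$-fixed point in $X^0_{\min}\setminus\Delta_U^{\geqslant d^U_{\max}}$, contradicting maximality. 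Both are valid; the paper's computation is hands-on and makes the weight structure visible, while your argument is cleaner to state and closer to the reductive template.

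There is, however, a real gap in your articulation of the key step for (b). You write that one passes from ``$S$ fixes a line $\ell\subseteq N$'' to a nearby $S$-fixed point of $X^0_{\min}$ by taking ``a limit along a curve in the $S$-orbit direction''. That is not the mechanism: an $S$-orbit direction is exactly where $S$ does \emph{not} fix things. What you need is the lifting argument. In the linear model $S$ acts linearly on $T_yX^0_{\min}$, and since $S$ is unipotent in characteristic~$0$ one has $H^1(S,T_y\Delta_U^{\geqslant d^U_{\max}})=0$, so any $S$-fixed vector in the quotient $N = T_yX^0_{\min}/T_y\Delta_U^{\geqslant d^U_{\max}}$ lifts to an $S$-fixed vector $v \in T_yX^0_{\min}$. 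But the $S$-fixed locus in the affine chart is a linear subspace contained in $\Delta_U^{\geqslant d^U_{\max}}$ (by maximality of $d^U_{\max}$), so $(T_yX^0_{\min})^S \subseteq T_y\Delta_U^{\geqslant d^U_{\max}}$ and hence $N^S = 0$. This is the clean form of the argument you are gesturing at; once stated this way there is no ``exponentiation to a curve'' and no transversality to check beyond what is already in Proposition~\ref{propn}.
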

\begin{proof}    %noindent{\bf Proof:} 
By assumption $Z_{\min}$ is not contained in the centre $\Delta_U^{\geqslant d^U_{\max}} $ of the blow-up. Hence $Z_{(1)}$ is  the proper transform of $Z_{\min}$ in the blow-up $X_{(1)}$ of $X^0_{\min}$ along $\Delta_U^{\geqslant d^U_{\max}}$, which is the blow-up of $Z_{\min}$ along $Z_{\min}^{d^U_{\max}}$ by the proof of Lemma \ref{lemmaB}. 

Now suppose that $z \in Z_{(1)} = \tilde{Z}_{\min}$ and that $y = \psi_{(1)}(z) \in Z^{d^U_{\max}}_{\min}$. Let $U' = \stab_U(y)$. We want to prove that $U'$  does not fix $z$. For this we can assume, as in the proof of  Proposition \ref{propn}, that $X$ is a projective space with a linear action of $\hU$. Then
$$(Z_{\min})^{U'} \subseteq Z_{\min}^{d^U_{\max}} \subseteq Z_{\min}$$
where $Z_{\min}$ and $(Z_{\min})^{U'}$ are linear subspaces of $X$ fixed pointwise by $\GG_m$, while $(Z_{\min})^{U'}$ is also fixed pointwise by $U'$. Recall 
the identity $p(ux)=x$ for all $x \in Z_{\min}$ and $u \in U$.
We can find $0 \leq m_1 \leq m_2 \leq m_3$ and homogeneous coordinates $[x_0: \ldots :x_{m_3}]$ on $X$ such that $\GG_m$ acts diagonally and $y=[0:\ldots 0:1]$, 
$$(Z_{\min})^{U'} = \{ [x_0: \ldots :x_{m_3}]  \mid x_j = 0 \mbox{ for } j \leq m_2 \}  \mbox{ and } 
Z_{\min} = \{ [x_0: \ldots :x_{m_3}]  \mid x_j = 0 \mbox{ for } j \leq m_1 \} ,$$
while $p([x_0: \ldots :x_{m_3}] ) = [0: \ldots :0: x_{m_1 + 1}: \ldots :x_{m_3}] $ and $U'$ acts on $X$ via matrices of the block form
$$ \left( \begin{array}{ccc} A & B & 0 \\ 0 & I & 0 \\ 0 & 0 & I \end{array} \right).$$
By considering local coordinates $x_j/x_{m_3}$ on $X$ near $y$, we see that if 
$\xi \in T_yZ_{\min} \setminus T_yZ_{\min}^{U'}$, then there is some  $u \in U'$ such that 
$ u \xi - \xi \notin   T_yZ_{\min}$.
Any $z \in \psi_{(1)}^{-1}(y)$ can be represented by such a $\xi$ and hence is not fixed by $U'$,
 as required for (b).  

(c) follows immediately from (b) and Proposition \ref{propn} (iv).  Finally for (d) observe that  if  we set
$$ p_{(1)}(x) =  \lim_{t \to 0, t \in \GG_m} t \cdot x $$
for $x \in \bar{X}_{(1)}$, then
$p_{(1)}(x) \in Z_{(1)}$ implies $x \in {X}_{(1)}$, while
 if $x \in {X}_{(1)} \setminus (U Z_{(1)} \cup E_{(1)})$ then ${p}_{(1)}(x) \in Z_{(1)}$ if and only if $p(\phi_{(1)}(x)) \in Z_{\min} \setminus Z_{\min}^{d^U_{\max}}$. The result follows.
 \end{proof} 

%\begin{propn} \label{propertransform2} Suppose that $\delta_0 = d^U_{\max}$ is strictly greater than $d^U_{\min}$ and that $r_1 = r(\delta_0 - 1)$ is strictly greater  than $\weight_{\min}$, so that   $Z_{((r_1-1))} \subseteq \Delta_U^{\geqslant d^U_{\max}} $ but  $Z_{((r_1))} \not\subseteq \Delta_U^{\geqslant d^U_{\max}} $. Let $X_{(1)}^0$ and $Z_{(1)}$ be as in Proposition \ref{propn}. Then \\ \noindent (a) $Z_{(1)}$ is the proper transform $\tilde{Z}_{((r_1))}$ of $ Z_{((r_1))}$ in $X_{(1)}$, and this is the blow-up of $ Z_{((r_1))}$ along $ Z_{((r_1))} \cap \Delta_U^{\geqslant d^U_{\max}} $; \\ \noindent (b) no point $z$ in this proper transform such that $y = \psi_{(1)}(z) \in Z_{((r_1))} \cap \Delta_U^{\geqslant d^U_{\max}} $  is fixed by the stabiliser $\stab_U(\psi_{(1)}(z))$ of $\psi_{(1)}(z)$; \\ \noindent (c) if $x \in X^0_{(1)}$ then $\dim \stab_U(x) < d^U_{\max}$; \\ \noindent (d) the complement of the exceptional divisor $E_{(1)}$  in $X^0_{(1)} \setminus U Z_{(1)}$is identified via the blow-down map $\psi_{(1)}$ with the open subscheme $$ \{ x = \psi_{(1)}(y) \in X^0_{\min} \setminus (U {Z}_{\min} \cup E_{(1)}) \mid  \,\,  p^{((r_1))}(x) \not\in p^{((r_1))}(\Delta_U^{\geqslant \delta_1}) \}$$ of $X^0_{\min} \setminus U Z_{\min}$,  in the notation of Definition \ref{longdefn}. \end{propn}
%\begin{proof} The proof is essentially the same as that of Proposition \ref{propertransform} with the role of $Z_{\min}$ now played by $Z_{((r_1))}$. \end{proof}

The following theorem, and therefore also Theorem \ref{mainthm2unipotent}, follow immediately from Proposition \ref{propertransform} . %and \ref{propertransform2}.

\begin{thm} \label{thmblowup}
Suppose that $U$ is a  graded unipotent group
 with $\hat{U} = U \rtimes \GG_m$ the semi-direct product of $U$ by $\GG_m$ 
which defines the grading.
Suppose that $\hU$  acts linearly on an irreducible projective scheme $X$ with respect to a very ample line bundle $L$, and that $\stab_U(x) = \{e\}$ for generic $x \in Z_{\min}$. Then after repeating the blow-up construction of Definition \ref{defnblowup} finitely many times, we obtain a projective scheme $\hat{X}$ with a linear $\hU$-action and a $\hU$-equivariant birational morphism $\psi_{\hat{X}}: \hat{X} \to X$ and an ample linearisation which is a tensor power of a small perturbation of $\psi_{\hat{X}}^*(\mathcal{L})$ and satisfies condition 
(\ref{eq:sss}). 

\end{thm}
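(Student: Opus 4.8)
The plan is to run the recursion of Definition \ref{defnblowup}, show --- using Propositions \ref{propn}, \ref{propertransform} and \ref{propertransform2} --- that each step strictly lowers the maximal $U$-stabiliser dimension along the open stratum, and deduce that the process terminates at a variety on which \eqref{eq:sss} holds. Concretely, set $\bar X_{(0)} = X$ and for $j \geqslant 1$ let $\bar\psi_{(j)}:\bar X_{(j)} \to \bar X_{(j-1)}$ be the blow-up of Definition \ref{defnblowup}, i.e.\ the blow-up of $\bar X_{(j-1)}$ along the closure of the $\hU$-invariant subvariety $\Delta_U^{\geqslant d^U_{\max}} = \{x \in (\bar X_{(j-1)})^0_{\min} \mid \dim\stab_U(x) = d^U_{\max}(\bar X_{(j-1)})\}$ (closed in the stratum by upper semicontinuity of stabiliser dimension), after if necessary a preliminary resolution by blow-ups along $\hU$-invariant subvarieties of the complement of $(\bar X_{(j-1)})^0_{\min}$ as in Remark \ref{cfK2}; equip $\bar X_{(j)}$ with the ample $\hU$-linearisation $\mathcal{L}_{(j),\epsilon_j}$ of Proposition \ref{propn}, choosing $\epsilon_j > 0$ small enough that its conclusions hold. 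Invariance of the centre is clear: $\Delta_U^{\geqslant\delta}$ is $U$-invariant (conjugation) and $\GG_m$-invariant (since $\GG_m$ normalises $U$, so $\dim\stab_U$ is constant on $\GG_m$-orbits in $X^0_{\min}$).

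The crux is the descent inequality: if $d^U_{\max}(\bar X_{(j-1)}) > d^U_{\min}(\bar X_{(j-1)})$, then $d^U_{\max}(\bar X_{(j)}) < d^U_{\max}(\bar X_{(j-1)})$; one applies Propositions \ref{propertransform} and \ref{propertransform2} with $X$ replaced by $\bar X_{(j-1)}$. In that case $\Delta_U^{\geqslant d^U_{\max}}$ is a proper closed subvariety of the stratum, so in the notation of Definition \ref{defn:RD} the weight $r_1 = r(d^U_{\max}-1) = \min\{r \in R \mid Z_{((r))} \not\subseteq \Delta_U^{\geqslant d^U_{\max}}\}$ is well defined (the stratum itself is not contained in the proper subvariety $\Delta_U^{\geqslant d^U_{\max}}$) and is either $\weight_{\min}$ or strictly larger, these two alternatives being exhaustive. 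If $r_1 = \weight_{\min}$ then $Z_{\min}\not\subseteq\Delta_U^{\geqslant d^U_{\max}}$ and Proposition \ref{propertransform}(c) gives $\dim\stab_U(z) < d^U_{\max}$ for every $z$ in the new $Z_{(1)}$; since every $x$ in the new $X^0_{(1)}$ has $\stab_U(x)\subseteq\stab_U(p_{(1)}(x))$ with $p_{(1)}(x)\in Z_{(1)}$ (Remark \ref{maxdimstab}), the inequality follows. If $r_1 > \weight_{\min}$ then $Z_{((r_1-1))}\subseteq\Delta_U^{\geqslant d^U_{\max}}\not\supseteq Z_{((r_1))}$, and Proposition \ref{propertransform2}(c) gives $\dim\stab_U(x) < d^U_{\max}$ directly for all $x\in X^0_{(1)}$. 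Since $\{d^U_{\max}(\bar X_{(j)})\}_j$ is a non-increasing sequence of non-negative integers that strictly decreases while it exceeds $d^U_{\min}(\bar X_{(j)})$, after finitely many steps it reaches $\hat X := \bar X_{(N)}$ on which $\dim\stab_U$ is constant along $\hat X^0_{\min}$. The composite $\psi_{\hat X} = \bar\psi_{(1)}\circ\cdots\circ\bar\psi_{(N)}$ is then $\hU$-equivariant and birational, and composing the linearisations of Proposition \ref{propn} exhibits an ample $\hU$-linearisation of $\hat X$ that is a positive tensor power of a small perturbation of $\psi_{\hat X}^{*}\mathcal{L}$ by a rational combination of the exceptional divisors (the $\epsilon_j$ chosen successively, which is legitimate as there are finitely many stages). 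Finally, $\psi_{\hat X}$ birational and $\hU$-equivariant together with the hypothesis propagate triviality of the generic $U$-stabiliser to $\hat X$; as $\hat X^0_{\min}$ is a non-empty, hence dense, open stratum of the irreducible variety $\hat X$ (cf.\ the proof of Lemma \ref{lemmaB}), it meets the locus where $\stab_U$ is trivial, so the constant value of $\dim\stab_U$ on $\hat X^0_{\min}$ is $0$. Since $U$ is unipotent in characteristic $0$ it has no non-trivial subgroup of dimension $0$, so $\stab_U(z) = \{e\}$ for all $z$ in $\hat Z_{\min}$, which is \eqref{eq:sss}; this proves the theorem (and, since \eqref{eq:sss} now holds on $\hat X$, Theorem \ref{mainthm} applies to $\hat X$, as needed for Theorem \ref{mainthm2unipotent}).

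The genuine geometric content --- that a single blow-up along the maximal-stabiliser locus lowers $d^U_{\max}$, and that $X^0_{\min}$ and $Z_{\min}$ deform controllably under a small perturbation of the linearisation --- is exactly Propositions \ref{propn}, \ref{propertransform} and \ref{propertransform2}, which we may assume; so the remaining work is organisational, and the main points needing care are: confirming that the dichotomy $r_1 = \weight_{\min}$ versus $r_1 > \weight_{\min}$ is exhaustive so that one of the two Propositions always applies; checking that the recursion never leaves the setting of Proposition \ref{propn}, so the perturbed linearisations can be chosen coherently across all stages; and using density of $\hat X^0_{\min}$ in $\hat X$ together with the generic-triviality hypothesis to force the terminal constant stabiliser dimension to be $0$ rather than merely some constant --- this last point being where the hypothesis that $\stab_U(x) = \{e\}$ for generic $x$ enters essentially.
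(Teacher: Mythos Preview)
Your proof is correct and follows exactly the same approach as the paper's: the paper's proof is a single sentence invoking Propositions \ref{propertransform} and \ref{propertransform2} to conclude that the blow-up process terminates with the required $\hat{X}$, and you have simply unpacked this by making explicit the dichotomy $r_1 = \weight_{\min}$ versus $r_1 > \weight_{\min}$, the strict descent of $d^U_{\max}$, and the final step identifying the terminal constant stabiliser dimension as $0$ via density of $\hat{X}^0_{\min}$ and triviality of finite unipotent subgroups in characteristic $0$. The additional care you take (exhaustiveness of the dichotomy, coherence of the successive perturbations, the role of the generic-triviality hypothesis) is implicit in the paper's surrounding discussion (Remark \ref{remsection5}, Proposition \ref{propn}) but not spelled out in its proof.
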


\begin{proof}
By Proposition \ref{propertransform} %and \ref{propertransform2}
 the blow-up construction terminates after finitely many steps with $\hat{X}$ satisfying the conditions required in Theorem \ref{thmblowup}.
\end{proof}

Suppose as usual that $H=U \rtimes R$ is a linear algebraic group over $\kk$ with graded unipotent radical $U$
 and  $\hat{H} = H \rtimes \l(\GG_m) = U \rtimes (R \times \l(\GG_m))$ is the semi-direct product of $H$ by $\GG_m$ %with subgroup $\hat{U} = U \rtimes \GG_m$
which defines the grading, and that  $\hH$  acts linearly on an irreducible projective scheme $X$ with respect to a very ample line bundle $L$. If $\stab_U(x) = \{e\}$ for generic $x \in Z_{\min}$, then since $U$ and $\hU$ are normal subgroups of $\hH$ and the subgroup $\hU /U \cong \l(\GG_m)$ in $\hH/U\cong R \times \l(\GG_m)$ is central, $\hat{X}$ can be constructed from $X$ in Theorem \ref{thmblowup} in an $\hH$-invariant way. Thus Theorem \ref{mainthm2} follows by combining Theorems \ref{thmblowup} and \ref{mainthm}.

\begin{rmk} \label{remweaken5}
If we do not have the condition that $\stab_U(x)$ is trivial for generic $x\in Z_{\min}$,  then the proof of Proposition \ref{propertransform} can be extended to show that the blow-up construction will still terminate after finitely many steps with $\hat{X}$ not satisfying condition (\ref{eq:sss}) but instead satisfying  $\dim\stab_U(x) = d^U_{\min}$ for every
$x \in \hat{X}^0_{\min}$. However in this more general situation, instead of using the blow-up construction directly, we can combine it with quotienting in stages (as described in $\S$5).
Recall from  Remark \ref{remweaken3} that the \eqref{eq:sss} hypothesis in Theorem \ref{mainthm} can be weakened, when the action of $U$ is such that the stabiliser in $U$ of $x \in X$ is always strictly positive-dimensional. Indeed, as was observed in Remark \ref{remweaken3},
provided that $\dim\stab_U(x) = d^U_{\min}$ for every
$x \in \hat{X}^0_{\min}$, and that $\stab_U(x)$ has a complementary subgroup $U'$ in $U$ with $U' \cap \stab_U(x) = \{ e\}$ and $U = U'\stab_U(z)$, then the conclusions of Theorem \ref{mainthm} still hold (although now  $X^{Ms, \hU}_{\min+}$ would be better notation than $X^{s, \hU}_{\min+}$, and $X^{Ms, \hH \geqslant \hU}_{\min+}$ might be better notation than $X^{s, \hH}_{\min+}$). Moreover as was also observed in Remark \ref{remweaken3},
the condition that $\stab_U(z)$ has a complementary subgroup $U'$ in $U$ with $U' \cap \stab_U(z) = \{ e\}$ and $U = U'\stab_U(z)$ is always satisfied when the unipotent group $U$ is abelian. 
Thus as described in Remark \ref{generalise} the proof of Theorem \ref{mainthm} can be extended to cover the more general case when  $\dim\stab_U(x) = d^U_{\min}$ for every
$x \in \hat{X}^0_{\min}$, provided that the same is true for every subgroup $U^{(j)}$ in the derived series for $U$ (or any series $U = U^{(0)} \geqslant U^{(1)} \geqslant \cdots \geqslant U^{(s)} = \{ e \}$ of normal subgroups of $\hH$ with abelian subquotients). Since each $U^{(j)}$ is a characteristic subgroup of $\hH$, the blow-up construction for $U^{(j)}$ is $\hH$-equivariant. So if need be the blow-up construction can be applied iteratively for each $U^{(j)}$, in order to obtain $\hat{X}$ with a linear $\hH$-action for which the conclusions of Theorem \ref{mainthm} hold. Here there is no necessity to extend the proof of Theorem \ref{thmblowup} to show that the blow-up construction always terminates after finitely many steps, even without the condition that $\stab_U(x)$ is trivial for generic $x\in Z_{\min}$. For we can always find
subgroups
$$ U = U^{(0)} \geqslant U^{(1)} \geqslant \cdots \geqslant U^{(s)} = \{ e\} $$
of $U$ which are all normal in $\hH$, have abelian subquotients $U^{(j)}/U^{(j+1)}$ such that for each $j$ the adjoint action of $\l(\GG_m)$ on the Lie algebra of $U^{(j)}/U^{(j+1)}$ has a single weight space, and which satisfy the following property for natural numbers $r_1, \ldots, r_s$:\\
(***) If $1 \leqslant j \leqslant s$ and $z\in X^{\l(\GG_m)}$ where $\l(\GG_m)$ acts on $L^*|_z$ with weight strictly less than $\weight_{r_j}$ then 
$$ U^{(j)} = U^{(j+1)} \stab_{U^{(j)}}(z),$$
whereas for generic $z\in X^{\l(\GG_m)}$ where $\l(\GG_m)$ acts on $L^*|_z$ with weight equal to $\weight_{r_j}$ then 
$$ \stab_{U^{(j)}}(z) \leqslant U^{(j+1)}.$$
Then we quotient in stages, combining the argument above with Remark \ref{remweaken3} to prove
Theorem \ref{mainthm3}.
\end{rmk}

\section{Quotients by linear algebraic groups with externally graded unipotent radicals} \label{externalsection}

Let $\mathcal{L}$ be a very ample linearisation with respect to a line bundle $L \to X$ 
of  the action of $H=U \rtimes R$ on an irreducible projective scheme $X$ over $\kk$. As usual we will assume
that there is an extension of this linearisation to a linearisation (by abuse of notation also denoted by $\mathcal{L}$) of the action of a semi-direct product
 $\hat{H} = H \rtimes \l(\GG_m) = U \rtimes (R \times \l(\GG_m))$ of $H$ by $\GG_m$ %with subgroup $\hat{U} = U \rtimes \GG_m$
which defines an external  grading of the unipotent radical $U$ of $H$, in the sense that the weights of the adjoint action of  $\l(\GG_m)$ on the Lie algebra of $U$ are all strictly positive.
Let $\chi: \hat{H} \to \GG_m$ be a character of $\hat{H}$ with kernel containing $H$; we will identify such characters $\chi $ with integers so that the integer 1 corresponds to the character which defines the exact sequence $H \to \hat{H} \to \GG_m$. 
Recall that we can twist the linearisation of the $\hat{H}$-action by multiplying the lift of the $\hH$-action to $L$ by such a character; this will leave the $H$-linearisation on $L$ and the action of $\hH$ on $X$ unchanged.

%By applying these results to an appropriate linear action of $\hH$ on $X \times \PP^1$, we obtain an $H$-invariant open subscheme $X^{\hat{s},H}$ of $X$ with a geometric quotient $X^{\hat{s},H}/H$ by $H$ which can be identified with an open subscheme of $({X} \times \PP^1)\env \hH$ (see Remark \ref{crossp1} and Lemma \ref{lem4.1} for more details).

%When we have proved Theorem \ref{mainthm2} and shown how to construct $\hat{X}$ and $\tilde{X}$ satisfying \eqref{eq:sss}, we will know that (i), (ii) and (iii) apply to the actions of $\hH$ on $\hat{X}$ and $\tilde{X}$, and also on $\hat{X} \times \PP^1$ and $\tilde{X} \times \PP^1$, appropriately linearised. This will give us an $H$-invariant open subscheme $X^{\hat{s},H}$ of $X$ with a geometric quotient $X^{\hat{s},H}/H$ by $H$ which can be identified with open subschemes of the projective schemes $(\hat{X} \times \PP^1) \env \hH$ and $(\tilde{X} \times \PP^1)\env \hH$. 

%\begin{rmk} \label{rmkcorresponds} This open subscheme $X^{\hat{s},H}$ of $X$ corresponds to the complement of the exceptional divisors in the intersection of $\hat{X} \times \{[1:1]\}$ with the $\hH$-stable locus in $\hat{X} \times \PP^1$, and in the intersection of $\tilde{X} \times \{[1:1]\}$ with the $\hH$-stable locus in $\tilde{X} \times \PP^1$. These correspond exactly, as the pre-image of the stable locus $X^s$ in the partial desingularisation construction $\psi:\tilde{X}^{ss} \to X^{ss}$ of \cite{K2} for a reductive action is exactly the complement of the exceptional divisors in $\tilde{X}^{ss} = \tilde{X}^s$.\end{rmk}

Suppose that the action of $\hH$ on $X$ satisfies the condition \eqref{eq:sss}; then the same is true for the action of $\hH$ on $X \times \PP^1$, where $\hH$ acts on $\PP^1$ as multiplication by the character which defines the exact sequence $H \to \hat{H} \to \GG_m$. 
We have 
$$ (X \times \PP^1)^0_{\min} = X^0_{\min} \times \AL^1$$
and $p_{X\times \PP^1} : X^0_{\min} \times \AL^1 \to Z_{\min} \times \{ 0 \}$ is given by $p_{X\times \PP^1} (x,t) = (p(x),0)$. 
Applying Theorem \ref{mainthm}  with $X$ replaced by $X \times \PP^1$, with respect to the tensor power of the linearisation $L \to X$ with $\calo_{\PP^1}(M)$ for $M\! >\! >\! 1$, twisted by a suitable character so that it is well adapted, gives us a  projective scheme $X \wenv H = (X \times \PP^1) \env \hat{H}$, which is a good quotient by $\hat{H}$ of an $\hat{H}$-invariant open subscheme of $X \times \AL^1$, given as the inverse image in $(X \times \PP^1)^{s,\hU}_{\min+}$ of the $R$-semistable subset of $(X\times \PP^1) \env \hat{U} = (X \times \PP^1)^{s,\hU}_{\min+}/\hU$. The projective scheme $X \wenv H$
contains as an open subscheme a geometric quotient by $H$ of an $H$-invariant open subset $X^{\hat{s},H}$ of $X$, which is the intersection with $X \times \{ [1:1]\}$ of the inverse image in $(X \times \PP^1)^{s,\hU}_{\min+}$ of the $R$-stable locus of 
$$(X\times \PP^1) \env \hat{U} =  ((X^0_{\min} \times (\AL^1 \setminus \{0\})) \sqcup (X^{s,\hU}_{\min+}  \times \{0\}))/\hU \cong 
(X^0_{\min}/U) \sqcup (X^{s,\hU}_{\min+}  /\hU).$$
Here $X^{\hat{s},H} = \{ x \in X^0_{\min} | p(x) \in Z_{\min}^{s,R}\}$ where $Z_{\min}^{s,R}$ is the stable locus for the induced linear $R$-action on $Z_{\min}$. 

The quasi-projective geometric quotient $X^{\hat{s},H}/H$ and the projective scheme $(X \times \PP^1) \env \hat{H} \supseteq X^{\hat{s},H}/H$ which contains it as an open subscheme can be described using Hilbert--Mumford criteria and an analogue of S-equivalence, combining the description of 
$(X\times \PP^1) \env \hat{U}$ as the geometric quotient $ (X \times \PP^1)^{s,\hU}_{\min+}/\hU$ with classical GIT for the induced linear action on $(X\times \PP^1) \env \hat{U}$ of the reductive group $R \cong H/U$. If $X^{\hat{s},H}$ is nonempty then 
 $(X \times \PP^1) \env \hat{H}$ is a projective completion of  $X^{\hat{s},H}/H$, and there is a surjective $\hH$-invariant morphism $\hat{\phi}$ onto  $(X \times \PP^1) \env \hat{H}$  from the open subscheme 
$$(X \times \AL^1)^{\hat{ss},\hH} =  \bigcap_{h \in H} h \,\, (X \times \AL^1)^{ss,T}_{\min+}$$
of $X \times \AL^1$, where $(X \times \AL)^{ss,T}_{\min+}$ is the $T$-semistable set in the sense of classical GIT for a well adapted linearisation of the action on $X \times \PP^1$ with respect to the tensor power of $L $ with $\calo_{\PP^1}(M)$ for $M>\!>1$.  When $(x,s)$ and $(y,t)$ lie in $(X \times \AL^1)^{\hat{ss},H}$
then $\hat{\phi}(x,s) = \hat{\phi}(y,t)$ if and only if the closures of their $\hH$-orbits meet in $(X \times \AL^1)^{\hat{ss},H}$.

Similarly when the action of $\hH$ on $X$ does not satisfy \eqref{eq:sss} but instead satisfies the weaker condition that $\stab_U(z)$ is trivial for generic $z \in Z_{\min}$ then  the same is true for the action of $\hH$ on $X \times \PP^1$ and we can apply Theorem \ref{mainthm2} to this action.
 Then \\
%\begin{enumerate} \item
(i)  there is a sequence of blow-ups of $X \times \PP^1$ along $\hat{H}$-invariant projective subschemes  resulting in a projective scheme $\widehat{X \times \PP^1}$ with a well adapted linear action of $\hat{H}$ (with respect to a power of an ample line bundle given by tensoring the pullback of $L\otimes \calo_{\PP^1}(M)$ for $M\! >\! >\! 1$ with small multiples of the exceptional divisors for the blow-ups) which satisfies the  condition \eqref{eq:sss}, so that Theorem \ref{mainthm} applies, giving us a projective geometric quotient
$$\widehat{X \times \PP^1} \env \hat{U} = (\widehat{X \times \PP^1})^{s,\hU}_{\min+}/\hU$$
and a projective good quotient $\widehat{X \times \PP^1} \env \hH = (\widehat{X \times \PP^1} \env \hat{U} ) /\!/ R$ of 
the $\hH$-invariant open subset $ \bigcap_{h \in H} h \, \widehat{X\times \PP^1}^{ss,T}_{\min+} $ of $\widehat{X\times \PP^1}$; \\
(ii)  there is a sequence of further blow-ups along $\hat{H}$-invariant projective subschemes resulting in a projective scheme $\widetilde{X \times \PP^1}$ satisfying the same conditions as $\widehat{X\times \PP^1}$ and in addition the enveloping quotient $\widetilde{X\times \PP^1}\env \hat{H} $ is the geometric quotient by $\hH$ of  the  $\hH$-invariant open subscheme $\widetilde{X \times \PP^1}^{s,\hH}_{\min+}$;\\
(iii) the blow-down maps $\hat{\psi}:\widehat{X\times \PP^1} \to X \times \PP^1$ and  $\tilde{\psi}:\widetilde{X \times \PP^1} \to X \times \PP^1$ are isomorphisms over the open subscheme
$$ \{ (x,t) \in X^0_{\min} \times (\AL^1 \setminus \{0\}) \,\,  | \,\,  \stab_U(p(x)) = \{ e \} \mbox{ and } p(x) \in Z_{\min}^{s,R} \},$$
and this has a geometric quotient by $\hH$ which can be identified on the one hand via $\hat{\psi}$ and $\tilde{\psi}$ with open subschemes of the projective schemes $\hat{X}\env \hat{H} $ and $\tilde{X}\env \hat{H} $, and on the other hand with the geometric quotient
$$ \{ x \in X^0_{\min}  \,\,  | \,\,  \stab_U(p(x)) = \{ e \} \mbox{ and } p(x) \in Z_{\min}^{s,R} \}/U.$$
% \item
%Both $\hat{X}\env \hat{H}$ and $\tilde{X}\env \hat{H}$ are projective completions of the geometric quotient by $\hH$ of the $\hH$-invariant open subset $$X^{s,\hH}_{\min+} =  \bigcap_{h \in H} h \, X^{s,T}_{\min+} $$ of $X$. % which can be identified via the blow-down map with the complement in $\tilde{X}^{s,\hH}_{\min+}$ of the exceptional divisors.%
%\end{enumerate}
%
%As before, by applying these results to the action of $\hH$ on ${X} \times \PP^1$ we obtain projective schemes $X \wenv H = \widehat{X \times \PP^1} \env \hat{H}$ and $X \tenv H = \widetilde{X \times \PP^1} \env \hat{H}$ which are categorical (and in the latter case geometric) quotients by $\hat{H}$ of $\hat{H}$-invariant open subsets of blow-ups of ${X} \times \kk$,  and contain as an open subscheme a geometric quotient of an $H$-invariant open subscheme $X^{\hat{s},H}$  of $X$ by $H$. 

\begin{defn} \label{defn9.1}
If $\stab_U(z)$ is trivial for generic $z \in Z_{\min}$ then
$$X^{\hat{s},H} = \{ x \in X^0_{\min} \,\,  | \,\,  \stab_U(p(x)) = \{ e \}  \mbox{ and } p(x) \in Z^{s,R}_{\min} \}.$$
\end{defn}

In the most general situation when even the weaker condition that $\stab_U(z)$ is trivial for generic $z \in Z_{\min}$ is not necessarily satisfied, we can apply Theorem \ref{mainthm3}  with $X$ replaced by $X \times \PP^1$, with respect to the tensor power of the linearisation $L \to X$ with $\calo_{\PP^1}(M)$ for $M\! >\! >\! 1$, twisted by a suitable character so that it is well adapted. We obtain projective schemes
$$X \wenv H = \widehat{X \times \PP^1} \env \hat{H} \,\,\, \mbox{ and } \,\,\, X \tenv H = \widetilde{X \times \PP^1} \env \hat{H}$$ which are good (and in the latter case geometric) quotients by $\hat{H}$ of $\hat{H}$-invariant open subschemes of blow-ups of ${X} \times \AL^1$,  and each contain as an open subscheme a geometric quotient of an $H$-invariant open subscheme  of $X$ by $H$. 
This open subscheme  corresponds under $\hat{\phi}$ to the complement of the exceptional divisors for the blow-up $\hat{X}$ of $X$ in $$ \{ x \in \hat{X}^0_{\min} \,\, | \,\, q^{\widehat{X\times \PP^1}}_{\hU} (x,[1:1]) \in  ((\widehat{X \times \PP^1}) \env \hat{U})^{s,R} \}, $$ and similarly for $\tilde{\phi}$. %(see Remark \ref{rmkcorresponds}).
If $\stab_U(z)$ is trivial for generic $z \in Z_{\min}$ then this open subscheme is 
$$X^{\hat{s},H} = \{ x \in X^0_{\min} \,\,  | \,\,  \stab_U(p(x)) = \{ e \}  \mbox{ and } p(x) \in Z^{s,R}_{\min} \} %= \{ x \in X^0_{\min} \,\,  | \,\, \exists  z \in Z^{s,R}_{\min}\cap \overline{Ux} \mbox{ s.t. } \stab_U(z) = \{ e \}   \}
$$
where $p(x) = \lim_{t\to 0} \l(t)x$.
In general recall from Remark \ref{remweaken5} that we can find subgroups
$$ U = U^{(0)} \geqslant U^{(1)} \geqslant \cdots \geqslant U^{(s)} = \{ e\} $$
of $U$ which are all normal in $\hH$, have abelian subquotients $U^{(j)}/U^{(j+1)}$ such that for each $j$ the adjoint action of $\l(\GG_m)$ on the Lie algebra of $U^{(j)}/U^{(j+1)}$ has a single weight space with weight $s_j$, and which satisfy the following property for natural numbers $r_1, \ldots, r_s$:\\
(***) If $1 \leqslant j \leqslant s$ and $z\in X^{\l(\GG_m)}$ where $\l(\GG_m)$ acts on $L^*|_z$ with weight strictly less than $\weight_{r_j}$ then 
$$ U^{(j)} = U^{(j+1)} \stab_{U^{(j)}}(z),$$
whereas for generic $z\in X^{\l(\GG_m)}$ where $\l(\GG_m)$ acts on $L^*|_z$ with weight equal to $\weight_{r_j}$ then 
$$ \stab_{U^{(j)}}(z) \leqslant U^{(j+1)}.$$
Then we can quotient in stages, to obtain as before projective schemes
$X \wenv H = \widehat{X \times \PP^1} \env \hat{H}$ and $X \tenv H = \widetilde{X \times \PP^1} \env \hat{H}$ which are good (and in the case of $X\tenv H$ geometric) quotients by $\hat{H}$ of $\hat{H}$-invariant open subschemes of blow-ups of ${X} \times \AL^1$,  and each contain as an open subscheme a geometric quotient of an $H$-invariant open subscheme 
$$X^{M\hat{s}, H \geqslant U \geqslant U^{(1)} \geqslant \cdots \geqslant U^{(s)}}$$
 of $X$ by $H$. Here this Mumford-hat-stable locus $X^{M\hat{s}, H \geqslant U \geqslant U^{(1)} \geqslant \cdots \geqslant U^{(s)}}$  is the pre-image in $X^{M\hat{s},U \geqslant U^{(1)} \geqslant \cdots \geqslant U^{(s)}}$ under the quotient map to $X \wenv U = (\widehat{X \times \PP^1}) \env \hU$ of the Mumford stable locus $((\widehat{X \times \PP^1}) \env \hU)^{Ms,R}$ for the induced action of $R$ on $(\widehat{X \times \PP^1}) \env \hU$, and 
$X^{M\hat{s},U \geqslant U^{(1)} \geqslant \cdots \geqslant U^{(s)}}$
 is defined inductively as %so that its product with $\AL^1 \setminus\{0\}$ is 
 the pre-image of 
 $$((X\times \PP^1)\env \hU^{(s-1)}) ^{M\hat{s}, U/U^{(s-1)} \geqslant U^{(1)}/U^{(s-1)} \geqslant \cdots \geqslant U^{(s-1)}/U^{(s-1)}}$$
 under the quotient map
 $$X^{M\hat{s},U^{(s-1)}} \to (X\times \PP^1)\env \hU^{(s-1)}.$$
 Here
 $X^{M\hat{s},U^{(s-1)}}$ is the locus in $X^0_{\min}$ where  the $\weight_r$-jet at 0 of 
 the morphism $\alpha_x: \AL^1 \to X$ defined by
$$\alpha_x (t) = \l(t) \exp(\xi/t^{s_j}) x \,\, \mbox{ for $t \neq 0$ and } \,\, \alpha_x (0) = \lim_{t\to 0} \l(t) \exp(\xi/t^{s_j}) x $$ 
  is nonzero for any weight vector $\xi \in \mathrm{Lie}(U^{(j)}) \setminus \mathrm{Lie}(U^{(j+1)})$ 
(see Remark \ref{rem20191}).

%Thus we obtain a geometric quotient $X^{M\hat{s},U}/U$ by $U$ of a non-empty $\hH$-invariant open subscheme $X^{M\hat{s},U\geqslant U^{(1)} \geqslant \cdots \geqslant U^{(s)}}$ (the Mumford-hat-stable locus for U) in $X$, with a projective completion $(\widehat{X \times \PP^1}) \env \hU$ and an induced ample $R$-linearisation with classical GIT quotient $((\widehat{X \times \PP^1}) \env \hU)/\!/R = (\widehat{X \times \PP^1}) \env \hH \supseteq X^{M\hat{s},H}/H$, where $X^{M\hat{s},H}$ is the pre-image in $X^{M\hat{s},U}$ of the Mumford stable locus $((\widehat{X \times \PP^1}) \env \hU)^{Ms,R}$ for the induced action of $R$ on $(\widehat{X \times \PP^1}) \env \hU$.

\begin{rmk}
Even if the algebra of $H$-invariant sections of powers of $L$ is finitely generated and condition \eqref{eq:sss} is satisfied, the projective schemes $X \wenv H = (X \times \PP^1)  \env \hat{H}$ and $X \env H$ are not in general isomorphic to each other, although they will be birationally equivalent. When $\kk = \CC$ and $H=U$ is a maximal unipotent subgroup of a reductive group $G$ (or more generally the unipotent radical of a parabolic subgroup of $G$) and the linear action of $U$ on $X$ extends to $G$, then the  algebra of $U$-invariant sections of powers of $L$ is finitely generated and the enveloping quotient $X \env U$ can be described using symplectic implosion \cite{GJS,KPEN}. When condition \eqref{eq:sss} is satisfied, there is a similar symplectic description of $X \wenv U = ({X \times \PP^1})  \env \hat{U}$, which is obtained from $X \env U$ via a symplectic cutting construction (cf.  \cite{Ler}).
\end{rmk}

\begin{rmk}
There is some analogy between the action of $\hat{H}$ on $X \times \PP^1$ (and on its blow-ups $\widehat{X \times \PP^1}$ and $\widetilde{X \times \PP^1}$ with GIT quotients  which are projective completions of $X^{\hat{s},H}/H$) and the concept introduced in \cite{DK} of a reductive envelope for a linear action of a unipotent group $U$ on a projective scheme $X$.  The main ingredients for a reductive envelope are a reductive group $G$ containing $U$ as a subgroup and a projective completion of $G \times^U X$ (where $G \times^U X$ is the quotient of $G \times X$ by the diagonal action of $U$ on the right on $G$ and on the left on $X$) with a linear $G$-action restricting to the given linearisation for the $U$-action on $X$.  Similarly, given a linear action of $H = U \rtimes R$ on $X$, we can define a graded envelope to be given by a semi-direct product $\hH = H \rtimes \l(\GG_m) = U \rtimes (R \times \l(\GG_m))$ with the adjoint action of $\l:\GG_m\to \hH$  on the Lie algebra of the unipotent radical $U$ having all weights strictly positive, and a projective  completion of $\hH \times^H X$ with a linear $\hH$-action restricting to the given linearisation for the $H$-action on $X$ (see \cite{BDKIII}. When the action of $H$ on $X$ extends to an action of $\hH$, then $\hH \times^H X$ is naturally isomorphic to $\GG_m \times X$, so the action of $\hat{H}$ on $X \times \PP^1$ gives us a graded envelope for the action of $H$ on $X$ in this sense.
\end{rmk}

\begin{rmk}  \label{remark9.4}
In the internally graded case when $H=U\rtimes R$ and $\l:\GG_m \to Z(R)$ grades $U$, we can identify  $X \wenv H = \widehat{X \times \PP^1} \env \hat{H}$ and $X \tenv H = \widetilde{X \times \PP^1} \env \hat{H}$, defined as after Definition \ref{defn9.1}, with
$$(\hat{X} \env \hat{U})\gitq (R/\l(\GG_m)) \,\,\, \mbox{ and } \,\,\, ( \tilde{X} \env  \hat{U})\gitq (R/\l(\GG_m))$$
 (cf. $\S$\ref{section2.2}).

\end{rmk}

\section{Examples} \label{section6}

In this section we consider some low dimensional examples which can be described very explicitly, and some applications to the construction of moduli spaces.

\subsection{Two points and a line in $\PP^2$}

First let $X = (\PP^2)^2 \times (\PP^2)^*$ with elements $(p,q,L)$ where $p$ and $q$ are points in $\PP^2$ and $L$ is a line in $\PP^2$. Let $X$ have the usual (left) action of the standard Borel subgroup $B = U \rtimes T$ of $\SL(3)$, consisting of upper triangular matrices, which is linear with respect to the product of $\calo(1)$ for each of the three projective planes whose product is $X$. We represent points in $\PP^2$ by column vectors, with the action of a matrix $A$ given by pre-multiplication by $A$; we represent lines
$$L_{(a,b,c)} = \{ [x:y:z] \in \PP^2 \,\, | \,\, ax + by + cz = 0 \}$$
by row vectors $(a,b,c)$ and the action of $A$ is given by post-multiplication by $A^{-1}$. The weights of the action of the maximal torus $T$ of $B$ on $X$ lie in an irregular hexagon (which is their convex hull) in the dual of the Lie algebra of the maximal compact subgroup of $T$. 

In order for a one-parameter subgroup $\lambda: \GG_m \to T$ to satisfy the condition that all its weights for the adjoint action on $\rm{Lie} \, U$ should be positive, we require its derivative to lie in the interior of the standard positive Weyl chamber $\liet_+$ for $\SL(3)$. The minimal weight for such a one-parameter subgroup acting on $X$ then corresponds to the $T$-fixed point 
$$ z_{\min} = ( [0,0,1], [0,0,1], L_{(1,0,0)}) $$
where the points $p$ and $q$ coincide at $[0,0,1]$ and both lie on the line $L$ which is defined by $x=0$. The stabiliser in $U$ of $z_{\min}$ is trivial, and the closure of its $U$-orbit is 
$$ \{ (p,q,L) \in X \,\, | \,\, p=q \in L \},$$
while
$$X^0_{\min} = \{ ([x_1,y_1,z_1], [x_2,y_2,z_2], L_{(a,b,c)}) \in X\,\, | \,\, z_1 \neq 0 \neq z_2, a \neq 0 \}$$
and by  Theorem \ref{thm:ExCo1}
$$X^0_{\min} \setminus Uz_{\min} = X^0_{\min} \setminus \overline{U z_{\min}} $$
has a projective geometric quotient $X \env B = (X^0_{\min} \setminus Uz_{\min})/B$ when the linearisation is twisted by a rational character $\chi$ in the interior of the hexagon near to the $T$-weight for $z_{\min}$.

\begin{rmk}
Here $U$ is a maximal unipotent subgroup of the reductive group $G = \SL(3)$ and the linear action of $U$ on $X$ extends to $G$. Therefore we know that the $U$-invariants on $X$ are finitely generated and the projective scheme $X \env U$ can be described geometrically over $\CC$ as the symplectic implosion (in the sense of \cite{GJS}, cf. also \cite{KPEN}) of $X$ by the action of the maximal compact subgroup $K = \SU(3)$ of $G$. Thus if $\mu: X \to \rm{Lie} \SU(3)^*$ is the moment map for the action of $K=\SU(3)$ on $X$ associated to the linearisation, and $K_\xi$ is the stabiliser of $\xi \in \liet_+$ under the coadjoint action of  $K$, then $X \env U$ can be obtained from $\mu^{-1}( \liet_+)$ by collapsing on the boundary of $\liet_+$ via the equivalence relation given by $x \sim y$ if $\mu(x) = \mu(y) = \xi \in \liet_+$ and $x \in [K_\xi, K_\xi] y$. For compatibility with the conventions in this paper, we should replace $\mu^{-1}(\liet_+)$ with $\mu^{-1}(-\liet_+)$. Quotienting further by $T$ to obtain $X \env B$ gives us 
$$ \mu^{-1}( \chi) / (T \cap K)$$
for $ \chi \in - \liet_+^o$, where $T \cap K$ is the usual maximal torus of $K= \SU(3)$ consisting of diagonal matrices.  This fits with our previous description since 
$$X^0_{\min} \setminus Uz_{\min} = B \times_{T \cap K} \mu^{-1}(\chi).$$
\end{rmk}

In order to find an example requiring a blow-up to achieve condition \eqref{eq:sss}, let us consider the subgroup
$$ U' = \left\{  \left( \begin{array} {ccc} 1 & \alpha & \beta \\ 0 & 1 & \gamma \\ 0 & 0 & 1 \end{array} \right) \in U \quad  | \quad  \gamma = 0 \, \, \right\} $$
of $U$, acting on the subscheme
$$Y = (\PP^2)^3 \times (\PP^1)^* $$
of $(\PP^2)^3 \times (\PP^2)^* $, where $(\PP^1)^* = \{ L_{(a,b,c)} \in (\PP^2)^* \,\, | \,\, a=0 \}$. Here $U'$ is normalised by the subgroup
$\lambda': \GG_m \to U'$ given by
$$ \lambda'(t) =  \left( \begin{array} {ccc} t^2 & 0 & 0 \\ 0 & t^{-1} & 0 \\ 0 & 0 & t^{-1} \end{array} \right) $$
of $T$, with the weights of $\lambda'(\GG_m)$ on the Lie algebra of $U'$ both strictly positive, and the linear action of $U'$ on $Y$ extends to $\hU' = U' \rtimes_{\lambda'} \GG_m$. The subset $Z_{\min}$ of $Y^{\lambda'(\GG_m)}$ where the $\GG_m$-weight is minimal is 
$$Z_{\min} = \{ (p,q,r,L) \in Y\,\, | \,\, p,q,r  \in L_{(1,0,0)} \}$$
corresponding to the points $p$, $q$ and $r$ all lying in the line defined by $x=0$. We have 
$$  \left( \begin{array} {ccc} 1 & \alpha & \beta \\ 0 & 1 & 0 \\ 0 & 0 & 1 \end{array} \right)  \left( \begin{array} {c} 0 \\ y \\ z \end{array} \right) =  \left( \begin{array} {c} \alpha y + \beta z \\ y \\ z \end{array} \right) $$
and 
$$ (0 \, b \, c) \left( \begin{array} {ccc} 1 & \alpha & \beta \\ 0 & 1 & 0 \\ 0 & 0 & 1 \end{array} \right) = (0 \, b \, c), $$
so the stabiliser in $U'$ of $(p,q,r,L) \in Z_{\min} $ is trivial unless $p = q = r$ in which case it is one-dimensional. Thus $d^U_{\max} = 1$ and 
$$ Z_{\min}^{d_{\max}} = \{ (p,q,r,L) \in Y\,\, | \,\, p=q=r \in L_{(1,0,0)} \},$$
while
$$ \Delta_U^{\geqslant d^U_{\max}}  = U' Z_{\min}^{d_{\max}} = \{ (p,q,r,L) \in Y \,\, | \,\, p=q=r \neq [1,0,0] \}.$$
The blow-up $\hat{Y}$ of $Y$ along the closure
$$\overline{\Delta_U^{\geqslant d^U_{\max}} } = \PP^2 \times (\PP^1)^* $$ of
$\Delta_U^{\geqslant d^U_{\max}} $ in $Y$
 is the product of $(\PP^1)^*$ with $(\PP^2)^3$ blown up along its diagonal $\PP^2$, with $\hat{Z}_{\min}$ the proper transform of $Z_{\min} = (\PP^1)^3 \times (\PP^1)^*$. So $\hat{Z}_{\min}$ is the blow-up of $Z_{\min} = (\PP^1)^3 \times (\PP^1)^*$ along $ \PP^1 \times (\PP^1)^*$ embedded diagonally, while the exceptional divisor $E$ is a $\PP^3$-bundle over $ \PP^2 \times (\PP^1)^*$. The open subset
$$ Y^s =  \{ (p,q,L) \in Y \,\, | \,\, p,q,r  \neq [1,0,0] \mbox{ and $p,q,r$ do not all coincide}  \} $$
has a geometric quotient by $\hU$ which has a projective completion
$$ \hat{Y} \env \hU = Y^s/\hU \,\, \sqcup \,\, E \env \hU.$$
%where $E \env \hU$ is isomorphic to $(\PP^1)^*$ via the projections $$ E \to  \PP^2 \times (\PP^1)^* \to  (\PP^1)^*.$$

\subsection{Moduli spaces of (Higgs) bundles of fixed Harder--Narasimhan type over a nonsingular projective curve} \label{subsec:rk2bundles}

When $G$ is a reductive group over $\kk$, acting linearly on a projective scheme $X$ with respect to an ample line bundle $L$, then given an invariant inner product on the Lie algebra of $G$, there is a stratification 
$$ X = \bigsqcup_{\beta \in \mathcal{B}} S_\beta$$ of $X$ by locally closed subschemes $S_\beta$, 
indexed by a partially ordered finite subset $\mathcal{B}$ of a positive Weyl chamber for the reductive group $G$,  such that 
 $S_0 = X^{ss}$, 
 and for each $\beta \in \mathcal{B}$
 the closure of $S_\beta$ is contained in $\bigcup_{\gamma \geqslant \beta} S_\gamma$. Moreover  $S_\beta \cong G \times_{P_\beta} Y_\beta^{ss}$
 where $$\gamma \geqslant \beta \mbox{ if and only if } \gamma = \beta \mbox{ or } |\!|\gamma|\!| > |\!|\beta|\!|$$
and 
$P_\beta$ is a parabolic subgroup of $G$ acting on  a projective subscheme $\overline{Y}_\beta$ of $X$ with an open subset $Y_\beta^{ss}$ which is determined by the action of the Levi subgroup of $P_\beta$ with respect to a suitably twisted linearisation \cite{K}.
Here the original linearisation for the action of $G$ on $L \to X$ is restricted to the action of the parabolic subgroup $P_\beta$ over $\overline{Y}_\beta$, and then twisted by a rational character of $P_\beta$ which is well adapted  for a central one-parameter subgroup of the Levi subgroup of $P_\beta$ acting with all weights strictly positive on the Lie algebra of the unipotent radical of $P_\beta$.
Thus to construct a quotient by $G$ of (an open subset of) any unstable stratum $S_\beta$ (and thus to study the stack $[X/G]$ via this stratification), we can study the linear action on $\overline{Y}_\beta$ of the parabolic subgroup $P_\beta$, twisted appropriately, and apply the results of this paper.

In particular we can consider moduli spaces of sheaves of fixed Harder--Narasimhan type over a nonsingular projective scheme $W$ (cf.  \cite{hok12}). 
Moduli spaces of semistable pure sheaves on $W$ of fixed  Hilbert polynomial can be constructed as GIT
quotients of linear actions of suitable  special linear groups $G$ on  schemes $Q$ (closely related to 
quot-schemes) which are $G$-equivariantly embedded in projective spaces  \cite{s94}.
 These constructions can
be chosen so that elements of $Q$ which parametrise sheaves of a fixed Harder--Narasimhan
type  form a stratum in the stratification of $Q$ associated to the linear action of $G$ (at least modulo
taking connected components of strata) \cite{hok12}. Thus to construct and study moduli spaces of  sheaves
of fixed Harder--Narasimhan type over $W$  we can study  the associated linear actions of  parabolic subgroups of these special linear groups $G$, appropriately twisted  %; for more details see 
 \cite{BHJK}. However in these cases the condition that $\stab_U(x) = \{ e \}$ for generic $x$ is rarely satisfied; from this viewpoint Higgs bundles are easier to deal with \cite{hamilton, hamilton2}.
The simplest non-trivial case is that of unstable vector bundles of rank 2 and fixed Harder--Narasimhan type over a nonsingular projective curve $W$ (cf. \cite{bramn09,Jackson}); then 
the blow-up construction terminates  with the situation that for every $x \in X^0_{\min}$ the dimension of $\stab_U(x)$ is equal to the generic dimension $d^U_{\min}$, and moreover, since $U$ is commutative, each stabiliser $\stab_U(x)$ has a complementary subgroup $U'$ in $U$ (cf. Remarks \ref{remweaken3} and \ref{remweaken5}).

\section{Cohomology of non-reductive GIT quotients}\label{sec:applications}

The results of \cite{bkcoh,hamilton2}
allow us to study cohomological intersection theory on moduli spaces constructed as quotients of projective varieties by non-reductive linear algebraic group actions as in this paper. When a moduli space can be described as the stable locus $X^{s,H}$ in a projective parametrising space $X$ up to a symmetry group $H=U \rtimes R$ with internally graded unipotent radical $U$, the non-reductive GIT quotient $\tilde{X}/\!/H$ of a suitable $H$-equivariant blow-up $\tilde{X}$ of $X$ gives a projective completion of the moduli space. When $X^{s,H}$ is nonsingular $\tilde{X}$ can be chosen to be nonsingular with the induced action of $H$ on $\tilde{X}$ well adapted and $H$-semistability coinciding with $H$-stability, and then the rational cohomology and cohomological intersection numbers of this projective completion can be directly computed from the $T$-action on the normal bundles to the connected components of the $T$-fixed point set, where $T \subset R$ is a maximal torus in $H$ in a way which generalises the reductive situation (cf. \cite{SM}). 

This is being used to study the cohomology of moduli spaces of unstable Higgs bundles of low rank constructed as non-reductive GIT quotients \cite{hamilton2,hamilton}, and has been used recently to give a proof of the polynomial Green--Griffiths--Lang and  Kobayashi conjectures \cite{bkGGL}.
A projective variety $X$ is called Brody hyperbolic if there is no non-constant entire holomorphic curve in $X$; i.e. any holomorphic map $f: \CC \to X$ must be constant. % In 1970 Kobayashi \cite{kob2} formulated 
\begin{conjecture}[\textbf{Kobayashi conjecture} \cite{kob2}]
A generic hypersurface $X\subseteq \PP^{n+1}$ of degree $d_n$ is Brody hyperbolic if $d_n$ is sufficiently large.
\end{conjecture}

%The Green--Griffiths--Lang (GGL) conjecture was formulated in 1979 by Green and Griffiths \cite{gg}; in 1986 by Lang \cite{lang} gave a refined version. 
\begin{conjecture}[\textbf{Green-Griffiths-Lang conjecture} \cite{gg,lang}]
Any projective algebraic variety $X$ of general type contains a proper algebraic subvariety $Y\subsetneqq X$ such that every
nonconstant entire holomorphic curve $f:\CC \to X$ satisfies $f(\CC) \subseteq Y$. 
\end{conjecture}

These conjectures are related (a generic projective hypersurface $X\subseteq \PP^{n+1}$ is of general type if $\deg(X)\ge n+3$), and have a long history \cite{demsurvey,dr}.  Siu \cite{siu4} and Brotbek \cite{brotbek} proved  Kobayashi hyperbolicity for projective hypersurfaces of sufficiently high (but not effective) degree. Effective degree bounds were worked out by Deng \cite{deng}, Demailly \cite{demsurvey}, Merker and The-Anh Ta \cite{merker2}; the best known bound based on their techniques was $(n \log n)^n$.     
The GGL conjecture was proved for surfaces by McQuillan \cite{mcquillan} assuming $c^2_1-c_2>0$. Following the strategy developed by Demailly \cite{dem} and Siu \cite{siu1,siu2,siu3,siu4}, %  for generic hypersurfaces $X\subseteq \PP^{n+1}$ of high degree, and using techniques of Demailly \cite{dem},
 the first effective lower bound  in this conjecture was given by Diverio, Merker and Rousseau \cite{dmr}, where the conjecture for generic projective hypersurfaces $X\subseteq \PP^{n+1}$ of degree $\deg(X)>2^{n^5}$ was confirmed; later $\deg(X)>(\sqrt{n}\log n)^n$ was shown to be sufficient \cite{merker2}. 
Recently Riedl and Yang \cite{riedl} showed that if there are integers $d_n$ for all positive $n$ such that the GGL conjecture for hypersurfaces of dimension $n$ holds for degree at least $d_n$ then the Kobayashi conjecture is true for hypersurfaces with degree at least $d_{2n-1}$.

To approach these conjectures Demailly and others study the bundle $\pi_k:J_kX \to X$ of holomorphic $k$-jets of maps $f: (\CC,0) \to X$ with $\pi_k(f)=f(0)$. % The fibre of $J_kX$ is the vector space formed by $k$-tuples $(f'(0),f''(0),\ldots, f^{(k)}(0))$ of derivatives for such $f$.
 Polynomial reparametrisations $(\CC,0)\to (\CC,0)$ of order $k$ with nonzero first derivative form a group $\Diff_k$, which acts on $J_kX$ by reparametrisation of jets.  The quotient $J_k^{reg} X/\Diff_k$ is the moduli space of regular $k$-jets in $X$, where regularity means the non-vanishing of the first derivative. 
The reparametrisation group $\Diff_k$ is not reductive. 
In \cite{bkGGL} we use the non-reductive GIT quotient $J_kX \widehat{\env} \Diff_k(1)$ and  the cohomological intersection theory developed in \cite{bkcoh} and the result of Riedl and Yang \cite{riedl} to prove the hyperbolicity conjectures for generic hypersurfaces with polynomial degree bounds.

\begin{thm}[\textbf{Polynomial Green-Griffiths-Lang theorem \cite{bkGGL}}] \label{mainthmtwo}
Let $X\subseteq \PP^{n+1}$ be a generic smooth projective hypersurface
of degree $\deg(X)\ge 16n^5(5n+4)$. Then there is a proper algebraic subvariety $Y\subsetneqq X$ containing all nonconstant entire holomorphic curves in $X$. 
\end{thm}

\begin{cor}[\textbf{Polynomial Kobayashi theorem \cite{bkGGL}}] \label{mainthmtwob}
A generic smooth projective hypersurface $X\subseteq \PP^{n+1}$  
of degree $\deg(X)\ge 16(2n-1)^5(10n-1)$ is Brody hyperbolic. 
\end{cor}  

The moduli space of $k$-jets in a nonsingular projective variety $X$ also plays a crucial role in enumerative geometry, and so potentially cohomology intersection theory on non-reductive GIT quotients will give new formulas here. Examples include \cite{b0} where it is shown that 
$J_k^{reg}X/\Diff_k$ forms a dense open subset of the curvilinear component $\CHilb^{k+1}(X)$ of the Hilbert scheme $\Hilb^{k+1}(X)$ of $k+1$ points in $X$,  \cite{bsz2} where a formula is obtained connecting integrals on $\CHilb^{k+1}(X)$ with integrals on $\Hilb^k(X)$,
\cite{bsz,bsz3} where Thom polynomials of singularities for maps $f:M\to N$ between compact manifolds are studied as equivariant intersection numbers of $\CHilb^k(M)$ and multisingularity classes are developed, and also \cite{gottsche,lehn,mop1,mop2}.

%\addcontentsline{toc}{section}{\numberline{}References}

\newpage

\hspace{3em}
\begin{center}
\begin{large} \textbf{PROJECTIVE COMPLETIONS FOR INTERNALLY GRADED QUOTIENTS} \\ APPENDIX by Eloise Hamilton \end{large}
\end{center}
\hspace{3em}
  
Appendix \ref{sec:diagrams} provides diagrams depicting the constructions of this paper and Appendix \ref{sec:algorithm} provides an explicit version of a resulting algorithm used in \cite{BHK} to find non-empty geometric quotients with projective completions for linear actions of linear algebraic groups with internally graded unipotent radicals.   
  
\appendix
%\appendixpage
%\addappheadtotoc  
  
\section{Diagrams for Geometric Invariant Theory, reductive and non-reductive} \label{sec:diagrams}

In this section we summarise the main results from Geometric Invariant Theory (GIT), both in the classical setting of \cite{GIT} ($\S$\ref{subsec:classicalredgit}) and in the non-reductive setting of the present paper ($\S$\ref{subsec:gradedunip} and $\S$\ref{subsec:internallygraded}), providing diagrams to illustrate the constructions involved. 

\subsection{Classical GIT for reductive groups} \label{subsec:classicalredgit}

\begin{figure}[b]
    \centering
    \begin{subfigure}{0.45\textwidth}
\centering
        \begin{tikzpicture}[scale=1,x=(15:8cm),y=(0:4.5cm), z=(90:1.5cm),
    cross line/.style={preaction={draw=white,-,line width=3pt}},
    equal/.style={double distance=1pt}]

\node (c1) at (1,-1.55,0.6) {  } ;
\node (c2) at (1,-1.55,2.4) { };
\node (c3) at (1,0.25,0.6) { };
\node (c4) at (1,0.25,2.4) { };

\filldraw  [fill = yellow, draw=black,loosely dotted, opacity=0.2] (c1) rectangle (c4);

\node (16) at (1,0,2) {\(   X \gitq G  \)};
\node(19) at (1,-1,2) {\( X \)};
\node(19') at (1,0,1) {\( X^s / G \)};
\node(20) at (1,-1,1) {\(  X^{ss} = X^{s} = \bigcap_{g \in G} g X^{s,T}    \)};

\draw [black,loosely dotted] (c1) rectangle (c4);
\draw [dashed, ->] (19) -- (16);
\draw[ ->] (20) -- (19');
\draw [equal] (19') -- (16);
\draw [right hook ->] (20) -- (19);

		\end{tikzpicture}
        \caption{When $X^{ss} = X^{s} \neq \emptyset$} \label{fig:classicalgit1} 
	\end{subfigure}

	\begin{subfigure}{1\textwidth}
        \centering
		\begin{tikzpicture}[scale=1,x=(15:6.5cm),y=(0:4cm), z=(90:1.8cm),
    cross line/.style={preaction={draw=white,-,line width=3pt}},
    equal/.style={double distance=1pt}]
    
    % Coordinates are defined with the format x=(angle:distance from
%origin).

\node (c1) at (1,-1.3,-0.25) {  } ;
\node (c2) at (1,-1.3,1.6) { };
\node (c3) at (1,0.35,-0.25) { };
\node (c4) at (1,0.35,1.6) { };
\node (c5) at (0,-1.3,-0.25) { } ;
\node (c6) at (0,-1.3,1.6) { } ;
\node (c7) at (0,0.35,-0.25) { } ;
\node (c8) at (0,0.35,1.6) { } ;

\node (1) at (0,-1,1.4) {\( X \)};
\node (1') at (0,0,1.4) {\( X \gitq G \)};
\node (2) at (0,-1,0.7) {\( X^{ss} \)};
\node (2') at (0,-1,0) {\( X^{s}  \)};
\node (3) at (0,0,0) {\( X^{s} / G \)};
\node (16) at (1,0,1.4) {\(   \tX \gitq G  \)};
\node(19) at (1,-1,1.4) {\( \tX \)};
\node(19') at (1,0,0.7) {\( \tX^{s} / G \)};
\node(20) at (1,-1,0.7) {\(  \tX^{ss} = \tX^{s}  \)};
\node(21) at (1,-1,0) {\( \tX^{ss} \setminus \widetilde{E} \)}; 
\node(22) at (1,0,0) {\( \left(\tX^{ss} \setminus \widetilde{E} \right) / G \)}; 
     
\filldraw  [fill = orange, draw=black,loosely dotted, opacity=0.2] (c5) rectangle (c8);
\filldraw  [fill = yellow, draw=black,loosely dotted, opacity=0.2] (c1) rectangle (c4);
  
\draw [black,loosely dotted] (c1) rectangle (c4);
\draw  [black,loosely dotted] (c5) rectangle (c8);
\draw [black, loosely dotted] (c3) -- (c7);
\draw [black, loosely dotted] (c4) -- (c8);
\draw [black, loosely dotted] (c1) -- (c5); 
\draw [black, loosely dotted] (c2) -- (c6);
     
\draw [->, dashed] (1) -- (1');
\draw [right hook ->](2) -- (1);
\draw [->] (2) -- (1');
\draw [right hook ->] (2') -- (2);
\draw [ ->](2') -- (3);
\draw [->] (21) -- (2') node[pos=0.5,below] {$\cong$}; 
\draw [right hook ->] (21) -- (20);
\draw [->] (21) -- (22) ;
\draw [->] (22) -- (3) node[pos=0.6,below] {$\cong$} ;
\draw [right hook -> ] (22) -- (19');
\draw [dashed, ->] (19) -- (16);
\draw[->] (20) -- (19');
%\draw[->] (20) -- (2) ;
\draw [equal] (19') -- (16);
\draw [right hook ->] (20) -- (19);
\draw [->] (19) -- (1) node[pos=0.4,above] {$\widetilde{\pi}$} ;
\draw [->, cross line] (16) -- (1') node[pos=0.4,above] {$\widetilde{\pi}_G$} ;    
\draw [ForestGreen,line width=0.5mm, right hook ->, cross line] (3) -- (16);
\draw [ForestGreen, line width=0.5mm,right hook ->, cross line ] (3) -- (1');
    
		\end{tikzpicture}
		
		\caption{When $\emptyset \neq X^s \subsetneq X^{ss}$. The green arrows denote projective completions.} \label{fig:classicalgit2}

	\end{subfigure}

	\caption{Classical GIT for reductive groups} \label{fig:classicalgit} 
\end{figure}

Let $X$ be an irreducible projective variety over an algebraically closed field of characteristic $0$ and let $G$ be a reductive group acting linearly on $X$ with respect to an ample line bundle $L \to X$. According to classical GIT, there exist open subsets $X^{s} \subseteq X^{ss} \subseteq X$ and a projective variety $X \gitq G = \operatorname{Proj} \bigoplus_{k \geq 0} H^0(X,L^{\otimes k })^G$ together with a rational map $q_G: X \dashrightarrow X \gitq G$ such that $q_G$ restricts to a good quotient $X^{ss} \to X \gitq G$ and induces a geometric quotient $X^{s} \to q_G(X^s) = X^s/G$. In the terminology of \cite{GIT}, $X^s$ is the properly stable locus; we will use the notation $X^{Ms}$ to denote the stable locus in Mumford's sense. When $X^s$ is dense in $X$, the GIT quotient $X \gitq G$ can be viewed as a projective completion of the geometric quotient $X^{s}/G$. Moreover, the semistable and stable loci are characterised by Hilbert-Mumford criteria so that $$X^{(s)s} = \bigcap_{g \in G} g X^{(s)s,T}$$ for a fixed maximal torus $T \subseteq G$. In what follows, if $X^s$ is empty but $X^{Ms}$ is not, then stability should be replaced by Mumford-stability. Note however that, unlike the stable locus, the Mumford-stable locus is not described by a Hilbert-Mumford-type criterion. 

If $\emptyset \neq X^{s} = X^{ss}$, then the GIT quotient $X \gitq G$ is a projective geometric quotient for the stable locus of $X$, and hence can have at worst only finite quotient singularities. Figure \ref{fig:classicalgit1} illustrates this situation.  

If $\emptyset \neq X^s \subsetneq X^{ss}$, then by \cite{K2,Reichstein1989} there exists a sequence of blow-ups starting from $X$ and resulting in a projective variety $\widetilde{X}$ with a blow-down map $\widetilde{\pi}: \widetilde{X} \to X$ such that $\tX^{ss} = \tX^{s} \neq \emptyset$ for a suitably linearised action of $G$ on $\tX$ (see Section \ref{section4} of the present paper for more detail). By construction of the blow-ups, $X^{s} = \widetilde{\pi}(\tX^{s} \setminus \widetilde{E})$ where $\widetilde{E}$ denotes the exceptional divisor of the blow-down map $\widetilde{\pi}$. %\todo{See Remark 6.20 of PC, but need to make sure this is OK.} 
Figure \ref{fig:classicalgit2} illustrates this blowing up construction. The two green arrows denote the two projective completions of $X^s/G$, namely $X \gitq G$ and $\tX \gitq G$, where the latter can be viewed as a partial desingularisation of the former. 

If $\emptyset = X^s 
% = X^{Ms} 
\neq X^{ss}$, then although the blow-ups can again be performed, for the resulting variety $\tX$ the semistable locus may be empty (see Remark \ref{rem:specialcases} of the present paper). In general, if the semistable locus for the linearised action of a reductive group $G$ on a projective variety $X$ is empty, then we are not in a situation where classical GIT can be usefully applied. This is one of the motivations for considering Non-Reductive GIT. Indeed, given an invariant inner product on the Lie algebra of $G$, by the HKKN-stratification for $X$ we have: $$X = \bigsqcup_{\b \in \mathcal{B}} S_{\b},$$ where $S_{0} = X^{ss}$ and $S_{\b} = G Y_{\beta}^{ss} \cong G \times_{P_{\b}} Y_{\b}^{ss}$ for $\beta \neq 0$ (see Remark \ref{stratification} of the present paper and \cite[\S 1]{BHK} for more details). Note that when $G$ is connected, $S_{\beta}$ is irreducible if and only if $Y_{\beta}^{ss}$ is irreducible. If $X^{ss} = \emptyset$, then there must exist an unstable stratum $S_{\b}$ with $\b \neq 0$ such that $S_{\b} \cong G \times_{P_{\b}} Y_{\b}^{ss}$ is open in $X$. Taking a quotient of an invariant open subset of $S_{\b}$ by $G$ is thus equivalent to taking a quotient of an invariant open subset of $Y_{\b}^{ss}$ by the parabolic subgroup $P_{\b}$, which is non-reductive in general. However, the subgroup $P_{\b}$ satisfies the property that its unipotent radical is internally graded. That is, $P_{\b} = U_{\b} \rtimes L_{\b}$, where $U_{\b}$ is its unipotent radical and $L_{\b}$ a Levi subgroup, and there exists a central one-parameter subgroup $\l_{\b}: \GG_m \to L_{\b}$ such that $\l_{\b}(\GG_m)$ acts on $\Lie U_{\b}$ with strictly positive weights. Linear algebraic groups satisfying this property are said to have internally graded unipotent radicals. Results from the present paper show that GIT can be successfully extended to allow actions by such groups. In the following two subsections we summarise the results from the present paper for graded extensions of unipotent groups (Section \ref{subsec:gradedunip}) and for linear algebraic groups with internally graded unipotent radicals (Section \ref{subsec:internallygraded}), providing diagrams to illustrate them.

\subsection{GIT for graded extensions of unipotent groups}  \label{subsec:gradedunip}

\begin{figure}
\begin{subfigure}{0.55\textwidth}
       \centering
       \begin{tikzpicture}[scale=1,x=(15:8cm),y=(0:3.9cm), z=(90:1.5cm),
    cross line/.style={preaction={draw=white,-,line width=3pt}},
    equal/.style={double distance=1pt}]
  
\node (c1) at (1,-1.75,0.6) {  } ;
\node (c2) at (1,-1.75,2.35) { };
\node (c3) at (1,0.35,0.6) { };
\node (c4) at (1,0.35,2.35) { };

\node (16) at (1,0,2) {\(   X \gitq \hU \)};
\node(19) at (1,-1,2) {\( X \)};
\node(20) at (1,-1,1) {\(  X^{ss, \hU}_{\min +}  = X^{s, \hU}_{\min +} = \bigcap_{u \in U} u X^{s, \GG_m}_{\min +} \)};
\node (19') at (1,0,1) {\( X^{s,\hU}_{\min +} / \hU \)};
     
\filldraw  [fill = yellow, draw=black,loosely dotted, opacity=0.2] (c1) rectangle (c4);
  
\draw [black,loosely dotted] (c1) rectangle (c4);
\draw [equal] (16) -- (19');
\draw [dashed, ->] (19) -- (16);
\draw[right hook ->] (20) -- (19);
\draw [->] (20) -- (19');
     
		\end{tikzpicture}
        \caption{When (ss=s($\hU$)) holds and $X \neq \overline{U \zmin}$} \label{fig:unip1}
    \end{subfigure} \begin{subfigure}{0.45\textwidth}
\centering
       \begin{tikzpicture}[scale=1,x=(15:8cm),y=(0:3.5cm), z=(90:1.5cm),
    cross line/.style={preaction={draw=white,-,line width=3pt}},
    equal/.style={double distance=1pt}]
    
\node (c1) at (1,-1.35,0.6) {  } ;
\node (c2) at (1,-1.35,2.35) { };
\node (c3) at (1,0.35,0.6) { };
\node (c4) at (1,0.35,2.35) { };

\node (16) at (1,0,2) {\(   \zmin \)};
\node(19) at (1,-1,2) {\( X = \overline{U \zmin} \)};
\node(20) at (1,-1,1) {\(  U \zmin \)};
\node (19') at (1,0,1) {\( U \zmin / \hU \)};
     
\filldraw  [fill = yellow, draw=black,loosely dotted, opacity=0.2] (c1) rectangle (c4);
  
\draw [black,loosely dotted] (c1) rectangle (c4);
\draw [equal] (16) -- (19');
\draw [dashed, ->] (19) -- (16);
\draw[right hook ->] (20) -- (19);
\draw [->] (20) -- (19');
     
		\end{tikzpicture}
        \caption{When (ss=s($\hU$)) holds and $X = \overline{U \zmin}$} \label{fig:unip12}
    \end{subfigure}

    \begin{subfigure}{1\textwidth}
  \centering
       \begin{tikzpicture}[scale=1,x=(15:7cm),y=(0:4.3cm), z=(90:1.2cm),
    cross line/.style={preaction={draw=white,-,line width=3pt}},
    equal/.style={double distance=1pt}]
    
\node (c1) at (1,-1.3,-0.4) {  } ;
\node (c2) at (1,-1.3,2.4) { };
\node (c3) at (1,0.35,-0.4) { };
\node (c4) at (1,0.35,2.4) { };
\node (c5) at (0,-1.3,-0.4) { } ;
\node (c6) at (0,-1.3,2.4) { } ;
\node (c7) at (0,0.35,-0.4) { } ;
\node (c8) at (0,0.35,2.4) { } ;

\node (1) at (0,-1,2) {\( X \)};
\node (2') at (0,-1,0) {\( X^{\widehat{s}, \hU}  \)};
\node (3) at (0,0,0) {\( X^{\widehat{s}} / 
\hU \)};
\node (16) at (1,0,2) {\(   \hX \gitq \hU \)};
\node(19) at (1,-1,2) {\( \hX \)};
\node(20) at (1,-1,1) {\(   \hX^{s, \hU}_{\min +}   \)};
\node (19') at (1,0,1) {\( \hX^{s, \hU}_{\min +}  / \hU \)};
\node (21) at (1,-1,0) {\( \hX^{s, \hU}_{\min +} \setminus \widehat{E} \)}; 
\node (22) at (1,0,0) {\( \left(\hX^{s, \hU}_{\min +} \setminus \widehat{E} \right) / \hU \)}; 
     
\filldraw  [fill = red, draw=black,loosely dotted, opacity=0.2] (c5) rectangle (c8);
\filldraw  [fill = yellow, draw=black,loosely dotted, opacity=0.2] (c1) rectangle (c4);
  
\draw [black,loosely dotted] (c1) rectangle (c4);
\draw  [black,loosely dotted] (c5) rectangle (c8);
\draw [black, loosely dotted] (c3) -- (c7);
\draw [black, loosely dotted] (c4) -- (c8);
\draw [black, loosely dotted] (c1) -- (c5); 
\draw [black, loosely dotted] (c2) -- (c6);

\draw [equal] (16) -- (19');
\draw [right hook ->] (2') -- (1);
\draw [dashed, ->] (19) -- (16);
\draw[right hook ->] (20) -- (19);
\draw [->] (20) -- (19');
\draw [right hook ->] (21) -- (20);
\draw [->] (21) -- (22);
\draw [right hook ->] (22) -- (19') ;
\draw  [->] (21) -- (2') node[midway,above] {$\widehat{\pi}$} node[midway, below] {$\cong$};
\draw [->] (22) -- (3) node[midway,above] {$\widehat{\pi}_{\hU}$} node[midway, below] {$\cong$}; 
\draw [->] (2') -- (3);
\draw [->] (19) -- (1) node[midway,above] {$\widehat{\pi}$} ;
      \draw [ForestGreen,line width=0.5mm,right hook ->, cross line] (3) -- (16);
   		\end{tikzpicture} \caption{When (ss=s$\neq \emptyset$[$U$]) does not hold. The green arrow denotes a projective completion.} \label{fig:unip2}
       \end{subfigure}  
  \caption{GIT for graded extensions of unipotent groups}
\end{figure}

Let $\hU = U \rtimes \GG_m$ be the semi-direct product of a non-trivial unipotent group with the multiplicative group $\GG_m$ such that $\GG_m$ acts on $\Lie U$ with strictly positive weights. Suppose that $\hU$ acts linearly on a projective variety $X$ with ample line bundle $L$. The subvarieties $\zmin$ and $\xmino$ are defined as in Definition \ref{def:ExPr2} of the present paper. 

Since $\hU$ is non-reductive, the ring of invariants $\bigoplus_{k \geq 0} H^0(X,L^{\otimes k})^{\hU}$ may not be finitely generated and thus there is no obvious analogue of the GIT quotient. Instead, an `enveloping quotient' $X \env \hU$ can be defined, admitting a rational map $q_{\hU}: X \dashrightarrow X \env \hU$ which restricts to a morphism on a suitably defined semistable locus $X^{ss,\hU}$. Unlike the classical case, this morphism may not be surjective, and its image may not even be a subvariety of $X \env \hU$ but only a constructible subset. Nevertheless, a stable locus $X^{s,\hU}$ can be defined so that $q_{\hU}(X^{s,\hU})$ is a subvariety of $X \env \hU$ and a geometric quotient for the action of $\hU$ on $X$ (see Section \ref{section2} of the present paper for more detail). By contrast with classical GIT, the semistable and stable loci for the $\hU$ action cannot in general be described by Hilbert-Mumford-type criteria. 

Nevertheless, if the action satisfies an additional condition denoted (ss=s($U$)), namely that $\Stab_U (z)= \{e\}$ for all $z \in \zmin$, then by Theorems \ref{mainthm} and \ref{thm:ExCo1} of the present paper, after taking a tensor power of the linearisation and twisting it by an appropriate character, all of the properties of classical GIT in the case when semistability coincides with stability can be recovered. That is, the invariants form a finitely generated graded ring and $X \env \hU$ is the associated projective variety. Moreover, a stable locus $$\xminshu := \xmino \setminus U \zmin$$ can be defined such that, if non-empty, $q_{\hU}: \xminshu \to X \env \hU$ is surjective and a geometric quotient for the action of $\hU$ on $\xminshu$. Finally, the stable locus  $\xminshu$ is characterised by a Hilbert-Mumford-type criterion: $$\xminshu = \bigcap_{u \in U} u \xminsg,$$ where $\xminsg$ is the stable locus in the reductive sense for the action of $\l(\GG_m)$ on $X$ with respect to the tensored and twisted linearisation. Since the enveloping quotient $X \env \hU$ satisfies all of the properties of classical GIT in the case where semistability coincides with stability, we denote it by $X \gitq \hU$ instead. Figure \ref{fig:unip1} illustrates this situation. Note that if $X_{\min+}^{s,\hU} = \emptyset$, then $U \zmin$ must be open in $X$. In this case, $\zmin$ is a geometric quotient for the action of $\hU$ on $U \zmin$ (see Remark \ref{rmkER} of the present paper). Figure \ref{fig:unip12} illustrates this situation.

If $\Stab_U(x)$ is not trivial but of constant and strictly positive dimension for all $x \in \xmino$, and the same is true for subgroups $U^{(j)}$ appearing in a series $U \supseteq U^{(1)} \supseteq \cdots \supseteq U^{(s)} = \{e\}$ normalised by $H$ with $U^{(j)}/ U^{(j+1)}$ abelian, then the same conclusions as stated in the above paragraph hold (see Remark \ref{remweaken} of the present paper). This situation is analogous to the reductive case when $X^s = \emptyset$ but $\emptyset \neq X^{Ms} = X^{ss}$. For this reason we denote the above weaker condition for the $U$-stabiliser groups in $\xmino$ by (ss=Ms$\neq \emptyset$[$U$]).

If the condition (ss=s$\neq \emptyset$[$U$]) is not satisfied for the action of $\hU$ on $X$, then provided there is some $x \in X$ such that $\Stab_U(x) = \{e\}$ (this condition is analogous to the condition that $X^s \neq \emptyset$ in the reductive case), by Theorem \ref{mainthm2} of the present paper there is a sequence of $\hU$-equivariant blow-ups of $X$ resulting in a projective variety $\hX$ with a blow-down map $\widehat{\pi}: \hX \to X $ such that for a suitably defined linearised action of $\hU$ on $\hX$ the condition (ss=s$\neq \emptyset$[$U$]) is satisfied. Figure \ref{fig:unip2} illustrates the blowing up procedure. If $\Stab_U(x)$ is strictly positive-dimensional for all $x \in X$, then a sequence of blow-ups can still be performed to obtain a variety $\hX$ satisfying (ss=Ms$\neq \emptyset$[$U$]) instead (see Remark \ref{rmk0.6} of the present paper). This is analogous to the case in classical GIT when $\emptyset = X^s \neq X^{Ms} \subsetneq X^{ss}$. Note that while in the reductive case the semistable locus must be non-empty in order to be able to perform the sequence of blow-ups, the $\hU$-blow-ups can always be performed. Unlike in Figure \ref{fig:classicalgit2}, we obtain only one projective completion for the geometric quotient $\xhshu / \hU$, coming from $\hX$, and it cannot in general be viewed as a partial desingularisation of another projective completion.

By analogy with classical GIT, a version of the stable locus inside $X$, denoted $\xhshu$, can be defined as $$\xhshu := \widehat{\pi}(\hX_{\min +}^{s,\hU} \setminus \widehat{E})$$ where $\widehat{E}$ denotes the exceptional divisor for the blow-down map $\widehat{\pi}: \hX \to X$. If $\hX_{\min +}^{s, \hU} = \emptyset$ then we define $\xhshu := \widehat{\pi} (U \widehat{Z}_{\min} \setminus \widehat{E})$ instead, where $\widehat{Z}_{\min}$ is the analogue of $\zmin$ for $\hX$.

\subsection{GIT for linear algebraic groups with internally graded unipotent radical} \label{subsec:internallygraded}

\begin{figure}
  \begin{subfigure}{1 \textwidth} 
   \centering
     \begin{tikzpicture}[scale=1,x=(10:10cm),y=(0:5.2cm), z=(90:1.3cm),
    cross line/.style={preaction={draw=white,-,line width=3pt}},
    equal/.style={double distance=1pt}]
    
\node (c1) at (1,1.7,2.4) {  } ;
\node (c4) at (1,-1.55,2.4) { };
\node (c5) at (1,1.7,-0.4) { } ;
\node (c6) at (1,-1.55,-0.4) { } ;
      
\node (16) at (1,-0.1,2) {\(   X \gitq \hU   \)};
\node (17) at (1,1,2) {\(  \left( X \gitq \hU \right)   \gitq R_{\l} =: X \gitq H \)};
\node (17') at (1,1,0) {\( \left( X \gitq \hU \right)^{s, R_{\l}} / R_{\l}  = X^{s,H}_{\min +} /H \)};
\node (18) at (1,-0.1,1) {\( X^{s,\hU}_{\min +} / \hU \)};
\node (18') at (1,-0.1,0) {\( \left(  X \gitq \hU   \right)^{s, R_{\l}}   \)};
\node(19) at (1,-1,2) {\( X \)};
\node(20) at (1,-1,1) {\(   X^{s, \hU}_{\min +} \)};
\node(20') at (1,-1,0) {\( X^{ss,H}_{\min +} = X^{s,H}_{\min +} = \bigcap_{h \in H} h X^{s,T}_{\min +} \)};
     
  \filldraw [fill = yellow, opacity=0.2 ]  (c6) rectangle (c1);

\draw [black,loosely dotted] (c6) rectangle (c1);
\draw [ dashed, ->] (16) -- (17);
\draw [->] (20') -- (18');
\draw [right hook ->] (20') -- (20);
\draw[right hook ->] (18') -- (18);
\draw [equal] (18) -- (16);
\draw [->] (18') -- (17');
\draw [equal] (17') -- (17);
\draw [->] (20) -- (18);
\draw [dashed, ->] (19) -- (16);
\draw[right hook ->] (20) -- (19);

   		\end{tikzpicture}
		\caption{When (ss=s$\neq \emptyset$[$U$]) for the action of $\hU$, $X \neq \overline{U \zmin}$ and $\left(  X \gitq \hU   \right)^{ss, R_{\l}} =\left(  X \gitq \hU   \right)^{s, R_{\l}} \neq \emptyset$ } \label{fig:interngrad1}
    \end{subfigure}

\begin{subfigure}{1\textwidth}
 \begin{tikzpicture}[scale=0.87,x=(57:7.3cm),y=(0:4.2cm), z=(90:1.3cm),
    cross line/.style={preaction={draw=white,-,line width=3pt}},
    equal/.style={double distance=1pt}]

\node (c1) at (1,1.4,2.5) {  } ;
\node (c2) at (-1,1.4,2.5) { };
\node (c3) at (-1,-1.35,2.5) { };
\node (c4) at (1,-1.35,2.5) { };
\node (c5) at (1,1.4,-2.45) { } ;
\node (c6) at (1,-1.35,-2.45) { } ;
\node (c7) at (-1,1.4,-2.45) { } ;
\node (c8) at (-1,-1.35,-2.45) { } ;
\node (c9) at (0,-1.35,2.5) {};
\node (c11) at (0,1.4,2.5) {};
\node (c10) at (0,-1.35,-2.45) {};
\node (c12) at (0,1.4,-2.45) { };
      
\node (1) at (-1,-1,2) {\( X \)};
\node (2') at (-1,-1,-2) {\(X^{\widehat{s},H} \)};
\node (3') at (-1,1,-2) {\( X^{\widehat{s},H}/ H \)};
\node (8) at (0,-1,2) {\( \hX \)};
\node (9) at (0,0,2) {\( \hX \gitq \hU \)};
\node (10) at (0,1,2) {\(  \hX \gitq H \)};
\node (11) at (0,-1,1) {\(  \hX^{s, \hU}_{\min +} \)};
\node(11') at (0,-1,0) {\( \hX^{ss,H}_{\min +} \)};
\node (12') at (0,-1,-1) {\( \hX^{s,H}_{\min +} \)};
\node (13') at (0,-1,-2) {\( \hX^{s,H}_{\min +} \setminus \widehat{E} \)};
\node (12) at (0,0,1) {\( \hX^{s, \hU}_{\min +} / \hU \)};
\node (13) at (0,0,0) {\( \left(\hX \gitq \hU \right)^{ss, R_{\l}} \)};
\node (14) at (0,0,-1) {\(\left(\hX \gitq \hU \right)^{s, R_{\l}} \)};
\node (15) at (0,1,-1) {\( \hX^{s,H}_{\min +} / H \)};
\node (15') at (0,1,-2) {\( \left(\hX^{s,H}_{\min +} \setminus \widehat{E} \right) / H \)};
\node (16) at (1,0,2) {\(   \tX \gitq \hU   \)};
\node (17) at (1,1,2) {\(   \tX \gitq H \)};
\node (17') at (1,1,0) {\(  \tX^{s,H}_{\min +}/H \)};
\node (18) at (1,0,1) {\( \tX^{s,\hU}_{\min +} / \hU \)};
\node (18') at (1,0,0) {\( \left(   \tX \gitq \hU   \right)^{s, R_{\l}}   \)};
\node(19) at (1,-1,2) {\( \tX \)};
\node(20) at (1,-1,1) {\(  \tX^{s, \hU}_{\min +} \)};
\node(20') at (1,-1,0) {\( \tX^{ss,H}_{\min +} = \tX^{s,H}_{\min +} \)};
\node(21) at (1,-1,-1) {\( \tX^{s,H}_{\min +} \setminus \widetilde{E} \)};
\node(22) at (1,1,-1) {\( \left( \tX^{s,H}_{\min +} \setminus \widetilde{E} \right) / H \)};
\node (23) at (1,0,-1)  {\( \left(   \tX \gitq \hU   \right)^{s, R_{\l}} \setminus \widetilde{E}_{\hU} \)};
     
\filldraw  [fill = red, draw=black,loosely dotted, opacity=0.2] (c3) rectangle (c7);
\filldraw  [fill = orange, draw=black,loosely dotted, opacity=0.2] (c10) rectangle (c11);
\filldraw  [fill = yellow, draw=black,loosely dotted, opacity=0.2] (c6) rectangle (c1);
  
\draw [black,loosely dotted] (c4) -- (c9);
\draw [black,loosely dotted] (c6) rectangle (c1);
\draw  [black,loosely dotted] (c10) rectangle (c11);
\draw  [black,loosely dotted] (c3) rectangle (c7);
\draw [black,loosely dotted] (c6) -- (c10);
\draw [black,loosely dotted] (c1) -- (c11);
\draw [black,loosely dotted] (c5) -- (c12);
\draw [black,loosely dotted] (c10) -- (c8);
\draw [black,loosely dotted] (c9) -- (c3);
\draw [black,loosely dotted] (c11) -- (c2);
\draw [black,loosely dotted] (c12) -- (c7);

%\draw [->] (20') -- (11') ;
\draw [loosely dotted, black] (c11) -- (c12);
\draw [-> ] (2') -- (3');
\draw [right hook ->] (2') -- (1);
\draw [right hook ->](11) -- (8);
\draw [equal](12) -- (9);
\draw [right hook ->] (12') -- (11');
\draw [right hook ->] (13') -- (12');
\draw [right hook ->] (11') -- (11);
\draw [->] (13') -- (15');
\draw [right hook ->](14) -- (13);
\draw [right hook ->] (15') -- (15);
\draw [ dashed, ->] (16) -- (17);
\draw [right hook ->] (20') -- (20);
\draw[right hook ->] (18') -- (18);
\draw [equal] (18) -- (16);
\draw [->] (18') -- (17');
\draw [equal] (17') -- (17);
\draw [->] (20) -- (18);
\draw [dashed, ->] (19) -- (16);
\draw[right hook ->] (20) -- (19);
\draw [ ->] (21) -- (23);
\draw [->] (23) -- (22);
\draw [right hook -> ] (21) -- (20');
\draw [right hook ->] (22) -- (17');
\draw [->] (22) -- (15) node [pos=0.35,right] {$\cong$};
\draw [->] (13') -- (2') node [midway,right] {$\cong$};
\draw [ ->] (21) -- (12') node [pos=0.45,right] {$\cong$};
\draw [->] (8) -- (1) node[midway,left] {$\widehat{\pi}$} ;
\draw [->,cross line] (17) -- (10) node[pos=0.8,left] {$\widetilde{\pi}_{H}$};
\draw[ ->] (19) -- (8) node[pos=0.8,left] {$\widetilde{\pi}$} ;
\draw [->] (15') -- (3') node[pos=0.3,right]  {$\cong$};
\draw [right hook ->] (15) -- (10);
\draw [->, cross line] (16) -- (9) node[pos=0.8,left] {$\widetilde{\pi}_{\hU}$};
\draw [dashed,->, cross line](8) -- (9);

\draw[->, cross line](11) -- (12);

\draw [->](14) -- (15);
\draw [->] (20') -- (18');
\draw [right hook-> ](13) -- (12);
\draw [->, cross line] (11') -- (13);
\draw [->] (12') -- (14);
\draw [ForestGreen,line width=0.5mm,right hook ->, cross line] (3') -- (10);
\draw [ForestGreen,line width=0.5mm,right hook ->, cross line] (3') -- (17) ;
%\draw [->,cross line] (18') --  (13);
\draw [->] (23) -- node[pos=0.45,right]{$\cong$} (14);
\draw [dashed,->, cross line](9) -- (10);
\draw [->, cross line](13) -- (10);

		\end{tikzpicture} \caption{When $\left(\hX \gitq \hU \right)^{s, R / \l(\GG_m)} \neq \emptyset$, $\hX \neq \overline{ U \widehat{Z}_{\min}}$ and $\tX \neq \overline{U \widetilde{Z}_{\min}}$. The green arrows denote projective completions.} \label{fig:interngrad2}
		
		\end{subfigure}  \caption{GIT for linear algebraic groups with internally graded unipotent radical (in the setting of the present paper)} 
\end{figure} 

More generally, let $H = U \rtimes R$ be a linear algebraic group with internally graded unipotent radical acting linearly on $X$. We let $\l: \GG_m \to R$ denote a central one-parameter subgroup such that $\l(\GG_m)$ acts on $\Lie U$ with strictly positive weights and we let $R_{\l}$ denote the reductive quotient group $R / \l(\GG_m)$. 

When the condition (ss=s$\neq \emptyset$[$U$]) (or (ss=Ms$\neq \emptyset$[$U$])) is satisfied for the action of $\hU = U \rtimes \l(\GG_m)$ on $X$, then by Section \ref{subsec:gradedunip} there exists a  projective geometric quotient $X \gitq \hU$ for the action of $\hU$ on $\xminshu$ (if $\xminshu = \emptyset$ so that $X = \overline{U \zmin}$ then in what follows we simply replace all occurrences of $\xminshu$ and $X \gitq \hU$ with $U \zmin$ and $\zmin$ respectively). The projective variety $X \gitq \hU$ has an induced action of the reductive group $R_{\l}$ which is linear with respect to an ample line bundle $L_{\hU}$ on $X \gitq \hU$ such that $q_{\hU}^{\ast} L_{\hU}$ is a tensor power of $L$ where $q_{\hU}: \xminshu \to X \gitq \hU$ denotes the quotient map. If $(X \gitq \hU)^{ss,R_{\l}} \neq \emptyset$ then we can further quotient by $R_{\l}$ to obtain a projective variety $X \gitq H := ( X \gitq \hU ) \gitq R_{\l}$ which is a geometric quotient for the action of $R_{\l}$ on $(X \gitq \hU )^{s,R_{\l}}$, and for the action of $H$ on $$\xminsh := q_{\hU}^{-1}( (X \gitq \hU)^{s,R_{\l}}) \cap \xminshu.$$ The semistable locus is analogously defined as $$\xminssh := q_{\hU}^{-1}( (X \gitq \hU)^{ss,R_{\l}} ) \cap \xminshu$$ and both the stable and semistable loci satisfy Hilbert-Mumford-type criteria: $$ X_{\min +}^{(s)s,H} = \bigcap_{h \in H} h X^{(s)s,T}_{\min +}$$ for a fixed maximal torus $T \subseteq R$ containing the central one-parameter subgroup which grades $U$. If semistability coincides with stability for the action of $R_{\l}$ on $X \gitq \hU$, then $X \gitq H$ is a geometric quotient for the action of $H$ on $\xminsh$ since in this case $\xminssh = \xminsh$. Figure \ref{fig:interngrad1} illustrates this situation.

If the condition (ss=Ms$\neq \emptyset$[$U$]) is not satisfied, then as described in Section \ref{subsec:gradedunip} a sequence of blow-ups can be performed to obtain a projective variety $\hX$ for which either (ss=s$\neq \emptyset$[$U$]) or (ss=Ms$\neq \emptyset$[$U$]) holds. For the projective variety $\hX \gitq \hU$, if $\emptyset \neq (\hX \gitq \hU)^{s,R_{\l}} \subsetneq (\hX \gitq \hU)^{ss,R_{\l}}$, then as described in Section \ref{subsec:classicalredgit} an additional sequence of blow-ups can be performed to produce a projective variety $(\hX \gitq \hU)^{\sim}$ such that semistability coincides with stability (or Mumford-stability) for a suitably lifted action of $R_{\l}$. Equivalently we can construct a variety $\tX$ with a blow-down map to $\hX$ and a linearised action of $H$ such that $\tX \gitq \hU = ( \hX \gitq \hU )^{\sim}$ is a projective geometric quotient for the action of $H$ on $\tX_{\min+}^{s,H}$. An alternate stable locus inside $X$, denoted $\xhsh$, is then defined as $$\xhsh := \widehat{\pi}(\hX_{\min +}^{s,H} \setminus \widehat{E}) = \widehat{\pi} (\widetilde{\pi} (\tX_{\min +}^{s,H} \setminus \widetilde{E}) ) \setminus \widehat{E} )$$ where $\widehat{E}$ and $\widetilde{E}$ denote the exceptional divisors for the blow-down maps $\widehat{\pi}: \hX \to X$ and $\widetilde{\pi}: \tX \to \hX$ respectively. Figure \ref{fig:interngrad2} illustrates this sequence of blow-ups. By construction, $\xhsh / H$ is a geometric quotient for the action of $X$ on $\xhsh$ and $\tX \gitq \hU$ is a projective completion of this quotient.

\section{Algorithm for performing quotients of projective varieties by internally graded groups} \label{sec:algorithm}

In the set-up of Section \ref{subsec:internallygraded}, if $H$ acts linearly on $X$, then there exists an open subset $\xhsh \subseteq X$ such that $\xhsh / H$ is a geometric quotient for the action of $H$ on $\xhsh$, as well as a projective completion $\tX \gitq H$ which is itself a projective geometric quotient for the action of $H$ on an open subset of $\tX$. This is the main result of the present paper. However, just as in classical GIT where the stable and semistable loci can be empty, leading to empty geometric and GIT quotients, the geometric quotient $\xhsh/H$ and its projective completion $\tX \gitq H$ could be empty. The purpose of this section is to describe a generalisation of the main result of the present paper which ensures that given the linearised action of $H$ on $X$, a non-empty geometric quotient and projective completion is always obtained. The proof is in the form of an algorithm, called the Projective completion algorithm; it is based on the inductive construction from \cite{BHK}.

Let $H= U \rtimes R$ be a linear algebraic group with unipotent radical $U$ graded by a central one-parameter subgroup $\l: \GG_m \to R$ of $R$, acting linearly on an irreducible projective variety $X$. We assume moreover that $X$ is non-empty and that $H$ is connected; otherwise we replace $H$ with its identity component. We denote the trivial one-parameter subgroup by $\l_0$. As before, we let $R_{\l}$ denote the reductive quotient group $R / \l(\GG_m)$. Let $\mathcal{L}$ denote the linearisation for the action of $H$ on $X$, and by abuse of notation also the line bundle with respect to which the linearisation is defined. Given this data, together with an invariant inner product on $\Lie R$, the projective completion algorithm produces a non-empty open subset $S_0(X,H,\lambda,\mathcal{L})$ of $X$ which admits a quasi-projective geometric quotient by the action of $H$ (cf.\ \cite{BHK}), together with a projective completion $\PC(S_0(X,H,\lambda,\mathcal{L})/H)$ of this geometric quotient.

This Projective completion algorithm is based on \cite{BHK}. It will be described in Section \ref{subsec:bualg} and is illustrated in Figure \ref{fig:bualg}. It relies on another stand-alone algorithm called the Replacement algorithm, which we describe in Section \ref{subsec:repalg} and which is illustrated in Figure \ref{fig:repalg}. The Replacement algorithm also appears on the left-hand side of Figure \ref{fig:bualg}. 

\subsection{The Replacement algorithm}

\label{subsec:repalg}

\begin{figure}
	 \centering
	            
        \begin{tikzpicture}[scale=2,x=(0:1.9cm), y=(90:1.5cm)]
         
         \node (c1) at (-0.7,8.7) {};
         \node (c2) at (3.55,8.7) {};
         \node (c3) at (-0.7, 1.7) {};
         \node (c4) at (3.55,1.7) {};
          
       \node[rectangle, draw = black, loosely dotted, draw opacity = 1, fill =gray, fill opacity = 0.2, text opacity =1] (1) at (0,2.5) { $\left( \overline{Y_{\b^{(N)}}^{ss}}, P_{\b^{(N)}}, \l_{\b^{(N)}}, \mathcal{L}|_{\overline{Y_{\b^{(N)}}^{ss}}} \right)$};
       \node (2) at (0,2.5) {  };
       \node (3) at (0,3) {};

       \node (9) at (0,4) {} ;
       \node (10) at (0,4.5) {};
       \node[rectangle, draw = black, loosely dotted, draw opacity = 1, fill =gray, fill opacity = 0.2, text opacity =1] (12) at (0,5) {$\left( \overline{Y_{\b^{(2)}}^{ss}}, P_{\b^{(2)}}, \l_{\b^{(2)}}, \mathcal{L}|_{\overline{Y_{\b^{(2)}}^{ss}}} \right)$};
       \node (13) at (0,6) {};
       \node (14) at (0,6.5) {};
       \node[rectangle, draw = black, loosely dotted, draw opacity = 1, fill =gray, fill opacity = 0.2, text opacity =1] (15) at (0,7) {$\left( \overline{Y_{\b}^{ss}}, P_{\b}, \l_{\b}, \mathcal{L}|_{\overline{Y_{\b}^{ss}}} \right)$};
       \node (16) at (0,8) {};
       \node (17) at (0,8.5) {};
       \node[rectangle, draw = black, loosely dotted, draw opacity = 1, fill =gray, fill opacity = 0.2, text opacity =1,label={[xshift=1.6cm, yshift=-0.65cm]$(\star \star \star)$}] (18) at (0,8.5) {$(X,H,\l,\mathcal{L})$};
       
       \node[rectangle, draw = black, loosely dotted, draw opacity = 1, fill =red, fill opacity = 0.2, text opacity =1,label={$(\star)$}] (a) at (2.7,3) {$\left( \overline{Y_{\b^{(N)}}^{ss}}, P_{\b^{(N)}}, \l_{\b^{(N)}}, \mathcal{L}|_{\overline{Y_{\b^{(N)}}^{ss}}} \right)$};
       
        \node[rectangle, draw = black, loosely dotted, draw opacity = 1, fill =orange, fill opacity = 0.2, text opacity =1,label={$(\star \star)$}] (a') at (2.7,2) {$\left( \overline{Y_{\b^{(N)}}^{ss}}, \{e\}, \l_0, \mathcal{L}|_{\overline{Y_{\b^{(N)}}^{ss}}} \right)$};

       \node[rectangle, draw = black, loosely dotted, draw opacity = 1, fill =orange, fill opacity = 0.2, text opacity =1,label={$(\star \star)$}] (d) at (2.7,4.5) {$\left( \overline{Y_{\b^{(2)}}^{ss}}, L_{\b^{(2)}}/\l_{\b^{(2)}} (\GG_m) , \l_{0}, \mathcal{L}|_{\overline{Y_{\b^{(2)}}^{ss}}} \right)$};
       \node[rectangle, draw = black, loosely dotted, draw opacity = 1, fill =red, fill opacity = 0.2, text opacity =1,label={$(\star)$}] (e) at (2.7,5.5) {$\left( \overline{Y_{\b^{(2)}}^{ss}}, P_{\b^{(2)}}, \l_{\b^{(2)}}, \mathcal{L}|_{\overline{Y_{\b^{(2)}}^{ss}}} \right)$};
       \node[rectangle, draw = black, loosely dotted, draw opacity = 1, fill =orange, fill opacity = 0.2, text opacity =1,label={$(\star \star)$}] (f) at (2.7,6.5) {$\left( \overline{Y_{\b}^{ss}}, L_{\b}/\l_{\b}(\GG_m), \l_0, \mathcal{L}|_{\overline{Y_{\b}^{ss}}} \right)$};
       \node[rectangle, draw = black, loosely dotted, draw opacity = 1, fill =red, fill opacity = 0.2, text opacity =1,label={$(\star)$}] (g) at (2.7,7.5) {$\left( \overline{Y_{\b}^{ss}}, P_{\b}, \l_{\b}, \mathcal{L}|_{\overline{Y_{\b}^{ss}}} \right)$};

       \draw  (c1) rectangle (c4);

       \draw  (12) -- (0,4);
       
       \draw [dashed] (0,4) -- (0,3.5);
       \draw [->] (0,3.5) -- (1);

       \path[->]

        (18) edge  (15)
        
        (15) edge node[rectangle, draw,rounded corners,sloped, anchor=center, above,yshift=1mm] {$\l_{\b}(\GG_m)$ acts non-trivially} (g.west)
        (15) edge node[rectangle, draw,rounded corners,sloped, anchor=center, above,yshift=1mm] {$\l_{\b}(\GG_m)$ acts trivially} node[rectangle, draw,rounded corners,sloped,anchor=center, below,yshift=-1mm] {and $\overline{Y_{\b}^{ss}}^{ss, L_{\b}/\l_{\b}(\GG_m)} \neq \emptyset$} 
        (f.west)
        (15) edge node[rectangle, draw,rounded corners,sloped,anchor=center,above,rotate=180,yshift=1mm] {$\l_{\b}(\GG_m)$ acts trivially} node[rectangle, draw,rounded corners,sloped,anchor=center, below, rotate=180,yshift=-1mm] {and $\overline{Y_{\b}^{ss}}^{ss, L_{\b}/\l_{\b}(\GG_m)} = \emptyset$} (12)
        (12) edge node[rectangle, draw,rounded corners,sloped, anchor=center, above,yshift=1mm] {$\l_{\b^{(2)}}(\GG_m)$ acts non-trivially} (e.west)
        (12) edge node[rectangle, draw,rounded corners,sloped, anchor=center, above,yshift=1mm] {$\l_{\b^{(2)}}(\GG_m)$ acts trivially} node[rectangle, draw,rounded corners,sloped,anchor=center, below,yshift=-1mm] {and $\overline{Y_{\b^{(2)}}^{ss}}^{ss, L_{\b^{(2)}}/\l_{\b^{(2)}}(\GG_m)} \neq \emptyset$} 
        (d.west)
        (1) edge node[rectangle, draw,rounded corners,sloped, anchor=center, above,yshift=1mm] {$\l_{\b^{(N)}}(\GG_m)$ acts non-trivially} (a.west)
        (1) edge node[rectangle, draw,rounded corners,sloped, anchor=center, above,yshift=1mm] {$\l_{\b^{(N)}}(\GG_m)$ acts trivially}  
        (a'.west);

\end{tikzpicture}

\caption{Replacement algorithm. The algorithm starts in the top left gray rectangle, and proceeds by following the arrows according to the conditions specified in the framed labels. The algorithm terminates when a red or orange rectangle is reached.} \label{fig:repalg}
	
	\end{figure}

This algorithm is illustrated in Figure \ref{fig:repalg} and is described in words below.  

\begin{repalg} \thlabel{replacementalg}
Suppose that $(X,H,\l,\mathcal{L})$ encodes the data consisting of the linearised action of a non-reductive group $H$ with internally graded unipotent radical $U$ and Levi subgroup $R$ on an irreducible projective variety $X$ as above, and let $R_{\l} = R/\l(\GG_m)$. Here we fix an adjoint-invariant inner product on $\Lie R$; this restricts to an adjoint-invariant inner product on $\Lie R'$ for any $R' \leq R$. One of three conditions must be satisfied for the action of $H$ on $X$: 
\begin{enumerate}[leftmargin=2cm]
\item[($\star$)] $\l(\GG_m)$ acts non-trivially on $X$; 
\item[($\star \star$)] $\l(\GG_m)$ acts trivially on $X$ and $X^{ss,R_{\l}} \neq \emptyset$; or
\item[($\star \star \star$)] $\l(\GG_m)$ acts trivially on $X$ and $X^{ss,R_{\l}} = \emptyset$.
\end{enumerate}
Note that if $\l(\GG_m)$ acts trivially on $X$, then $U$ must also act trivially on $X$, and so $X$ is already a geometric quotient for the action of $\hU$ on $X$. Thus it suffices to consider the action of $H / \l(\GG_m) = R/ \l(\GG_m)=R_{\l}$ on $X$, and in particular we can consider the semistable locus for this reductive action which determines which of $(\star \star)$ and $(\star \star \star)$ is satisfied. 

The input of the replacement algorithm is a $4$-tuple $(X,H,\l,\mathcal{L})$ satisfying $(\star \star \star)$. The algorithm then produces another such $4$-tuple $(X',H',\l',\mathcal{L}')$ satisfying either $(\star)$ or $(\star \star)$. The construction of $(X',H',\l',\mathcal{L}')$ from $(X,H,\l,\mathcal{L})$ is depicted in Figure \ref{fig:repalg}, and can be described in words as follows: 

\begin{step} Since by assumption $(X,H,\l,\mathcal{L})$ satisfies $(\star \star \star)$, the semistable locus for the induced action of $R_{\l} = R/ \l(\GG_m)$ on $X$ is empty. Thus there exists an unstable stratum $S_{\b}$ with $\b \neq \emptyset$ which is open in $X$, such that $X = \overline{S_{\b}}= \overline{R_{\l} Y_{\b}^{ss}}$. As observed in Section \ref{subsec:classicalredgit}, this means that $\overline{Y_{\beta}^{ss}}$ is irreducible and non-empty. Let $P_{\b}$ denote the parabolic subgroup of $R_{\l}$ determined by $\b$ and let $\l_{\b}: \GG_m \to R_{\l}$ denote the corresponding one-parameter subgroup. We then replace $(X,H,\l,\mathcal{L})$ by $(\overline{Y_{\b}^{ss}}, P_{\b},\l_{\b}, \mathcal{L} |_{\overline{Y_{\b}^{ss}}} )$ and proceed to \thref{step2}. 
\end{step} 

\begin{step} \thlabel{step2}
If $(\overline{Y_{\b}^{ss}}, P_{\b},\l_{\b}, \mathcal{L} |_{\overline{Y_{\b}^{ss}}} )$ satisfies ($\star$), then we set $(X',H',\l',\mathcal{L}') = (\overline{Y_{\b}^{ss}}, P_{\b},\l_{\b}, \mathcal{L} |_{\overline{Y_{\b}^{ss}}} )$ and the algorithm terminates here. 

If $(\overline{Y_{\b}^{ss}}, P_{\b},\l_{\b}, \mathcal{L} |_{\overline{Y_{\b}^{ss}}} )$ satisfies ($\star \star$), then we set $(X',H',\l',\mathcal{L}') = (\overline{Y_{\b}^{ss}}, P_{\b}/\l_{\b}(\GG_m),\l_{0}, \mathcal{L} |_{\overline{Y_{\b}^{ss}}} )$ and the algorithm also terminates here. 

If $(\overline{Y_{\b}^{ss}}, P_{\b},\l_{\b}, \mathcal{L} |_{\overline{Y_{\b}^{ss}}} )$ satisfies ($\star \star \star$), then for the induced action of $R_{\l}$ on $X$, there exists an unstable stratum $S_{\b^{(2)}}$ with $\b^{(2)} \neq \emptyset$ which is open in $\overline{Y_{\b}^{ss}}$, such that $\overline{Y_{\b}^{ss}} = \overline{S_{\b^{(2)}}}= \overline{R_{\l} Y_{\b^{(2)}}^{ss}}$  and $\overline{R_{\l} Y_{\b^{(2)}}^{ss}}$ is a non-empty and irreducible projective variety (see Section \ref{subsec:classicalredgit}). Let $L_{\b}$ denote a Levi subgroup of $P_{\b}$ and let $P_{\b^{(2)}}$ denote the parabolic subgroup of $L_{\b}/\l_{\b}(\GG_m)$ determined by $\b^{(2)}$. Moreover, let $\l_{\b^{(2)}}: \GG_m \to L_{\b} / \l_{\b}(\GG_m)$ denote the corresponding one-parameter subgroup. We then replace $(\overline{Y_{\b}^{ss}}, P_{\b},\l_{\b}, \mathcal{L} |_{\overline{Y_{\b}^{ss}}} )$ by $(\overline{Y_{\b^{(2)}}^{ss}}, P_{\b^{(2)}},\l_{\b^{(2)}}, \mathcal{L} |_{\overline{Y_{\b^{(2)}}^{ss}}} )$ and proceed to Step $3$.

\end{step}

\begin{step} Repeat Step $2$, except with the output $( \overline{Y_{\b^{(2)}}^{ss}}, P_{\b^{(2)}}, \l_{\b^{(2)}}, \mathcal{L} |_{ \overline{Y_{\b^{(2)}}^{ss}}})$ of the previous step as the starting $4$-tuple.  
\end{step} 

More generally, assuming that for each Step $i \leq k$ the starting $4$-tuple satisfies $(\star \star \star)$, then Step $k+1$ can be defined inductively. Let $(\overline{Y_{\b^{(i)}}^{ss}}, P_{\b^{(i)}}, \l_{\b^{(i)}}, \mathcal{L} |_{\overline{Y_{\b^{(i)}}^{ss}}})$ be the output of Step $i$ for $1 \leq i \leq k$ (for $i=1$ we let $\b^{(1)} = \b$).

\begin{stepk+1} Repeat Step $k$, replacing $(\overline{Y_{\b^{(k-1)}}^{ss}}, P_{\b^{(k-1)}}, \l_{\b^{(k-1)}}, \mathcal{L} |_{\overline{Y_{\b^{(k-1)}}^{ss}}})$ with $(\overline{Y_{\b^{(k)}}^{ss}}, P_{\b^{(k)}}, \l_{\b^{(k)}}, \mathcal{L} |_{\overline{Y_{\b^{(k)}}^{ss}}})$ as the starting $4$-tuple. If $(\star \star \star)$ holds, then $\l_{\b^{(k)}}(\GG_m)$ acts trivially on $\overline{Y_{\b^{(k)}}^{ss}}$ and the semistable locus for the induced action of $L_{\b^{(k)}}/\l_{\b^{(k)}}(\GG_m)$ on  $\overline{Y_{\b^{(k)}}^{ss}}$ is empty, where $L_{\b^{(k)}}$ denotes a Levi subgroup of $P_{\b^{(k)}}$. In this case, we let $\b^{(k+1)}$ denote the non-trivial character determining the unstable stratum $S_{\b^{(k+1)}}$ for the action of $L_{\b^{(k)}}/\l_{\b^{(k)}}(\GG_m)$ on $\overline{Y_{\b^{(k)}}^{ss}}$ such that $\overline{Y_{\b^{(k)}}^{ss}} = \overline{S_{\b^{(k+1)}}}$. Moreover, let $Y_{\b^{(k+1)}}^{ss}$ denote the locally closed subset of $\overline{Y_{\b^{(k)}}^{ss}}$ determined by this action such that $S_{\b^{(k+1)}} = L_{\b^{(k)}} / \l_{\b^{(k)}}(\GG_m)  Y_{\b^{(k+1)}}^{ss}$. Finally, let $\l_{\b^{(k+1)}}: \GG_m \to L_{\b^{(k)}} / \l_{\b^{(k)}}(\GG_m)$ and $P_{\b^{(k+1)}}$ denote the non-trivial one-parameter subgroup and parabolic subgroup respectively of $L_{\b^{(k)}} / \l_{\b^{(k)}}(\GG_m)$ determined by $\b^{(k+1)}$. With this notation, repeating Step $k$ then produces the $4$-tuple $(\overline{Y_{\b^{(k+1)}}^{ss}}, P_{\b^{(k+1)}}, \l_{\b^{(k+1)}}, \mathcal{L} |_{\overline{Y_{\b^{(k+1)}}^{ss}}} ).$ 

\end{stepk+1}

\end{repalg}

\begin{prop} The replacement algorithm terminates after a finite number of steps.
\end{prop} 

\begin{proof}

If we arrive at Step $i$ for $i \geq 1$, then the only way for the algorithm to continue on to Step $i+1$ is if the starting $4$-tuple $(\overline{Y_{\b^{(i-1)}}^{ss}}, P_{\b^{(i-1)}}, \l_{\b^{(i-1)}}, \mathcal{L} |_{\overline{Y_{\b^{(i-1)}}^{ss}}})$ satisfies $(\star \star \star)$. If so, then the $4$-tuple must be replaced with $(\overline{Y_{\b^{(i)}}^{ss}}, P_{\b^{(i)}}, \l_{\b^{(i)}}, \mathcal{L} |_{\overline{Y_{\b^{(i)}}^{ss}}})$. But we have that $\operatorname{dim} L_{\b^{(i)}} \leq \operatorname{dim} P_{\b^{(i)}} \leq \operatorname{dim}  L_{\b^{(i-1)}} / \l_{\b^{(i-1)}}(\GG_m) < \operatorname{dim} L_{\b^{(i-1)}},$ since $\l_{\b^{(i-1)}}(\GG_m)$ is a non-trivial one-parameter subgroup of $L_{\b^{(i-1)}}$. Hence at each step the dimension of the Levi subgroup appearing in the $4$-tuple strictly decreases. This implies that if at each step the starting $4$-tuple satisfies $(\star \star \star)$, we must eventually reach a Step $N$ with starting $4$-tuple $(\overline{Y_{\b^{(N-1)}}^{ss}}, P_{\b^{(N-1)}}, \l_{\b^{(N-1)}}, \mathcal{L} |_{\overline{Y_{\b^{(N)}}^{ss}}})$ such that $L_{\b^{(N-1)}} / \l_{\b^{(N-1)}}(\GG_m)$ is trivial. Hence the semistable locus for the induced action of $L_{\b^{(N-1)}} / \l_{\b^{(N-1)}}(\GG_m) = \{e\}$ on $\overline{Y_{\b^{(N-1)}}^{ss}}$ is all of $\overline{Y_{\b^{(N-1)}}^{ss}}$ and so $(\star \star)$ is automatically satisfied. 
\end{proof}

\subsection{The Projective completion algorithm} 

\label{subsec:bualg}
	
	This algorithm is illustrated in Figure \ref{fig:bualg} and described in words below.

\begin{figure}
    \centering

        \begin{tikzpicture}[scale=2,x=(0:1.5cm), y=(90:1cm),
    equal/.style={double distance=1pt}]
          
\node (c1) at (-3,7.3) {};
\node (c2) at (-1,7.3) {};
\node (c3) at (-3,-0.6) {};
\node (c4) at (-1,-0.6) {};

\draw (c1) rectangle (c4);

\node (b1) at (-0.2,7.3) {};
\node (b2) at (2.2, 7.3) {};
\node (b3) at (-0.2,-0.6) {};
\node (b4) at (2.2,-0.6) {};

\draw (b1) rectangle (b4);

\node (d1) at (-1.9,5.75) {};
\node (d2) at (-1.1,5.75) {};
\node (d3) at (-1.9,-0.35) {};
\node (d4) at (-1.1,-0.35) {};
\draw[loosely dotted] (d1) rectangle (d4);
          
\node (a1) at (0,-0.2) {  } ;
\node (a2) at (2,-0.2) { };
\node (a3) at (0,1.8) { };
\node (a4) at (2,1.8) { };

\node (a) at (1,0.8) {$(\star)$};

\node (e1) at (0,4.8) {};
\node (e2) at (2,4.8) {};
\node (e3) at (0,6.8) {};
\node (e4) at (2,6.8) {};

\node (e) at (1,5.8) {};

\node (f1) at (0,2.3) {};
\node (f2) at (2,2.3) {};
\node (f3) at (0,4.3) {};
\node (f4) at (2,4.3) {};

\node (f) at (1,3.4) {$(\star \star)$};

\node[rectangle,draw,minimum width = 2cm, minimum height = 0.7cm,fill=gray, fill opacity = 0.2, draw=black,loosely dotted, text opacity = 1] (r1) at (-2.5,0.4) {}; 

\node[rectangle,draw,minimum width = 2cm, minimum height = 0.7cm,fill=gray, fill opacity = 0.2, draw=black,loosely dotted, text opacity = 1] (r2) at (-2.5,5) {};

\node[rectangle,draw,minimum width = 2cm, minimum height = 0.7cm,fill=gray, fill opacity = 0.2, draw=black,loosely dotted, text opacity = 1] (r3) at (-2.5,6.8) {$(\star \star \star)$};

\node[rectangle, fill = red, draw=black,loosely dotted, fill opacity=0.2, text opacity = 1,minimum width = 2cm, minimum height = 0.7cm]  (r1r) at (-1.5,0.9) {$(\star)$};

\node[rectangle, fill = orange, draw=black,loosely dotted, fill opacity=0.2, text opacity = 1,minimum width = 2cm, minimum height = 0.7cm]  (r1o) at (-1.5,-0.1) {$(\star \star)$};

\node[rectangle, fill = orange, draw=black,loosely dotted, fill opacity=0.2, text opacity = 1,minimum width = 2cm, minimum height = 0.7cm]  (r2o) at (-1.5,4.5) {$(\star \star)$};

\node[rectangle, fill = red, draw=black,loosely dotted, fill opacity=0.2, text opacity = 1,minimum width = 2cm, minimum height = 0.7cm]  (r2r) at (-1.5,5.5) {$(\star)$};

\node[rectangle,draw,minimum width = 2cm, minimum height = 0.7cm,fill=gray, fill opacity = 0.2, draw=black,loosely dotted, text opacity = 1] (r4) at (-2.5,3.2) {};

\node[rectangle, fill = red, draw=black,loosely dotted, fill opacity=0.2, text opacity = 1,minimum width = 2cm, minimum height = 0.7cm]  (r4r) at (-1.5,3.7) {$(\star)$};

\node[rectangle, fill = orange, draw=black,loosely dotted, fill opacity=0.2, text opacity = 1,minimum width = 2cm, minimum height = 0.7cm]  (r4o) at (-1.5,2.7) {$(\star \star)$};

\node (1) at (-2.5,2.3) {};
\node (2) at (-2.5,1.3) {};

\node (alg2) at (-0.2,3.5) {};
\node (alg1) at (-1.1,3.5) {};

\draw[->, line width = 0.3mm] (alg1) -- (alg2);

\draw [->] (r2) -- (r4);
\draw [->] (r4.east) -- (r4r.west);
\draw [->] (r4.east) -- (r4o.west);
\draw [->] (r1.east) -- (r1r.west);
\draw [->] (r1.east) -- (r1o.west);
\draw (r4) -- (1);
\draw [dashed] (1) -- (2);
\draw [->] (2) -- (r1);

\draw [->]
(r3) -- (r2);

\draw [->] (r2.east) -- (r2r.west);
\draw [->] (r2.east) -- (r2o.west);

%Blow-up algorithm
    
\filldraw  [fill = red, draw=black,loosely dotted, opacity=0.2] (a1) rectangle (a4);

\filldraw  [fill = yellow, draw=black,loosely dotted, opacity=0.2] (e1) rectangle (e4);

\filldraw  [fill = orange, draw=black,loosely dotted, opacity=0.2] (f1) rectangle (f4);

\draw [->,line width=0.3mm] (a) to  node[draw,circle,right, xshift=1mm] {1} (f) ;

\draw [->,line width=0.3mm] (f) -- (e) node[draw,circle,right,pos=0.5,xshift=1mm] {1'};

\draw [->,line width=0.3mm] (a) to [bend left = 32] node[draw,circle,below,xshift=23mm,yshift=-22mm] {2}  (r3)  ;

\draw [->,line width=0.3mm] (f) to [bend right = 30]  node[draw,circle,right, above,xshift = -8mm,yshift=6mm] {2'} (r3);

		\end{tikzpicture}
        \caption{Projective completion algorithm for the action of an internally graded linear algebraic group on a projective variety. The left-hand side corresponds to the replacement algorithm, described in Figure \ref{fig:repalg}. The sequence of blow-ups corresponding to each of the four paths $1$, $2$, $1'$ and $2'$ are described in Figure \ref{fig:bups}. Given a $4$-tuple $(X,H,\l,\mathcal{L})$, the algorithm starts in the grey square in the top left or in the red or orange squares on the right-hand side, depending on which of the three conditions $(\star)$, $(\star \star)$ and $(\star \star \star)$ is satisfied for the $4$-tuple. If at any point in the algorithm we find ourselves on the grey square in the top left (either at the start or after taking paths $2$ or $2'$), then we run the replacement algorithm. The output determines whether we proceed to the red or to the orange rectangle on the right-hand side: if the output is red we move to the red rectangle, if it is orange we move to the orange rectangle. Figure \ref{fig:proof} depicts the different paths which can be taken within the algorithm and illustrates why the algorithm must terminate in the yellow rectangle after a finite number of steps (\thref{prop:bualgterm}).}  \label{fig:bualg} 
	\end{figure}

    \begin{figure} 
\begin{subfigure}[t]{1\hsize}
        \centering
      \begin{tikzpicture}[scale=0.97,x=(60:6.6cm),y=(0:5cm), z=(90:1.2cm),
    cross line/.style={preaction={draw=white,-,line width=3pt}},
    equal/.style={double distance=1pt}]

\node (c2) at (-1,1.5,2.5) { };
\node (c3) at (-1,-1.45,2.5) { };

\node (c7) at (-1,1.5,-2.5) { } ;
\node (c8) at (-1,-1.45,-2.5) { } ;
\node (c9) at (0,-1.45,2.5) {};
\node (c11) at (0,1.5,2.5) {};
\node (c10) at (0,-1.45,-2.5) {};
\node (c12) at (0,1.5,-2.5) { };
      
\node (1) at (-1,-1,2) {\( X \)};
\node (2') at (-1,-1,-2) {\( S_0(X,H) \)};
\node (3') at (-1,1,-2) {\( S_0(X,H)/ H \)};
\node (8) at (0,-1,2) {\( \hX \)};
\node (9) at (0,0,2) {\( \hX \gitq \hU \)};
\node (11) at (0,-1,1) {\(  \hX^{s, \hU}_{\min +} \)};
\node(11') at (0,-1,0) {\( \hX^{ss,H} \)};
\node (12') at (0,-1,-1) {\( q_{\hU}^{-1} ( S_0(\hX \gitq \hU, R_{\l})  )  \)};
\node (13') at (0,-1,-2) {\(  q_{\hU}^{-1} ( S_0(\hX \gitq \hU, R_{\l})  )  \setminus \widehat{E} \)};
\node (12) at (0,0,1) {\( \hX^{s, \hU}_{\min +} / \hU \)};
\node (13) at (0,0,0) {\( \left(\hX \gitq \hU \right)^{ss, R_{\l}} \)};
\node (14) at (0,0,-1) {\(S_0(\hX \gitq \hU, R_{\l}) \)};
\node (15) at (0,1,-1) {\(S_0(\hX \gitq \hU, R_{\l}) / R_{\l} \)};
\node (15') at (0,1,-2) {\( q_{\hU}^{-1} ( S_0(\hX \gitq \hU, R_{\l}) )  \setminus \widehat{E}  / H \)};

\filldraw  [fill = red, draw=black,loosely dotted, opacity=0.2] (c3) rectangle (c7);
\filldraw  [fill = orange, draw=black,loosely dotted, opacity=0.2] (c10) rectangle (c11);

\draw  [black,loosely dotted] (c10) rectangle (c11);
\draw  [black,loosely dotted] (c3) rectangle (c7);

\draw [black,loosely dotted] (c10) -- (c8);
\draw [black,loosely dotted] (c9) -- (c3);
\draw [black,loosely dotted] (c11) -- (c2);
\draw [black,loosely dotted] (c12) -- (c7);

\draw [loosely dotted, black] (c11) -- (c12);
\draw [-> ] (2') -- (3');
\draw [right hook ->] (2') -- (1);
\draw [right hook ->](11) -- (8);
\draw [equal](12) -- (9);
\draw [right hook ->] (12') -- (11');
\draw [right hook ->] (13') -- (12');
\draw [right hook ->] (11') -- (11);
\draw [->] (13') -- (15');
\draw [right hook ->](14) -- (13);
\draw [right hook ->] (15') -- (15);

\draw [->] (13') -- (2') node [pos=0.2,right] {$\cong$};

\draw [->] (8) -- (1) node[midway,left] {$\widehat{\pi}$} ;

\draw [->] (15') -- (3') node[pos=0.2,left] {$\widehat{\pi}_{H}$} node[pos=0.2,right] {$\cong$};

\draw [dashed,->](8) -- (9);

\draw[->](11) -- (12) node[midway,above]{$q_{\hU}$};

\draw [->](14) -- (15);

\draw [right hook-> ](13) -- (12);
\draw [->] (11') -- (13);
\draw [->] (12') -- (14);

		\end{tikzpicture}
        \caption{Blow-up corresponding to Path $1$ in Figure \ref{fig:bualg} (when $\hX \neq \overline{U \widehat{Z}_{\min}}$)} \label{fig:ro}
    \end{subfigure}

 \begin{subfigure}[t]{1\hsize}
        \centering
      \begin{tikzpicture}[scale=0.97,x=(60:5.2cm),y=(0:5cm), z=(90:1.2cm),
    cross line/.style={preaction={draw=white,-,line width=3pt}},
    equal/.style={double distance=1pt}]

\node (c2) at (-1,1.5,1.5) { };
\node (c3) at (-1,-1.45,1.5) { };
\node (c7) at (-1,1.5,-2.5) { } ;
\node (c8) at (-1,-1.45,-2.5) { } ;
\node (c9) at (0,-1.45,1.5) {};
\node (c11) at (0,1.5,1.5) {};
\node (c10) at (0,-1.45,-2.5) {};
\node (c12) at (0,1.5,-2.5) { };
      
\node (1) at (-1,-1,1) {\( X \)};
\node (2') at (-1,-1,-2) {\( S_0(X,H) \)};
\node (3') at (-1,1,-2) {\( S_0(X,H) / H \)};
\node (8) at (0,-1,1) {\( \hX \)};
\node (9) at (0,0,1) {\( \hX \gitq \hU  = \overline{\widehat{S}_{\b}} =  \overline{R_{\l} \widehat{Y}_{\b}^{ss} }  \)};
\node (11) at (0,-1,0) {\(  \hX^{s, \hU}_{\min +} \)};
\node (12') at (0,-1,-1) {\( q_{\hU}^{-1}(S_0(\hX \gitq \hU, R_{\l}) ) \)};
\node (13') at (0,-1,-2) {\( q_{\hU}^{-1}(S_0(\hX \gitq \hU, R_{\l}) ) \setminus \widehat{E} \)};
\node (12) at (0,0,0) {\( \hX^{s, \hU}_{\min +} / \hU \)};
\node (14) at (0,0,-1) {\( S_0(\hX \gitq \hU,R_{\l})    \)};
\node (15) at (0,1,-1) {\( S_0(\hX \gitq \hU, R_{\l}) / R_{\l} \)};
\node (15') at (0,1,-2) {\( \left( q_{\hU}^{-1}(S_0(\hX \gitq \hU, R_{\l}) ) \setminus \widehat{E}  \right) / H \)};

\filldraw  [fill = red, draw=black,loosely dotted, opacity=0.2] (c3) rectangle (c7);
\filldraw  [fill = gray, draw=black,loosely dotted, opacity=0.2] (c10) rectangle (c11);

\draw  [black,loosely dotted] (c10) rectangle (c11);
\draw  [black,loosely dotted] (c3) rectangle (c7);

\draw [black,loosely dotted] (c10) -- (c8);
\draw [black,loosely dotted] (c9) -- (c3);
\draw [black,loosely dotted] (c11) -- (c2);
\draw [black,loosely dotted] (c12) -- (c7);

\draw [loosely dotted, black] (c11) -- (c12);

\draw [right hook ->] (14) -- (12);

\draw [-> ] (2') -- (3');
\draw [right hook ->] (2') -- (1);
\draw [right hook ->](11) -- (8);
\draw [equal](12) -- (9);
\draw [right hook ->] (13') -- (12');
\draw [->] (13') -- (15');
\draw [right hook ->] (15') -- (15);
\draw [right hook ->] (12') -- (11);

\draw [->] (13') -- (2') node [pos=0.3,right] {$\cong$};
\draw [->] (8) -- (1) node[pos=0.3,left] {$\widehat{\pi}$} ;

\draw [->] (15') -- (3') node[pos=0.4,left] {$\widehat{\pi}_{H}$} node[pos=0.4,right] {$\cong$};

\draw [dashed,->](8) -- (9);
\draw[->](11) -- (12);
\draw [->](14) -- (15);

\draw [->] (12') -- (14);

		\end{tikzpicture}
        \caption{Blow-up corresponding to Path $2$ in Figure \ref{fig:bualg}} \label{fig:rg} 
    \end{subfigure}
    
    \end{figure} 
    
    \begin{figure} \ContinuedFloat
      
  \begin{subfigure}[t]{1\hsize}
        \centering
         	\begin{tikzpicture}[scale=1,x=(15:6.5cm),y=(0:4cm), z=(90:1.8cm),
    cross line/.style={preaction={draw=white,-,line width=3pt}},
    equal/.style={double distance=1pt}]
    
    % Coordinates are defined with the format x=(angle:distance from
%origin).

\node (c1) at (1,-1.4,-0.25) {  } ;
\node (c2) at (1,-1.4,1.65) { };
\node (c3) at (1,0.45,-0.25) { };
\node (c4) at (1,0.45,1.65) { };
\node (c5) at (0,-1.4,-0.25) { } ;
\node (c6) at (0,-1.4,1.65) { } ;
\node (c7) at (0,0.45,-0.25) { } ;
\node (c8) at (0,0.45,1.65) { } ;

\node (1) at (0,-1,1.4) {\( X \)};
\node (1') at (0,0,1.4) {\( X \gitq R_{\l} \)};
\node (2) at (0,-1,0.7) {\( X^{ss,R_{\l}} \)};
\node (2') at (0,-1,0) {\( S_0(X,H)   \)};
\node (3) at (0,0,0) {\( S_0(X,H) / H \)};
\node (16) at (1,0,1.4) {\(   \tX \gitq R_{\l}  \)};
\node(19) at (1,-1,1.4) {\( \tX \)};
\node(19') at (1,0,0.7) {\( \tX^{s,R_{\l}} / R_{\l} \)};
\node(20) at (1,-1,0.7) {\(  \tX^{ss,R_{\l}} = \tX^{s,R_{\l}}  \)};
\node(21) at (1,-1,0) {\( \tX^{ss,R_{\l}} \setminus \widetilde{E} \)}; 
\node(22) at (1,0,0) {\( \left(\tX^{ss,R_{\l}} \setminus \widetilde{E} \right) / R_{\l} \)}; 
     
\filldraw  [fill = orange, draw=black,loosely dotted, opacity=0.2] (c5) rectangle (c8);
\filldraw  [fill = yellow, draw=black,loosely dotted, opacity=0.2] (c1) rectangle (c4);
  
\draw [black,loosely dotted] (c1) rectangle (c4);
\draw  [black,loosely dotted] (c5) rectangle (c8);
\draw [black, loosely dotted] (c3) -- (c7);
\draw [black, loosely dotted] (c4) -- (c8);
\draw [black, loosely dotted] (c1) -- (c5); 
\draw [black, loosely dotted] (c2) -- (c6);
     
\draw [->, dashed] (1) -- (1');
\draw [right hook ->](2) -- (1);
\draw [->] (2) -- (1');
\draw [right hook ->] (2') -- (2);
\draw [ ->](2') -- (3);
\draw [->] (21) -- (2') node[pos=0.5,below] {$\cong$}; 
\draw [right hook ->] (21) -- (20);
\draw [->] (21) -- (22) ;
\draw [->] (22) -- (3) node[pos=0.6,below] {$\cong$} ;
\draw [right hook -> ] (22) -- (19');
\draw [dashed, ->] (19) -- (16);
\draw[->] (20) -- (19');

\draw [equal] (19') -- (16);
\draw [right hook ->] (20) -- (19);
\draw [->] (19) -- (1) node[pos=0.4,above] {$\widetilde{\pi}$} ;
\draw [->, cross line] (16) -- (1') node[pos=0.4,above] {$\widetilde{\pi}_G$} ;    
\draw [ForestGreen,line width=0.5mm, right hook ->, cross line] (3) -- (16);
\draw [ForestGreen, line width=0.5mm,right hook ->, cross line ] (3) -- (1');
    
		\end{tikzpicture}
        \caption{Blow-up corresponding to Path $1'$ in Figure \ref{fig:bualg} (when $\tX \neq \overline{U \widetilde{Z}_{\min}}$)} \label{fig:oy}
    \end{subfigure}
    
      \begin{subfigure}{1\hsize}
        \centering
         	\begin{tikzpicture}[scale=1,x=(15:6.5cm),y=(0:4cm), z=(90:1.8cm),
    cross line/.style={preaction={draw=white,-,line width=3pt}},
    equal/.style={double distance=1pt}]
    
    % Coordinates are defined with the format x=(angle:distance from
%origin).

\node (c1) at (1,-1.4,-0.25) {  } ;
\node (c2) at (1,-1.4,1.65) { };
\node (c3) at (1,0.45,-0.25) { };
\node (c4) at (1,0.45,1.65) { };
\node (c5) at (0,-1.4,-0.25) { } ;
\node (c6) at (0,-1.4,1.65) { } ;
\node (c7) at (0,0.45,-0.25) { } ;
\node (c8) at (0,0.45,1.65) { } ;

\node (1) at (0,-1,1.4) {\( X \)};
%\node (1') at (0,0,1.4) {\( X \gitq R_{\l} \)};
%\node (2) at (0,-1,0.7) {\( X^{ss,R_{\l}} \)};
\node (2') at (0,-1,0) {\( S_0(X,H)   \)};
\node (3) at (0,0,0) {\( S_0(X,H) / H \)};
%\node (16) at (1,0,1.4) {\(   \tX \gitq R_{\l}  \)};
\node(19) at (1,-1,1.4) {\( \tX = \overline{\widetilde{S}_{\beta}}  \)};
\node(19') at (1,0,0.7) {\( S_0(\tX, R_{\l}) / R_{\l} \)};
\node(20) at (1,-1,0.7) {\(  S_0(\tX, R_{\l})  \)};
\node(21) at (1,-1,0) {\( S_0(\tX, R_{\l}) \setminus \widetilde{E} \)}; 
\node(22) at (1,0,0) {\( \left(S_0(\tX, R_{\l}) \setminus \widetilde{E} \right) / R_{\l} \)}; 
     
\filldraw  [fill = orange, draw=black,loosely dotted, opacity=0.2] (c5) rectangle (c8);
\filldraw  [fill = gray, draw=black,loosely dotted, opacity=0.2] (c1) rectangle (c4);
  
\draw [black,loosely dotted] (c1) rectangle (c4);
\draw  [black,loosely dotted] (c5) rectangle (c8);
\draw [black, loosely dotted] (c3) -- (c7);
\draw [black, loosely dotted] (c4) -- (c8);
\draw [black, loosely dotted] (c1) -- (c5); 
\draw [black, loosely dotted] (c2) -- (c6);
     
%\draw [->, dashed] (1) -- (1');
%\draw [right hook ->](2) -- (1);
%\draw [->] (2) -- (1);
\draw [right hook ->] (2') -- (1);
\draw [ ->](2') -- (3);
\draw [->] (21) -- (2') node[pos=0.5,below] {$\cong$}; 
\draw [right hook ->] (21) -- (20);
\draw [->] (21) -- (22) ;
\draw [->] (22) -- (3) node[pos=0.6,below] {$\cong$} ;
\draw [right hook -> ] (22) -- (19');
%\draw [dashed, ->] (19) -- (16);
\draw[->] (20) -- (19');
%\draw[->] (20) -- (2) ;
%\draw [equal] (19') -- (16);
\draw [right hook ->] (20) -- (19);
\draw [->] (19) -- (1) node[pos=0.4,above] {$\widetilde{\pi}$} ;
%\draw [->, cross line] (16) -- (1') node[pos=0.4,above] {$\widetilde{\pi}_G$} ;    
%\draw [ForestGreen,line width=0.5mm, right hook ->, cross line] (3) -- (16);
%\draw [ForestGreen, line width=0.5mm,right hook ->, cross line ] (3) -- (1');
    
		\end{tikzpicture}
        \caption{Blow-up corresponding to Path $2'$ in Figure \ref{fig:bualg} (when $\tX \neq \overline{U \widetilde{Z}_{\min}}$)} \label{fig:og}
    \end{subfigure}

 \caption{Blow-ups from the Projective completion algorithm} \label{fig:bups}
\end{figure}

\begin{pcalg}  \thlabel{blowupalg} 

Given a $4$-tuple $(X,H,\l,\mathcal{L})$ encoding the data consisting of the linearised action of a non-reductive group $H$ with internally graded unipotent radical $U$ and Levi subgroup $R$ on an irreducible projective variety $X$, this algorithm produces: \begin{enumerate}[label = \roman*)]
\item a non-empty open subset $S_0(X,H,\lambda,\mathcal{L}) \subseteq X$ such that $S_0(X,H,\lambda,\mathcal{L}) \to S_0(X,H,\lambda,\mathcal{L}) / H$ is a geometric for the action of $H$ on $S_0(X,H,\lambda,\mathcal{L})$; and
\item a projective variety $X'$ with an action by an internally graded non-reductive group $H' = U' \rtimes R'$ satisfying (ss=s($U'$)) and such that $X' \gitq H' = X'^{(M)s,H}_{\operatorname{min+}} / H'$ is a projective geometric quotient for the action of $H'$ on $X$ and a projective completion of $S_0(X,H,\lambda,\mathcal{L})/H$. This projective completion is denoted $\PC(S_0(X,H,\lambda,\mathcal{L})/H)$. 
\end{enumerate}

For simplicity, we denote the open subset $S_0(X,H,\lambda,\mathcal{L})$ by $S_0(X,H)$, leaving implicit the dependence on the linearisation and choice of grading one-parameter subgroup.

\begin{rk} \thlabel{rkredtonr}
In the familiar case where a reductive group $G$ (in this case the grading one-parameter subgroup is the trivial one) acts linearly on a projective variety $X$ with non-empty stable locus, the algorithm produces the GIT-stable locus $X^s$ as the $G$-invariant open subset $S_0(X,G)$, and the GIT quotient $X \gitq G$ as the projective completion. 

\end{rk} 

\begin{case} \thlabel{case1} Suppose that $(X,H,\l, \mathcal{L})$ satisfies $(\star)$. We start in the red square of the right-hand rectangle in Figure \ref{fig:bualg}. According to the diagram two paths can be taken, $1$ or $2$. Both correspond to performing the sequence of $\hU$-blow-ups described in Section \ref{subsec:gradedunip}. If $\hxminshu \neq \emptyset$, then we obtain a non-empty projective variety $\hX \gitq \hU$ which is a geometric quotient for the action of $\hU$ on $\hxminshu  \subseteq \hX$. If $\hxminshu = \emptyset$, then we consider instead the projective variety $\widehat{Z}_{\min}$ which is a geometric quotient for the action of $\hU$ on $U \widehat{Z}_{\min}$. In either case, the non-empty projective variety admits an induced action of $R_{\l} = R/\l(\GG_m)$, and the semistable locus for this linearised action determines the path to be taken, as described by the following subcases.  For simplicity, we describe these subcases under the assumption that $\hxminshu \neq \emptyset$, but if this is not the case then it suffices to replace all occurrences of $\hxminshu$ and $\hX \gitq \hU$ with $U \widehat{Z}_{\min}$ and $\widehat{Z}_{\min}$ respectively.

\begin{subcase} \thlabel{case11} If $(\hX \gitq \hU)^{ss, R_{\l}} \neq \emptyset$, then path $1$ must be taken, as illustrated in Figure \ref{fig:ro}. By assumption, the $4$-tuple $(\hX \gitq \hU,R_{\l}, \l_0, \widehat{\mathcal{L}})$ then satisfies $(\star \star)$, where $\l_0$ denotes the trivial one-parameter subgroup of $R_{\l}$. We then proceed to \thref{case2} applied to the $4$-tuple $(\hX \gitq \hU,R_{\l}, \l_0, \widehat{\mathcal{L}})$ in order to construct the open subset $S_0(\hX \gitq \hU, R_{\l}) \subseteq \hX \gitq \hU$ and the projective completion $\PC(S_0(\hX \gitq \hU, R_{\l})  / R_{\l})$. Having done so, we set \begin{equation}
S_0(X,H) := \widehat{\pi} \left(  q_{\hU}^{-1} ( S_0(\hX \gitq \hU, R_{\l}) )  \setminus \widehat{E} \right) 
\end{equation} 
where $\widehat{E}$ is the exceptional divisor corresponding to the sequence of $\hU$-blow-ups $\widehat{\pi}: \hX \to X$. Moreover, we define $$\PC \left( S_0(X,H) / H \right) = \PC ( S_0(\hX \gitq \hU, R_{\l}) / R_{\l} ).$$

\end{subcase} 

\begin{subcase} \thlabel{case12}
If $(\hX \gitq \hU)^{ss, R_{\l}} = \emptyset$, then path $2$ must be taken, as illustrated in Figure \ref{fig:rg}. From the assumption determining this subcase, the semistable locus for the action of $R_{\l}$ on $\hX \gitq \hU$ is empty. Thus the $4$-tuple $(\hX \gitq \hU,R_{\l}, \l_0, \widehat{\mathcal{L}})$ satisfies $(\star \star \star)$. We then proceed to \thref{case3} applied to the $4$-tuple $(\hX \gitq \hU,R_{\l}, \l_0, \widehat{\mathcal{L}})$ in order to construct the open set $S_0(\hX \gitq \hU, R_{\l}) \subseteq \hX \gitq \hU$ and the variety $\PC(S_0(\hX \gitq \hU, R_{\l}))$. Having done so, as in \thref{case11} we set \begin{equation}
S_0(X,H) := \widehat{\pi} \left(  q_{\hU}^{-1} ( S_0(\hX \gitq \hU, R_{\l}) )  \setminus \widehat{E} \right)
\end{equation} and define $$\PC \left(S_0(X,H) / H \right) := \PC (S_0(\hX \gitq \hU, R_{\l}) / R_{\l}).$$

\end{subcase}

\end{case}

\begin{case}  \thlabel{case2} Suppose that $(X,H,\l, \mathcal{L})$ satisfies $(\star \star)$. We start in the orange square of the right-hand rectangle in Figure \ref{fig:bualg}. According to the diagram two paths can be taken, $1'$ or $2'$. Both correspond to performing the sequence of reductive blow-ups described in Section \ref{subsec:classicalredgit}. Since $X^{ss,R_{\l}}$ is non-empty, we can perform a reductive blow-up of $X$ to obtain a projective variety $\tX$ acted upon linearly by $R_{\l}$ such that semistability coincides with stability (or Mumford-stability if there are no properly stable points) for the action of $R_{\l}$ on $\tX$. The semistable locus for this linearised action determines the path to be taken, as described in the following subcases. 
 
\begin{subcase} \thlabel{case21}
If $\tX^{ss,R_{\l}} \neq \emptyset$, then path $1'$ must be taken, as illustrated in Figure \ref{fig:oy}. Taking this path corresponds to performing the $R_{\l}$-blow-up of $X$ to obtain a projective variety $\tX$ for which $\tX^{ss,R_{\l}} = \tX^{(M)s,R_{\l}} \neq \emptyset$. We then define \begin{equation} 
S_0(X,H) : =  \widetilde{\pi} \left( \tX^{ss,R_{\l}} \setminus \widetilde{E} \right),
\end{equation}  
where $\widetilde{E}$ is the exceptional divisor corresponding to the blow-up $\widetilde{\pi}: \tX \to X$, and $$\PC \left( S_0(X,H)/H \right): = \tX \gitq R_{\l}.$$ By assumption we have that $\tX \gitq R_{\l} = \tX^{ss,R_{\l}} / R_{\l}$ is a projective geometric quotient for the action of $R_{\l}$ on $\tX^{ss,R_{\l}}$. The algorithm terminates here. 
\end{subcase}

\begin{subcase} \thlabel{case22}
If $\tX^{ss,R_{\l}} = \emptyset$, then path $2'$ must be taken, as illustrated in Figure \ref{fig:og}. Taking this path corresponds to performing the $R_{\l}$-blow-up of $X$ to obtain a projective variety $\tX$ for which $\tX^{ss,R_{\l}} = \emptyset$. Thus the $4$-tuple $(\tX, R_{\l}, \l_0, \widetilde{\mathcal{L}})$ satisfies $(\star \star \star)$. We then proceed to \thref{case3} applied to the $4$-tuple $(\tX, R_{\l}, \lambda_0, \widetilde{\mathcal{L}})$ in order to construct the open set $S_0(\tX,R_{\l}) \subseteq \tX$ and the variety $\PC(S_0(\tX,R_{\l})/R_{\l})$. Having done so, we set \begin{equation*}
S_0(X,H): = \widetilde{\pi} \left( \tX^{\widehat{s}, R_{\l}} \setminus \widetilde{E} \right) 
\end{equation*} 
where $\widetilde{E}$ is the exceptional divisor corresponding to the blow-up $\widetilde{\pi}: \tX \to X$, and $$\PC \left( S_0(X,H)/H \right) := \PC ( S_0(\tX,R_{\l}) / R_{\l} ).$$ 
\end{subcase} 

\end{case} 

\begin{case} \thlabel{case3}
Suppose that $(X,H,\l,\mathcal{L})$ satisfies $(\star \star \star)$. Then we can apply the replacement algorithm to this $4$-tuple to obtain another $4$-tuple $(X',H',\l',\mathcal{L}')$ satisfying $(\star)$ or $(\star \star)$. 

\begin{subcase} \thlabel{case31} If $(\star)$ is satisfied, then $(X',H',\l',\mathcal{L}')$ must be of the form $(\overline{Y_{\b^{(k)}}^{ss}}, P_{\b^{(k)}}, \l_{\b^{(k)}}, \mathcal{L} |_{\overline{Y_{\b^{(k)}}^{ss}}})$ for some $k \geq 1$ (see Figure \ref{fig:repalg}). We then proceed to \thref{case1} applied to the $4$-tuple $(X',H',\l',\mathcal{L}')$ in order to construct the open set $S_0(X',H') = S_0(\overline{Y_{\b^{(k)}}^{ss}}, P_{\b^{(k)}}) \subseteq \overline{Y_{\b^{(k)}}^{ss}}$ and the variety $\PC(S_0(X',H')/H')$. Having done so, we set \begin{equation*} 
S_0(X,H) : = R \left( L_{\b} \left(  \cdots \left( L_{\b^{(k-1)}} \left( S_0(\overline{Y_{\b^{(k)}}^{ss}}, P_{\b^{(k)}}) \cap Y_{\b^{(k)}}^{ss} \right) \cdots  \right) Y_{\b^{(2)}}^{ss} \right) \cap  Y_{\b^{ss}} \right).
\end{equation*} Moreover, we define $$\PC \left( S_0(X,H)/H \right) = \PC ( S_0(\overline{Y_{\b^{(k)}}^{ss}}, P_{\b^{(k)}})/ P_{\beta^{(k)}} ).$$

\end{subcase}

\begin{subcase} \thlabel{case32}

If $(X',H',\l', \mathcal{L}')$ satisfies $(\star \star)$ instead, then it is of the form $(\overline{Y_{\b^{(k)}}^{ss}}, P_{\b^{(k)}}/ \l_{\b^{(k)}}(\GG_m), \l_0, \mathcal{L} |_{\overline{Y_{\b^{(k)}}^{ss}}})$ for some $k \geq 1$. We then proceed to \thref{case1} applied to the $4$-tuple $(X',H',\l',\mathcal{L}')$ in order to construct the open set $S_0(X',H') = S_0(\overline{Y_{\b^{(k)}}^{ss}}, P_{\b^{(k)}}/\l_{\b^{(k)}}(\GG_m)) \subseteq \overline{Y_{\b^{(k)}}^{ss}}$ and the variety $\PC(S_0(X',H')/H')$. Having done so, we set \begin{equation*} 
S_0(X,H)  = R\left( L_{\b} \left(  \cdots \left( L_{\b^{(k-1)}} \left( S_0(\overline{Y_{\b^{(k)}}^{ss}}, P_{\b^{(k)}}/\l_{\b^{(k)}}(\GG_m)) \cap Y_{\b^{(k)}}^{ss} \right) \cdots  \right) Y_{\b^{(2)}}^{ss} \right) \cap  Y_{\b^{ss}} \right)
\end{equation*} and define $$\PC \left( S_0(X,H) / H \right): = \PC ( S_0(\overline{Y_{\b^{(k)}}^{ss}}, P_{\b^{(k)}}/\l_{\b^{(k)}}(\GG_m)) /  \left( P_{\beta^{(k)}}/\l_{\beta^{(k)}}(\GG_m) \right) ).$$

\end{subcase} 

\end{case}

\end{pcalg} 

\begin{prop} \thlabel{prop:bualgterm}
The projective completion algorithm terminates after a finite number of steps. 
\end{prop}

\begin{proof}

\begin{figure}
	            \centering
 \begin{tikzpicture}[scale=1.2,x=(0:1.6cm), y=(90:1cm)]
          
         \node[rectangle, draw = black,draw opacity = 1, fill = red, fill opacity =0.2, text opacity = 1] (c1) at (0,0) {Case $1$};
         \node[rectangle, draw = black,draw opacity = 1, fill = gray, fill opacity =0.2, text opacity = 1] (c2) at (-2,1) {Case $3$};
         \node[rectangle, draw = black,draw opacity = 1, fill = orange, fill opacity =0.2, text opacity = 1] (c3) at (0,2) {Case $2$};
         \node[rectangle, draw = black,draw opacity = 1, fill = yellow, fill opacity =0.2, text opacity = 1] (stop) at (0,4) {Stop};

 \draw [->] (c3) to node[rectangle,draw,rounded corners,above,yshift=1mm,sloped] {Case $2.1$} (stop) ;
 
 \draw [->,bend right = 20,color=blue] (c2) to node[rectangle,draw,rounded corners,below,sloped,yshift=-1mm] {Case $3.1$} (c1);
 
  \draw [->,bend right = 20,color=green] (c1) to node[rectangle,draw,rounded corners,below,yshift=-1mm,sloped] {Case $1.2$} (c2);
 
 \draw [->,color=green] (c1) to node[rectangle,draw,rounded corners,right,sloped,above,yshift=1mm] {Case $1.1$} (c3);
 
  \draw [->, bend right = 20,color=green] (c2) to node[rectangle,draw,rounded corners,above,sloped,yshift=1mm] {Case $3.2$} (c3);
 
   \draw [->, bend right = 20,color=blue] (c3) to node[rectangle,draw,rounded corners,above,yshift=1mm,sloped] {Case $2.2$} (c2);

\end{tikzpicture}

\caption{Proof that the projective completion algorithm must terminate. 
A green (respectively blue) arrow indicates that the dimension of the group decreases (respectively decreases or remains the same) upon passing from one case to the other. Since each loop contains at least one green arrow, when going through a loop the dimension of the group must always decrease.
} \label{fig:proof} 
\end{figure}

Suppose that the initial $4$-tuple is $(X,H,\l,\mathcal{L})$. By construction, the projective completion algorithm can only terminate by reaching \thref{case21}. 

If $(\star)$ is satisfied for $(X,H,\l,\mathcal{L})$, then we start in \thref{case1}, according to which $(X,H,\l,\mathcal{L})$ is replaced by $(\hX \gitq \hU, R_{\l}, \l_0, \widehat{\mathcal{L}})$ (or by $(\widehat{Z}_{\min}, R_{\l}, \l_0, \widehat{\mathcal{L}}|_{\zmin})$ if $\hxminshu = \emptyset$). Since $(\star)$ holds for $(X,H,\l,\mathcal{L})$, the one-parameter subgroup $\l(\GG_m)$ acts non-trivially on $X$ and hence is itself non-trivial. Thus we have that $\operatorname{dim} R_{\l} = \operatorname{dim} R/\l(\GG_m) < \operatorname{dim} R \leq \operatorname{dim} H$, and so whether we move to \thref{case2} (if \thref{case11} holds) or \thref{case3} (if \thref{case12} holds), the new starting $4$-tuple involves a group of dimension strictly smaller than the dimension of $H$. 

If $(\star \star)$ is satisfied for $(X,H,\l,\mathcal{L})$, then we start in \thref{case2}. If \thref{case21} holds, then the algorithm terminates at this step. If \thref{case22} holds, then according to the algorithm we must proceed to \thref{case3}, using the $4$-tuple $(\tX, R_{\l}, \l_0, \widetilde{\mathcal{L}})$. Since $\operatorname{dim} R_{\l} = \operatorname{dim} R / \l(\GG_m)  \leq \operatorname{dim} R \leq \operatorname{dim } H$ where equality may hold as $\l(\GG_m)$ acts trivially on $X$, we are applying \thref{case3} to a $4$-tuple for which the dimension of the group is smaller than or equal to the dimension of $H$. 

If $(\star \star \star)$ is satisfied for $(X,H,\l,\mathcal{L})$, then we start in \thref{case3}, according to which the $4$-tuple must be replaced either by $(\overline{Y_{\b^{(k)}}^{ss}}, P_{\b^{(k)}}, \l_{\b^{(k)}}, \mathcal{L} |_{\overline{Y_{\b^{(k)}}^{ss}}})$ (\thref{case31}) or by $(\overline{Y_{\b^{(k)}}^{ss}}, P_{\b^{(k)}}/\l_{\b^{(k)}}(\GG_m), \l_0, \mathcal{L} |_{\overline{Y_{\b^{(k)}}^{ss}}})$ (\thref{case32}) for some $k \geq 1$. Note that by construction, we have that $\operatorname{dim} P_{\b^{(k)}} \leq \operatorname{dim} R/\l(\GG_m)  \leq \operatorname{dim} R \leq  \operatorname{dim} H$, where equality may hold throughout (since $\l(\GG_m)$ could be trivial). However, since $\l_{\b^{(k)}}(\GG_m)$ is a non-trivial one-parameter subgroup, we have that $\operatorname{dim} P_{\b^{(k)}}/\l_{\b^{(k)}}(\GG_m) < \operatorname{dim} P_{\b^{(k)}}.$ Thus in \thref{case31} we obtain a new $4$-tuple for which the dimension of the group is smaller than or equal to the dimension of $H$, while in \thref{case32} the dimension of the group becomes strictly smaller.

The only way in which the algorithm might not terminate is if we could travel between \thref{case1,case2,case3} infinitely many times. In Figure \ref{fig:proof}, this corresponds to circling through \thref{case1,case2,case3} via loops formed by the arrows coloured green (indicating that the dimension of the group decreases) and blue (indicating that the dimension of the group decreases or stays the same). However, as the diagram shows, every such closed loop contains at least one green arrow. Thus each time we travel through a loop, the dimension of the group must decrease. If we did this ad infinitum, we would eventually reach a case for which the starting $4$-tuple $(X',H',\l',\mathcal{L}')$ satisfies $H' = \{e\}$. Since $\l'(\GG_m)$ is then trivial, and since $X'^{ss,R'/\l'(\GG_m)} = X'^{ss,\{e\}} = X' \neq \emptyset$, \thref{case2} applies.  Note that we also then have $X'^{s,R'/\l'(\GG_m)} = X'^{ss,R'/\l'(\GG_m)} \neq \emptyset$, so the reductive blow-up is $X'$ itself, and hence $\widetilde{X'}^{ss,R'/\l'(\GG_m)} \neq \emptyset$. Thus \thref{case21} would be automatically satisfied, and so the algorithm would terminate.
\end{proof} 

%\bibliography{Appendixbibliography}
%\bibliographystyle{abbrv}

 \newpage
\section{Index of notation} \label{index}

\begin{tabular}{lll}
\textbf{Notation} & \textbf{Short description} & \textbf{}\\
$\kk$ & algebraically closed field of char $0$ & \\ % \verb!\kk! \\
$\GG_m$ & multiplicative group over $\kk$ & \\ % \verb!\GG_m!\\
$G$ & reductive linear algebraic group over $\kk$ & \\ % \verb!G! \\
$U$ & graded unipotent linear group over $\kk$ & $\S$\ref{section1} \\ %\verb!U! \\
%& $U$ is \textsl{graded unipotent} see page... & \\
$\hU=U \rtimes \l(\GG_m)$ & semi-direct product where $\l:\GG_m \to \hU$ grades $U$ & $\S$\ref{section1} \\ % \verb!\hU!\\
$H=U \rtimes R$ & \makecell[lt]{linear algebraic group over $\kk$ with unipotent radical $U$ \\ and Levi subgroup $R$} & \\ %\verb!H! \\
$\l:\GG_m \to Z(R)$ & central $1$-parameter subgroup of $R$ which grades $U$ & $\S$\ref{section1} \\ %  \verb!\l!\\
& $H$ \textsl{has internally graded unipotent radical} & $\S$\ref{section2.2} \\ % \\
$\hH=H \rtimes \GG_m$ & $H$ \textsl{has externally graded unipotent radical}  & $\S$\ref{section2.1} \\ % \\
%$\hH=H \rtimes \GG_m$ & \makecell[lt]{graded unipotent extension where $\GG_m$ grades \\
% the unipotent radical $U$ of $H$} & \verb!\hH!\\
$T$ & maximal torus of $R$ (and thus of $H$) & \\ %\verb!T!\\
$\chi: H \to \GG_m$ & character of $H$ & \\ % \verb!\chi! \\
$\chi \in \Lie(H)^*$ & (rational) character of $H$ & \\
$\o_0<\o_1<\ldots <\o_h$ & weights of linear $\GG_m$ action on $X$ & Definition \ref{defn2.1}  \\  % \verb!\o_i! \\
$\o_{\min}=\o_0, \o_{\max}=\o_h$ & minimal and maximal weights & \\
$X^{\GG_m}$ or $X^{\l(\GG_m)}$ & fixed point locus for $\GG_m$-action via $\l$ & \\
$Z_{\min}$ or $Z_{\min}(X)$ & sublocus of $X^{\GG_m}$ with weight $\o_{\min}$ & Definition \ref{def:ExPr2} \\ % \verb!\zmin! \\
$p=\pi_{\min}:X_{\min}^0 \to Z_{\min}$ & limit of $\GG_m$ orbit $p(x)=\lim_{t\to 0} \l(t) \cdot x$ & Definition \ref{def:ExPr2} \\ % \verb!\xmino!\\
$\hat{X}$ & blow-up to resolve singularities of quotient from $U$-action & $\S$\ref{section5} \\
$\tilde{X}$ & blow-up to resolve singularities of quotient from $H$-action & $\S$\ref{externalsection} \\ %\verb!\tX! \\
\textbf{(Semi)stable loci} & & \\
$X^{(s)s,H}$ &  &$\S$\ref{section2} \\ % \verb!\x(s)sh! \\
$X^{s,\GG_m}_{\min +}$, \,\, $X^{s,\hU}_{\min +}$ & stable loci for well-adapted actions of $\GG_m,\, \hU$ & $\S$\ref{section2.1} \\ %\verb!\xminsg! \\
%  & see page... & \verb!\xminshu! \\
$X^{(s)s,\hat{T}}_{\min +}$, \,\,  %  & see page... & \verb!\xmin(s)st! \\
$X^{(s)s,\hH}_{\min +}$  & (semi)stable loci for well-adapted actions of $\hat{T},\, \hH$  & Remark \ref{defnchio} \\ % \verb!\xmin(s)sh! \\
%$X^{\hat{s},\hU}_{\min +}$  & see page... & \verb!\xminhshu! \\
%$X^{\hat{s},H}_{\min +}$  & $\{x \in \xminhshu: \hU x \in (X/\!/\hU)^{s,R/\l(\GG_m)}\}$, see page... & \verb!\xminhsh! \\
%$X^{\hat{s},U}$ & see page... & \verb!\xhsu! \\
$X^{\hat{s},H}$ & hat-stable locus & Definition \ref{defn9.1} \\ %\verb!\xhsh! \\
$X^{M\hat{s}, H \geqslant U \geqslant U^{(1)} \geqslant \cdots \geqslant U^{(s)}}$ & Mumford-hat-stable locus & $\S$\ref{externalsection}\\
\textbf{Quotients} & & \\
$X \gitq G$ & classical GIT quotient  & \\
$X \env H$ & enveloping quotient & $\S$\ref{section2} \\ %\verb!X \env H! \\
%$X /\!/ H,\hU$ & see page... & \verb!X \gitq H,\hU! \\
$X\, \widehat{\env} \, H$ & $(\widehat{X \times \PP^1} \env \hU)/\!/R$ \, (external case) & Definition \ref{defn9.1} \\
  & $(\widehat{X} \env \hU)/\!/ (R/\l(\GG_m))$ \, (internal case) & Remark \ref{remark9.4} \\ %\verb!X \hgitq H!\\
$X \, \widetilde{\env}\, H$ & %\makecell[lt]{$\widetilde{(\hat{X} /\!/ \hU)}/\!/ (R/\l(\GG_m))$ internal case, see... \\ 
%Frances, isn't this what we denote simply by $X \gitq H$?} & \verb!X \tgitq H!\\
 $(\widetilde{X \times \PP^1} \env \hU)/\!/R$ \, (external case) & Definition \ref{defn9.1} \\
  & $(\widetilde{X} \env \hU)/\!/ (R/\l(\GG_m))$ \, (internal case) & Remark \ref{remark9.4} \\ 
\end{tabular}

\newpage
\bibliographystyle{abbrv}
\bibliography{ProjectiveCompletions.bib}

\begin{thebibliography}{10}

\bibitem{Alper}
J.~Alper.
\newblock Good moduli spaces for {A}rtin stacks.
\newblock {\em Annales de {l}'institut {F}ourier}, 63:2349--2402, 2013.

\bibitem{ad07}
A.~Asok and B.~Doran.
\newblock On unipotent quotients and some {$\Bbb A^1$}-contractible smooth
  schemes.
\newblock {\em Int. Math. Res. Pap.}, 51, 2007.

\bibitem{b0}
G.~B\'erczi.
\newblock Tautological integrals on curvilinear {H}ilbert schemes.
\newblock {\em Geom. Topol.}, 21(5):2897--2944, 2017.

\bibitem{BDHK}
G.~B\'erczi, B.~Doran, T.~Hawes, and F.~Kirwan.
\newblock Constructing quotients of algebraic varieties by linear algebraic
  group actions.
\newblock In {\em Handbook of Group Actions}, Advanced Lectures in Mathematics.
  Higher Education Press and International Press, 2018.

\bibitem{BDHK2}
G.~B\'erczi, B.~Doran, T.~Hawes, and F.~Kirwan.
\newblock Geometric invariant theory for graded unipotent groups and
  applications.
\newblock {\em Journal of Topology}, 11(3):826--855, 2018.

\bibitem{BDKII}
G.~B\'erczi, B.~Doran, and F.~Kirwan.
\newblock Geometric invariant theory for linear algebraic groups with graded
  unipotent radicals.
\newblock in preparation.

\bibitem{BDKIII}
G.~B\'erczi, B.~Doran, and F.~Kirwan.
\newblock Non-reductive geometric invariant theory and iterated logarithms.
\newblock in preparation.

\bibitem{BDK}
G.~B\'erczi, B.~Doran, and F.~Kirwan.
\newblock Graded linearisations.
\newblock In {\em Modern Geometry: A celebration of the work of Simon
  Donaldson}. AMS, 2018.

\bibitem{BHJK}
G.~B\'erczi, V.~Hoskins, J.~Jackson, and F.~Kirwan.
\newblock Quotients of unstable subvarieties and moduli spaces of sheaves of
  fixed {H}arder-{N}arasimhan type ii.
\newblock in preparation.

\bibitem{BHK}
G.~B\'erczi, V.~Hoskins, and F.~Kirwan.
\newblock Stratifying quotient stacks and moduli stacks.
\newblock In {\em Proceedings of Abel Symposia}. Springer Verlag, 2017.

\bibitem{BJK}
G.~B\'erczi, J.~Jackson, and F.~Kirwan.
\newblock Variation of non-reductive geometric invariant theory.
\newblock {\em Surveys in Differential Geometry}, 22:49--69, 2017.

\bibitem{bkcoh}
G.~B{\'e}rczi and F.~Kirwan.
\newblock Moment maps and cohomology of non-reductive quotients.
\newblock arXiv:1909.11495.

\bibitem{bkGGL}
G.~B\'erczi and F.~Kirwan.
\newblock Non-reductive geometric invariant theory and hyperbolicity.
\newblock arXiv:1909.11417.

\bibitem{bkgrosshans}
G.~B\'erczi and F.~Kirwan.
\newblock Graded unipotent groups and {G}rosshans theory.
\newblock {\em Forum of Mathematics, Sigma}, 5:e21, 2017.

\bibitem{bsz3}
G.~B\'erczi and A.~Szenes.
\newblock Multisingularity classes of maps.
\newblock in preparation.

\bibitem{bsz2}
G.~B\'erczi and A.~Szenes.
\newblock Tautological integrals on {H}ilbert scheme of points in higher
  dimension.
\newblock in preparation.

\bibitem{bsz}
G.~B{\'e}rczi and A.~{Sz}enes.
\newblock Thom polynomials of {M}orin singularities.
\newblock {\em Ann. of Math. (2)}, 175(2):567--629, 2012.

\bibitem{BB}
A.~Bialynicki-Birula.
\newblock Some theorems on actions of algebraic groups.
\newblock {\em Annals of Mathematics}, 98(3):480--497, 1973.

\bibitem{BiB}
A.~Bialynicki-Birula.
\newblock Some theorems on actions of algebraic groups.
\newblock {\em Ann. Math.}, 2:480--497, 1973.

\bibitem{bramn09}
L.~Brambila-Paz, O.~Guti\'errez, and N.~Nitsure.
\newblock Moduli stacks and moduli schemes for rank 2 unstable bundles.
\newblock 11 2009.

\bibitem{brotbek}
D.~Brotbek.
\newblock On the hyperbolicity of general hypersurfaces.
\newblock {\em Publ. Math. Inst. Hautes Etudes Sci.}, 126:1--34, 2017.

\bibitem{bunnett}
D.~Bunnett.
\newblock Moduli of hypersurfaces in toric orbifolds.
\newblock PhD thesis, Freie Universit\"at Berlin, 2019.

\bibitem{dem}
J.-P. Demailly.
\newblock Algebraic criteria for {K}obayashi hyperbolic projective varieties
  and jet differentials.
\newblock In {\em Algebraic geometry---{S}anta {C}ruz 1995}, volume~62 of {\em
  Proc. Sympos. Pure Math.}, pages 285--360. Amer. Math. Soc., Providence, RI,
  1997.

\bibitem{demsurvey}
J.-P. Demailly.
\newblock Recent results on the {K}obayashi and {G}reen-{G}riffiths-{L}ang
  conjectures.
\newblock 2018.
\newblock Contribution to the 16th Takagi lectures in celebration of the 100th
  anniversary of K.Kodaira.

\bibitem{deng}
Y.~Deng.
\newblock On the {D}iverio-{T}rapani conjecture.
\newblock {\em To appear in {A}nnales {S}cientifiques de l'\'Ecole {N}ormale
  {S}up\'erieure}.
\newblock arXiv:1703.07560.

\bibitem{dmr}
S.~Diverio, J.~Merker, and E.~Rousseau.
\newblock Effective algebraic degeneracy.
\newblock {\em Invent. Math.}, 180(1):161--223, 2010.

\bibitem{dr}
S.~Diverio and E.~Rousseau.
\newblock {\em A survey on hyperbolicity of projective hypersurfaces}.
\newblock IMPA Mathematical Publications. Instituto de Matem\'atica Pura e
  Aplicada (IMPA), Rio de Janeiro, 2011.
\newblock [On the title page: A survey on hiperbolicity of projective
  hypersurfaces].

\bibitem{dix96}
J.~Dixmier.
\newblock {\em Enveloping algebras}.
\newblock Graduate Studies in Mathematics. American Mathematical Society,
  Providence, RI, 1996.
\newblock Revised reprint of the 1977 translation.

\bibitem{Dolg}
I.~Dolgachev.
\newblock {\em Lectures on invariant theory}, volume 296 of {\em London
  Mathematical Society Lecture Note Series}.
\newblock Cambridge University Press, 2003.

\bibitem{DolgHu}
I.~V. Dolgachev and Y.~Hu.
\newblock Variation of geometric invariant theory quotients.
\newblock {\em Publications Math\'ematiques de l'IH\'ES}, 87:5--51, 1998.

\bibitem{DK}
B.~Doran and F.~Kirwan.
\newblock Towards non-reductive geometric invariant theory.
\newblock {\em Pure Appl. Math. Q.}, 3(1, part 3):61--105, 2007.

\bibitem{ER}
D.~Edidin and D.~Rydh.
\newblock Canonical reduction of stabilizers for artin stacks with good moduli
  spaces.
\newblock arXiv:1710.03220, 2017.

\bibitem{F2}
A.~Fauntleroy.
\newblock Categorical quotients of certain algebraic group actions.
\newblock {\em Illinois J. Math.}, 27(1):115--124, 03 1983.

\bibitem{F1}
A.~Fauntleroy.
\newblock Geometric invariant theory for general algebraic groups.
\newblock {\em Compositio Mathematica}, 55(1):63--87, 1985.

\bibitem{gottsche}
L.~G\"{o}ttsche.
\newblock A conjectural generating function for numbers of curves on surfaces.
\newblock {\em Comm. Math. Phys.}, 196(3):523--533, 1998.

\bibitem{gg}
M.~Green and P.~Griffiths.
\newblock Two applications of algebraic geometry to entire holomorphic
  mappings.
\newblock In {\em The {C}hern {S}ymposium 1979 ({P}roc. {I}nternat. {S}ympos.,
  {B}erkeley, {C}alif., 1979)}, pages 41--74. Springer, New York-Berlin, 1980.

\bibitem{GP1}
G.-M. Greuel and G.~Pfister.
\newblock Geometric quotients of unipotent group actions.
\newblock {\em Proc. Lond. Math. Soc.}, 67(3):75--105, 1993.

\bibitem{GP2}
G.-M. Greuel and G.~Pfister.
\newblock Geometric quotients of unipotent group actions.
\newblock In {\em Singularities (Oberwolfach 1996)}, volume 162 of {\em
  Progress in Math}, pages 27--36. Birkhauser, Basel, 1998.

\bibitem{GJS}
V.~Guillemin, L.~Jeffrey, and R.~Sjamaar.
\newblock Symplectic implosion.
\newblock {\em Transformation Groups}, 7, 02 2001.

\bibitem{hamilton2}
E.~Hamilton.
\newblock Oxford DPhil in progress.

\bibitem{hamilton}
E.~Hamilton.
\newblock Stratifications and quasi-projective coarse moduli spaces for the
  stack of higgs bundles.
\newblock arXiv:1911.13194.

\bibitem{HK}
V.~Hoskins and F.~Kirwan.
\newblock Projective completions of non-reductive quotients via reduction of
  stabilisers for {A}rtin stacks.
\newblock in preparation.

\bibitem{hok12}
V.~Hoskins and F.~Kirwan.
\newblock {Quotients of unstable subvarieties and moduli spaces of sheaves of
  fixed Harder–Narasimhan type}.
\newblock {\em Proceedings of the London Mathematical Society},
  105(4):852--890, 05 2012.

\bibitem{Jackson}
J.~Jackson.
\newblock Oxford DPhil thesis, 2018.

\bibitem{K5}
F.~Kirwan.
\newblock Symplectic quotients of unstable {M}orse strata for normsquares of
  moment maps.
\newblock arXiv:1802.09237.

\bibitem{K}
F.~Kirwan.
\newblock Cohomology of quotients in symplectic and algebraic geometry.
\newblock In {\em Mathematical Notes}, number~31. Princeton University Press,
  Princeton, NJ, 1984.

\bibitem{KPEN}
F.~Kirwan.
\newblock Quotients by non-reductive algebraic group actions.
\newblock In {\em Moduli Spaces and Vector Bundles}. Cambridge University
  Press, 03 2008.

\bibitem{K2}
F.~C. Kirwan.
\newblock Partial desingularisations of quotients of nonsingular varieties and
  their betti numbers.
\newblock {\em Annals of Mathematics}, 122(1):41--85, 1985.

\bibitem{kob2}
S.~Kobayashi.
\newblock {\em Hyperbolic manifolds and holomorphic mappings}, volume~2 of {\em
  Pure and Applied Mathematics}.
\newblock Marcel Dekker, Inc., New York, 1970.

\bibitem{lang}
S.~Lang.
\newblock Hyperbolic and {D}iophantine analysis.
\newblock {\em Bull. Amer. Math. Soc. (N.S.)}, 14(2):159--205, 1986.

\bibitem{lehn}
M.~Lehn.
\newblock Chern classes of tautological sheaves on {H}ilbert schemes of points
  on surfaces.
\newblock {\em Invent. Math.}, 136(1):157--207, 1999.

\bibitem{Ler}
E.~Lerman.
\newblock Symplectic cuts.
\newblock {\em Mathematical Research Letters}, 2:247–258, 1995.

\bibitem{mop2}
A.~Marian, D.~Oprea, and R.~Pandharipande.
\newblock Higher rank {S}egre integrals over the {H}ilbert scheme of points.
\newblock arXiv:1712.02382.

\bibitem{mop1}
A.~Marian, D.~Oprea, and R.~Pandharipande.
\newblock The combinatorics of {L}ehn's conjecture.
\newblock {\em J. Math. Soc. Japan}, 1:299--308, 2019.

\bibitem{SM}
S.~Martin.
\newblock Symplectic quotients by a nonabelian group and by its maximal torus.
\newblock arxiv:0001.002.

\bibitem{mcquillan}
M.~McQuillan.
\newblock Diophantine approximations and foliations.
\newblock {\em Inst. Hautes \'Etudes Sci. Publ. Math.}, 87:121--174, 1998.

\bibitem{merker2}
J.~Merker and T.-A. Ta.
\newblock Degrees {$(d\ge \sqrt{n}\log n)^n$} and {$d\ge (n \log n)^ n$} in the
  conjectures of {G}reen-{G}riffiths and of {K}obayashi.
\newblock arXiv:1901.04042.

\bibitem{Mukai}
S.~Mukai.
\newblock Geometric realization of $t$-shaped root systems and counterexamples
  to {H}ilbert's fourteenth problem.
\newblock In {\em Algebraic transformation groups and algebraic varieties},
  volume 132 of {\em Encyclopaedia Math. Sci.}, pages 123--129. Springer,
  Berlin, 2004.

\bibitem{GIT}
D.~Mumford, J.~Fogarty, and F.~Kirwan.
\newblock {\em Geometric invariant theory}, volume~34 of {\em Ergebnisse der
  Mathematik und ihrer Grenzgebiete (2) [Results in Mathematics and Related
  Areas (2)]}.
\newblock Springer-Verlag, Berlin, third edition, 1994.

\bibitem{New}
P.~E. Newstead.
\newblock {\em Introduction to moduli problems and orbit spaces}.
\newblock Tata Institute Lecture Notes. Springer, 1978.

\bibitem{new2}
P.~E. Newstead.
\newblock {\em Geometric invariant theory, Moduli spaces and vector bundles},
  volume 359 of {\em London Math. Soc. Lecture Notes}.
\newblock Cambridge University Press, 2009.

\bibitem{Popov}
V.~Popov.
\newblock On hilbert's theorem on invariants.
\newblock {\em Dokl. Akad. Nauk SSSR}, 249(3):551--555, 1979.
\newblock English translation: Soviet Math. Dokl. 20 (1979), no. 6, 1318-1322
  (1980).

\bibitem{PopVin}
V.~Popov and E.~Vinberg.
\newblock Invariant theory.
\newblock In {\em Algebraic geometry IV}, volume~55 of {\em Encyclopaedia Math.
  Sci.} Springer, Berlin, 1994.

\bibitem{stacks-project}
S.~project authors.
\newblock {S}tacks {P}roject.
\newblock http://stacks.math.columbia.edu, 2015.

\bibitem{Reichstein1989}
Z.~Reichstein.
\newblock Stability and equivariant maps.
\newblock {\em Inventiones mathematicae}, 96(2):349--384, 1989.

\bibitem{riedl}
E.~Riedl and D.~Yang.
\newblock Applications of a {G}rassmannian technique in hypersurfaces.
\newblock arXiv:1806.02364.

\bibitem{ser58}
J.-P. Serre.
\newblock Espaces fibr\'es alg\'ebriques.
\newblock {\em S\'eminaire {C}laude {C}hevalley}, 3, 1958.
\newblock talk:1.

\bibitem{s94}
C.~Simpson.
\newblock Moduli of representations of the fundamental group of a smooth
  projective variety ii.
\newblock {\em Publications Math\'ematiques de l'IH\'ES}, 80:5--79, 1994.

\bibitem{siu1}
Y.-T. Siu.
\newblock Some recent transcendental techniques in algebraic and complex
  geometry.
\newblock In {\em Proceedings of the {I}nternational {C}ongress of
  {M}athematicians, {V}ol. {I} ({B}eijing, 2002)}, pages 439--448. Higher Ed.
  Press, Beijing, 2002.

\bibitem{siu2}
Y.-T. Siu.
\newblock Hyperbolicity in complex geometry.
\newblock In {\em The legacy of {N}iels {H}enrik {A}bel}, pages 543--566.
  Springer, Berlin, 2004.

\bibitem{siu4}
Y.-T. Siu.
\newblock Hyperbolicity of generic high-degree hypersurfaces in complex
  projective space.
\newblock {\em Invent. Math.}, 202(3):1069--1166, 2015.

\bibitem{siu3}
Y.-T. Siu and S.-k. Yeung.
\newblock Hyperbolicity of the complement of a generic smooth curve of high
  degree in the complex projective plane.
\newblock {\em Invent. Math.}, 124(1-3):573--618, 1996.

\bibitem{Thaddeus}
M.~Thaddeus.
\newblock Geometric invariant theory and flips.
\newblock {\em Jour. AMS}, pages 691--729, 1996.

\bibitem{W}
J.~Winkelmann.
\newblock Invariant rings and quasiaffine quotients.
\newblock {\em Math. Z.}, 244:163--174, 2003.

\end{thebibliography}

\end{document}